\newcommand{\onehalf}{\frac{1}{2}}
\newcolumntype{M}{>{$}c<{$}}
\newtheoremstyle{lecture}
  {9pt}
  {9pt}
  {}
  {}
  {\bfseries}
  {}
  { }
  {}
\theoremstyle{lecture}
\newtheorem{theorem}{Theorem}[section]
\newtheorem{lemma}[theorem]{Lemma}
\newtheorem*{lemma*}{Lemma}
\newtheorem{proposition}[theorem]{Proposition}
\newtheorem{corollary}[theorem]{Corollary}
\newtheorem{remark}[theorem]{Remark}
\newtheorem*{remark*}{Remark}
\newtheorem{definition}[theorem]{Definition}
\newcommand{\R}{\mathbb{R}}
\newcommand{\C}{\mathbb{C}}
\newcommand{\Z}{\mathbb{Z}}
\newcommand{\Proj}{\mathbb{P}}
\newcommand{\RProj}{\mathbb{RP}}
\newcommand{\cl}{\mathrm{cl}}
\newcommand{\SU}{\mathrm{SU}}
\newcommand{\SOR}[1]{\mathrm{SO}(#1,\mathbb{R})}
\newcommand{\su}{\mathfrak{su}}
\newcommand{\id}{\mathrm{id}}
\newcommand{\Id}{\mathrm{Id}}
\newcommand{\Gr}{\text{Gr}}
\newcommand{\dunion}{\dot{\cup}}
\newcommand{\Triple}[3]{\left(#1,\,#2,\,#3\right)}
\newcommand{\Pair}[2]{\left(#2,\,#1\right)}
\newcommand{\Bideg}[2]{\left(#1,\,#2\right)}
\newcommand{\Fix}{\text{Fix}\,}
\newcommand{\Diag}{\text{Diag}\,}
\newcommand{\sig}{\text{sig}\,}
\newcommand{\Deck}{\text{Deck}\,}
\newcommand\restr[2]{
  #1|_{#2}
  }
\newcommand\Restr[2]{%
  \left.\kern-\nulldelimiterspace 
  #1 
  \right|_{#2} 
  }
\renewcommand{\Re}[1]{\text{Re}\,#1}
\renewcommand{\Im}[1]{\text{Im}\,#1}
\renewcommand{\mod}[0]{\;\text{mod}\;}
\renewcommand{\setminus}{\smallsetminus}
\newcommand{\LatGen}[2]{\left<#1,#2\right>_{\mathbb{Z}}}
\newcommand{\hdim}{\text{dim}_H}
\newcommand{\pdim}{\text{dim}_P}
\newcommand{\Maps}[2]{{#2}^{#1}}
\newcommand{\closure}{\text{cl}\,}
\DeclareDocumentCommand \I { G{} G{} } {%
  \ifstrempty{#2}{%
    \ifstrempty{#1}{%
      \mathbb{E} %
    }{%
      \mathbb{E}_{#1} %
    }%
  }{%
    \mathbb{E}_{#1,#2} %
  }%
}
\tikzset{
  on each segment/.style={
    decorate,
    decoration={
      show path construction,
      moveto code={},
      lineto code={
        \path [#1]
        (\tikzinputsegmentfirst) -- (\tikzinputsegmentlast);
      },
      curveto code={
        \path [#1] (\tikzinputsegmentfirst)
        .. controls
        (\tikzinputsegmentsupporta) and (\tikzinputsegmentsupportb)
        ..
        (\tikzinputsegmentlast);
      },
      closepath code={
        \path [#1]
        (\tikzinputsegmentfirst) -- (\tikzinputsegmentlast);
      },
    },
  },
  mid arrow/.style={postaction={decorate,decoration={
        markings,
        mark=at position .5 with {\arrow[scale=1.8,#1]{stealth}}
      }}},
}
\begin{document}

\subject{Dissertation}
\title{On Time-Reversal Equivariant\\ Hermitian Operator Valued Maps\\
  from a 2D-Torus Phase Space}
\author{Moritz Schulte}
\date{\today}
\maketitle

\cleardoublepage
\tableofcontents

\addchap{Acknowledgements}

First of all, I would like to express my sincerest gratitude to my
supervisor Professor Huckleberry. His help and guidance have been
invaluable to me throughout the project. The mathematical discussions
we shared were crucial to my progress, in particular when he helped
with several of the proofs by suggesting key ideas. The work enabled
me to grasp what mathematical research is like.

I would also like to thank my parents, my family and my friends for
supporting me throughout my studies and research. 

And finally, there is somebody whose role throughout the critical
phase of my PhD project I would like to underline: Thank you,
Christina, for your unwavering support, especially when some of the
proofs had given me a hard time. Your part in the completion of this
paper can hardly be overestimated.

This research was conducted as part of the the Sonderforschungsbereich
TR12 (SFB TR12) ``Symmetries and Universality in Mesoscopic Systems''
and was partially financed by the Deutsche Forschungsgemeinschaft
(DFG) for which I am very thankful.


\chapter{Introduction}

In this paper we are primarily concerned with the equivariant homotopy
classification of maps from a torus phase space $X$ to
$\mathcal{H}_n^*$, the space of $n \times n$, non-singular hermitian
operators. The equivariance is with respect to a time-reversal
involution on $X$ and an involution on $\mathcal{H}_n^*$ defining a
certain symmetry class. This is in contrast to related works in this
area (e\,g. \cite{Kennedy}), which deal with the classification of
equivariant maps defined on \emph{momentum space}, where the
time-reversal involution on momentum space is given by $k \mapsto
-k$. Here we consider the model problem where the phase space is a
$2$-dimensional symplectic torus $(X,\omega)$ and the time-reversal
involution $T$ is antisymplectic, i.\,e.
\begin{align*}
  T^*(\omega) = -\omega.
\end{align*}
In this case it is known (see e.\,g. \cite{Seppala}, \cite{Tehrani})
that up to equivariant symplectic diffeomorphisms there are exactly
three classes of antisymplectic involutions on $X$. Representatives
for these three classes can be given by
\begin{align*}
  [z] \mapsto [\overline{z}] \;\;\text{,}\;\; [z] \mapsto [i\overline{z}] \;\;\text{and}\;\; [z] \mapsto [\overline{z} + \text{\textonehalf}],
\end{align*}
where $X$ is regarded as the square torus $\C/\Lambda$ defined by the
lattice $\Lambda = \LatGen{1}{i}$. In this paper we only deal with the
first two involutions, as these are the only ones, which have
fixed points. In particular, in local coordinates they are of the form
\begin{align*}
  (q,p) \mapsto (q,-p).
\end{align*}
We call the first one \emph{type I} involution and the second one
\emph{type II} involution. On $\mathcal{H}_n^*$ resp. $\mathcal{H}_n$
we define the involution, which we also denote by $T$, to be
\begin{align*}
  H \mapsto \overline{H} = \ltrans{H}.
\end{align*}
We believe, though, that the methods described in this paper, are also
applicable, when $X$ is equipped with a time-reversal involution
coming from the third class of (free) antisymplectic involutions on
$X$ and when physically relevant image spaces other than
$(\mathcal{H}_n,T)$ are considered.

Letting $G$ be the cyclic group $C_2$ of order two and $X$ and
$\mathcal{H}_n$ be equipped as above with $G$-actions, our main
results in chapter~\ref{ChapterClassification} give a complete
description of the spaces
\begin{align*}
  [X,\mathcal{H}_n^*]_G
\end{align*}
of $G$-equivariant homotopy classes of maps $X \to \mathcal{H}_n^*$,
where $X$ is equipped with the type I or the type II involution. These
descriptions are in terms of numerical topological invariants. It
turns out that the topological invariant given by the \emph{total
  degree} together with certain \emph{fixed point degrees} defines a
\emph{complete} topological invariant of equivariant homotopy. Recall
that an invariant is called \emph{complete} if the equality of the
invariants associated to two objects (equivariant maps in our case)
implies the equivalence of the objects (the maps are equivariantly
homotopic). It is important to emphasize that not all combinations of
total degree and fixed point degrees are realizable; a certain
elementary number theoretic condition is necessary and sufficient for
the existence of a map with given invariants.

In chapter~\ref{ChapterJumps} we study the situation where a certain
controlled degeneration is allowed. In formal terms this is a curve
\begin{align*}
  f\colon [-1,1] \times X \to \mathcal{H}_n
\end{align*}
where the image $\Im(f_t)$ is contained in $\mathcal{H}_n^*$ for $t
\not= 0$ and where the degeneration $f_0(x)$ being singular is only
allowed at isolated points in $X$. The interesting case is where
$f_{-1}$ and $f_{+1}$ are in distinct $G$-homotopy classes,
i.\,e. where we have a sort of \emph{jump curve}. In this case we show
exactly which jumps are possible and produce maps with given jumps.

This research was carried out in the context of the interdisciplinary
project SFB TR12. Thus we have attempted to present a work that is
mostly self-contained; for the convenience of the reader, the appendix
(Chapter~\ref{ChapterAppendix}) includes some well-known background
material.

\section{Outline}

The equivariant homotopy classification of maps $X \to
\mathcal{H}_n^*$ is contained in
chapter~\ref{ChapterClassification}. We begin by identifying the
connected components of $\mathcal{H}_n^*$. These are the subspaces
$\mathcal{H}_{(p,q)}$ of the matrices with $p$ positive and $q$
negative eigenvalues ($p + q = n$). It follows that this
\emph{eigenvalue signature} $(p,q)$ is a homotopy invariant of maps $X
\to \mathcal{H}_n^*$ and the classification of maps to
$\mathcal{H}_n^*$ reduces to the classification of maps to
$\mathcal{H}_{(p,q)}$, where $(p,q)$ takes on all possible eigenvalue
signatures. We then begin by examining the case where $X$ is equipped
with the type I involution and $n=2$. An ad-hoc computation shows that
$\mathcal{H}_{(2,0)}$ resp. $\mathcal{H}_{(0,2)}$ are equivariantly
contractible to plus resp. minus the identity matrix, whereas
$\mathcal{H}_{(1,1)}$ contains a $U(2)$-orbit diffeomorphic to $S^2$
as equivariant strong deformation retract. The induced $T$-action on
the sphere is given by a reflection along the $x,y$-plane.  This
motivates the need to classify equivariant maps $X \to S^2$. Also note
that the $2$-sphere can be equivariantly identified with the
projective space $\Proj_1$, on which the involution is the standard
real structure given by complex conjugation. Since $X$ and $S^2$ are
both $2$-dimensional manifolds, we obtain the (Brouwer) degree, which
we call \emph{total degree}, as a homotopy invariant of maps $X \to
S^2$. The equivariance requires that fixed point sets are mapped to
fixed point sets. The fixed point set in the torus -- for the type I
involution -- consists of two disjoint circles -- $C_0$ and $C_1$ --
while the fixed point set in the sphere consists of only the equator
$E$. This defines two additional invariants of equivariant homotopy:
The \emph{fixed point degrees}, which are the degrees of the restrictions
of the maps to the respective fixed point circles in $X$, regarded as
maps to the equator. This gives rise the \emph{degree triple map}
$\mathcal{T}$, which sends an equivariant map $f\colon X \to S^2$ to
the degree triple $\Triple{d_0}{d}{d_1}$, where $d$ is the total
degree of $f$ and $d_0$, $d_1$ denote the fixed point degrees of $f$. A
convenient property of the type I involution is the fact that a
fundamental region for this involution is given by a \emph{cylinder},
which we call $Z$. The boundary circles of the cylinder are the
fixed point circles in $X$. Maps $(Z,C_0 \cup C_1) \to (S^2,E)$ can be
uniquely equivariantly extended to the full torus $X$. It follows that
the equivariant homotopy classification of maps $X \to S^2$ reduces to
the (non-equivariant) homotopy classification of maps $(Z, C_0 \cup
C_1) \to (S^2,E)$. We then regard maps from the cylinder $Z = I \times
S^1$ to $S^2$ as \emph{curves} in the free loop space
$\mathcal{L}S^2$. After this translation of the problem and a certain
normalization procedure on the boundary circles, we are dealing with
the problem of computing homotopy classes of curves in
$\mathcal{L}S^2$ (with fixed endpoints). The key point now is to
define the \emph{degree map}, which associates to curves in
$\mathcal{L}S^2$ the degree of its equivariant extension to the full
torus. The degree map factors through the space of
homotopy class of curves. Furthermore it satisfies a certain
compatibility condition with respect to the concatenation of
curves. In particular, the degree map becomes a group homomorphism,
when it is restricted to a based fundamental group. This can be used for
proving that the restriction of the degree map to based fundamental
groups is injective. The injectivity of this map is then the basis for
the proof of the statement that maps with the same degree triple are
equivariantly homotopic. To summarize the above: The degree triple map
$\mathcal{T}$ is well-defined on the set of equivariant homotopy
classes of maps $X \to S^2$ and defines a bijection to a subset of
$\Z^3$. We call a degree triple, which is contained in the image of
the degree triple map, \emph{realizable}.  As a preperation for
describing the image of the degree triple map we express the
concatenation of curves in $\mathcal{L}S^2$ in terms of a binary
operation (``concatenation'') on the space of degree triples:
\begin{align*}
  \Triple{d_0}{d}{d_1} \bullet \Triple{d_1}{d'}{d_2} = \Triple{d_0}{d+d'}{d_2}.
\end{align*}
This concatenation operation is only defined for degree triples which
are compatible in the sense that the fixed point degrees in the middle
coincide. Using this formalism we observe that the image of the degree
triple map $\Im(\mathcal{T})$ is closed under the concatenation
operation. Furthermore we show that certain basic triples,
e.\,g. $\Triple{0}{1}{0}$, are not contained in the image of the
degree triple map. These are the key tools for completely describing
the image $\Im(\mathcal{T})$ using the formalism of degree triple
concatenations. It turns out that the number theoretic condition
\begin{align}
  \label{DegreeTripleCondition}
  d \equiv d_0 + d_1 \mod 2
\end{align}
is sufficient and necessary for a degree triple $\Triple{d_0}{d}{d_1}$
to be in the image of the degree triple map. This completely solves
the problem for the type I involution in the case $n=2$.

In the next step we reduce the classification problem for the type II
involution to the type I classification. The key observation here is
that maps $X \to S^2$, where $X$ is equipped with the type II
involution, can be normalized on a subspace $A \subset X$ such that
they push down to the quotient $X/A$, which happens to be
equivariantly diffeomorphic to $S^2$. The action on the $2$-sphere is
again given by a reflection along the $x,y$-plane. After another
normalization procedure, concerning the images of the two poles of
$S^2$, we prove a one-to-one correspondence between equivariant maps
$S^2 \to S^2$ and equivariant maps from a torus equipped with the type
I involution to $S^2$ having the one fixed point degree be zero. This
allows us to use the results for the type I involution.  In particular
it implies that we are dealing with degree \emph{pairs} instead of
degree triples. The degree pair map associates to an equivariant map
$f\colon X \to S^2$ its degree pair $\Pair{d_C}{d}$ where $d_C$ is the
fixed point degree and $d$ the total degree of $f$. Because of this
correspondence between maps defined on a type II torus and maps
defined on a type I torus, the condition (\ref{DegreeTripleCondition})
for degree triples to be in the image of the degree triple map induces
a corresponding condition for a pair $\Pair{d_C}{d}$ to be in the
image of the degree pair map:
\begin{align}
  \label{DegreePairCondition}
  d \equiv d_C \mod 2.
\end{align}
This completes the case $n=2$ for both involutions.

For the general case $n > 2$ we begin with an examination of the
connected components $\mathcal{H}_{(p,q)}$ of $\mathcal{H}_n^*$. As in
the case $n=2$ we prove that the components $\mathcal{H}_{(n,0)}$ and
$\mathcal{H}_{(0,n)}$ are equivariantly contractible to a
point. Fundamental for the following considerations is the result that
the components $\mathcal{H}_{(p,q)}$ with $0 < p,q < n$ have a
$U(n)$-orbit as equivariant strong deformation retract, where $U(n)$
acts on $\mathcal{H}_{(p,q)}$ by conjugation. This orbit is
equivariantly diffeomorphic to the complex Grassmann manifold
$\Gr_p(\C^n)$ on which the involution $T$ acts as
\begin{align*}
  V \mapsto \overline{V}.
\end{align*}
Thus, the problem of classifying equivariant maps $X \to
\mathcal{H}_{(p,q)}$ is equivalent to classifying equivariant maps $X
\to \Gr_p(\C^n)$. Now we fix the standard flag in $\C^n$ and consider
the -- with respect to this flag -- unique (complex) one-dimensional
Schubert variety $\mathcal{S}$ in $\Gr_p(\C^n)$. In this situation we
prove, using basic Hausdorff dimension theoretical results, that it is
possible to iteratively remove certain parts of the Grassmann manifold
$\Gr_p(\C^n)$ so that in the end we obtain an equivariant strong
deformation retract to the Schubert variety $\mathcal{S}$. The latter
is equivariantly biholomorphic to the complex projective space
$\Proj_1$ equipped with the standard real structure. This allows us to
use the results for the $n=2$ case. In terms of the invariants, the
only difference between the case $n = 2$ and $n > 2$ is that in the
former case the fixed point degrees are integers from the set $\Z$
while in the latter case they are only from the set $\{0,1\}$. This
stems from the fact that in the case $n>2$ the fundamental group of
the $T$-fixed points in the Grassmann manifold is not infinite cyclic
anymore, but cyclic of order two. In this setup, the conditions for
degree triples resp. pairs to be realizable are exactly
(\ref{DegreeTripleCondition}) and (\ref{DegreePairCondition}). This
completes the classification of equivariant maps $X \to
\mathcal{H}_n^*$.

In chapter~\ref{ChapterJumps} we construct curves
\begin{align*}
  H\colon [-1,1] \times X \to \mathcal{H}_n
\end{align*}
of equivariant maps with the property that the image $\Im(H_t)$ is
contained in $\mathcal{H}_n^*$ for $t\not= 0$ and $H_{-1}$ and
$H_{+1}$ are not equivariantly homotopic as maps to
$\mathcal{H}_n^*$. This is only possible if we allow a certain
degeneration at $t=0$, which means that there exists a non-empty
\emph{singular set} $S(H_0)$ consisting of the points $x \in X$ such
that the matrix $H_0(x)$ is singular. To make the construction
non-trivial we require the singular set $S(H_0)$ to be discrete.  As
in the previous chapter we first consider the type I involution in the
special case $n=2$. As a first result we obtain that jump curves from
$H_{-1}$ to $H_{+1}$ with discrete singular set can only exist if the
eigenvalue signatures of $H_{-1}$ and $H_{+1}$ coincide. Hence we fix
an eigenvalue signature $(p,q)$. The construction of \emph{jump
  curves} is based on the decomposition of degree triples as a
concatenation of simpler triples. First we show how to construct jumps
for certain ``model maps'' for basic degree triples. Subsequently we
extend this method such that we can construct jumps from any
equivariant homotopy class to any other -- as long as the eigenvalue
signature remains unchanged.

For the type II involution in the case $n=2$, we can employ the same
correspondence that has been used during the homotopy classification
to turn jump curves for the type I involution into corresponding jump
curves for the type II involution. This completes the construction of
jump curves in the case $n=2$.

For the general case $n > 2$ we construct jumps curves using the
embedding of the Schubert variety $\mathcal{S} \cong \Proj_1$ into
$\Gr_p(\C^n)$. In other words: The jump curves we construct in the
higher dimensional situation really come from jump curves in the case
$n=2$.


\chapter{Equivariant Homotopy Classification}
\label{ChapterClassification}

As indicated in the introduction, we may regard the torus $X$ as being
defined by the standard square lattice $\Lambda = \LatGen{1}{i}$. As a
complex manifold it carries a canonical orientation. Furthermore, we
regard $X$ as being equipped with the real structure $T([z]) =
[\overline{z}]$ (type I) or with the real structure $T([z]) =
[i\overline{z}]$ (type II). Recall that $G$ denotes the cyclic group
$C_2$ of order two. The $G$-action on $\mathcal{H}_n$ is assumed to be
given by matrix transposition or -- equivalently -- complex
conjugation:
\begin{align*}
  T\colon \mathcal{H}_n &\to \mathcal{H}_n\\
  H &\mapsto \ltrans{H} = \overline{H}.
\end{align*}
The goal in this chapter is to completely describe the sets
$[X,\mathcal{H}_n^*]_G$ of $G$-\parbox{0pt}{}equivariant homotopy classes of maps
$X \to \mathcal{H}_n^*$ ($n \geq 2$). Recall the basic definition of
equivariant homotopy:
\begin{definition}
  Let $X$ and $Y$ be two $G$-spaces and $f_0, f_1\colon X \to Y$ two
  $G$-maps. Note that $I \times X$ ($I$ is the unit interval $[0,1]$)
  can be regarded as a $G$-space by defining $T(t,x) = (t,T(X))$. Then
  $f_0$ and $f_1$ are said to be \emph{$G$-homotopic} (or
  \emph{equivariantly homotopic}\footnote{We use the terms
    ``$G$-homotopy'' and ``equivariant homotopy'' interchangeably.})
  if there exists a $G$-map $f\colon I \times X \to Y$ such that
  $f(0,\cdot) = f_0$ and $f(1,\cdot) = f_1$.
\end{definition}

The first invariant of maps $X \to \mathcal{H}_n^*$ we consider is the
\emph{eigenvalue signature}. We begin with the following
\begin{definition}
  Let $H$ be a non-singular $n \times n$ matrix and
  \begin{align*}
    \lambda^+_1 \geq \ldots \geq \lambda^+_p > 0 > \lambda^-_1 \ldots \geq \lambda^-_q
  \end{align*}
  the eigenvalues of $H$ (repetitions allowed). Then we call $(p,q)$
  the \emph{eigenvalue signature} (or simply \emph{signature}) of $H$
  and set $\sig H = (p,q)$.
\end{definition}
The connected components of the space $\mathcal{H}_n^*$ are the $n+1$
open subspaces $\mathcal{H}_{(p,q)}$, where
\begin{align*}
  \mathcal{H}_{(p,q)} = \left\{H \in \mathcal{H}_n^*\colon \sig H = (p,q)\right\}.
\end{align*}
Thus we can write
\begin{align*}
  [X,\mathcal{H}_n^*]_G = \dot{\bigcup_{p+q=n}}\;[X,\mathcal{H}_{(p,q)}]_G.
\end{align*}
In particular this means that the signature $\sig(H)$ for maps
$H\colon X \to \mathcal{H}_n^*$ is well-defined and a topological
invariant. It turns out that the components $\mathcal{H}_{(n,0)}$ and
$\mathcal{H}_{(0,n)}$ are equivariantly contractible but the
components $\mathcal{H}_{(p,q)}$ with $0<p,q<n$ are topologically
interesting as they have Grassmann manifolds as equivariant strong
deformation retract. The classification of maps to a component
$\mathcal{H}_{(p,q)}$ for a fixed (non-definite) eigenvalue signature
$(p,q)$ is in this way reduced to the problem of classifying
equivariant maps to complex Grassmann manifolds $\Gr_p(\C^n)$ on which
the $G$-action is given by standard complex conjugation.

As a preparation for proving a general statement about the set
$[X,\mathcal{H}_n^*]_G$, we first consider the special case $n=2$
(theorem~\ref{HamiltonianClassificationRank2},
p.~\pageref{HamiltonianClassificationRank2}). As mentioned earlier,
the only non-trivial cases occur for maps whose images are contained
in the components $\mathcal{H}_{(p,q)}$ of mixed signature. In the
case $n=2$ this boils down to $\mathcal{H}_{(1,1)}$.  This space has
the $2$-sphere, equipped with the involution given by a reflection
along the $x,y$-plane, as equivariant strong deformation
retract.\footnote{The 2-sphere is regarded as being embedded as the
  unit sphere in $\R^3$} Thus, the problem of describing
$[X,\mathcal{H}_{(1,1)}]_G$ is reduced to classifying equivariant maps
$X \to S^2$. We do this for the type~I and the type~II involution
seperately, but it turns out that the type II case can be reduced to
the type I case (see theorem~\ref{Classification1} and
theorem~\ref{Classification2}). The result is that for both
involutions the complete invariants of maps $X \to S^2$ consist of two
pieces of data: First the \emph{total degree} $d$ (which is also an
invariant of non-equivariant homotopy) and second the so-called
\emph{fixed point degrees} -- two integers (named $d_0, d_1$) for the
type I involution and one integer (named $d_C$) for the type~II
involution. Note that the notion of fixed point degree only makes
sense for \emph{equivariant} maps, as they are required to map the
fixed point set in $X$ into the fixed point set in $S^2$. Furthermore
we obtain the statement that not all combinations of total degree and
fixed point degrees are realizable by $G$-maps. As mentioned in the
introduction, the conditions which are sufficient and necessary are
$d \equiv d_0 + d_1 \mod 2$ for the type I involution and
$d \equiv d_C \mod 2$ for the type II involution.

After having proven a classification statement for $n=2$, we observe
that this case is really the fundamental case to which the general
case ($n$ arbitrary) can be reduced to by means of a retraction
argument. In each Grassmannian $\Gr_p(\C^n)$ there exists a Schubert
variety $\mathcal{S} \cong \Proj_1$; after fixing a full flag in
$\C^n$, this the unique one-dimensional (over $\C$) Schubert variety
which generates the second homology groups
$H_2(\Gr_p(\C^n),\Z)$. After iteratively cutting out certain parts of
a general Grassmannian $\Gr_p(\C^n)$ we obtain an equivariant
retraction to this embedded Schubert variety. It can be equivariantly
identified with $\Proj_1$ on which the involution is given by the
standard real structure:
\begin{align*}
  [z_0:z_1] \mapsto \left[\overline{z_0}:\overline{z_1}\right]
\end{align*}
This space can be equivariantly identified with $S^2$. Hence, we can
use the previous results for the case $n=2$.

\section{Maps to $\mathcal{H}_2$}
\label{SectionN=2}

In this section we provide a complete description of the set
$[X,\mathcal{H}_2^*]_G$, where $X$ is equipped with a real structure
$T$, which is either the type I or the type II involution. This case
is of particular importance, since all the other cases (i.\,e.
$n > 2$) can be reduced to this one. Note that a general equivariant
map $H\colon X \to \mathcal{H}_2$ is of the form
\begin{align}
  \label{GeneralRank2Map}
  H =
  \begin{pmatrix}
    a_1 & b \\
    \overline{b} & a_2
  \end{pmatrix},
\end{align}
where $a_1$, $a_2$ are functions $X \to \R$ and $b$ is a function $X
\to \C$ satisfying
\begin{align}
  \label{N=2MatrixEquivarianceConditionsA}
  &a_j \circ T = a_j \;\text{for $j=1,2$}\\
  \label{N=2MatrixEquivarianceConditionsB}
  &b \circ T = \overline{b}
\end{align}
A map into $\mathcal{H}_n^*$ has the additional property that
$\det H = a_1 a_2 - |b|^2$ is nowhere zero.  As we have mentioned
previously, the space $\mathcal{H}_2^*$ decomposes into the disjoint
union
\begin{align*}
  \mathcal{H}_{(2,0)} \,\dunion\, \mathcal{H}_{(1,1)} \,\dunion\, \mathcal{H}_{(0,2)},
\end{align*}
where $(2,0)$, $(1,1)$ and $(0,2)$ are the possible eigenvalue
signatures for maps $X \to \mathcal{H}_n^*$. First we note:
\begin{remark}
  \label{Rank2DefiniteComponentsContractible}
  The components $\mathcal{H}_{(2,0)}$ and $\mathcal{H}_{(0,2)}$ have
  $\{\pm \I{2}\}$ as equivariant strong deformation
  retract.\footnote{Here, $\I{k}$ is the $k \times k$ identity
    matrix.}
\end{remark}
\begin{proof}
  It suffices to consider the component $\mathcal{H}_{(2,0)}$, as the
  proof for the other is exactly the same. We define the following
  strong deformation retract:
  \begin{align*}
    \rho\colon I \times \mathcal{H}_{(2,0)} &\to \mathcal{H}_{(2,0)}\\
    (t,H) &\mapsto (1-t)H + t\I{2}.
  \end{align*}
  We have to check that $\rho$ really maps into
  $\mathcal{H}_{(2,0)}$. For this, let $H$ be any positive-definite
  hermitian matrix. Then $(1-t)H$ is also positive-definite for $t \in
  (0,1)$. If $\det((1-t)H + t\I{2}) = 0$, this means that the negative
  number $-t$ is an eigenvalue of $(1-t)H$, which is a
  contradiction. Equivariance of $\rho_t$ follows from the fact that
  the equivariance conditions (\ref{N=2MatrixEquivarianceConditionsA})
  and (\ref{N=2MatrixEquivarianceConditionsB}) are compatible with
  scaling by real numbers.
\end{proof}

At this point we begin using the space $i\mathfrak{su}_2$, which is
the vector space of traceless hermitian operators:
\begin{align*}
  i\mathfrak{su}_2 = \left\{
    \begin{pmatrix}
      a & \phantom{-}b \\
      \overline{b} & -a
    \end{pmatrix}\colon a \in \R \;\text{and}\; b \in \C\right\}.
\end{align*}
It can be identified with $\R^3$ via the linear isomorphism
\begin{align}
  \label{EmbeddingU2OrbitAsUnitSphere}
  \Psi\colon \begin{pmatrix}
    a & b \\
    \overline{b} & -a
  \end{pmatrix} \mapsto
  \begin{pmatrix}
    a \\
    \Re(b) \\
    \Im(b)
  \end{pmatrix}.
\end{align}
We regard $\R^3$ as being equipped with the involution, which
reflects along the $x,y$-plane:
\begin{align*}
  \begin{pmatrix}
    x \\
    y \\
    z
  \end{pmatrix} \mapsto
  \begin{pmatrix}
    \phantom{-}x \\
    \phantom{-}y \\
    -z
  \end{pmatrix}.
\end{align*}
With respect to this action, the diffeomorphism $i\mathfrak{su}_2
\xrightarrow{\;\sim\;} \R^3$ is equivariant. Regarding the topology of
the component $\mathcal{H}_{(1,1)}$ we state:
\begin{remark}
  \label{Rank2TraceZeroReduction}
  The component $\mathcal{H}_{(1,1)}$ has
  $i\mathfrak{su}_2\setminus\{0\}$ as equivariant, strong deformation
  retract.
\end{remark}
\begin{proof}
  A general matrix in $i\mathfrak{su}_2$ is of the form
  \begin{align*}
    \begin{pmatrix}
      a & \phantom{-}b \\
      \overline{b} & -a
    \end{pmatrix}
  \end{align*}
  with $a$ real and $b$ complex. Its eigenvalues are
  \begin{align*}
    \pm\sqrt{a^2 + |b|^2}.
  \end{align*}
  This implies that $i\mathfrak{su}_2\smallsetminus\{0\}$ is contained
  in $\mathcal{H}_{(1,1)}$. Now we prove that $\mathcal{H}_{(1,1)}$
  has the space $i\mathfrak{su}_2\setminus\{0\}$ as equivariant,
  strong deformation retract. A computation shows that the two
  eigenvalue functions $\lambda_1,\lambda_2$ of the general map $H$
  (\ref{GeneralRank2Map}) are
  \begin{align*}
    \lambda_{1,2} = \frac{a_1+a_2}{2} \pm \sqrt{\left(\frac{a_1 + a_2}{2}\right)^2 + |b|^2 - a_1 a_2}.
  \end{align*}
  It is a straightforward computation to show that $\det H > 0$
  implies that $H$ is positive or negative definite. From this we can
  conclude that the eigenvalue signature being $(1,1)$ implies $\det H
  < 0$, that is $|b|^2 - a_1 a_2 > 0$. Now define
  \begin{align*}
    c = \frac{a_1 + a_2}{2}
  \end{align*}
  and consider the strong deformation retract
  \begin{align*}
    \rho\colon I \times \mathcal{H}_{(1,1)} &\to \mathcal{H}_{(1,1)}\\
    (t,H) &\mapsto (1-t)H - tc\I{2}.
  \end{align*}
  For this to be well-defined we need $\det(\rho(t,H)) <
  0$ for all $t$ and $H$:
  \begin{align*}
    \det(\rho(t,H)) &= (a_1 - tc)(a_2 - tc) - |b|^2\\
                    &= a_1 a_2 - |b|^2 - tc(a_1 + a_2) + (tc)^2\\
                    &< - tc(a_1 + a_2) + (tc)^2\\
                    &= -\frac{(a_1 + a_2)^2}{2} + t\frac{(a_1 + a_2)^2}{4}\\
                    &= \left(\frac{a_1 + a_2}{2}\right)^2(t-2) < 0.
  \end{align*}
  Furthermore, $\rho$ stabilizes $i\mathfrak{su}_2\setminus\{0\}$ and
  $\rho(1,\mathcal{H}_{(1,1)}) = i\mathfrak{su}_2\setminus\{0\}$. As
  in the previous remark, equivariance of $\rho_t$ follows since the
  equivariance conditions (\ref{N=2MatrixEquivarianceConditionsA}) and
  (\ref{N=2MatrixEquivarianceConditionsB}) are compatible with scaling
  by real numbers.
\end{proof}

Clearly, the isomorphism $\Psi$ maps the zero matrix to the
origin. The space $\R^3\smallsetminus\{0\}$ has the unit $2$-sphere as
equivariant strong deformation retract. Under $\Psi$ this corresponds
to the $U(2)$ orbit of $\I{1}{1} = \Diag(1,-1)$ in
$i\mathfrak{su}_2$. This orbit consists of the matrices
\begin{align*}
  \left\{
    \begin{pmatrix}
      |a|^2 - |b|^2 & 2a\overline{c} \\
      2\overline{a}c & |b|^2 - |a|^2
    \end{pmatrix}\colon \;\text{where}\;
    \begin{pmatrix}
      a & b \\
      c & d
    \end{pmatrix}\;\text{is unitary.}\right\}
\end{align*}

This proves the following
\begin{proposition}
  \label{Rank2MixedSignatureReduction}
  The space $i\mathfrak{su}_2\setminus\{0\}$ has the $U(2)$-orbit of
  $\I{1}{1}$ as equivariant strong deformation retract. The above
  isomorphispm $\Psi$ equivariantly identifies the $U(2)$-orbit with
  the unit sphere in $\R^3$ on which the involution acts as a
  reflection along the $x,y$-plane. \qed
\end{proposition}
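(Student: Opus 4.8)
The ingredients for this proposition are essentially already in place, so the plan is mainly to assemble them cleanly. The one auxiliary fact I would first record is the standard \emph{equivariant} radial retraction of a punctured Euclidean space onto its unit sphere: define $r\colon I \times (\R^3 \setminus \{0\}) \to \R^3 \setminus \{0\}$ by $r(t,v) = (1-t)v + t\,v/|v|$. This is well-defined because $r(t,v)$ is a strictly positive combination of two vectors lying on the same open ray through the origin, hence never $0$; moreover $r_0 = \id$, $r_1(v) = v/|v| \in S^2$, and $r_t|_{S^2} = \id$ for every $t$, so $r$ is a strong deformation retraction onto $S^2$. Writing $\iota$ for the reflection $(x,y,z)\mapsto(x,y,-z)$, orthogonality gives $|\iota v| = |v|$, whence $r_t(\iota v) = (1-t)\iota v + t\,\iota v/|v| = \iota\,r_t(v)$; thus $r$ is $G$-equivariant.

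Next I would transport $r$ along $\Psi$. Since $\Psi\colon i\su_2 \to \R^3$ is a linear isomorphism intertwining the involution $H \mapsto \overline{H}$ on $i\su_2$ with $\iota$ on $\R^3$ — precisely the equivariance of $\Psi$ noted just before the statement — the conjugated homotopy $\rho := \Psi^{-1}\circ r \circ(\id_I \times \Psi)$ is a $G$-equivariant strong deformation retraction of $i\su_2 \setminus \{0\}$ onto $\Psi^{-1}(S^2)$, and it restricts to the identity on $\Psi^{-1}(S^2)$ throughout. So it remains only to recognize $\Psi^{-1}(S^2)$ as the conjugation orbit $\mathcal{O} = U(2)\cdot\I{1}{1}$, where $\I{1}{1} = \Diag(1,-1)$, and to note that $\Psi$ then gives the asserted equivariant identification of this orbit with the unit sphere carrying the reflection $\iota$.

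For that last point I would argue as follows. A matrix $\begin{pmatrix} a & b \\ \overline{b} & -a \end{pmatrix} \in i\su_2$ has eigenvalues $\pm\sqrt{a^2 + |b|^2}$, and under $\Psi$ the locus $a^2 + |b|^2 = 1$ is exactly the unit sphere $x^2+y^2+z^2=1$; hence $\Psi^{-1}(S^2)$ is the set of elements of $i\su_2$ with spectrum $\{+1,-1\}$. Conjugation by $U(2)$ preserves eigenvalues, so $\mathcal{O}\subseteq\Psi^{-1}(S^2)$; conversely, a hermitian matrix with eigenvalues $+1$ and $-1$ is automatically traceless and, by the spectral theorem (eigenvectors for distinct eigenvalues of a hermitian matrix are orthogonal and may be rescaled to unit length), equals $g\,\Diag(1,-1)\,g^{*}$ for some $g \in U(2)$, hence lies in $\mathcal{O}$; thus $\mathcal{O} = \Psi^{-1}(S^2)$. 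Carrying out $g\,\Diag(1,-1)\,g^{*}$ for $g = \begin{pmatrix} a & b \\ c & d\end{pmatrix} \in U(2)$ then reproduces the explicit list of matrices displayed above, and combining the three steps proves the proposition. I do not expect a genuine obstacle here; the only things to keep an eye on are that $\rho_t$ never leaves $i\su_2 \setminus \{0\}$ and fixes the orbit pointwise for all $t$, both of which are immediate from the corresponding properties of $r$.
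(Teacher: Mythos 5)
Your proposal is correct and follows essentially the same route as the paper, which also uses the equivariant radial retraction of $\R^3\setminus\{0\}$ onto the unit sphere transported through the linear isomorphism $\Psi$, and identifies $\Psi^{-1}(S^2)$ with the orbit $U(2)\cdot\I{1}{1}$ via the explicit conjugation computation. You merely spell out the details the paper leaves implicit (the explicit homotopy, its equivariance, and the spectral-theorem argument that the orbit is exactly the set of traceless hermitian matrices with spectrum $\{+1,-1\}$), all of which check out.
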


This reduces the classification problem to the classification of
equivariant maps $X \to S^2$. In order to discuss mapping degrees of
maps to $S^2$ we need to fix an orientation on the sphere and on its
equator $E$. We choose the orientation on $S^2$ to be defined by an
outer normal vector field on the sphere. Furthermore we define the
orientation on the equator $E$ to be such that the loop
\begin{align}
  \label{DegreeOneMapOnEquator}
  S^1 &\to E \subset S^2 \subset \R^3\\
  z &\mapsto
  \begin{pmatrix}
    \cos(\arg z)\\
    \sin(\arg z)\\
    0
  \end{pmatrix}\nonumber
\end{align}
has positive degree one\footnote{These choices are arbitrary. Choosing
  the opposite orientation on the sphere or on the equator only
  changes the signs appearing in concrete examples, but not the
  general discussion.}. Later, when discussing the case $n > 2$, it
will become important to equivariantly identify $S^2$ with the
projective space $\Proj_1$. The action on $\Proj_1$ is given by
standard complex conjugation
\begin{align*}
  [z_0:z_1] \mapsto \left[\overline{z_0}:\overline{z_1}\right].
\end{align*}
\label{P1S2OrientationDiscussion}Note that $\Proj_1$ is canonically
oriented as a complex manifold. Let $\psi$ be an equivariant,
orientation-preserving diffeomorphism $S^2 \to \Proj_1$. This
identifies the compactified real line $\smash{\widehat{\R}}$ with the equator
$E$. We can assume that this identification of $S^2$ with $\Proj_1$ is
such that a loop of degree $+1$ into the compactified real line
$\smash{\widehat{\R}} \subset \Proj_1$ is given by
\begin{align*}
  S^1 &\to \RProj_1\\
  z &\mapsto
  \begin{cases}
    \infty & \;\text{if $\arg(z) \in 2\pi\Z$}\\
    \tan\left(\frac{\arg z - \pi}{2}\right) & \;\text{else}\\
  \end{cases}.
\end{align*}
That is, the loop starts at $\infty$, then traverses the negative real
numbers until it reaches zero, then it traverses the positive real
numbers until it reaches $\infty$ again.

As indicated in the introduction, the relevant topological invariants
of maps $X \to S^2$ consist of two pieces of data:
\begin{enumerate}
\item The \emph{total degree}, which is the usual Brouwer degree, and
\item The \emph{fixed point degrees}, which are the Brouwer degrees of
  the restrictions of the maps to the components of the fixed point sets
  of the respective involution on $X$.
\end{enumerate}
For the type I involution this fixed point set consists of two
disjoint circles. Hence we obtain \emph{degree triples} of the form
$\Triple{d_0}{d}{d_1}$ as invariants, where the $d_j$ are the fixed
point degrees and $d$ is the total degree. For the type II involution
the fixed point set is a single circle, which is why the invariants we
work with are \emph{degree pairs} of the form $\Pair{d_C}{d}$ ($d$
being the total degree and $d_C$ the fixed point degree). We call a
degree triple resp. a degree pair \emph{realizable} if it occurs as
invariant of a $G$-maps. The exact definitions will be given in
definition~\ref{DefinitionTriple} and
definition~\ref{DefinitionDegreePair}.

The main result in this section is the following
\begin{restatable*}{theorem}{HamiltonianClassificationRankTwo}
  \label{HamiltonianClassificationRank2}
  Let $X$ be a torus equipped with either the type I or the type II
  involution. Then:
  \begin{enumerate}[(i)]
  \item The sets $[X,\mathcal{H}_{(2,0)}]_G$ and
    $[X,\mathcal{H}_{(0,2)}]_G$ are trivial (i.\,e. one-point sets).
  \item Two $G$-maps $X \to \mathcal{H}_{(1,1)}$ are $G$-homotopic iff
    their degree triples (type I) resp. their degree pairs (type II)
    agree.
  \item The realizable degree triples $\Triple{d_0}{d}{d_1}$ (type I)
    resp. degree pairs $\Pair{d_C}{d}$ (type II) are exactly those
    which satisfy
    \begin{align*}
      d \equiv d_0 + d_1 \mod 2 \;\;\text{resp.}\;\;d \equiv d_C \mod 2.
    \end{align*}
  \end{enumerate}
\end{restatable*}
The proof will be given in
section~\ref{SectionHamiltonianClassificationRank2}
(p.~\pageref{HamiltonianClassificationRankTwoProof}).

\subsection{Maps to $S^2$}
\label{SectionMapsTo2Sphere}

After the reduction described above, we can regard $G$-maps
$X \to \mathcal{H}_{(1,1)}$ as having their image contained in the
oriented $2$-sphere $S^2 \subset \R^3$. Throughout this section we use
a fixed orientation-preserving identification of the equator $E$ with
$S^1$. Furthermore, we let $p_0 \in E$ be the fixed base point in the
equator, which corresponds -- under this identification -- to
$1 \in S^1$. This point will be used later for bringing the maps into
a convenient “normal form”. We will make frequent use of the following
theorem:
\begin{theorem}
  \label{HopfTheorem}
  Let $M$ be a closed, $n$-dimensional manifold. Then maps $M \to S^n$
  are homotopic if and only if their degrees agree.
\end{theorem}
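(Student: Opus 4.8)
The plan is to prove the only substantive implication — that equality of degrees forces homotopy — by the classical differential-topological argument (going back to Hopf and Pontryagin): one reduces an arbitrary smooth map to a single explicit ``model'' map depending on nothing but its degree. The easy direction, that homotopic maps have equal degree, is just homotopy invariance of the Brouwer degree. I carry out the argument for $M$ connected and oriented, which is the only case needed in this paper (there $M$ is the oriented torus $X$ or a fixed-point circle); for non-orientable $M$ one replaces $\Z$ by $\Z/2$ and the signed count by the count modulo $2$, and every step below is unchanged.

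\emph{Easy direction.} If $f\simeq g$ via $H\colon I\times M\to S^n$, choose (Sard) a point $y\in S^n$ that is simultaneously a regular value of $f$, $g$ and $H$. Then $H^{-1}(y)\subset I\times M$ is a compact oriented $1$-manifold whose boundary is $g^{-1}(y)\sqcup f^{-1}(y)$ with the induced signs; since the signed number of endpoints of a compact oriented $1$-manifold is zero, $\deg g=\deg f$.

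\emph{Main direction.} After smooth approximation we may assume $f$ and $g$ are smooth, and (Sard) we fix a common regular value $y$; write $p$ for its antipode and $f^{-1}(y)=\{x_1,\dots,x_k\}$, with $\varepsilon_i=\pm1$ the local sign of $f$ at $x_i$, so $\deg f=\sum_i\varepsilon_i$. The heart of the proof is a \textbf{standardization lemma}: $f$ is homotopic to a map $\hat f$ for which there are disjoint closed balls $B_1,\dots,B_k\subset M$ with $x_i\in\mathrm{int}\,B_i$ such that $\hat f$ equals the constant $p$ on $M\setminus\bigcup_i\mathrm{int}\,B_i$, while $\hat f\colon(B_i,\partial B_i)\to(S^n,p)$ is a model degree-$\varepsilon_i$ map on each $B_i$. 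To prove it: since $y$ is regular and $\dim M=\dim S^n$, $f$ is a local diffeomorphism near each $x_i$; shrinking a closed ball $D\ni y$ in $S^n$ with $p\notin D$, one arranges $f^{-1}(D)=\bigsqcup_i B_i$ with every restriction $f|_{B_i}\colon B_i\to D$ a diffeomorphism; using that $\GL^+(n,\R)$ (and its reflection coset) is connected, together with collar neighborhoods, one homotopes $f$ so that on each $B_i$ it is the standard linear model in suitable charts; finally, since $f$ then maps $M\setminus\bigcup_iB_i$ into the ball $S^n\setminus\mathrm{int}\,D$, which deformation-retracts onto $\{p\}$, one pushes the image of that complement down to $p$ while simultaneously spreading the image of each $B_i$ out over all of $S^n$, keeping the two pieces compatible along the spheres $\partial B_i$. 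This last step — flowing the complement to $p$ and inflating each $B_i$ so that the pieces still match on $\partial B_i$ — is the delicate bookkeeping of the whole proof, and it is the part I expect to be the main obstacle; everything around it is formal.

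Granting the lemma, $\hat f$ is constant on $M\setminus\bigcup_i\mathrm{int}\,B_i$ and hence factors through the quotient $M\big/\bigl(M\setminus\bigsqcup_i\mathrm{int}\,B_i\bigr)\cong\bigvee_{i=1}^{k}S^n$; that is, $\hat f$ is the composite of the pinch map with a wedge of maps of degrees $\varepsilon_1,\dots,\varepsilon_k$. A $+1$ summand and a $-1$ summand of this wedge can be merged and cancelled: join the two corresponding balls by an embedded arc in the connected manifold $M$, thicken it to a tube, and note that over the resulting enlarged ball the map has degree $\varepsilon_i+\varepsilon_j=0$ on the boundary sphere and, the two opposite local framings being joinable along the arc, extends to the constant $p$ — this is the $0$-dimensional instance of framed cobordism. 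Iterating, $\hat f$, and with it $f$, is homotopic to the standard degree-$d$ map $\mathrm{std}_d\colon M\to S^n$ (pinch $M$ onto $\bigvee_{|d|}S^n$, then fold with all signs equal to $\sgn d$), a map depending only on $d=\deg f$. Applying the same to $g$: if $\deg f=\deg g=d$ then $f\simeq\mathrm{std}_d\simeq g$, as required. (Alternatively one may argue by obstruction theory: $S^n$ is $(n-1)$-connected, so $f$ and $g$ can be homotoped to agree on the $(n-1)$-skeleton of a CW structure on $M$, after which the difference cochain in $C^n(M;\pi_n S^n)=C^n(M;\Z)$ represents, in $H^n(M;\Z)\cong\Z$, the integer $\deg f-\deg g$ times the generator dual to the fundamental class, which must vanish for a homotopy to exist.)
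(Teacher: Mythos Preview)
Your argument is correct; it is the classical Hopf--Pontryagin proof via framed cobordism of points (standardize near a regular fiber, collapse the complement, cancel $\pm1$ pairs along arcs), and the obstruction-theoretic alternative you mention is equally valid. The paper, however, does not prove this theorem at all: its ``proof'' is simply a citation to Milnor, \emph{Topology from the Differentiable Viewpoint}, p.~50. That reference carries out precisely the differential-topological argument you have sketched, so your proposal is not a different route but rather an expansion of what the paper leaves to the literature.

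One small remark: the statement as quoted requires $M$ oriented for $\deg$ to take values in $\Z$ (as you note), and in the paper it is only ever invoked for $M$ the oriented torus $X$, the sphere $S^2$, or a circle $S^1$, so your restriction to the connected oriented case loses nothing.
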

\begin{proof}
  See e.\,g. \cite[p.~50]{Milnor}.
\end{proof}

An important tool for the classification will be the study of
homotopies on the fixed point sets. For this we start with a general
definition:
\begin{definition}
  \label{DefinitionFixpointDegree} (Fixed point degree) Let $f\colon M
  \to Y$ be a $G$-map between $G$-manifolds. Assume that the fixed point
  set $Y^G$ is a closed, oriented manifold $K$ of dimension $m$ and
  that the fixed point set $M^G$ is the disjoint union
  \begin{align*}
    M^G = \bigcup_{j=0,\ldots,k} K_j
  \end{align*}
  of closed, oriented (connected) manifolds $K_j$ each of dimension
  $m$. Then we define the \emph{fixed point degrees} to be the
  $k$-tuple $\left(d_0,\ldots,d_k\right)$ where $d_j$ is the degree of
  the map $\restr{f}{K_j}\colon K_j \to K$.
\end{definition}

\begin{remark}
  \label{FixpointDegreeInvariant}
  The fixed point degrees are invariants of $G$-homotopies.
\end{remark}
\begin{proof}
  Let $M$ and $Y$ be two $G$-manifolds as in
  definition~\ref{DefinitionFixpointDegree} and let $f,f'$ be two
  $G$-homotopic maps $M \to Y$. Denote by $(d_0,\ldots,d_k)$
  resp. $(d'_0,\ldots,d'_k)$ their fixed point degrees. Assume that
  $d_j \neq d'_j$ for some $j$. By assumption, there exists a
  $G$-homotopy $H\colon I \times X \to Y$ from $f$ to $f'$. This
  homotopy can be restricted to $I \times K_j$, which can be regarded
  as a homotopy $h\colon I \times K_j \to K$. The degree $d_j$ is the
  degree of the map $h_0$ and $d'_j$ is the degree of the map
  $h_1$. But the degree is a homotopy invariant, hence $d_j \neq d'_j$
  yields a contradiction.
\end{proof}

From now on we handle the involution types I and II seperately.

\subsubsection{Type I}
\label{SectionMapsToSphereClassI}

In this section the real structure $T$ on $X$ is the type I
involution:
\begin{align*}
  T\colon X &\to X\\
  [z] &\mapsto [\overline{z}].
\end{align*}
Furthermore, we regard the torus $X$ as being equipped with the
structure of a $G$-CW complex (see e.g. \cite[p.~16]{May} or
\cite[p.~1]{ShahArticle}). Such a $G$-CW structure is depicted in
figure~\ref{FigureClassICWDecomposition}. We continue with the
\emph{cylinder reduction} method.

\paragraph{Cylinder Reduction}

In order to describe the cylinder reduction we need the notion of a
fundamental region:
\begin{definition}
  \label{DefinitionFundamentalRegion}
  A \emph{fundamental region} for the $G$-action on $X$ is a connected
  subset $R \subset X$ such that each $G$-orbit in $X$ intersects $R$
  in a single point.
\end{definition}

\begin{figure}%
  \centering
  \subfloat[$G$-CW structure of the torus $X$.]{%
    \label{FigureClassICWDecomposition}%
    \centering%
    \begin{tikzpicture}[scale=0.8]
      \draw
      (0,0) coordinate (A) node[below left] {$e^0_1$}
      (4,0) coordinate (B) node[below right] {$e^0_1$}
      (4,4) coordinate (C) node[above right] {$e^0_1$}
      (0,4) coordinate (D) node[above left] {$e^0_1$}
      (0,2) coordinate (E1) node[left] {$e^0_2$}
      (4,2) coordinate (E2) node[right] {$e^0_2$};

      \draw[postaction={on each segment={mid arrow=black}}]
      (A) -- node [left] {$e^1_{2,1}$} (E1)
      (B) -- node [right] {$e^1_{2,1}$} (E2)
      (E1) -- node [left] {$e^1_{2,2}$} (D)
      (E2) -- node [right] {$e^1_{2,2}$} (C);

      \draw[color=red,postaction={on each segment={mid arrow=red}}]
      (A) -- node [below] {$e^1_1$} (B)
      (D) -- node [above] {$e^1_1$} (C);
      \draw[color=red]
      (E1) -- node [above] {$e^1_3$} (E2);
      
      \fill [radius=2pt] (0,0) circle
      [] (4,0) circle
      [] (4,2) circle
      [] (0,2) circle
      [] (0,4) circle
      [] (4,4) circle;

      \useasboundingbox (-2,5.5) rectangle (5.5,-2.5);
    \end{tikzpicture}}
  \subfloat[Fundamental region for the type I involution on $X$.]{%
    \label{FigureClassIFundamentalRegion}%
    \centering%
    \begin{tikzpicture}[scale=0.8]
      \draw
      (0,0) coordinate (A) node[below left] {$e^0_1$}
      (4,0) coordinate (B) node[below right] {$e^0_1$}
      (0,2) coordinate (E1) node[above left] {$e^0_2$}
      (4,2) coordinate (E2) node[above right] {$e^0_2$};

      \draw [postaction={on each segment={mid arrow=black}}]
      (A) -- node [left] {$e^1_{2,1}$} (E1)
      (B) -- node [right] {$e^1_{2,1}$} (E2);
      \draw[color=red]
      (A) -- node [below] {$e^1_1$} (B)
      (E1) -- node [above] {$e^1_3$} (E2);

      \fill [radius=2pt] (0,0) circle
      [] (4,0) circle
      [] (4,2) circle
      [] (0,2) circle;

      \draw[color=red,thick,decorate,decoration={brace,mirror,amplitude=10pt}] (0,-0.8) -- (4,-0.8) node[shift={(-1.4,-0.5)},below] {$C_0$};
      \draw[color=red,thick,decorate,decoration={brace,amplitude=10pt}] (0,2.8) -- (4,2.8) node[shift={(-1.4,+0.5)},above] {$C_1$};

      \useasboundingbox (-1.5,5) rectangle (5.5,-2.5);
    \end{tikzpicture}}
  \caption{Type I $G$-CW setup of the torus $X$.}
  \label{3figs}
\end{figure}
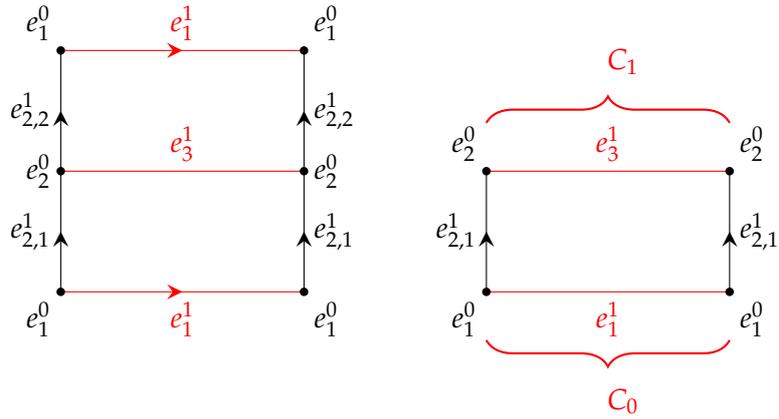

The type I involution is particularly convenient to work with, since
for this action there exists a sub-$G$-CW complex that is a
fundamental region for the $T$-action (see
figure~\ref{FigureClassIFundamentalRegion}). Geometrically this
fundamental region is a closed cylinder (including its boundary
circles), which we denote by $Z$. The fixed point set $X^G$ is the
disjoint union of two circles $C_0$ and $C_1$.  The cylinder $Z$ is
bounded by these circles. We identify $Z$ with $I \times S^1$. In this
setup, the boundary circles are $C_0 = \{0\} \times S^1$ and $C_1 =
\{1\} \times S^1$. For convenience we also set $C = C_0 \cup C_1$.  In
this situation we note the following general lemma:
\begin{lemma}
  \label{Class1EquivariantExtension}
  Let $Y$ be a $G$-space, $Y^G$ the $T$-fixed point set in $Y$ and
  $$
  f\colon (Z,C) \to (Y,Y^G)
  $$
  a map. Then there exists a unique equivariant extension of $f$ to a
  $G$-map $X \to Y$ which we denote by $\smash{\hat{f}}$. The same applies
  to homotopies of such maps.
\end{lemma}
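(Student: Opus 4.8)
The plan is to build $\hat f$ by hand from the geometry of the fundamental region. Since $Z$ is the fundamental region pictured in Figure~\ref{FigureClassIFundamentalRegion}, the torus splits as $X = Z \cup T(Z)$ with $Z$ and $T(Z)$ both closed and with $Z \cap T(Z) = C = X^G$ (the two boundary circles, and nothing else). Given $f\colon (Z,C)\to (Y,Y^G)$ I would set
\begin{align*}
  \hat f(x) =
  \begin{cases}
    f(x) & \text{if } x \in Z,\\
    T\bigl(f(T(x))\bigr) & \text{if } x \in T(Z).
  \end{cases}
\end{align*}
First I would check that this is well defined on the overlap: if $x\in C$ then $T(x)=x$ and $f(x)\in Y^G$, so $T\bigl(f(T(x))\bigr)=T(f(x))=f(x)$, and the two clauses agree there. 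Continuity of $\hat f$ is then the pasting (gluing) lemma, since $Z$ and $T(Z)$ are closed, cover $X$, and on $T(Z)$ the map $\hat f$ is the composite $T\circ f\circ T$ of continuous maps.

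Next I would verify equivariance $\hat f\circ T = T\circ \hat f$, which is a short case check using $T^2=\id$: for $x\in Z$ we have $T(x)\in T(Z)$, so $\hat f(T(x)) = T\bigl(f(T(T(x)))\bigr) = T(f(x)) = T(\hat f(x))$, and the case $x\in T(Z)$ is symmetric, the two cases being consistent on $C$ by the previous step. Uniqueness is forced by the same identity: if $g\colon X\to Y$ is any $G$-map with $\restr{g}{Z}=f$, then for $x\in T(Z)$ equivariance gives $g(x) = T(g(T(x))) = T(f(T(x)))$ because $T(x)\in Z$; hence $g=\hat f$ on all of $X = Z\cup T(Z)$.

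Finally, for homotopies I would simply apply the statement just proved to the $G$-space $I\times X$ with the action $T(t,x)=(t,T(x))$: here $I\times Z$ is again a fundamental region of exactly the same shape, closed, with $(I\times Z)\cap T(I\times Z) = I\times C = (I\times X)^G$, so the identical construction extends a homotopy $(I\times Z, I\times C)\to (Y,Y^G)$ uniquely to a $G$-map $I\times X\to Y$; restricting to the ends $\{0\}\times X$ and $\{1\}\times X$ and invoking the uniqueness part shows it is a $G$-homotopy between the extensions $\hat f_0$ and $\hat f_1$. I do not anticipate a genuine obstacle here; the only point that needs care is the geometric input that $X = Z\cup T(Z)$ with $Z\cap T(Z)=C=X^G$ and both pieces closed, which is exactly what the type~I fundamental region provides and is what makes this naive gluing work with no compatibility hypothesis on $f$ beyond $f(C)\subseteq Y^G$.
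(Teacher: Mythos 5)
Your construction is exactly the one the paper uses: define $\hat f$ as $f$ on $Z$ and $T\circ f\circ T$ on the complementary cylinder $Z'=T(Z)$, with well-definedness on $C=Z\cap Z'$ coming from $f(C)\subseteq Y^G$ and $T|_C=\id$. The argument is correct and matches the paper's proof; you merely spell out the equivariance, uniqueness, and gluing-lemma details that the paper leaves implicit.
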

\begin{proof}
  We only prove the statement for a given homotopy
  \begin{align*}
    H\colon I \times (Z,C) \to (Y,Y^G).
  \end{align*}
  Denote the complementary cylinder by $Z'$ (note that $Z \cap Z' = C
  = C_0 \cup C_1$).  Then we can construct the equivariant extension
  $\smash{\widehat{H}}$ of $H$ by defining
  \begin{align*}
    \widehat{H}\colon I \times X &\to Y\\
    (t,x) &\mapsto
    \begin{cases}
      H(t,x) & \;\text{if $x \in Z$}\\
      T(H(t,T(x))) & \;\text{if $x \in Z'$}\\
    \end{cases}
  \end{align*}
  Since $H(t,\cdot)$ is a map $(Z,C) \to (Y,Y^G)$, this definition of
  $\smash{\widehat{H}}$ is well-defined on the boundary circles $C_0$ and
  $C_1$ and globally continuous.
\end{proof}

For the concrete situation $(Y,Y^G) = (S^2,E)$ this implies:
\begin{remark}
  \label{CylinderReduction} (Cylinder reduction) Every $G$-map
  $f\colon X \to S^2$ is completely determined by its restriction
  $\restr{f}{Z}$ to the cylinder $Z$. Discussing $G$-homotopies of
  maps $X \to S^2$ is equivalent to discussing homotopies of maps
  $(Z, C) \to (S^2,E)$.
\end{remark}

\label{CjS1Identification}
For the following we need to fix an identification of $C_0$ and $C_1$
with $S^1$.\footnote{Here, $S^1$ is regarded as being equipped
  with the standard orientation.} Let us assume that the
identification of the circles $C_j$ with $S^1$ is such that the loops
\begin{align*}
  \gamma_j\colon I &\to X\\
  t &\mapsto \left[t + \frac{j}{2}i\right] \;\text{ for $j=0,1$}
\end{align*}
both define loops of degree one. Recall that loops in $S^1$ are up to
homotopy determined by their degree. Based on this observation we
define a normal form for equivariant maps $X \to S^2$:
\begin{definition}
  \label{DefinitionBoundaryNormalization}
  (Type I normalization) Let $f\colon X \to S^2$ be a $G$-map (resp. a
  map $(Z,C) \to (S^2,E)$). Then $f$ is \emph{type I normalized} if
  the restrictions
  \begin{align*}
   \Restr{f}{C_j}\colon C_j \to E
  \end{align*}
  are, using the fixed identifications of the circles $C_j$ and $E$
  with $S^1$, of the form $z \mapsto z^{k_j}$ for some integers $k_j$.
\end{definition}

\begin{remark}
  \label{BoundaryNormalization}
  Let $f\colon (Z,C) \to (S^2,E)$ be a map (or let $f$ be a $G$-map $X
  \to S^2$). Via the homotopy extension property (resp. its
  equivariant version, see corollary~\ref{G-HEP}) of the pair $(Z, C)$
  (resp. $(X,C)$), there exists a $G$-homotopy from $f$ to a map $f'$,
  which is type I normalized. This means that, using the respective
  identifications of the $C_j$ and $E$ with $S^1$, the map
  $\restr{f'}{C_j}$ is of the form $z \mapsto z^{k_j}$ where $k_j$
  denotes the respective fixed point degree on the circle $C_j$. \qed
\end{remark}

Given a $G$-map $f\colon X \to S^2$ we have already defined its fixed point
degree $\Bideg{d_0}{d_1}$ (see
definition~\ref{DefinitionFixpointDegree}). Given a map $f\colon (Z,
C) \to (S^2,E)$ we define its fixed point degree to be the fixed point degree
of its equivariant extension $X \to S^2$.

\paragraph{The Free Loop Space}

By the cylinder reduction (remark~\ref{CylinderReduction}) it suffices
to classify maps $(Z,C) \to (S^2,E)$ up to homotopy. The cylinder $Z$
is just $S^1$ times some closed interval (e.\,g. the closed unit
interval $I$). Recall that $\mathcal{L}S^2$, the free loop space of
the space $S^2$, is the space of all maps $S^1 \to S^2$:
\begin{align*}
  \mathcal{L}S^2 = \mathcal{M}(S^1,S^2) = \{f\colon S^1 \to S^2\}.
\end{align*}
First we make a general remark about the topology on the free loop
space:
\begin{remark}
  The space $S^1$ is Hausdorff and locally compact. Hence, in the
  terminology of e.\,g. \cite{ArticleEscardoHeckmann}, it is
  \emph{exponentiable}, which means that for any other spaces $Y$ and
  $A$, the natural bijection
  \begin{align*}
    \mathcal{M}(A,\mathcal{M}(S^1,Y)) = \left(\Maps{S^1}{Y}\right)^A
    \cong \Maps{A \times S^1}{Y} = \mathcal{M}(A \times S^1,Y)
  \end{align*}
  is compatible with the topology in the sense that it preserves
  continuity. See \cite{ArticleEscardoHeckmann}. The mapping spaces
  are regarded as being equipped with the compact-open topology. \qed
\end{remark}

As an immediate consequence of the above we note:
\begin{remark}
  A map $f\colon (Z,C) \to (S^2,E)$ can be regarded as a curve in
  $\mathcal{L}S^2$ starting and ending at loops whose image are
  contained in $E$. After type I normalization the start and end curve
  are distinguished and indexed by $\mathbb{Z}$. \qed
\end{remark}

We do not make any distinction in the notation; we silently identify
maps $(Z,C) \to (S^2,E)$ with curves in $\mathcal{L}S^2$.

\begin{definition}
  Let $\alpha$ be a curve in $\mathcal{L}S^2$ from $p_1$ to $p_2$ and
  $\beta$ a curve from $p_2$ to $p_3$. Then we denote by
  $\beta \ast \alpha$ the concatenation of the curves $\alpha$ and
  $\beta$, which is a curve from $p_1$ to $p_3$. Given a curve
  $\alpha$, we denote by $\alpha^{-1}$ the curve with time reversed.
\end{definition}

\begin{figure}[h]
  \centering
  \scriptsize
  \subfloat[The map $\alpha\colon Z \to S^2$.]{%
    \centering
    \def\svgwidth{45mm}%
    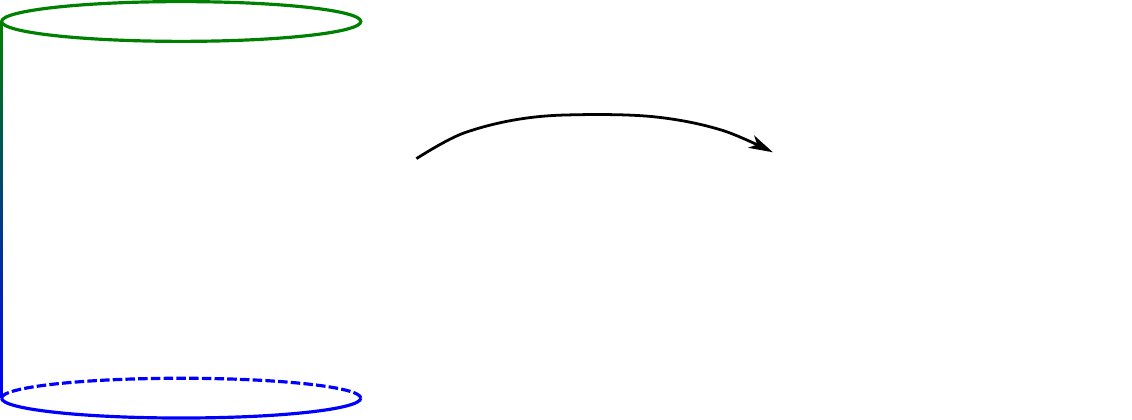}\quad
  \subfloat[The map $\beta\colon Z \to S^2$.]{%
    \centering
    \def\svgwidth{45mm}%
    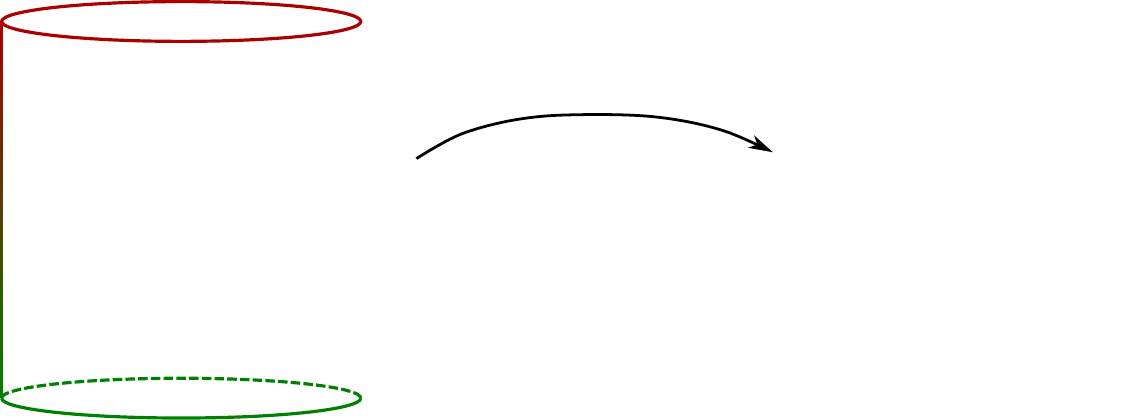}\quad
  \subfloat[The map $\beta \ast \alpha\colon Z \to S^2$.]{%
    \centering
    \def\svgwidth{45mm}%
    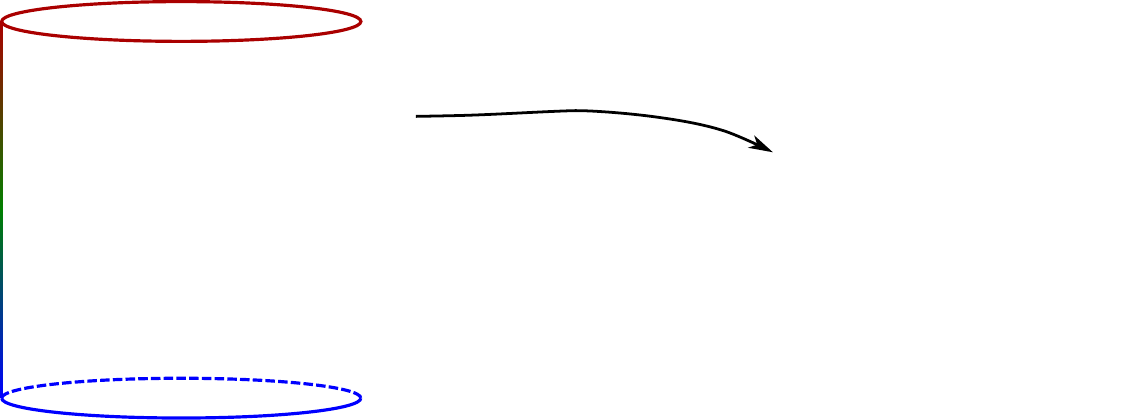}
  \caption{Concatenation of maps from the cylinder.}
  \label{fig:ConcatCylinderMaps}
\end{figure}

Using the isomorphism 
\begin{align}
  \label{FLSIso}
  \mathcal{M}(I,\mathcal{L}S^2) \cong \mathcal{M}(Z,S^2),
\end{align}
concatenation of curves induces a corresponding operation on the space
of (type I normalized) maps $(Z,C) \to (S^2,E)$. This is depicted in
figure~\ref{fig:ConcatCylinderMaps}. We need to be careful to
distinguish the inverse of a map from its inverse regarded as a
curve. But this should always be clear from the context.

Of course, the homotopy classification of curves in $\mathcal{L}S^2$
depends on the topology of $\mathcal{L}S^2$. Our first remark towards
understanding the topology of the latter is:
\begin{remark}
  The free loop space $\mathcal{L}S^2$ is path-connected.
\end{remark}
\begin{proof}
  Let $\alpha, \beta \in \mathcal{L}S^2$ be two loops in $S^2$. Since
  $\pi_1(S^2) = 0$, any loop can be deformed to the constant loop at
  some chosen point in $S^2$. By transitivity, this implies that
  $\alpha$ and $\beta$ can be deformed into each other. In other
  words, there is a path from $\alpha$ to $\beta$, regarded as
  elements of the free loop space.
\end{proof}

Now we consider the set $\pi(\mathcal{L}S^2;\gamma_0,\gamma_1)$ of
homotopy classes of curves in $\mathcal{L}S^2$ with fixed endpoints
$\gamma_0$ and $\gamma_1$. Using lemma~\ref{LemmaCurvesNullHomotopic}
we can conclude that two curves $f, g\colon I \to \mathcal{L}S^2$ from
$\gamma_0$ to $\gamma_1$ are homotopic if and only if $g^{-1} \ast f$
is null-homotopic in $\pi(\mathcal{L}S^2,\gamma_0)$. This underlines
that we need to understand the fundamental group
$\pi_1(\mathcal{L}S^2)$. In the next step we compute the fundamental
group $\pi_1(\mathcal{L}S^2)$. For this we make use of the following
fibration (see e.\,g. \cite{AgaudeArticle})
\begin{align}
  \label{LoopFibration}
  \Omega S^2 \hookrightarrow \mathcal{L}S^2 \to S^2
\end{align}
where $\Omega S^2$ denotes the space of \emph{based} loops in $S^2$ at
some fixed basepoint, e.\,g. $p_0 \in E$. The map $\mathcal{L}S^2 \to
S^2$ is the evaluation map, which takes a loop in $\mathcal{L}S^2$,
i.\,e. a map $S^1 \to S^2$, and evaluates it at $1$. Now we can state:
\begin{lemma}
  \label{FundamentalGroupOfFLS} 
  The fundamental group $\pi_1(\mathcal{L}S^2)$ of the free loop space
  of $S^2$ is infinite cyclic.
\end{lemma}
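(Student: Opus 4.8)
The plan is to extract the fundamental group of $\mathcal{L}S^2$ from the long exact homotopy sequence of the evaluation fibration (\ref{LoopFibration}), namely
\begin{align*}
  \cdots \to \pi_2(S^2) \xrightarrow{\;\partial\;} \pi_1(\Omega S^2) \to \pi_1(\mathcal{L}S^2) \to \pi_1(S^2) \to \cdots.
\end{align*}
First I would record the homotopy groups of the ingredients: $\pi_1(S^2) = 0$, and $\pi_1(\Omega S^2) \cong \pi_2(S^2) \cong \Z$ via the standard adjunction/shift isomorphism $\pi_k(\Omega Y) \cong \pi_{k+1}(Y)$. Since $\pi_1(S^2) = 0$, exactness immediately gives that the map $\pi_1(\Omega S^2) \to \pi_1(\mathcal{L}S^2)$ is surjective. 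Hence $\pi_1(\mathcal{L}S^2)$ is a quotient of $\Z$, so it is either $\Z$ or finite cyclic, and to finish it suffices to show the connecting homomorphism $\partial\colon \pi_2(S^2) \to \pi_1(\Omega S^2)$ is zero — equivalently, that $\pi_1(\Omega S^2) \to \pi_1(\mathcal{L}S^2)$ is injective, equivalently that $\pi_2(\mathcal{L}S^2) \to \pi_2(S^2)$ is surjective.

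The cleanest way to kill $\partial$ is to exhibit a section of the evaluation fibration: the map $S^2 \to \mathcal{L}S^2$ sending a point $x$ to the constant loop at $x$ is continuous and is a right inverse to the evaluation at $1$. A fibration with a section has all its connecting homomorphisms vanish, so the long exact sequence breaks into short exact sequences
\begin{align*}
  0 \to \pi_k(\Omega S^2) \to \pi_k(\mathcal{L}S^2) \to \pi_k(S^2) \to 0
\end{align*}
for every $k$, and in fact (since the section provides a splitting on $\pi_*$) one gets $\pi_k(\mathcal{L}S^2) \cong \pi_k(\Omega S^2) \oplus \pi_k(S^2)$. Specializing to $k=1$ and using $\pi_1(S^2) = 0$, $\pi_1(\Omega S^2) \cong \pi_2(S^2) \cong \Z$, we conclude $\pi_1(\mathcal{L}S^2) \cong \Z$, which is the claim.

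I expect the main (though modest) obstacle to be making the adjunction isomorphism $\pi_1(\Omega S^2) \cong \pi_2(S^2)$ and the splitting argument fully rigorous at the level of basepoints and group structure — in particular checking that the constant-loop inclusion genuinely lands in the correct path-component and is compatible with basepoints, and that the resulting splitting is one of groups and not merely of sets. These are standard facts (the inclusion of constant loops splits the evaluation fibration $\Omega Y \to \mathcal{L}Y \to Y$, see e.g. the references already cited for the fibration), so the write-up should be short: state the fibration, note the section, invoke the resulting short exact / split sequences on homotopy groups, and read off $\pi_1(\mathcal{L}S^2) \cong \Z$ from $\pi_1(S^2) = 0$ and $\pi_2(S^2) \cong \Z$.
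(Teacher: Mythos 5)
Your argument is correct, and it finishes the proof by a genuinely different mechanism than the paper. Both start from the evaluation fibration $\Omega S^2 \hookrightarrow \mathcal{L}S^2 \to S^2$ and its long exact sequence, and both reach the point where $\pi_1(\mathcal{L}S^2)$ is a quotient of $\pi_1(\Omega S^2) \cong \pi_2(S^2) \cong \Z$. The paper then rules out a finite quotient by an explicit geometric construction: it produces, for each even number of rotations, a map of the cylinder into $S^2$ whose equivariant extension to the torus has a prescribed, arbitrarily large total degree while keeping fixed point degree $(1,1)$; Hopf's theorem then yields infinitely many pairwise non-homotopic loops in $\mathcal{L}S^2$ based at the equator parametrization. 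You instead observe that the constant-loop map $x \mapsto c_x$ is a continuous section of the evaluation fibration, so the connecting homomorphism $\partial\colon \pi_2(S^2) \to \pi_1(\Omega S^2)$ vanishes and $\pi_1(\Omega S^2) \to \pi_1(\mathcal{L}S^2)$ is injective as well as surjective. Your route is shorter, avoids any appeal to degree theory or to the equivariant extension machinery, and immediately generalizes to $\pi_k(\mathcal{L}Y) \cong \pi_k(\Omega Y) \oplus \pi_k(Y)$ for simply connected $Y$; the caveats you flag (basepoints, and the fact that the splitting is only an issue of group extensions when $\pi_1$ of the base is nontrivial, which it is not here) are exactly the right ones and are harmless for $S^2$. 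What the paper's longer argument buys is that it exhibits a concrete generator and prefigures the degree homomorphism on $\pi_1(\mathcal{L}S^2)$ and its injectivity (Proposition~\ref{DegreeMapInjective}), which are used heavily afterwards; as a proof of the lemma itself, yours is cleaner.
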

\begin{proof}
  We consider the fibration (\ref{LoopFibration}) and develop it into
  a long exact homotopy sequence. The relevant part of this sequence
  is:
  \begin{align}
    \ldots \rightarrow \mathbb{Z} \cong \pi_2(S^2) \rightarrow \pi_1(\Omega S^2) \rightarrow \pi_1(\mathcal{L}S^2) \rightarrow \pi_1(S^2) \cong 0
  \end{align}
  It is known that
  \begin{align*}
    \pi_1(\Omega S^2) \cong \pi_2(S^2) \cong \mathbb{Z}.
  \end{align*}
  Therefore, by exactness of the above sequence, it follows that
  $\pi_1(\mathcal{L}S^2)$ must be a quotient of $\mathbb{Z}$,
  i.\,e. either the trivial group, $\mathbb{Z}$ or one of the finite
  groups $\mathbb{Z}_n$. We prove that it is $\mathbb{Z}$ with the
  help of a geometric argument: Let $\gamma_1$ be the curve in $S^2$
  which parametrizes the equator $E$ with degree one and choose
  $\gamma_1$ as the basepoint for the fundamental group and then
  construct an infinite family of maps $X \to S^2$ with the same fixed
  point degree $(1,1)$ but with different global degrees -- the latter
  implies by Hopf's theorem that they cannot be homotopic. In
  particular they cannot be $G$-homotopic and hence the curves
  associated to their restrictions to $Z$ cannot be homotopic.

  The construction goes as follows: We construct a map $\varphi$ from
  the cylinder $Z = I \times S^1$ to $S^2$ such that
  $\varphi_0 \equiv \varphi_1$ and $\varphi_0$ and $\varphi_1$ are
  degree 1 maps $S^1 \to E$ as follows: Assume that for $t=0$ we have
  an embedding of $S^1$ as the equator $E \subset S^2$. Then we fix
  two antipodal points on the equator and begin rotating the equator
  on the Riemann sphere while keeping the two antipodal points fixed
  for all $t$. We can rotate this $S^1$-copy on the sphere as fast as
  we want, we just need to make sure that at $t=1$ we again match up
  with the equator. By making the correct number (i.\,e. an even
  number) of rotations we obtain a map $\varphi$ from $Z$ to $S^2$
  whose equivariant extension to the full torus $X$ has the desired
  fixed point degree $(1,1)$. By increasing the number of rotations,
  we can generate homotopically distinct $G$-maps from the torus into
  the sphere. Thus we obtain an infinite family of maps with fixed
  point degree $\Bideg{1}{1}$ and with distinct total
  degrees. Therefore, the fundamental group cannot be finite, but must
  be infinite cyclic. \qedhere
\end{proof}

\paragraph{The Degree Map}

In the following we introduce the \emph{total degree} into the
discussion. The fixed point degree together with this total degree will
turn out to be a complete invariant of equivariant homotopy. The
degree map will be the fundamental tool for the proof. We would like
to associate to a curve in $\mathcal{L}S^2$ the degree of its
equivariant extension. This only makes sense for a certain subset of
all curves. Thus we define:
\begin{definition}
  For each $j \in \mathbb{Z}$ we let $\mathscr{D}_j \in
  \mathcal{L}S^2$ be the normalized loop in
  $E \subset S^2$ of degree $j$. We set
  \begin{align*}
    \mathcal{L}_{\text{std}} = \left\{\mathscr{D}_j\colon j \in \mathbb{Z}\right\} \subset \mathcal{L}S^2.
  \end{align*}
  Let $\mathscr{P}$ be the set of curves in $\mathcal{L}S^2$ starting
  and ending at points in $\mathcal{L}_{\text{std}}$:
  \begin{align*}
    \mathscr{P} = \left\{\alpha\colon I \to \mathcal{L}S^2\mid \alpha(0), \alpha(1) \in \mathcal{L}_{\text{std}}\right\}.
  \end{align*}
\end{definition}
The set $\mathscr{P}$ is a suitable domain of definition for the
degree map, which we now define:
\begin{definition}
  \label{DefDegreeMap}
  The \emph{degree map} on $\mathscr{P}$ is the map
  \begin{align*}
    \text{deg}\colon \mathscr{P} &\to \mathbb{Z}\\
    \alpha &\mapsto \deg \widehat{\alpha},
  \end{align*}
  where $\smash{\widehat{\alpha}}$ denotes the equivariant extension
  of $\alpha$. That is, $\alpha$, which is a curve in the free loop
  space $\mathcal{L}S^2$, is to be regarded as a map
  $(Z,C) \to (S^2,E)$ and $\smash{\widehat{\alpha}}\colon X \to S^2$
  is its equivariant extension to the full torus (see
  lemma~\ref{Class1EquivariantExtension}).
\end{definition}
An immediate property of the degree map is:
\begin{remark}
  \label{DegOfConstantCurveZero}
  Let $\alpha \in \mathscr{P}$ be the constant curve at some $\gamma
  \in \mathcal{L}_{\text{std}}$. Then $\deg \alpha = 0$.
\end{remark}
\begin{proof}
  The equivariant extension $\smash{\widehat{\alpha}}$ has its image contained
  in $E \subset S^2$. Hence it is not surjective as a map to $S^2$,
  which implies $\deg \smash{\widehat{\alpha}} = 0$.
\end{proof}
For curves $\alpha$, $\beta$ we introduce the symbols $\alpha \simeq
\beta$ to express that $\alpha$ and $\beta$ are homotopic as curves
(with fixed endpoints).
\begin{remark}
  \label{DegreeMapInvariantUnderHomotopies}
  When $\alpha, \beta \in \mathscr{P}$ are two curves with common
  endpoints, then $\alpha \simeq \beta$ implies $\deg \alpha = \deg
  \beta$. In other words: the degree map is also well-defined
  on $\mathscr{P}/{\simeq}$, that is $\mathscr{P}$ modulo homotopy.
\end{remark}
\begin{proof}
  Homotopies between two curves can be regarded as homotopies between
  maps $Z \to S^2$ rel $C$. Such homotopies can also be equivariantly
  extended, giving homotopies between the respective equivariant
  extensions of the two maps $X \to S^2$. Therefore, their total
  degree must agree.
\end{proof}
We have just seen that the degree map factors through
$\mathscr{P}/{\simeq}$.  The next goal is to prove an injectivity
property of the degree map. Crucial for its proof is that the degree
map is compatible with the aforementioned concatenation operation of
loops:
\begin{lemma}
  \label{DegreeCompatibleWithConcatenation}
  Let $\gamma_1, \gamma_2, \gamma_3$ be in $\mathcal{L}_{\text{std}}$,
  furthermore let $\alpha \in \mathscr{P}$ be a path in
  $\mathcal{L}S^2$ from $\gamma_1$ to $\gamma_2$ and $\beta$ be a path
  in $\mathcal{L}S^2$ from $\gamma_2$ to $\gamma_3$. Then
  $\beta \ast \alpha$ is a path from $\gamma_1$ to $\gamma_3$ and
  \begin{align*}
    \deg (\beta \ast \alpha) = \deg \beta + \deg \alpha.
  \end{align*}
\end{lemma}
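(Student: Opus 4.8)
The plan is to realize the concatenation of curves geometrically at the level of maps from the cylinder, and to compute the degree of the equivariant extension of a concatenated map by cutting the torus into pieces on which the extension restricts to the extensions coming from $\alpha$ and $\beta$.

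First I would use the isomorphism (\ref{FLSIso}) to regard $\alpha$ and $\beta$ as maps $A, B\colon (Z,C) \to (S^2,E)$, with $\restr{A}{C_1} = \restr{B}{C_0}$ being the normalized loop $\mathscr{D}_{j_2}$ associated to $\gamma_2 \in \mathcal{L}_{\text{std}}$, so that the concatenation $\beta \ast \alpha$ corresponds to the map $B \ast A\colon Z \to S^2$ obtained by gluing $A$ on the lower half-cylinder $Z_1 \cong [0,\tfrac12]\times S^1$ and $B$ on the upper half-cylinder $Z_2 \cong [\tfrac12,1]\times S^1$ along the middle circle $C_{1/2}$, which they agree on. Now pass to equivariant extensions: let $\widehat{A}, \widehat{B}\colon X \to S^2$ be the equivariant extensions (lemma~\ref{Class1EquivariantExtension}) and let $\widehat{B\ast A}$ be the extension of the concatenation. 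The key geometric observation is that the torus $X$, cut along the three $T$-invariant circles $C_0$, $C_{1/2}$, $C_1$, decomposes as $X = N_1 \cup N_2$, where $N_1$ is a $T$-invariant sub-torus (the union of $Z_1$ and its mirror $Z_1'$, glued along $C_0 \cup C_{1/2}$) and $N_2$ is the complementary $T$-invariant sub-torus (the union of $Z_2$ and its mirror), with $N_1 \cap N_2 = C_0 \cup C_{1/2}$ a disjoint union of two circles. By construction, $\restr{\widehat{B\ast A}}{N_1}$ is exactly the equivariant extension of $A$ regarded as a map on the sub-torus $N_1$ (whose fundamental region is $Z_1$), and similarly $\restr{\widehat{B\ast A}}{N_2}$ equals the extension of $B$; moreover these sub-torus extensions are, up to the obvious reparametrization, the same maps as $\widehat{A}$ and $\widehat{B}$ and hence have degrees $\deg\alpha$ and $\deg\beta$.

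Then I would compute $\deg\widehat{B\ast A}$ by the standard additivity of Brouwer degree over a decomposition of the domain: choosing a regular value $y \in S^2$ not lying on the images of the boundary circles (possible since those images lie in the equator $E$, a nowhere-dense set of measure zero), the preimage $\widehat{B\ast A}^{-1}(y)$ is a finite set contained in the interiors of $N_1$ and $N_2$, and the signed count splits as the signed count over $N_1$ plus the signed count over $N_2$. Each of these equals the degree of the corresponding sub-torus extension, since the orientations of $N_1$, $N_2$ are induced from that of $X$. Hence $\deg\widehat{B\ast A} = \deg\widehat{A} + \deg\widehat{B}$, which is the claim. The main obstacle is the bookkeeping of orientations and the verification that the restricted extension $\restr{\widehat{B\ast A}}{N_i}$ really coincides (as an equivariant map of oriented manifolds) with $\widehat{A}$ resp. $\widehat{B}$ — i.e. that the gluing along $C_{1/2}$ is compatible with the equivariant extension procedure and does not introduce a sign or a discontinuity. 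This hinges on the fact that on the shared circle $C_{1/2}$ both $A$ and $B$ restrict to the same normalized loop into $E = (S^2)^G$, so the extension formula of lemma~\ref{Class1EquivariantExtension} produces a globally continuous map, and on the orientation conventions fixed earlier; once these are pinned down, the degree additivity is routine. Alternatively, one can avoid cutting the torus and instead argue directly that $\widehat{B\ast A}$ is homotopic (rel nothing) to a map that is $\widehat{A}$ on one half of $X$ and $\widehat{B}$ on the other, appealing to remark~\ref{DegreeMapInvariantUnderHomotopies}, but the cleanest route is the direct degree computation via a generic regular value.
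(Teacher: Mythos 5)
Your proposal is correct and takes essentially the same route as the paper's proof: both compute $\deg(\beta \ast \alpha)$ by counting signed preimages of a generic regular value of the equivariant extension and splitting that count over the two pieces of the torus on which the extension agrees (up to orientation-preserving reparametrization) with $\widehat{\alpha}$ resp.\ $\widehat{\beta}$. The only cosmetic issues are that your pieces $N_1,N_2$ are closed cylinders rather than sub-tori (their intersection is $C_{1/2} \cup T(C_{1/2})$, not $C_0 \cup C_{1/2}$), and that, as the paper notes explicitly, one must first smooth $\beta \ast \alpha$ near the gluing circle before invoking regular values.
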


\begin{proof}
  Let $\alpha$ and $\beta$ be paths in $\mathcal{L}S^2$ with the
  properties mentioned above.  To prove the lemma, we have to show
  that
  \begin{align*}
    \deg \widehat{\beta \ast \alpha} = \deg \widehat{\beta} + \deg
    \widehat{\alpha}.
  \end{align*}
  The idea we employ here is that -- after smooth approximation of the
  maps -- we can count preimage points of a regular fiber to compute
  the total degrees. We regard the curves $\alpha$ and $\beta$ as maps
  from the cylinder $Z = I \times S^1$ to $S^2$. We can assume that
  $\alpha$ and $\beta$ are smooth on $Z$; the concatenation
  $\beta \ast \alpha$ does not have to be smooth at the glueing
  curves. But after a smooth approximation of $\beta \ast \alpha$ near
  this glueing curve, we can assume that the equivariant extensions
  $\smash{\widehat{\alpha}}$, $\smash{\widehat{\beta}}$ and
  $\smash{\widehat{\beta \ast \alpha}}$ are smooth maps from the torus
  into $S^2$.
  Now let $q \in S^2$ be a regular point of
  $\smash{\widehat{\beta \ast \alpha}}$, away from the neighborhood of
  the boundary circles where we have smoothed the map
  $\beta \ast \alpha$. We now consider its fiber over $q$. This
  consists of all points $p_1,\ldots,p_k$ with
  $\smash{\widehat{\alpha}}(p_j) = q$ and all points
  $p'_1,\ldots,p'_\ell$ with $\smash{\widehat{\beta}}(p'_j) =
  q$.
  Summing the points $p_j$ (resp. $p'_j$) respecting the orientation
  signs yields $\deg\smash{\widehat{\alpha}}$
  (resp. $\deg\smash{\widehat{\beta}}$). Therefore, the points
  $p_1,\ldots,p_k,p'_1,\ldots,p'_\ell$ with their respective
  orientation signs attached add up to
  $\deg\smash{\widehat{\alpha}} + \deg\smash{\widehat{\beta}}$.
\end{proof}

Note that lemma~\ref{DegreeCompatibleWithConcatenation} also implies
that
\begin{align*}
  \deg\left(\alpha^{-1}\right) = -\deg\left(\alpha\right),
\end{align*}
where $\alpha^{-1}$ is the curve $\alpha\colon I \to \mathcal{L}S^2$
with reversed timed. The space $\mathscr{P}$ does not carry a group
structure in the usual sense, but of course the based fundamental
groups $\pi_1(\mathcal{L}S^2,\gamma)$ do carry a group structure. When
$\gamma$ is in $\mathcal{L}_{\text{std}}$, then $\pi_1(\mathcal{L}S^2,\gamma)
\subset \mathscr{P}/{\simeq}$ and we can note the following
\begin{remark}
  The degree map induces a group homomorphism on fundamental groups:
  \begin{align*}
    \text{deg}\colon \pi_1(\mathcal{L}S^2, \mathscr{D}_j) \to \mathbb{Z}
  \end{align*}
  for every $j \in \mathbb{Z}$.
\end{remark}
\begin{proof}
  Let $\mathscr{D}_j$ be in $\mathcal{L}_{\text{std}}$. By
  remark~\ref{DegreeMapInvariantUnderHomotopies} we know that the
  degree map is well-defined as a map on the fundamental group
  $\pi_1(\mathcal{L}S^2,\mathscr{D}_j)$. By
  remark~\ref{DegOfConstantCurveZero} the neutral element in
  $\pi_1(\mathcal{L}S^2,\mathscr{D}_j)$, that is the homotopy class of
  the constant curve at $\mathscr{D}_j$, gets mapped to $0 \in
  \mathbb{Z}$. The group structure in the fundamental group is given
  be the concatenation of loops at the
  basepoint. Lemma~\ref{DegreeCompatibleWithConcatenation} shows that
  the degree map is compatible with the group structures. Hence, the
  degree map is a group homomorphism on fundamental groups.
\end{proof}

Now that we know that restrictions of the degree map to fundamental
groups are group homomorphisms it is useful to compute its kernel. For
this we need a lemma, which deals with the special case of a fixed point
degree of the form $\Bideg{d_0}{d_0}$. To formulate the statement, we
introduce an equivalence relation $\sim$ on the cylinder $Z$, which
identifies the circles $C_0, C_1$: $(0,z) \sim (1,z)$.  The resulting
space $\smash{\widetilde{Z}} = Z/{\sim}$ is a
2-torus.\label{CylinderBoundaryCirclesIdentified} Now we can state:

\begin{lemma}
  \label{MapExtensionDegreeEquality}
  A map $f\colon (Z,C) \to (S^2,E)$ with $\restr{f}{C_0} \equiv
  \restr{f}{C_1}$ has even degree. More concretely: The map $f$
  induces a map $\smash{\tilde{f}}\colon \smash{\widetilde{Z}} \to S^2$ and
  \begin{align*}
    \deg \hat{f} = 2 \deg \tilde{f}.
  \end{align*}
  In particular, the equivariant extension $\smash{\hat{f}}$ has degree
  zero iff the induced map $\smash{\tilde{f}}$ has degree zero.
\end{lemma}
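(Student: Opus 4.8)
The statement has two parts: first, that the equivariant extension $\hat f$ factors through the quotient torus $\widetilde Z$ to give a map $\tilde f$; second, the degree identity $\deg\hat f = 2\deg\tilde f$. I would start with the factoring. Recall the torus $X$ is the union of the two cylinders $Z$ and $Z'$ glued along $C = C_0 \cup C_1$, and $\hat f|_{Z'}(x) = T(\hat f(T(x)))$. The hypothesis $f|_{C_0}\equiv f|_{C_1}$ (identifying both circles with $S^1$ via the fixed parametrizations, and noting that under the gluing $(0,z)\sim(1,z)$ the identification is the ``obvious'' one compatible with how $Z$ and $Z'$ meet) means exactly that the map $f\colon Z \to S^2$ descends to the quotient $\widetilde Z = Z/{\sim}$. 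Call this descended map $\tilde f\colon \widetilde Z \to S^2$. I would then exhibit a degree-one (i.e.\ homeomorphism) identification of $\widetilde Z$ with $X$, or more honestly: construct directly a map $X \to \widetilde Z$ and compare. Actually the cleanest route is to observe that there is a natural quotient map $\pi\colon X \to \widetilde Z$ obtained by folding $Z'$ onto $Z$ via the involution $T$ (i.e.\ $\pi|_Z = \mathrm{pr}$ and $\pi|_{Z'} = \mathrm{pr}\circ T$), and that $\hat f = \tilde f \circ \pi$ by construction of $\hat f$ together with the $T$-invariance of the target-side involution composed appropriately --- wait, one must be careful: $\hat f|_{Z'} = T\circ \hat f\circ T$ has an extra $T$ on the target, so $\hat f$ is \emph{not} literally $\tilde f\circ\pi$ unless $\tilde f$ takes values in $E$. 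So instead I would not try to factor $\hat f$ itself as a composite through $\widetilde Z$; rather I would prove the degree identity directly by a covering/counting argument, and interpret ``$f$ induces $\tilde f$'' as the statement about $f|_Z$ descending to $Z/{\sim}$ only.

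For the degree identity, the plan is a regular-value count exactly in the spirit of the proof of Lemma~\ref{DegreeCompatibleWithConcatenation}. Smooth $\hat f$ (after a smooth approximation near $C$, as done there) so that it is a smooth map $X \to S^2$, and arrange that $\hat f$ and $\tilde f$ are simultaneously smoothly approximated compatibly on $Z$, away from $C$. Pick a regular value $q \in S^2$ not in the image of the $E$-valued parts near $C$; in particular $q\notin E$ so none of its preimages lie on $C = X^G$, and by the reflection symmetry of the target involution we may also assume $q$ and $T(q)$ are both regular. Preimages of $q$ under $\hat f$ lying in the open cylinder $Z$ are in bijection, with matching orientation signs, with preimages of $q$ under $\hat f|_{Z'}$: indeed $x\in Z'$ satisfies $\hat f(x)=q$ iff $T(x)\in Z$ satisfies $\hat f(T(x)) = T(q)$. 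Hmm --- that relates the $Z'$-count over $q$ to the $Z$-count over $T(q)$, not over $q$. To fix the orientation bookkeeping I would instead choose $q$ on $E$ is forbidden, so choose a regular value $q\in S^2\setminus E$ and note that $T$ reverses orientation on $S^2$ (reflection) but also reverses orientation on $X$ (it's an antisymplectic, hence orientation-reversing, involution of the surface), so $x\mapsto T(x)$ is an orientation-preserving bijection from $\hat f^{-1}(q)\cap Z'$ onto $\hat f^{-1}(T(q))\cap Z$ that preserves local degree signs. Since $\deg\hat f$ can be computed over either $q$ or $T(q)$ and equals the total signed count, we get $\deg\hat f = (\text{count over }q \text{ in }Z) + (\text{count over }T(q)\text{ in }Z)$; and each of these counts, by definition of $\tilde f$ and the fact that $Z\to\widetilde Z$ is a one-to-one parametrization on the interior, equals $\deg\tilde f$ (computed at $q$, resp.\ at $T(q)$, both regular values of $\tilde f$). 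Hence $\deg\hat f = 2\deg\tilde f$.

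The ``in particular'' clause is then immediate: $\deg\hat f = 0 \iff 2\deg\tilde f = 0 \iff \deg\tilde f = 0$. I expect the main obstacle to be the orientation bookkeeping in the preceding paragraph --- making sure that the two copies of the cylinder contribute with the \emph{same} sign rather than opposite signs, which is precisely where the orientation-reversing nature of $T$ on both source and target (the two reversals cancelling) must be used carefully; getting the signs wrong would spuriously give $\deg\hat f = 0$ always. A secondary, more bookkeeping-level obstacle is being careful about the identifications of $C_0$ and $C_1$ with $S^1$ relative to the gluing $(0,z)\sim(1,z)$, so that ``$f$ induces $\tilde f$ on $\widetilde Z$'' is literally correct and not off by an orientation on one boundary circle; but the parametrizations $\gamma_0,\gamma_1$ were fixed earlier precisely so that this is consistent.
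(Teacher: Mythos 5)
Your proposal is correct and follows essentially the same route as the paper's own proof: a regular-value count with a generic $q$ such that $q$ and $T(q)$ are both regular and have no preimages on $C$, the identification via $T$ of the $Z'$-fiber over $q$ with the $Z$-fiber over $T(q)$, and the observation that the two orientation reversals (on source and target) cancel so that the two cylinder halves contribute with the same sign, giving $\deg\hat f = 2\deg\tilde f$. Your caveat that $\hat f$ does not literally factor through $\widetilde Z$ (only $f|_Z$ descends) is also consistent with how the paper uses the statement.
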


\begin{proof}
  Note that on the LHS we have the degree of the equivariant extension
  $\smash{\hat{f}}\colon X \to S^2$ of $f$ while on the RHS we have (two
  times) the degree of the induced map $\smash{\tilde{f}}\colon
  \smash{\widetilde{Z}} \to S^2$.

  We prove the statement by counting preimage points. The torus
  $\smash{\widetilde{Z}}$ carries a differentiable structure.  The
  induced map $\smash{\tilde{f}}\colon \smash{\widetilde{Z}} \to S^2$
  does not have to be smooth near $C \subset
  \smash{\widetilde{Z}}$.
  But after a smooth approximation near $C$ we can assume that it is
  globally smooth.  By the same reason the equivariant extension
  $\smash{\hat{f}}$ defined on $X$ is globally smooth.
  Let $q$ be a regular value of the smoothed map $f$, away from the
  circle $C$. This implies that $q$ is also a regular value of
  $\smash{\tilde{f}}$ and $\smash{\hat{f}}$.  Let $d$ be the degree of
  $\smash{\tilde{f}}$.  The fiber of $q$ under $\smash{\tilde{f}}$ can thus be
  written as
  \begin{align*}
    \tilde{f}^{-1}(q) = \{p_1,\ldots,p_k\}.
  \end{align*}
  Denoting the respective orientation signs at each preimage point
  $p_j$ with $\sigma_j$ we can express the degree $d$ as
  \begin{align*}
    d = \sum_{1\leq j \leq k} \sigma_j.
  \end{align*}
  To simplify the proof we assume that $q$ has been chosen such that
  $T(q)$ is also a regular value of $\smash{\hat{f}}$ and that $q$
  (and therefore also $T(q)$) has no preimage points on the boundary
  circles.

  Recall that when regarding the cylinder $Z$ as being embedded in $X$
  with boundary circles $C_0$ and $C_1$, the equivariant extension
  $\smash{\hat{f}}$ is the map
  \begin{align*}
    \hat{f}\colon X &\to S^2\\
    x &\mapsto
    \begin{cases}
      f(x) & \;\text{if $x \in Z$}\\
      T \circ f \circ T(x) & \;\text{if $x \in Z'$}\\
    \end{cases}
  \end{align*}
  where $Z'$ denotes the complementary cylinder in $X$ whose
  intersection with $Z$ consists of the circles $C_0 \cup C_1$. It
  remains to compare the fibers $\smash{\hat{f}^{-1}}(q)$ and
  $\smash{\tilde{f}^{-1}}(q)$. Since $q$ is assumed to have no preimage
  points in the boundary circles, we can write the first fiber as
  \begin{align*}
    \hat{f}^{-1}(q) = \Restr{\hat{f}}{Z}^{-1}(q) \cup \Restr{\hat{f}}{Z'}^{-1}(q).
  \end{align*}
  By definition of $\smash{\hat{f}}$, the preimage points in the cylinder
  $Z$ are exactly those of $\smash{\tilde{f}}$, with the same orientation
  signs. We now prove
  \begin{align*}
    \Restr{\hat{f}}{Z'}^{-1}(q) = T\left(\Restr{\hat{f}}{Z}^{-1}(T(q))\right).
  \end{align*}
  For this, let $p$ be in $Z'$ with
  $\restr{\smash{\hat{f}}}{Z'}(p) = q$.  This implies
  $T \circ \restr{\smash{\hat{f}}}{Z'}(p) = T(q)$, which in turn
  implies by equivariance
  \begin{align*}
   \Restr{\hat{f}}{Z}(T(p)) = T(q).
  \end{align*}
  In other words, after defining $\smash{\tilde{p}} = T(p) \in Z$,
  we have: $p = T(\smash{\tilde{p}})$ with $\smash{\tilde{p}} \in Z$
  and
  \begin{align*}
    \Restr{\smash{\tilde{f}}}{Z}(\smash{\tilde{p}}) =
    \Restr{\smash{\tilde{f}}}{Z}(T(p)) =
    T \circ \Restr{\smash{\tilde{f}}}{Z'}(p) = T(q).
  \end{align*}
  On the other hand, let $\smash{\tilde{p}}$ be in
  $T\left(\restr{\smash{\hat{f}}}{Z}^{-1}(T(q))\right)$. This
  means that $\smash{\tilde{p}} = p$ with
  $\restr{\smash{\hat{f}}}{Z}(p) = T(q)$. Set
  $p = T(\smash{\tilde{p}})$. Then:
  \begin{align*}
    \Restr{\hat{f}}{Z'}(\hat{p}) = \Restr{\hat{f}}{Z'}(T(p)) = T \circ \Restr{\hat{f}}{Z}(p) =  q,
  \end{align*}
  which means $\smash{\tilde{p}} \in \restr{\smash{\hat{f}}}{Z'}^{-1}(q)$.

  Since $T$ bijectively sends $Z$ to $Z'$ it follows that the number
  of preimage points on $Z$ and $Z'$ coincide. Also, since
  $\smash{\hat{f}}$ restricted to (the interior of) $Z'$ is the
  composition of $f$ with two orientation-reversing diffeomorphisms,
  the orientation sign at each preimage point $p$ in $Z$ is the same
  as the orientation sign of the corresponding preimage point $T(p)$
  in $Z'$. This means that summing (respecting the orientation) over
  all the preimage points in the fiber $\smash{\hat{f}}^{-1}(q)$
  yields exactly $2 \deg \smash{\tilde{f}}$.
\end{proof}

Now we can prove the desired injectivity property of the degree map:
\begin{proposition}
  \label{DegreeMapInjective}
  For any $\gamma \in \mathcal{L}_{\text{std}}$, the degree homomorphism
  \begin{align*}
    \deg\colon \pi_1(\mathcal{L}S^2,\gamma) \to \mathbb{Z}
  \end{align*}
  is injective.
\end{proposition}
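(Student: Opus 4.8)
The plan is to use that $\pi_1(\mathcal{L}S^2,\gamma)$ is infinite cyclic (Lemma~\ref{FundamentalGroupOfFLS}), so that the map in question is a homomorphism $\mathbb{Z}\to\mathbb{Z}$. Any such homomorphism has image $a\mathbb{Z}$, where $a$ is its value on a generator, hence it is injective as soon as it is not identically zero. It therefore suffices, for each $\gamma\in\mathcal{L}_{\text{std}}$, to exhibit a single element of $\pi_1(\mathcal{L}S^2,\gamma)$ on which $\deg$ does not vanish.

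For $\gamma=\mathscr{D}_1$ I would revisit the ``rotating equator'' map $\varphi$ from the proof of Lemma~\ref{FundamentalGroupOfFLS}: starting from the standard embedded equator at $t=0$ and rotating it about the axis through a fixed pair of antipodal equatorial points through an even number of half-turns, so that at $t=1$ it closes up onto the equator with the original parametrization, one obtains a map $\varphi\colon(Z,C)\to(S^2,E)$ with $\restr{\varphi}{C_0}\equiv\restr{\varphi}{C_1}$, both of degree one onto $E$. After type I normalization (Remark~\ref{BoundaryNormalization}) this represents an element of $\pi_1(\mathcal{L}S^2,\mathscr{D}_1)$, and normalizing does not alter the Brouwer degree $\deg\widehat\varphi$ of the equivariant extension. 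Since $\restr{\varphi}{C_0}\equiv\restr{\varphi}{C_1}$, Lemma~\ref{MapExtensionDegreeEquality} gives $\deg\widehat\varphi=2\deg\widetilde\varphi$ for the induced torus map $\widetilde\varphi\colon\widetilde{Z}\to S^2$; picking a regular value off the rotation axis and counting preimages, $\deg\widetilde\varphi$ equals, up to sign, the number of sheets of the moving circle over that value, which is a nonzero multiple of the number of turns. Hence $\deg\widehat\varphi\neq 0$, so $\deg$ is nonzero, and therefore injective, on $\pi_1(\mathcal{L}S^2,\mathscr{D}_1)$. (Alternatively, one may simply invoke that the construction produces $G$-maps of pairwise distinct — in particular not all vanishing — total degrees.)

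For a general $\gamma=\mathscr{D}_j\in\mathcal{L}_{\text{std}}$ I would reduce to the case $j=1$ by a change of basepoint. As $\mathcal{L}S^2$ is path-connected there is a path $\delta\in\mathscr{P}$ from $\mathscr{D}_j$ to $\mathscr{D}_1$, and conjugation $\ell\mapsto\delta\ast\ell\ast\delta^{-1}$ is the standard isomorphism $\pi_1(\mathcal{L}S^2,\mathscr{D}_j)\xrightarrow{\;\sim\;}\pi_1(\mathcal{L}S^2,\mathscr{D}_1)$. By Lemma~\ref{DegreeCompatibleWithConcatenation} and the relation $\deg(\delta^{-1})=-\deg\delta$ noted just after it, this isomorphism intertwines the two degree homomorphisms, because $\deg(\delta\ast\ell\ast\delta^{-1})=\deg\delta+\deg\ell-\deg\delta=\deg\ell$. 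Hence the kernel of $\deg$ on $\pi_1(\mathcal{L}S^2,\mathscr{D}_j)$ is trivial as soon as it is trivial on $\pi_1(\mathcal{L}S^2,\mathscr{D}_1)$, which was the previous step.

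I expect the only genuine obstacle to be the orientation bookkeeping in the sweeping argument: checking that the several sheets of the rotating circle lying over a chosen regular value all contribute with the \emph{same} sign, so that they do not cancel, and that an even number of half-turns simultaneously closes up the loop and forces this uniform sign. Everything else — reducing injectivity to nonvanishing via $\pi_1\cong\mathbb{Z}$, the application of Lemma~\ref{MapExtensionDegreeEquality}, and the change of basepoint via Lemma~\ref{DegreeCompatibleWithConcatenation} — is formal given the results already established.
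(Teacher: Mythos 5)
Your proof is correct, but it takes a genuinely different route from the paper's. The paper argues directly that the kernel is trivial: given a loop $\alpha$ at the constant curve $\mathscr{D}_0$ with $\deg\widehat{\alpha}=0$, it passes to the induced map on the torus $\widetilde{Z}$ (via Lemma~\ref{MapExtensionDegreeEquality}), makes the map constant along both generators of $\pi_1(\widetilde{Z})$, collapses them to obtain a degree-zero map $S^2\to S^2$, and invokes Hopf's theorem to produce a null-homotopy of $\alpha$ rel endpoints. You instead exploit the group structure: since $\pi_1(\mathcal{L}S^2,\gamma)\cong\mathbb{Z}$ by Lemma~\ref{FundamentalGroupOfFLS}, injectivity of the homomorphism $\deg$ reduces to non-vanishing, which you witness with the rotating-equator loop at $\mathscr{D}_1$; the reduction to a single basepoint is handled by conjugation with a path $\delta\in\mathscr{P}$, and you correctly verify via Lemma~\ref{DegreeCompatibleWithConcatenation} and $\deg(\delta^{-1})=-\deg(\delta)$ that this isomorphism intertwines the two degree homomorphisms. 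Your basepoint step is in fact more careful than the paper's, which only remarks that all based fundamental groups are isomorphic without checking compatibility with $\deg$. The trade-off is that you lean on the full strength of Lemma~\ref{FundamentalGroupOfFLS}, whose ``infinite cyclic'' half is itself established by the same rotating-equator family together with Hopf's theorem, whereas the paper's argument is independent of that lemma and effectively reproves, en route, that $\pi_1(\mathcal{L}S^2)$ embeds into $\mathbb{Z}$. Your fallback for non-vanishing --- the family has pairwise distinct total degrees, so they cannot all be zero --- is the cleanest way to sidestep the orientation bookkeeping you flag, and it makes the argument complete.
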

\begin{proof}
  We prove that the degree homomorphism has trivial kernel. By
  path-connected- ness of $\mathcal{L}S^2$ all the based fundamental
  groups of $\mathcal{L}S^2$ are isomorphic. Hence it suffices to
  prove the statement for one fixed loop $\gamma$. Therefore we reduce
  the problem to a simpler case and take $\gamma$ to be the constant
  loop at the base point $p_0$ in $E$.

  Let $\alpha$ be a type I normalized map of the cylinder $Z$ into
  $S^2$ with fixed point degree $\Bideg{0}{0}$ and $\deg \alpha = 0$. As before,
  $\alpha$ can be regarded as curve in $\mathcal{L}S^2$ and $[\alpha]$
  defines an element in the fundamental group
  $\pi_1(\mathcal{L}S^2,\gamma)$. Let $\widetilde{Z} = Z/{\sim}$ be the
  torus which results from the cylinder $Z$ by identifying the circles
  $C_0$ and $C_1$ (see
  p.\pageref{CylinderBoundaryCirclesIdentified}). Since, by
  construction, $\restr{\alpha}{C_0} \equiv \restr{\alpha}{C_1} \equiv
  \gamma$, $\alpha$ induces a map $\widetilde{Z} \to S^2$ which, by
  lemma~\ref{MapExtensionDegreeEquality}, also has degree zero.

  Denote the generator of the fundamental group of the torus
  $\widetilde{Z}$ which corresponds to $C_0$ resp. $C_1$ by $C$.
  Let $C'$ denote the other generator of the fundamental group such
  that the intersection number of $C$ with $C'$ is $+1$. The
  restriction of $f\colon \widetilde{Z} \to S^2$ to $C$ is already
  constant. Using the homotopy extension property together with the
  simply-connectedness of $S^2$ we can assume that $f$ is also
  constant along $C'$. Therefore we can collapse these two generators
  and $f$ induces a map
  $\widetilde{Z}/(C \cup C') \cong S^2 \to S^2 \cong S^2$. By
  remark~\ref{RemarkMapOnQuotientDegreeUnchanged} the map
  $f\colon \widetilde{Z}/(C \cup C') \to S^2$ also has degree zero. By
  Hopf's theorem, this map is null-homotopic (even as a map of pointed
  spaces, as can be shown by another application of the homotopy
  extension property). This corresponds to a null-homotopy of the
  curve $\alpha$ with fixed basis curve $\gamma$, hence
  $[\alpha] = 0$, which finishes the proof.
\end{proof}

\paragraph{The Degree Triple}

In the following we combine the total degree and the fixed point
degrees. For this we make the following
\begin{definition}
  \label{DefinitionTriple}
  (Degree triple map) The \emph{degree triple map} is the map
  \begin{align*}
    \mathcal{T}\colon \mathcal{M}_G(X,S^2) &\to \Z^3\\
    f &\mapsto \Triple{d_0}{d}{d_1},
  \end{align*}
  where $\Bideg{d_0}{d_1}$ is the fixed point degree of $f$ and $d$ is
  the total degree of $f$. For a map $f\colon X \to S^2$ we define its
  \emph{degree triple} (or simply \emph{triple}) to be
  $\mathcal{T}(f)$. We call a given triple \emph{realizable} if it is
  contained in the image $\Im(\mathcal{T})$.  For a map $f\colon (Z,C)
  \to (S^2,E)$ we define its \emph{degree triple} to be the degree
  triple of the equivariant extension $\smash{\hat{f}}\colon X \to S^2$.
\end{definition}

As an immediate consequence we obtain:
\begin{remark}
  \label{DegreeTripleInvariant}
  The degree triple of a map is a $G$-homotopy invariant. \qed
\end{remark}
Thus, the degree triple map factors through $[X,S^2]_G$. In the
following we analyze the image $\Im(\mathcal{T})$.

\begin{remark}
  \label{RemarkTripleGlobalDegree}
  Let $f$ be a $G$-map $X \to S^2$ with fixed point degrees
  $\Bideg{d_0}{d_0}$. Then the total degree $\deg f$ must be even. In
  other words, the degree triples $\Triple{d_0}{2k+1}{d_0}$ are not
  contained in the image $\Im(\mathcal{T})$.\end{remark}
\begin{proof}
  Assume that the map $f$ is type I normalized. The restriction
  $\restr{f}{Z}$ to the cylinder then satisfies the assumptions of
  lemma~\ref{MapExtensionDegreeEquality}, hence we can conclude that
  its equivariant extension $\smash{\hat{f}}$ has even degree. But the
  equivariant extension of $\restr{f}{Z}$ is $f$ itself, hence $f$
  must have even degree.
\end{proof}
In particular, the triple $\Triple{0}{1}{0}$ is not contained in
$\Im(\mathcal{T})$. This gives rise to the $\mod 2$ condition
\begin{align}
  \label{DiscussionParityCondition}
  d \equiv d_0 + d_1 \mod 2,
\end{align}
which was already mentioned in the outline and must hold for any
realizable degree triple. The details will be explained in
paragraph~\ref{ParagraphClassificationType1MainResult}
(p.~\pageref{ParagraphClassificationType1MainResult}). In order to
deduce the above ``parity condition''
(\ref{DiscussionParityCondition}) for general degree triples we
introduce an algebraic structure on the set of realizable
triples. This is the following binary operation for triples:
\begin{definition}
  Two triples $\Triple{d_0}{d}{d_1}$ and $\Triple{d_0'}{d'}{d_1'}$ are
  called \emph{compatible} if $d_1 = d_0'$.  Given two compatible
  triples $\Triple{d_0}{d}{d_1}$ and $\Triple{d_1}{d'}{d_2}$, then we
  define $\Triple{d_0}{d}{d_1} \bullet \Triple{d_1}{d'}{d_2}$ to be
  the triple
  \begin{align*}
    \Triple{d_0}{d + d'}{d_2}.
  \end{align*}
\end{definition}
We call this binary operation \emph{concatenation} of triples. Its
usefulness is illustrated by the following remark:
\begin{remark}
  \label{ImageOfTripleMapClosedUnderConcatenation}
  When the triples $\Triple{d_0}{d}{d_1}$ and $\Triple{d_1}{d'}{d_2}$
  are in $\Im(\mathcal{T})$, then so is
  \begin{align*}
    \Triple{d_0}{d}{d_1} \bullet \Triple{d_1}{d'}{d_2}.
  \end{align*}
\end{remark}
\begin{proof}
  Assume that the triples $\Triple{d_0}{d}{d_1}$
  resp. $\Triple{d_1}{d'}{d_2}$ are realized by the paths $\alpha$
  resp. $\beta$ in $\mathscr{P}$ satisfying $\alpha(1) = \beta(0)$. By
  definition,
  \begin{align*}
    \Triple{d_0}{d}{d_1} \bullet \Triple{d_1}{d'}{d_2} = \Triple{d_0}{d+d'}{d_2}\;.
  \end{align*}
  We need to show the existence of a path in the loop space
  $\mathcal{L}S^2$ from $\alpha(0)$ to $\beta(1)$ with total degree
  $d + d'$ of its associated equivariant extension. Recall that $d$
  (resp. $d'$) is the degree of the equivariant extension
  $\widehat{\alpha}$ (resp. $\widehat{\beta}$). The concatenation
  $\alpha \ast \beta$ is a path in loop space from $d_0$ to $d_2$.
  Its total degree is defined to be the degree of the equivariant
  extension $\widehat{\alpha \ast \beta}$. By
  lemma~\ref{DegreeCompatibleWithConcatenation} this is the same as
  $\deg\widehat{\alpha} + \deg\widehat{\beta} = d + d'$.
\end{proof}

In other words: the image $\Im(\mathcal{T})$ is closed under the
concatenation operation of degree triples. In the following lemma we
encapsulate several fundamental properties of the image
$\Im(\mathcal{T})$.
\begin{proposition}
  \label{TripleBuildingBlocks}
  Let $d_0, d_1$ be integers. Then:
  \begin{enumerate}[(i)]
  \item If the triple $\Triple{d_0}{d}{d_1}$ is in $\Im(\mathcal{T})$, then so is
    $\Triple{d_0}{-d}{d_1}$.
  \item For any fixed point degree $\Bideg{d_0}{d_1}$, the triple
    $\Triple{d_0}{d_0 - d_1}{d_1}$ is in $\Im(\mathcal{T})$. In
    particular, the triple $\Triple{1}{1}{0}$ is in
    $\Im(\mathcal{T})$.
  \item For any $k$, the triple $\Triple{0}{2k}{0}$ is in $\Im(\mathcal{T})$.
  \item If $\Triple{d_0}{d}{d_1}$ is in $\Im(\mathcal{T})$, then so is
    $\Triple{d_1}{d}{d_0}$.
  \end{enumerate}
\end{proposition}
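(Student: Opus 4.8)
The plan is to prove the four assertions by two complementary devices: for (i) and (iv) we precompose a map realizing the given triple with a suitable self-diffeomorphism of $X$, while for (ii) and (iii) we build one explicit family of ``hemisphere–sweep'' maps and generate everything else from it by the concatenation operation, using Remark~\ref{ImageOfTripleMapClosedUnderConcatenation}. Note that to prove a triple realizable it suffices to exhibit \emph{some} $G$-map with that triple, so homotopy classes play no role here.

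\medskip
\noindent\emph{Parts (i) and (iv).} Suppose $f\colon X\to S^2$ is a $G$-map with $\mathcal{T}(f)=\Triple{d_0}{d}{d_1}$, and let $T\colon X\to X$ denote the type I involution. First I check that $f\circ T$ is again a $G$-map: for $x\in X$ we have $(f\circ T)(Tx)=f(T^2x)=f(x)$ and, using equivariance of $f$, $T\big((f\circ T)(x)\big)=T\big(f(Tx)\big)=T\big(T(f(x))\big)=f(x)$; hence $f\circ T$ is equivariant. Since $T$ restricts to the identity on each fixed circle $C_j$, the restrictions $(f\circ T)|_{C_j}=f|_{C_j}$ are unchanged, so $f\circ T$ has fixed point degrees $(d_0,d_1)$ again. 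Finally, the type I involution is orientation-reversing (in local coordinates it is $(q,p)\mapsto(q,-p)$), so by multiplicativity of the Brouwer degree $\deg(f\circ T)=\deg(T)\cdot\deg(f)=-d$, and $f\circ T$ realizes $\Triple{d_0}{-d}{d_1}$. For (iv) we argue identically but precompose instead with the translation $\sigma\colon X\to X$, $[z]\mapsto[z+\tfrac{i}{2}]$. This $\sigma$ is an orientation-preserving diffeomorphism, it commutes with $T$ (both $\sigma\circ T$ and $T\circ\sigma$ send $[z]$ to $[\,\overline z+\tfrac{i}{2}\,]$, using $i\in\Lambda$), and it interchanges the fixed circles $C_0=\{\mathrm{Im}\,z=0\}$ and $C_1=\{\mathrm{Im}\,z=\tfrac12\}$, carrying the chosen generating loop of each onto that of the other. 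Hence $f\circ\sigma$ is a $G$-map with fixed point degrees $(d_1,d_0)$ and, since $\deg\sigma=1$, total degree $d$; that is, $f\circ\sigma$ realizes $\Triple{d_1}{d}{d_0}$.

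\medskip
\noindent\emph{Part (ii).} The core is the claim that $\Triple{m}{m}{0}\in\Im(\mathcal{T})$ for every $m\in\Z$. For $m=0$ the constant map works. For $m\ne 0$, construct $f\colon(Z,C)\to(S^2,E)$ by thinking of $S^2$ in latitude/longitude coordinates: on $C_0=\{0\}\times S^1$ let $f$ be the degree-$m$ cover of the equator; as the cylinder parameter $t$ increases, let $f_t$ be the corresponding multiply-covered latitude circle, moving monotonically up towards the north pole; and for $t$ near $1$ bring the resulting (now constant) loop down along a fixed meridian to the base point $p_0\in E$, so $f|_{C_1}$ is the constant loop at $p_0$. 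By construction the image of $f$ lies in the closed northern hemisphere, so the image of its equivariant extension $\widehat f$ lies in the union of the two closed hemispheres, which meet only along $E$. Choosing a regular value $q$ in the open northern hemisphere (and off the meridian used at the end), every preimage of $q$ under $\widehat f$ lies in the interior of $Z$; counting these with signs — one value of $t$ and $|m|$ values of the $S^1$-coordinate, all carrying the same orientation sign — gives $\deg\widehat f=\pm m$. Combined with (i) this yields $\Triple{m}{m}{0},\ \Triple{m}{-m}{0}\in\Im(\mathcal{T})$. Now, for arbitrary $d_0,d_1$, apply (i) and then (iv) to $\Triple{d_1}{d_1}{0}$ to obtain $\Triple{0}{-d_1}{d_1}\in\Im(\mathcal{T})$, and concatenate:
\[
  \Triple{d_0}{d_0}{0}\bullet\Triple{0}{-d_1}{d_1}=\Triple{d_0}{d_0-d_1}{d_1},
\]
which lies in $\Im(\mathcal{T})$ by Remark~\ref{ImageOfTripleMapClosedUnderConcatenation}. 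The special case $\Triple{1}{1}{0}$ is $d_0=1,\ d_1=0$.

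\medskip
\noindent\emph{Part (iii).} The triple $\Triple{0}{0}{0}$ is realized by the constant map. By (iv), $\Triple{0}{1}{1}\in\Im(\mathcal{T})$ (it is the image of $\Triple{1}{1}{0}$ under (iv)), and then
\[
  \Triple{0}{1}{1}\bullet\Triple{1}{1}{0}=\Triple{0}{2}{0}\in\Im(\mathcal{T}).
\]
Concatenating $k$ copies of $\Triple{0}{2}{0}$ gives $\Triple{0}{2k}{0}$ for $k\ge 1$, and (i) supplies the cases $k<0$. The main obstacle I anticipate is the bookkeeping in the hemisphere-sweep construction of (ii): one must make the construction honest (so that $f$ genuinely sends $C$ into $E$ and $\widehat f$ is continuous), verify that the complementary cylinder $Z'$ contributes nothing to the degree — which is precisely why the image must be confined to one closed hemisphere — and check that the orientation signs in the preimage count are all equal, so that $\deg\widehat f$ is truly $\pm m$ and not some smaller integer. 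Everything else is either a short equivariance check ((i), (iv)) or formal manipulation with the concatenation operation ((iii)).
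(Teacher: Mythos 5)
Your proof is correct, but it routes parts (i) and (ii) differently from the paper. For (i) the paper post-composes with the involution $T\colon S^2\to S^2$ on the target (orientation-reversing, pointwise fixing $E$), whereas you pre-compose with the involution $T\colon X\to X$ on the source (orientation-reversing, pointwise fixing $C_0\cup C_1$); both devices flip the total degree while leaving the fixed point degrees untouched, and your equivariance check for $f\circ T$ is right. For (ii) the paper writes down a single normal-form map for arbitrary $\Bideg{d_0}{d_1}$ — a disk map that contracts a degree-$d_0$ loop to the pole and re-expands it as a degree-$d_1$ loop — and then must compare orientation signs between the two halves of the cylinder to get total degree $d_0-d_1$; you instead build only the one-sided sweep realizing $\Triple{m}{\pm m}{0}$, where the sign bookkeeping is trivial because all preimages of a generic point carry the same sign and the complementary cylinder contributes nothing, and you then assemble the general triple as $\Triple{d_0}{d_0}{0}\bullet\Triple{0}{-d_1}{d_1}$ using (i), (iv) and Remark~\ref{ImageOfTripleMapClosedUnderConcatenation}. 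This reorders the logical dependencies (you need (iv) before (ii)), but that is harmless since your proof of (iv) — pre-composition with the translation $[z]\mapsto[z+\tfrac{i}{2}]$, which is exactly the paper's argument — is independent of (ii). What your version buys is a cleaner degree count and an absorption of the sign ambiguity into (i); what it costs is reliance on the concatenation machinery (and implicitly on boundary normalization so that the two realizing cylinder maps can actually be glued), which the paper's direct construction avoids. Part (iii) is essentially identical to the paper's, with the minor improvement that you treat $k\le 0$ explicitly.
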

\begin{proof}
  Regarding (i): Assume that $f\colon X \to S^2$ is a map with
  $\mathcal{T}(f) = \Triple{d_0}{d}{d_1}$. Note that $T\colon S^2 \to
  S^2$ is an orientation-reversing diffeomorphism of $S^2$ which keeps
  the equator $E$ fixed. Therefore:
  \begin{align*}
    \mathcal{T}(T \circ f) = \Triple{d_0}{-d}{d_1}.
  \end{align*}
  In other words: the triple $\Triple{d_0}{-d}{d_1}$ is realizable.
 
  Regarding (ii): We need to construct a map $X \to S^2$ with fixed
  point degrees $\Bideg{d_0}{d_1}$ and total degree $d_0 - d_1$. For
  this, let $D$ be the closed unit disk in the complex plane:
  \begin{align*}
    D = \{z \in \mathbb{C}\colon |z| \leq 1\}.
  \end{align*}
  Let $\iota_D$ be the embedding of $D$ onto one of the two hemispheres.:
  \begin{align}
    \label{IotaHemisphereEmbedding}
    \iota\colon D &\hookrightarrow S^2\\
    r e^{i\varphi} &\mapsto
    \begin{pmatrix}
      r\cos \varphi \\
      r\sin \varphi \\
      \pm\sqrt{1-r^2}
    \end{pmatrix}.
  \end{align}
  We pick the sign defining either the lower or the upper hemisphere
  such that $\iota_D$ is orientation preserving.  Now we define the
  map
  \begin{align*}
    f\colon Z &\to D\\
    (t, \varphi) &\mapsto \begin{cases}
      (1-2t) e^{i d_0 \varphi}  & \text{ for $0 \leq t \leq \frac{1}{2}$}\\
      (2t-1) e^{i d_1 \varphi} & \text{ for $\frac{1}{2} \leq t \leq 1$}.
    \end{cases}
  \end{align*}
  To ease the notation, set
  \begin{align*}
    Z_1 = \left(0,\frac{1}{2}\right) \times S^1 \;\text{ and }\; Z_2 &= \left(\frac{1}{2},1\right) \times S^1.
  \end{align*}
  Then we consider the composition map
  \begin{align*}
    F = \iota_D \circ f\colon Z \to S^2.
  \end{align*}
  The restriction $\restr{F}{C_j}$ to the boundary circles is of the
  form:
  \begin{align*}
    \Restr{F}{C_j}\colon S^1 &\to E \subset S^2\\
    \varphi &\mapsto
    \begin{pmatrix}
      \cos(d_j \varphi) \\
      \sin(d_j \varphi) \\
      0
    \end{pmatrix}.
  \end{align*}
  Hence, by construction, $F$ has fixed point degrees
  $\Bideg{d_0}{d_1}$ (compare with (\ref{DegreeOneMapOnEquator}),
  p.~\pageref{DegreeOneMapOnEquator}). It remains to show that the
  equivariant extension $\smash{\hat{F}}$ has total degree
  $d_0 - d_1$. At least away from the boundary circles,
  $\smash{\hat{F}}$ is a smooth map. Thus, let $q \in S^2$ be a
  regular value of $\smash{\hat{F}}$, say, on the hemisphere
  $\iota_D(D)$, but not one of the poles (they are not regular values
  for $f$). The point $q$ corresponds to some point
  $\smash{\tilde{q}} = \smash{\tilde{t}e^{i\tilde{\varphi}}}$ in the
  closed unit disk $D$ with $\smash{\tilde{t}} \in (0,1)$. By
  construction, the fiber $\smash{\hat{F}^{-1}}(q)$ will be the same as
  the fiber $F^{-1}(\tilde{q})$, which contains exactly $d_0 + d_1$
  points in the cylinder $Z$:
  \begin{align*}
    q_{j,k} = \left(\frac{1+(-1)^{j+1}\tilde{t}}{2}, d_j\left(\tilde{\varphi} + \frac{2k}{d_j}\pi\right)\right)\; \text{for $j=0,1$ and $k=0,1,\ldots,d_j$}.
  \end{align*}
  The points $q_{0,k}$, $k=0,\ldots,d_1-1$, are of the form
  \begin{align*}
    q_{0,k} = \left(\frac{1-\tilde{t}}{2}, d_1\left(\tilde{\varphi} + \frac{2k}{d_1}\pi\right)\right)
  \end{align*}
  which means that they are contained in the cylinder half $Z_1$. The
  points $q_{1,k}$, $k=0,\ldots,d_2-1$ are of the form
  \begin{align*}
    q_{2,k} = \left(\frac{1+\tilde{t}}{2}, d_2\left(\tilde{\varphi} + \frac{2k}{d_2}\pi\right)\right)
  \end{align*}
  and therefore they are contained in $Z_2$. A computation shows
  that
  the orientations signs at the points $q_{1,k} \in Z_1$ are the
  opposite of those at the points $q_{2,k} \in Z_2$. Therefore,
  summing over the points in the generic fiber
  $\smash{\hat{F}^{-1}}(q)$ yields $\pm (d_0 - d_1)$ as total degree
  of $\smash{\hat{F}}$. Since $\iota_D$ has been chosen
  orientation-preserving, the total degree is $d_0 - d_1$.

  Regarding (iii): First note that by (i) and (ii) the triple
  $\Triple{0}{1}{1}$ is in $\Im(\mathcal{T})$. But then the triple
  $\Triple{0}{1}{1} \bullet \Triple{1}{1}{0} = \Triple{0}{2}{0}$ is
  also in $\Im(\mathcal{T})$. Concatenation of this triple with itself
  $k$ times yields the desired triple $\Triple{0}{2k}{0}$.

  Regarding (iv): Let $\tau\colon X \to X$ be the translation inside
  the torus, which swaps the boundary circles (e.\,g. $[z] \mapsto [z
  + \text{\textonehalf}i]$ in our model). Then, composing a map of type
  $\Triple{d_0}{d}{d_1}$ with $\tau$ yields a map of type
  $\Triple{d_1}{d}{d_0}$.
  This finishes the proof.
\end{proof}

\label{RemarkAboutNormalForms}
The proof of proposition~\ref{TripleBuildingBlocks} (ii) is of
particular importance -- it contains a constructive method for
producing equivariant maps $X \to S^2$ with triple
$\Triple{d_0}{d_0-d_1}{d_1}$. These triples can be regarded as basic
building blocks for equivariant maps $X \to S^2$, since all other
triples can be built from triples of this form by concatenation.

\begin{definition}
  We call the maps constructed in proposition~\ref{TripleBuildingBlocks}
  (ii) \emph{maps in normal form} for the triple
  $\Triple{d_0}{d_0-d_1}{d_1}$.
\end{definition}

Note that the normal form map for a triple
$\Triple{d_0}{d_1-d_0}{d_1}$ is by definition the normal form map for
$\Triple{d_0}{d_0-d_1}{d_1}$ composed with $T\colon S^2 \to S^2$.

\paragraph{Main Result}
\label{ParagraphClassificationType1MainResult}

In this paragraph we state and prove the main classification result
for the type I involution.  The statement is:
\begin{theorem}
  \label{Classification1}
  The $G$-homotopy class of a map $f \in \mathcal{M}_G(X,S^2)$ is
  uniquely determined by its degree triple $\mathcal{T}(f)$. The image
  $\Im(\mathcal{T})$ of the degree triple map
  $\mathcal{T}\colon \mathcal{M}_G(X,S^2) \to \Z^3$ consists of those
  triples $\Triple{d_0}{d}{d_1}$ satisfying
  \begin{align*}
    d \equiv d_0 + d_1 \mod 2.
  \end{align*}
\end{theorem}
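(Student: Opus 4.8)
The plan is to prove the two assertions of the theorem separately, but both rest on the machinery built up so far: the cylinder reduction (remark~\ref{CylinderReduction}), the type~I normalization (remark~\ref{BoundaryNormalization}), the degree map and its injectivity on based fundamental groups (proposition~\ref{DegreeMapInjective}), and the building-block triples of proposition~\ref{TripleBuildingBlocks}. For the \emph{image} statement, necessity of the parity condition $d \equiv d_0 + d_1 \bmod 2$ follows from remark~\ref{RemarkTripleGlobalDegree} together with a reduction: given any $f$ with triple $\Triple{d_0}{d}{d_1}$, I would compose with a normal-form map for $\Triple{d_1}{d_1 - d_0}{d_0}$ (which exists by proposition~\ref{TripleBuildingBlocks}(ii)), so that by remark~\ref{ImageOfTripleMapClosedUnderConcatenation} the concatenated triple $\Triple{d_0}{d + d_1 - d_0}{d_0}$ is also realizable; by remark~\ref{RemarkTripleGlobalDegree} its middle entry must be even, i.e. $d + d_1 - d_0 \equiv 0 \bmod 2$, which is exactly the claimed congruence. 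For sufficiency, given $\Triple{d_0}{d}{d_1}$ with $d \equiv d_0 + d_1 \bmod 2$, write $d = (d_0 - d_1) + 2k$ for a suitable $k \in \Z$; then proposition~\ref{TripleBuildingBlocks}(ii) gives $\Triple{d_0}{d_0-d_1}{d_1} \in \Im(\mathcal{T})$, and proposition~\ref{TripleBuildingBlocks}(iii) together with closure under concatenation gives $\Triple{d_1}{2k}{d_1} \in \Im(\mathcal{T})$; concatenating the two yields $\Triple{d_0}{d_0 - d_1 + 2k}{d_1} = \Triple{d_0}{d}{d_1}$. (One must take $k$ possibly negative; part~(i) of proposition~\ref{TripleBuildingBlocks} supplies $\Triple{0}{-2|k|}{0}$ from $\Triple{0}{2|k|}{0}$, so this is fine.)

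For the \emph{injectivity} statement — that the degree triple determines the $G$-homotopy class — I would argue as follows. Let $f, g\colon X \to S^2$ be $G$-maps with $\mathcal{T}(f) = \mathcal{T}(g) = \Triple{d_0}{d}{d_1}$. By remark~\ref{BoundaryNormalization} we may assume both are type~I normalized, so that $\restr{f}{C_j} = \restr{g}{C_j}$ is the standard degree-$d_j$ loop $z \mapsto z^{d_j}$ for $j = 0,1$. By cylinder reduction it suffices to show $\restr{f}{Z}$ and $\restr{g}{Z}$ are homotopic as maps $(Z, C) \to (S^2, E)$, i.e. that the corresponding curves $\alpha_f, \alpha_g$ in $\mathcal{L}S^2$ from $\mathscr{D}_{d_0}$ to $\mathscr{D}_{d_1}$ are homotopic with fixed endpoints. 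By the remark following lemma~\ref{LemmaCurvesNullHomotopic}, this is equivalent to $\alpha_g^{-1} \ast \alpha_f$ being null-homotopic in $\pi_1(\mathcal{L}S^2, \mathscr{D}_{d_0})$. Now $\alpha_g^{-1} \ast \alpha_f$ is a based loop in $\mathcal{L}S^2$, and by lemma~\ref{DegreeCompatibleWithConcatenation} its degree is $\deg \alpha_f - \deg \alpha_g = d - d = 0$. Since $\pi_1(\mathcal{L}S^2, \mathscr{D}_{d_0})$ is infinite cyclic (lemma~\ref{FundamentalGroupOfFLS}) and the degree homomorphism on it is injective (proposition~\ref{DegreeMapInjective}), degree zero forces $[\alpha_g^{-1} \ast \alpha_f] = 0$. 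Hence $\alpha_f \simeq \alpha_g$, so $f$ and $g$ are $G$-homotopic.

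The main obstacle I anticipate is bookkeeping in the sufficiency half of the image statement: one has to make sure the concatenation operation on triples faithfully mirrors concatenation of curves in $\mathcal{L}S^2$ at the level of \emph{realizability} — i.e. that whenever one writes a target triple as a $\bullet$-product of building blocks, there genuinely exist curves with matching endpoints in $\mathcal{L}_{\text{std}}$ realizing each factor, so that remark~\ref{ImageOfTripleMapClosedUnderConcatenation} applies. This is where the normalization on the boundary circles is essential: it guarantees the endpoints of the curves live in the discrete set $\mathcal{L}_{\text{std}}$ and are therefore matchable purely by their integer labels. Everything else — the parity obstruction, the sign computations inside proposition~\ref{TripleBuildingBlocks} — has already been isolated into the results cited above, so the proof of the theorem itself should be a relatively short assembly, provided one is careful that the case $d < d_0 - d_1$ (needing negative $k$) is covered by part~(i).
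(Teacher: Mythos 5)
Your proposal is correct, and the completeness half (two maps with the same triple are $G$-homotopic) follows the paper's route exactly: type~I normalization, cylinder reduction, passage to curves in $\mathcal{L}S^2$, and the injectivity of the degree homomorphism on $\pi_1(\mathcal{L}S^2,\mathscr{D}_{d_0})$ applied to $\alpha_g^{-1}\ast\alpha_f$. This is precisely lemma~\ref{HomotopyOnCylinder} plus the first half of the paper's proof.

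Where you diverge is in the combinatorial assembly of $\Im(\mathcal{T})$, and your version is shorter. For necessity the paper conjugates a hypothetical triple $\Triple{d_0}{2k+1+p}{d_1}$ by a chain of $|d_0|+|d_1|$ elementary triples to reduce to $\Triple{0}{\text{odd}}{0}$ and contradict remark~\ref{RemarkTripleGlobalDegree}; you instead concatenate once with the normal-form triple $\Triple{d_1}{d_1-d_0}{d_0}$ to land on $\Triple{d_0}{d+d_1-d_0}{d_0}$, whose equal outer entries force the middle entry even by the same remark. For sufficiency the paper again builds the target triple as a product of $|d_0|+|d_1|+1$ blocks, whereas you write $d=(d_0-d_1)+2k$ and concatenate $\Triple{d_0}{d_0-d_1}{d_1}$ with $\Triple{d_1}{2k}{d_1}$. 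Both rest on the same three ingredients (proposition~\ref{TripleBuildingBlocks}, closure under concatenation, and the parity obstruction), so nothing new is needed; your route just trades the paper's explicit telescoping chain for a two-step argument. Two small points to make explicit: obtaining $\Triple{d_1}{2k}{d_1}$ from $\Triple{0}{2k}{0}$ requires conjugating by the part~(ii) triples $\Triple{d_1}{d_1}{0}$ and $\Triple{0}{-d_1}{d_1}$, not part~(iii) alone; and, as you note yourself, each application of remark~\ref{ImageOfTripleMapClosedUnderConcatenation} presupposes that the realizing curves have matching endpoints in $\mathcal{L}_{\text{std}}$, which the normalization supplies.
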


We need one last lemma in order to prove this theorem:
\begin{lemma}
  \label{HomotopyOnCylinder}
  Let $f$ and $g$ be two type I normalized maps $(Z, C) \to (S^2,E)$
  with the same triple $\Triple{d_0}{d}{d_1}$. Then $f$ and $g$ are
  homotopic rel $C$.
\end{lemma}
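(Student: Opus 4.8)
The plan is to convert the statement about cylinder maps into a statement about a single loop in the free loop space and then invoke the injectivity of the degree homomorphism (proposition~\ref{DegreeMapInjective}). First I would pass through the exponential correspondence (\ref{FLSIso}) and regard $f$ and $g$ as curves $I \to \mathcal{L}S^2$; a homotopy rel $C$ of the cylinder maps is exactly a homotopy of these curves with endpoints held fixed. Because $f$ and $g$ are type I normalized with the same fixed point degree $\Bideg{d_0}{d_1}$, their restrictions to $C_0$ and $C_1$ \emph{coincide} (not merely up to homotopy): they are the standard loops $\mathscr{D}_{d_0}$ and $\mathscr{D}_{d_1}$. Hence $f$ and $g$ are both elements of $\mathscr{P}$, each a curve in $\mathcal{L}S^2$ from $\mathscr{D}_{d_0}$ to $\mathscr{D}_{d_1}$.

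Next I would form the loop $g^{-1} \ast f$ based at $\mathscr{D}_{d_0}$. By lemma~\ref{LemmaCurvesNullHomotopic}, $f$ and $g$ are homotopic with fixed endpoints if and only if $[g^{-1}\ast f] = 0$ in $\pi_1(\mathcal{L}S^2,\mathscr{D}_{d_0})$. To check the latter I compute the image of this class under the degree homomorphism. Using lemma~\ref{DegreeCompatibleWithConcatenation} together with the consequence $\deg(\alpha^{-1}) = -\deg(\alpha)$, and the fact that the degree map records precisely the degree of the equivariant extension, I get
\[
  \deg(g^{-1}\ast f) = \deg(g^{-1}) + \deg(f) = -\deg(g) + \deg(f).
\]
Since $f$ and $g$ have the same degree triple $\Triple{d_0}{d}{d_1}$, the degrees of their equivariant extensions are both $d$, so $\deg(g^{-1}\ast f) = -d + d = 0$.

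Finally, proposition~\ref{DegreeMapInjective} tells us that $\deg\colon \pi_1(\mathcal{L}S^2,\mathscr{D}_{d_0}) \to \Z$ is injective, so $[g^{-1}\ast f] = 0$. Applying lemma~\ref{LemmaCurvesNullHomotopic} again produces a homotopy from $f$ to $g$ with the endpoints $\mathscr{D}_{d_0}$ and $\mathscr{D}_{d_1}$ fixed throughout, which under (\ref{FLSIso}) is a homotopy rel $C$ of the cylinder maps.

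The only step with genuine content is the appeal to proposition~\ref{DegreeMapInjective}; everything else is bookkeeping with the exponential law and the concatenation formula. The point I expect to be the main thing to verify carefully is the translation dictionary: that a homotopy rel $C$ on the cylinder side corresponds exactly to a homotopy holding the endpoint loops fixed on the curve side, and that type I normalization makes the two boundary restrictions of $f$ and $g$ literally equal to $\mathscr{D}_{d_0}$ and $\mathscr{D}_{d_1}$ rather than only homotopic to them — both of which are immediate from the relevant definitions, but worth stating explicitly so that the curve $g^{-1}\ast f$ is legitimately a \emph{based} loop.
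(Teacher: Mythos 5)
Your proposal is correct and follows essentially the same route as the paper's own proof: regard $f$ and $g$ as curves in $\mathcal{L}S^2$ with common endpoints (guaranteed by type I normalization), reduce to showing $g^{-1}\ast f$ is null-homotopic via lemma~\ref{LemmaCurvesNullHomotopic}, compute $\deg(g^{-1}\ast f)=0$ using lemma~\ref{DegreeCompatibleWithConcatenation}, and conclude by the injectivity of the degree homomorphism (proposition~\ref{DegreeMapInjective}). Your explicit attention to the translation between homotopies rel $C$ and endpoint-fixing homotopies of curves is a welcome clarification but does not change the argument.
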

\begin{proof}
  Both maps can be regarded as curves $f,g\colon I \to
  \mathcal{L}S^2$. Since they are assumed to be type I normalized,
  they start at the same curve $\gamma_0 \in \mathcal{L}_{\text{std}}$
  and end at the same curve $\gamma_1 \in
  \mathcal{L}_{\text{std}}$. We need to prove the existence of a
  homotopy between the curves $f$ and $g$ in $\mathcal{L}S^2$.

  By lemma~\ref{LemmaCurvesNullHomotopic} it suffices to show that the
  loop $g^{-1} \ast f$ based at $\gamma_0$ is null-homotopic.  To
  prove this we use the degree map restricted to the fundamental group
  based at $\gamma_0$:
  \begin{align*}
    \deg\colon \pi_1(\mathcal{L}S^2,\gamma_0) &\to \mathbb{Z}\\
    [\gamma] &\mapsto \deg \hat{\gamma}.
  \end{align*}
  From lemma~\ref{DegreeCompatibleWithConcatenation} it follows that
  \begin{align*}
    \deg (g^{-1} \ast f) = \deg f - \deg g = \deg \hat{f} - \deg \hat{g} = \deg f - \deg g = d - d = 0.
  \end{align*}
  Because of remark~\ref{DegreeMapInvariantUnderHomotopies} this can
  also be stated on the level of homotopy classes:
  \begin{align*}
    \deg [g^{-1} \ast f] = \deg [f] - \deg [g] = 0.
  \end{align*}
  Therefore, using the injectivity of the degree map
  (proposition~\ref{DegreeMapInjective}) we can conclude that $[g^{-1}
  \ast f] = 0$ in $\pi_1(\mathcal{L}S^2,\gamma_1)$. In other
  words:
  \begin{align*}
    g^{-1} \ast f \simeq c_{\gamma_0},
  \end{align*}
  where $c_{\gamma_0}$ denotes the constant curve in $\mathcal{L}S^2$ at
  $\gamma_0$. Now, lemma~\ref{LemmaCurvesNullHomotopic} implies that
  $f$ and $g$ are homotopic by means of a homotopy $h\colon I
  \times I \to \mathcal{L}S^2$. This homotopy induces a homotopy
  \begin{align*}
    H\colon I \times (Z,C) &\to (S^2,E)\\
    (t,(s,z)) &\mapsto h(t,s,z)
  \end{align*}
  between the maps $f$ and $g$.
\end{proof}

Finally, we can prove theorem~\ref{Classification1}:
\begin{proof}
  It is clear by remark~\ref{DegreeTripleInvariant} that the degree
  triple $\Triple{d_0}{d}{d_1}$ is a $G$-homotopy invariant. Now we
  show that it is a \emph{complete} invariant in the sense that
  \begin{align*}
    \mathcal{T}(f) = \mathcal{T}(g) \;\Rightarrow\; f \simeq_G g,
  \end{align*}
  where $f \simeq_G g$ means that the maps $f$ and $g$ are
  $G$-homotopic.  For this, let $f$ and $g$ be two $G$-maps with the
  same triple $\Triple{d_0}{d}{d_1}$.
  By remark~\ref{BoundaryNormalization} we can assume that $f$ and $g$
  are type~I normalized. Now we use remark~\ref{CylinderReduction} and
  reduce the construction of a $G$-homotopy from $f$ to $g$ to the
  construction of a homotopy from $\restr{f}{Z}$ to $\restr{g}{Z}$ rel
  $C$. The assumptions of lemma~\ref{HomotopyOnCylinder} are
  satisfied, hence this lemma provides us with a homotopy $H\colon I
  \times (Z,C) \to (S^2,E)$ rel $C$. Such a homotopy can be
  equivariantly extended to all of $X$, establishing a $G$-homotopy
  between $f$ and $g$ as maps $X \to S^2$.

  It remains to compute the image $\Im(\mathcal{T})$. To simplify the
  notation, set
  \begin{align*}
    p =
    \begin{cases}
      0 & \;\text{if $d_0 + d_1$ is even}\\
      1 & \;\text{if $d_0 + d_1$ is odd}.
    \end{cases}
  \end{align*}
  First we show that for any $d$ of the form $2k + p$ the triple
  $\Triple{d_0}{d}{d_1}$ is contained in the image $\Im(\mathcal{T})$,
  then we show that the assumption of the triple $\Triple{d_0}{2k + 1
    + p}{d_1}$ being contained in $\Im(\mathcal{T})$ leads to a
  contradiction. To ease the notation, let $\sigma_j$ be the sign of
  $d_j$, i.\,e. $d_j = \sigma_j|d_j|$, for $j=0,1$. By definition of
  $p$ we know that $2k + p - (d_0 + d_1)$ is an even number. Hence the
  triple
  \begin{align*}
    t = \Triple{0}{2k+p-(d_0+d_1)}{0}
  \end{align*}
  is contained in $\Im(\mathcal{T})$ by
  proposition~\ref{TripleBuildingBlocks} (ii). We define the following
  triples:
  \begin{align*}
    t_1 =& \Triple{\sigma_0}{\sigma_0}{0}\\
    t_2 =&\Triple{2\sigma_0}{\sigma_0}{\sigma_0}\\
    & \ldots\\
    t_{|d_0|} =&\Triple{|d_0|\sigma_0}{\sigma_0}{|d_0|\sigma_0 - \sigma_0}\\
    u_1 =& \Triple{0}{\sigma_1}{\sigma_1}\\
    u_2 =& \Triple{\sigma_1}{\sigma_1}{2\sigma_1}\\
    & \ldots\\
    u_{|d_1|} =& \Triple{|d_1|\sigma_1 - \sigma_1}{\sigma_1}{|d_1|\sigma_1},
  \end{align*}
  which are contained in $\Im(\mathcal{T})$ by
  proposition~\ref{TripleBuildingBlocks} (ii). From this we can form the
  triple
  \begin{align*}
    \tilde{t} &= t_{|d_0|} \bullet \ldots \bullet t_2 \bullet t_1 \bullet t \bullet u_1 \bullet u_2 \bullet \ldots \bullet u_{|d_1|}\\
    &= \Triple{d_0}{2k+p}{d_1}
  \end{align*}
  Therefore, the triple $\tilde{t}$ is contained in
  $\Im(\mathcal{T})$. On the other hand, assume that the triple $t =
  \Triple{d_0}{2k + 1 + p}{d_1}$ is contained in $\Im(\mathcal{T})$
  for some $k$. As above, we define the following realizable triples
  \begin{align*}
    t_1 =& \Triple{0}{-\sigma_1}{\sigma_1}\\
    t_2 =&\Triple{\sigma_1}{-\sigma_1}{2\sigma_1}\\
    & \ldots\\
    t_{|d_0|} =&\Triple{|d_0|\sigma_1 - \sigma_1}{-\sigma_1}{|d_0|\sigma_1}\\
    u_1 =& \Triple{\sigma_2}{-\sigma_2}{0}\\
    u_2 =& \Triple{2\sigma_2}{-\sigma_2}{\sigma_2}\\
    & \ldots\\
    u_{|d_1|} =& \Triple{|d_1|\sigma_2}{-\sigma_2}{|d_1|\sigma_2 - \sigma_2}.
  \end{align*}
  This allows us to form the following triple
  \begin{align*}
    \tilde{t} &= t_1 \bullet t_2 \bullet \ldots \bullet t_{|d_0|} \bullet t \bullet u_{|d_1|} \bullet \ldots \bullet u_2 \bullet u_1 \\
    &= \Triple{0}{2k + 1 + p - (d_0 + d_1)}{0},
  \end{align*}
  which then also has to be in $\Im(\mathcal{T})$. But, by definition,
  $p - (d_0 + d_1)$ is an even number, hence $2k + 1 + p - (d_0 +
  d_1)$ is odd, contrary to
  remark~\ref{RemarkTripleGlobalDegree}. Hence, the assumption that
  $\Triple{d_0}{2k+1+p}{d_1}$ is in the image $\Im(\mathcal{T})$
  cannot hold.
\end{proof}

A consequence of the above is that we can identify equivariant
homotopy classes with their associated degree triples. At this point
we underline that the proof of theorem~\ref{Classification1} and
lemma~\ref{HomotopyOnCylinder} shows that when two type I normalized
$G$-maps $f, g\colon X \to S^2$ have the same degree triple, then they
are not only equivariantly homotopic, but they are equivariantly
homotopic rel $C = C_0 \cup C_1$. This will be of particular
importance in the next section.

\subsubsection{Type  II}

In this section, the torus $X$ -- still defined in terms of the
standard square lattice -- is equipped with the type II involution
\begin{align*}
  T\colon X &\to X\\
  [z] &\mapsto [i\overline{z}].
\end{align*}
As before, we can regard the torus $X$ as a $G$-CW complex. The $G$-CW
structure we use for the type II involution is depicted in
figure~\ref{MSInvolutionFigureCW}.
\label{ParagraphGeometryOfClass2}
The first major difference when compared with type I is the fact that
the fixed point set in the torus consists of a single circle $C$, not
two.  In the universal cover $\mathbb{C}$ this circle can be described
as the diagonal line $x + ix$. Furthermore, the complement of the
circle $C$ in the torus is still connected. The choice of
``fundamental region'' in this case is not completely obvious. Indeed,
we need a new definition -- the notion of a fundamental region in the
sense of definition~\ref{DefinitionFundamentalRegion} is not suitable
for the type II involution:
\begin{definition}
  \label{PseudofundamentalRegion}
  A connected subset $R \subset X$ is called a \emph{pseudofundamental
    region} for the $G$-action on $X$ if there exists a subset $R'
  \subset \partial R$ in the boundary of $R$ such that $R\setminus R'$
  is a fundamental region.
\end{definition}

The idea behind the proof of the classification for type II is to identify a
pseudofundamental region $R \subset X$, then bring the maps into a
certain normal form such that we can collapse certain parts of
$\partial R$ and the maps push down to this quotient. In the quotient,
the geometry of the $T$-action is easier to understand and the image
of $R$ in the quotient will be fundamental region for the induced
$T$-action on the quotient.

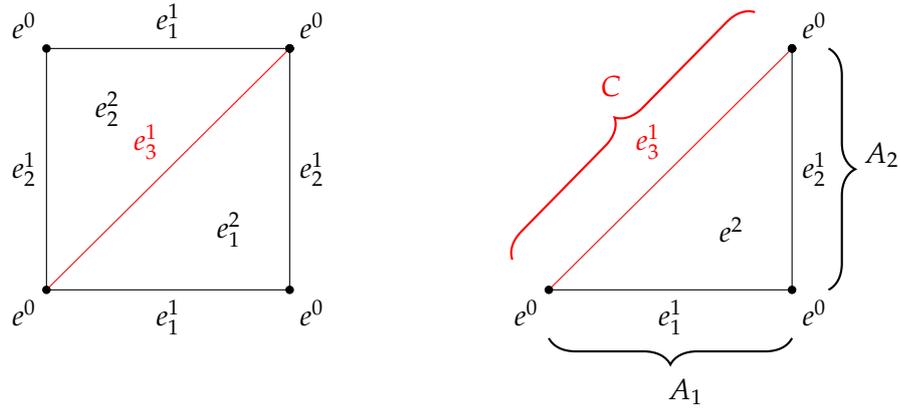
\begin{figure}[h]
  \centering
  \subfloat[$G$-CW Decomposition of $X$.]{%
    \label{MSInvolutionFigureCW}%
    \centering%
    \begin{tikzpicture}[scale=0.8]
      \draw
      (4,4) coordinate (a) node[above right] {$e^0$}
      (0,0) coordinate (b) node[below left] {$e^0$}
      (4,0) coordinate (c) node[below right] {$e^0$}
      (0,4) coordinate (d) node[above left] {$e^0$}
      (b) -- node [below] {$e^1_1$} (c)
      (a) -- node [right] {$e^1_2$} (c);
      \draw[color=red] (b) -- node [above left] {$e^1_3$} (a);
      \draw
      (b) -- node[left] {$e^1_2$} (d)
      (d) -- node[above] {$e^1_1$} (a);

      \node[align=center] at (3,1) {$e^2_1$};
      \node[align=center] at (1,3) {$e^2_2$};

      \fill [radius=2pt] (0,0) circle
      [] (4,4) circle
      [] (4,4) circle
      [] (4,0) circle
      [] (0,4) circle;

      \useasboundingbox (-2,5.5) rectangle (5.5,-2.5);
    \end{tikzpicture}}\qquad
  \subfloat[Pseudofundamental region $R \subset X$.]{%
    \label{MSInvolutionFigureCWDelta}%
    \centering%
    \begin{tikzpicture}[scale=0.8]
      \draw
      (4,4) coordinate (a) node[above right] {$e^0$}
      (0,0) coordinate (b) node[below left] {$e^0$}
      (4,0) coordinate (c) node[below right] {$e^0$}
      (b) -- node [below] {$e^1_1$} (c)
      (a) -- node [right] {$e^1_2$} (c);
      \draw[color=red] (b) -- node [above left] {$e^1_3$} (a);

      \node[align=center] at (3,1) {$e^2$};

      \draw[thick,decorate,decoration={brace,mirror,amplitude=10pt}] (0,-0.8) -- (4,-0.8) node[shift={(-1.4,-0.4)},below] {$A_1$};
      \draw[thick,decorate,decoration={brace,mirror,amplitude=10pt}] [xshift=6mm](4,0) -- (4,4) node[shift={(0.7,-1.1)},below] {$A_2$};
      \draw[color=red,thick,decorate,decoration={brace,amplitude=10pt}] (-0.6,0.5) -- (3.4,4.6) node[shift={(-1.9,-0.7)},below] {$C$};

      \fill [radius=2pt] (0,0) circle
      [] (4,4) circle
      [] (4,4) circle
      [] (4,0) circle;

      \useasboundingbox (-1.8,5) rectangle (5.8,-2.5);
    \end{tikzpicture}}
  \caption{Type II $G$-CW Setup of the Torus $X$.}.
\end{figure}

The pseudofundamental region we use for the type II $T$-action on $X$ is
\begin{align*}
  R = \left\{[x+iy]\colon 0 \leq x \leq 1 \text{ and } 0 \leq y \leq x\right\}
\end{align*}

See figure~\ref{MSInvolutionFigureCWDelta} for a depiction of this
fundamental region. Note that $T$-equivariance of a map requires the
values of this map along the circles $A_1$ and $A_2$ to be compatible,
because $T$ maps $A_1$ to $A_2$ and vice versa. To make this precise
(and to ease the notation) we make the following definition.
\begin{definition}
  \label{Class2CompatibilityCondition}
  For $f\colon X \to S^2$ resp. $f\colon R \to S^2$ we let
  $f_{A_j}\colon I \to S^2$ ($j=1,2$) be the map
  \begin{align*}
    f_{A_j}\colon t \mapsto 
    \begin{cases}
      f\left([t]\right) & \;\text{for $j=1$}\\
      f\left([1 + it]\right) & \;\text{for $j=2$}.
    \end{cases}
  \end{align*}
  For convenience we also set $A = A_1 \cup A_2$.
\end{definition}

Then we can reformulate what it means for a map $f\colon X \to S^2$ to
be equivariant:
\begin{remark}
  \label{RemarkClassIIEquivariance}
  Let $F\colon X \to S^2$ be a $G$-map. Then its restriction $f =
  \restr{F}{R}$ to $R$ satisfies
  \begin{enumerate}[(i)]
  \item $f(C) \subset E$ and
  \item $f_{A_1} = T \circ f_{A_2}$.
  \end{enumerate}
\end{remark}

\begin{lemma}
  \label{Class2EquivariantExtension}
  Maps $R \to S^2$ resp. homotopies $I \times X \to S^2$ with the
  above two properties extend uniquely to equivariant maps $X \to S^2$
  resp. equivariant homotopies $I \times X \to S^2$.
\end{lemma}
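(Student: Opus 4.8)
The plan is to imitate the proof of Lemma~\ref{Class1EquivariantExtension} almost verbatim; the only genuinely new input is the geometry of the pseudofundamental region $R$. First I would record the elementary fact that $R \cup T(R) = X$ and $R \cap T(R) = C \cup A$, where $A = A_1 \cup A_2$. This follows by writing a point of $X$ as $[x+iy]$ with $0 \le x,y \le 1$ and noting that $T([x+iy]) = [y+ix]$, so $T$ reflects the lower triangle $R$ onto the upper triangle $T(R)$; the lattice identifications (in particular $[it] = [1+it]$ and $[t+i] = [t]$) then show that $T$ interchanges the edges $A_1$ and $A_2$ and fixes $C$ pointwise, and that the open interiors of $R$ and $T(R)$ are disjoint in $X$. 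Given $f\colon R \to S^2$ satisfying properties (i) and (ii) of Remark~\ref{RemarkClassIIEquivariance}, I would then define $F\colon X \to S^2$ by $F(x) = f(x)$ for $x \in R$ and $F(x) = T\bigl(f(T(x))\bigr)$ for $x \in T(R)$.

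Next I would check that $F$ is well defined on the overlap $R \cap T(R) = C \cup A$. On $C$ one has $T(x) = x$, so the second clause reads $T(f(x))$, which equals $f(x)$ precisely because $f(C) \subseteq E = (S^2)^{G}$; this is property~(i). On $A_1$, writing $x = [t]$, the first clause gives $f_{A_1}(t)$, while $T(x) = [1+it] \in A_2$ makes the second clause read $T\bigl(f_{A_2}(t)\bigr)$; these agree by property~(ii), and the case $x \in A_2$ is symmetric (using that $T$ is an involution). Continuity of $F$ then follows from the gluing lemma, since $R$ and $T(R)$ are closed in $X$, cover $X$, and the two defining formulas are continuous and agree on $R \cap T(R)$.

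Then equivariance $F \circ T = T \circ F$ is a two-case check: for $x \in R$ we have $T(x) \in T(R)$, so $F(T(x)) = T\bigl(f(T(T(x)))\bigr) = T(f(x)) = T(F(x))$; for $x \in T(R)$ we have $T(x) \in R$, so $F(T(x)) = f(T(x))$ while $T(F(x)) = T\bigl(T(f(T(x)))\bigr) = f(T(x))$. Uniqueness is immediate: any equivariant extension $\widetilde{F}$ restricts to $f$ on $R$, and for $x \in T(R)$ the identity $\widetilde{F}(x) = T\bigl(\widetilde{F}(T(x))\bigr) = T\bigl(f(T(x))\bigr)$ forces $\widetilde{F} = F$, since $R \cup T(R) = X$. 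The homotopy statement is obtained by running exactly the same construction slicewise in $t$, i.e. on $I \times R$ (with $G$ acting trivially on the interval factor) in place of $R$, with properties (i) and (ii) imposed on each time slice.

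The only step that needs real care is the first one: correctly identifying $R \cap T(R)$ inside the quotient $X$, where the three vertices of the triangle $R$ are all identified to the single point $[0]$ and the edge $\{x=0\}$ is glued to $\{x=1\}$. In particular one must confirm that properties (i) and (ii) are mutually consistent at that vertex, where $C$, $A_1$ and $A_2$ all meet, so that the case definition of $F$ does not become ambiguous there. Everything after that is formal bookkeeping, completely parallel to Lemma~\ref{Class1EquivariantExtension}.
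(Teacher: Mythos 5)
Your proposal is correct and follows essentially the same route as the paper, which likewise defines the extension on the complementary pseudofundamental region $T(R)$ by $T$-conjugation and invokes the compatibility conditions along $C$ and $A_1 \cup A_2$ for well-definedness; your write-up simply carries out the overlap and equivariance checks in more detail. No gaps.
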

\begin{proof}
  This is basically the same proof as for
  lemma~\ref{Class1EquivariantExtension}. We let $R'$ be the opposite
  pseudofundamental region, which has the intersection $A_1 \cup A_2
  \cup C$ with $R$. Then we define the extension of the map on $R'$ to
  be the $T$-conjugate of the map on $R$. The compatibility condition
  along $A_1 \cup A_2$ (from
  definition~\ref{Class2CompatibilityCondition}) guarantees that the
  resulting extension is globally well-defined on $X$.
\end{proof}

\paragraph{Homotopy Invariant}

In the type I classification we have identified degree \emph{triples}
as the desired (complete) homotopy invariant; in the type II case we
need something slightly different. In order to obtain well-defined
notion of fixed point degree we must choose an orientation on the
fixed point circle $C \subset X$ (compare with
p.~\pageref{CjS1Identification}). \label{CircleOrientationRemark}
Although we assume for this paragraph that an orientation has already
been chosen, we will wait until paragraph~\ref{RedcutionToClassI}
(p.~\pageref{RedcutionToClassI}) before we fix this orientation on
$C$.

\begin{definition}
  \label{DefinitionDegreePair} (Degree pair map)
  The \emph{degree pair map} is the map
  \begin{align*}
    \mathcal{P}\colon \mathcal{M}_G(X,S^2) &\to \Z^2\\
    f &\mapsto \Pair{d_C}{d},
  \end{align*}
  where $d$ is the total degree of the map $f$ and $d_C$ is the
  fixed point degree of $f$. For a map $f\colon X \to S^2$ we define its
  \emph{degree pair} (or simply \emph{pair}) to be
  $\mathcal{P}(f)$. We call a given pair \emph{realizable} if it is
  contained in the image $\Im(\mathcal{P})$.
\end{definition}
We need to be careful to distinguish the \emph{degree pair}
$\Pair{d_C}{d}$ of an equivariant map $X \to S^2$ (for the type II
involution) from the \emph{fixed point degrees} $\Bideg{d_0}{d_1}$ of an
equivariant map $X \to S^2$ (for the type I involution), but this
should always be clear from the context. After having defined the
degree pair, we immediately obtain:
\begin{remark}
  \label{DegreePairInvariant}
  The degree pair is a $G$-homotopy invariant. \qed
\end{remark}

\paragraph{Reduction to Type I}
\label{RedcutionToClassI}

Although, at first glance, the type II involution appears to be quite
different from the type I involution, it is nevertheless possible to
use the results from the type I classification
(section~\ref{SectionMapsToSphereClassI}).

\begin{definition}
  \label{DefinitionClass2Normalized} (Type II normalization)
  Let $f\colon X \to S^2$ be a $G$-map. We say $f$ is \emph{type II
    normalized} if
  \begin{enumerate}[(i)]
  \item $f_{A_1} = f_{A_2} = c_{p_0}$, where
    $c_{p_0}\colon X \to S^2$ denotes the constant map whose image is
    $\{p_0\} \subset E$ and
  \item $f$ is normalized on the circle $C$ in the sense that, using
    the identifications of $C$ and $E$ with $S^1$, the map
    $\restr{f}{C}$ corresponds to $z \mapsto z^k$ for some
    $k$.
  \end{enumerate}
\end{definition}

\begin{proposition}
  \label{Class2Normalization}
  Any $G$-map $f\colon X \to S^2$ is $G$-homotopic to a map $f'$ which
  is type II normalized.
\end{proposition}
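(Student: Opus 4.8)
The plan is to achieve the two defining conditions of definition~\ref{DefinitionClass2Normalized} one after the other, each time building an equivariant homotopy of $f$ on a $G$-subcomplex of $X$ and then spreading it over all of $X$ via the equivariant homotopy extension property (corollary~\ref{G-HEP}). I would work with the $G$-CW structure of figure~\ref{MSInvolutionFigureCW}, in which the fixed circle $C$ is the closed $1$-cell $e^1_3$ together with $e^0$, the $1$-skeleton $A_1\cup A_2\cup C$ is a $G$-subcomplex on which $T$ fixes $C$ pointwise and interchanges $A_1 = e^1_1$ with $A_2 = e^1_2$, and where a direct check in the lattice model gives $A_1\cap A_2 = A_1\cap C = A_2\cap C = \{e^0\}$, so in particular $f([1]) = f([1+i]) = f(e^0)$. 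Recall also that $E\subset S^2$ is pointwise fixed by $T$.

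First I would pin the base point and normalize $C$. Since $e^0\in C$, remark~\ref{RemarkClassIIEquivariance} gives $f(e^0)\in E$; as $E$ is path-connected and $T$-fixed, a path in $E$ from $f(e^0)$ to $p_0$ is an equivariant homotopy of the map $\{e^0\}\to S^2$, and corollary~\ref{G-HEP} extends it over $X$, so we may assume $f(e^0) = p_0$. Then $\restr{f}{C}\colon C\to E$ is a based loop of some degree $k$, hence homotopic \emph{rel} $e^0$, inside $E$, to the standard power map $z\mapsto z^{k}$ (which also sends the base point to $p_0$); this homotopy lives in the fixed sets $C$ and $E$, so it is automatically equivariant, and extending it over $X$ by corollary~\ref{G-HEP} leaves $f(e^0) = p_0$ untouched and achieves condition~(ii).

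Next I would normalize $A_1$ and $A_2$. Since remark~\ref{RemarkClassIIEquivariance} forces $f_{A_1} = T\circ f_{A_2}$, it is enough to contract $f_{A_2}$: its endpoints both equal $e^0$, so $f_{A_2}$ is a loop in $S^2$ based at $f(e^0) = p_0$, and as $\pi_1(S^2) = 0$ it is null-homotopic rel endpoints, giving a homotopy from $f_{A_2}$ to $c_{p_0}$ stationary at $e^0$. Applying $T$ produces the matching homotopy on $A_1$, from $f_{A_1} = T\circ f_{A_2}$ to $T\circ c_{p_0} = c_{p_0}$, using $T(p_0) = p_0$. Together with the constant homotopy on $C$, which agrees with both of these at the shared point $e^0$, they glue to an equivariant homotopy on the $1$-skeleton $A_1\cup A_2\cup C$, and corollary~\ref{G-HEP} extends it over $X$. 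The resulting $f'$ satisfies $f'_{A_1} = f'_{A_2} = c_{p_0}$ and still has $\restr{f'}{C}$ equal to $z\mapsto z^{k}$, so it is type II normalized.

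The main thing to watch is the ordering: each step has to be carried out \emph{rel} the part of $\partial R$ already normalized --- which is why $e^0$ is pinned first, then $C$, then the two arcs --- and at each stage one must check the homotopy is genuinely $G$-equivariant. That is immediate on $C$ (it lies in the fixed set) and is forced on the pair $A_1, A_2$ by the $T$-conjugation device; the only substantive input is the simple-connectedness of $S^2$, which is exactly what lets $f_{A_2}$ be contracted, just as in the type~I argument.
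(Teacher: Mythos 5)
Your argument is correct and rests on the same two ingredients as the paper's proof: the simple-connectedness of $S^2$ to contract the arc loops, and the (equivariant) homotopy extension property for the $G$-CW pair to globalize each step. The only difference is cosmetic --- you normalize the base point, then $C$, then the arcs rel $C$, whereas the paper first contracts the arcs and then normalizes $C$ while dragging the arcs along, working on the pseudofundamental region $R$ with the ordinary HEP before equivariantly extending; both orderings go through.
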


\begin{proof}
  The statement is proved in several steps. Consider $X$ with the
  $G$-CW-decomposition as in figure~\ref{MSInvolutionFigureCW}
  consisting of the six cells $e^0$, $e^1_{1,2,3}$, and
  $e^2_{1,2}$. Restrict $f$ to the pseudo-fundamental region $R$. Then
  $A = A_1 \cup A_2$ is a subcomplex of $R$. We now successively apply
  two homotopies to $f$. Observe that the maps $f_{A_j}$ ($j=1,2$) are
  loops in $S^2$ at some point $p \in E$. Using the
  simply-connectedness of $S^2$ there exists a null-homotopy
  $\rho\colon I \times I \to S^2$ of $f_{A_1}$ to the constant curve
  $c_p$. For the first homotopy, we define $H$ on $\{0\} \times R$ to
  be the original map $f$ and on $I \times A_1$ resp. $I \times A_2$
  we set
  \begin{align*}
    (t,s) \mapsto \rho(t,s) \;\;\text{resp.}\; (t,1+is) \mapsto T \circ \rho(t,s).
  \end{align*}
  By the homotopy extension property (HEP, see proposition~\ref{HEP})
  this can be extended to a homotopy $H\colon I \times R \to S^2$ and
  the resulting map $H(1,\cdot)$, which we denote by
  $\smash{\tilde{f}}$, has the property that
  $\smash{\tilde{f}}(A_j) = \{p\}$ ($j=1,2$).

  Define the second homotopy as follows: On $\{0\} \times R$ we define
  $H$ to be the map $\smash{\tilde{f}}$. On $I \times C$ we make a
  homotopy $H_C\colon I \times C \to E$ to the normalized form $z
  \mapsto z^k$. In particular, $e_0$ will then be mapped to $p_0 \in
  E$. But when we change the map on $C$ we only need to adjust it
  accordingly on $A$, hence we define the homotopy on $I \times A_1$
  resp. $I \times A_2$ to be
  \begin{align*}
    (t,s) \mapsto H_C(t,0) \;\text{resp.}\; (t,1+is) \mapsto H_C(t,0).
  \end{align*}
  The map $H$ is then well-defined on $I \times e_0$ and extends by
  the HEP to a homotopy $I \times R \to S^2$. By
  lemma~\ref{Class2EquivariantExtension}, we can equivariantly extend
  this homotopy to a homotopy $I \times X \to S^2$ and the resulting
  map has the desired properties.
\end{proof}

For the reduction to type I it will be very useful to replace the
torus $X$ with the quotient $X/A$. This will be formalized next. With
the above remark we can assume without loss of generality that any map
in $\mathcal{M}_G(X,S^2)$ is type II normalized. Type II normalized
maps $f\colon X \to S^2$ push down to the quotient $X/A \to S^2$. Let
us denote the topological quotient $X \to X/A$ by $\pi_A$ and the
image of $A$ under $\pi_A$ with $A/A$.  In the next lemma we
study the geometry of $X/A$.
\begin{lemma}
  \label{ClassIIQuotientSphereIdentification}
  We state:
  \begin{enumerate}[(i)]
  \item The $T$-action on $X$ pushes down to the quotient.
  \item The projection map $\pi_A\colon X \to X/A$ is equivariant.
  \item The space $(X/A,T)$ can then be equivariantly identified with
    $(S^2, T)$, where $T$ acts on $S^2$ as the standard reflection
    along the equator.
  \end{enumerate}
\end{lemma}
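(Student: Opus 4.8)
The plan is to move everything onto the closed unit square and collapse once. Realise $X=\C/\LatGen{1}{i}$ as the quotient $q\colon Q\to X$ of $Q=[0,1]^2$ under the edge identifications $(x,0)\sim(x,1)$ and $(0,y)\sim(1,y)$. The lift of $T([z])=[i\overline{z}]$ to $\C$ is $z\mapsto i\overline{z}$, which in the coordinates $z=x+iy$ is the reflection $\tilde T(x,y)=(y,x)$ across the diagonal $\delta=\{(x,x):x\in[0,1]\}$; it permutes $1\leftrightarrow i$, hence preserves $\Lambda$, and it carries $Q$ onto itself, so it descends to $T$ and satisfies $q\circ\tilde T=T\circ q$. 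Under $q$ the bottom/top edge pair maps onto $A_1$ and the left/right edge pair onto $A_2$, so $q^{-1}(A)=\partial Q$, while $q(\delta)=C$.

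For (i) and (ii): since $T$ interchanges $A_1$ and $A_2$, the set $A$ is $T$-invariant. Hence $\pi_A\circ T$ is constant on the fibres of $\pi_A$, so there is a unique continuous map $\overline T\colon X/A\to X/A$ with $\overline T\circ\pi_A=\pi_A\circ T$, and $T^2=\mathrm{id}$ forces $\overline T^2=\mathrm{id}$. This is precisely the statement that the $T$-action pushes down to $X/A$ and that $\pi_A$ is equivariant; from now on write $T$ again for $\overline T$.

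For (iii): as $q^{-1}(A)=\partial Q$ and $q$ is injective on $\mathrm{int}\,Q$, the composite $\pi_A\circ q\colon Q\to X/A$ collapses $\partial Q$ to a single point $\ast=\pi_A(A)$ and is injective elsewhere; since $Q$ is compact and $X/A$ is Hausdorff this exhibits $X/A$ as $Q/\partial Q$, which is homeomorphic to $S^2$. Under this identification the pushed-down $T$ corresponds to the involution induced by $\tilde T$ on $Q/\partial Q$. To recognise the latter as the standard equatorial reflection I would fix the explicit homeomorphism $Q/\partial Q\cong S^2$ given by the radial construction based at the centre $c=(\tfrac12,\tfrac12)$ of $Q$ (note $c\in\delta$): send each segment from $c$ to $\partial Q$ linearly onto a meridian joining the pole $c$ to the pole $\ast$. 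Because $\tilde T$ fixes $c$ and reflects directions at $c$ across $\delta$, the induced self-map of $S^2$ fixes both poles and interchanges meridians symmetric about the circle $\overline\delta$ that is the image of $\delta$ (a circle through the two poles); this is exactly a reflection of $S^2$ with fixed-point set $\overline\delta$. A final conjugation by a homeomorphism of $S^2$ carrying $\overline\delta$ to the equator $E$ turns it into the standard reflection along $E$, giving the asserted equivariant identification $(X/A,T)\cong(S^2,T)$.

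The verifications $q\circ\tilde T=T\circ q$, $q^{-1}(A)=\partial Q$, and $Q/\partial Q\cong S^2$ are routine. The only point needing a genuine topological input is the last step --- that an orientation-reversing involution of $S^2$ with a tame circle as fixed-point set is conjugate to the standard equatorial reflection (equivalently, that it swaps the two complementary discs, which holds here since $T\neq\mathrm{id}$). I expect this to be the main, though still routine, obstacle; no analysis is involved, and it can be sidestepped entirely by carrying the explicit radial model a little further.
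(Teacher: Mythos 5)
Your proposal is correct and follows essentially the same route as the paper: $A$ is $T$-invariant so the involution descends, and $X/A$ is recognised as two discs interchanged by $T$ and glued along the $T$-fixed circle $\pi_A(C)\cup\{\ast\}$, hence equivariantly a $2$-sphere with equatorial reflection. The one step you flag as a potential obstacle (conjugating an involution with circle fixed-point set to the standard reflection) is resolved in the paper exactly by the sidestep you propose yourself: map one closed disc onto a closed hemisphere carrying the fixed circle to the equator, and define the map on the other disc by equivariant extension.
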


\begin{proof}
  Regarding (i): First we prove that the $T$-action on $X$ pushes down
  to $X/A$: The $T$-action stabilizes $A \subset X$. Hence, $T$ is
  compatible with the relation on $X$, which identifies the points in
  $A$. By theorem~\ref{TopologyMapInducedOnQuotient} $T$ therefore
  induces a map $X/A \to X/A$, which we, by abuse of notation, again
  denote by $T$. Since $T\colon X \to X$ is an involutive
  homeomorphism on $X$, the same applies to $T\colon X/A \to X/A$ (see
  e.\,g. theorem~\ref{TopologyMapInducedOnQuotient}).

  Regarding (ii): This is true by definition of the $T$-action on the
  quotient: $T([x]) = [T(x)]$.

  Regarding (iii): The space $X/A$ consists of two $2$-cells, glued
  together along their boundary, which is stabilized by $T$. We can
  define an equivariant homeomorphism to $S^2$ by mapping one of the
  $2$-cells to e.\,g. the lower hemisphere of the sphere, mapping the
  boundary of the $2$-cell to the equator. The map on the second cell
  can then be defined by equivariant extension, thus mapping it to the
  upper hemisphere.
\end{proof}

Thus we can from now on identify $X/A$ with $S^2$ as topological
manifolds and by this identification equip $X/A$ with the smooth
structure from $S^2$.  We denote the point in $S^2$ corresponding to
$A/A \in X/A$ by $P_0$. The projection map $\pi_A$ induces an
equivariant map
\begin{align}
  \label{TypeIIDiscussionS2}
  \pi_{S^2}\colon X \to S^2,
\end{align}
whose restriction $X\setminus A \to S^2\setminus\{P_0\}$ is smooth.
The degree of $\pi_{S^2}$ is either $+1$ or $-1$ but we can assume that
it is $+1$ (if it were $-1$, we could compose with a reflection along
the equator in $S^2$). We must be careful to not confuse the 2-sphere
introduced in (\ref{TypeIIDiscussionS2}) with the 2-sphere which has
been identified as a deformation retract of $\mathcal{H}_{(1,1)}$. In
particular their equators must be considered as distinct objects. Thus
we denote the equator of the 2-sphere which appears as a quotient of
the torus $X$ by $E'$.

Now, let $(X',T)$ be a type I torus, that is, the torus $\C/\Lambda$
equipped with the type I involution $T$ and denote the north resp. the
south pole of $S^2$ by $O_\pm$. Observe that also the sets
$X'\setminus C_0$ and $S^2\setminus\{O_\pm\}$ are $G$-spaces. The
former is an open cylinder and the latter is a doubly punctured
2-sphere. We note:
\begin{remark}
  \label{Class1TorusWithSphereIdentification}
  There exists a smooth and orientation preserving $G$-diffeomorphism
  \begin{align*}
    \Psi\colon X'\setminus C_0 \xrightarrow{\;\sim\;} S^2\setminus \{O_\pm\}.
  \end{align*}
\end{remark}

\begin{proof}
  Regard $S^2$ as being embedded as the unit-sphere in $\R^3$. Then we
  define the map $\Psi$ as follows:
  \begin{align*}
    \Psi\colon X'\setminus C_0 &\xrightarrow{\;\sim\;} S^2\setminus \{O_\pm\}\\
    [x+iy] &\mapsto
    \begin{pmatrix}
      \sqrt{1-(2y-1)^2} \cos(2\pi x) \\
      \sqrt{1-(2y-1)^2} \sin(2\pi x) \\
      2y-1
    \end{pmatrix}
  \end{align*}
  This defines an equivariant and orientation preserving
  diffeomorphism.
\end{proof}

On p.~\pageref{CircleOrientationRemark} we mentioned that we still
need to decide for an orientation on the circle $C \subset X$, which
is equivalent to chosing an identification of $C$ with $S^1$ -- we
will do this now. Observe that the restriction of the above map $\Psi$
to $C_1 \subset X'$ identifies the circle $C_1$ with the equator $E'$
in $S^2$, which the map $\pi_{S^2}$ identifies with with $C \subset
X$. We define the identifications of $C \subset X$ and $E' \subset S^2$
with $S^1$ in terms of this identification with $C_1 \subset
X'$. Analogously to definition~\ref{DefinitionClass2Normalized} we
make the following
\begin{definition}
  We call a $G$-map $f\colon S^2 \to S^2$ \emph{type II
    normalized}\footnote{as a map defined on the 2-sphere, not on the
    type II torus $X$}, if the north pole $O_+$ and the south pole
  $O_-$ both are mapped to $p_0 \in E \subset S^2$ and the map is
  normalized on the equator $E'$ according to the above identification
  of $E'$ with $S^1$.
\end{definition}

\begin{remark}
  Any $G$-map $S^2 \to S^2$ is equivariantly homotopic to a type II
  normalized $G$-map.
\end{remark}

\begin{proof}
  This follows with the equivariant homotopy extension
  (corollary~\ref{G-HEP}): There exists a $G$-CW decomposition of
  $S^2$ such that the set $B = \{O_\pm\} \cup E'$ is a $G$-CW
  subcomplex of $S^2$.
\end{proof}

To summarize the above discussion: Above we have constructed a map
from the set of type II normalized $G$-maps $X \to S^2$ to the set of
type II normalized $G$-maps $S^2 \to S^2$. We denote this map by
$\psi$ and state:
\begin{proposition}
  \label{Class2CorrespondenceXS2}
  The map $\psi$ is a bijection.  Furthermore, the degree pairs of $f$
  and $\psi(f)$ agree.
\end{proposition}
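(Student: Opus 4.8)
The plan is to exhibit an explicit two-sided inverse of $\psi$ and then read off the two entries of the degree pair from the properties of the quotient projection. Recall that, by construction, $\psi$ sends a type~II normalized $G$-map $f\colon X\to S^2$ to the map $\bar f$ obtained by pushing $f$ down over $A$ — legitimate because such an $f$ is constant $\equiv p_0$ on $A$ — regarded as a $G$-map $S^2\to S^2$ via the equivariant identification $X/A\cong S^2$ of lemma~\ref{ClassIIQuotientSphereIdentification}; if $\pi_{S^2}\colon X\to S^2$ denotes the induced equivariant projection, then $\psi(f)\circ\pi_{S^2}=f$ by the universal property of the quotient. The candidate inverse is the map $h\mapsto h\circ\pi_{S^2}$.

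First I would check that $h\circ\pi_{S^2}$ is again a type~II normalized $G$-map $X\to S^2$ whenever $h\colon S^2\to S^2$ is one: it is a $G$-map because $\pi_{S^2}$ is equivariant (lemma~\ref{ClassIIQuotientSphereIdentification}(ii)); since $\pi_{S^2}(A)=P_0$ and $P_0$ is, by the way the identifications of $C$ and $E'$ with $S^1$ were fixed, exactly the point of $E'$ that an equator-normalized map carries to $p_0$, the composite is $\equiv p_0$ on $A$; and since $\pi_{S^2}$ restricts on $C$ to the chosen orientation-preserving diffeomorphism $C\xrightarrow{\sim}E'$, the restriction $(h\circ\pi_{S^2})|_C$ is $z\mapsto z^k$ as soon as $h|_{E'}$ is. The same bookkeeping, read in the other direction, shows that $\psi(f)$ is type~II normalized as a map $S^2\to S^2$. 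The two constructions are then mutually inverse: $\psi(f)\circ\pi_{S^2}=f$ is the universal property, and $\psi(h\circ\pi_{S^2})=h$ because $h\circ\pi_{S^2}$ descends over $A$ precisely to $h$ (using that $\pi_{S^2}$ is surjective). This yields the bijectivity.

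For the degree pair, write $d$ and $d_C$ for the total degree and the fixed point degree of $f$. The projection $\pi_{S^2}$ is a continuous map of closed oriented surfaces of degree $+1$ (the collapse $X\to X/A$ is a homeomorphism off the $1$-complex $A$, and the orientation on $S^2$ was chosen so that this degree is $+1$), so multiplicativity of the Brouwer degree under composition gives
\begin{align*}
  d=\deg f=\deg\bigl(\psi(f)\circ\pi_{S^2}\bigr)=\deg\psi(f)\cdot\deg\pi_{S^2}=\deg\psi(f).
\end{align*}
Since $\pi_{S^2}$ maps $C$ orientation-preservingly onto $E'$ and $f|_C=\psi(f)|_{E'}\circ\pi_{S^2}|_C$, one similarly gets $d_C=\deg(f|_C)=\deg\bigl(\psi(f)|_{E'}\bigr)$, and the right-hand side is by definition the fixed point degree of $\psi(f)$; hence $f$ and $\psi(f)$ have the same degree pair. (As $\pi_{S^2}$ is not smooth along $A$, the degree identity is to be understood homologically, or computed with a regular value lying away from $P_0$ and its preimage.)

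I expect the main obstacle to be exactly the bookkeeping in the second paragraph: one must make sure that the identifications fixed in the preceding pages — of $C$ and of $E'$ with $S^1$, the positions of $P_0$ and of the poles $O_\pm$ on $S^2$, and the matching of $p_0$ with $1\in S^1$ — are mutually compatible, so that ``type~II normalized'' is honestly preserved in both directions of $\psi$. Granting that, the proposition is essentially formal: it is the universal property of a quotient together with multiplicativity of degree.
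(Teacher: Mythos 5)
Your argument is correct and follows essentially the same route as the paper: the paper proves injectivity via $f=\psi(f)\circ\pi_{S^2}$ and surjectivity by exhibiting the preimage $f'\circ\pi_{S^2}$ (which is exactly your two-sided inverse $h\mapsto h\circ\pi_{S^2}$), and it establishes equality of degree pairs from $\deg\pi_{S^2}=+1$ together with the orientation-preserving identification of $C$ with the equator. The normalization bookkeeping you flag as the main obstacle is likewise treated as routine in the paper.
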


\begin{proof}
  Let us begin by reviewing how the map $\psi$ is defined: If $f\colon
  X \to S^2$ is a type II normalized map, it is by definition constant
  along $A$. Hence it pushes down to a $G$-map $f\colon X/A \to
  S^2$. Now $X/A$ can be equivariantly identified with $S^2$, hence,
  by means of this identification, $f$ defines a $G$-map $S^2 \to
  S^2$. By construction these two maps are related by the following
  diagram:
  \[
  \begin{xy}
    \xymatrix{
      X \ar[d]_{\pi_{S^2}} \ar[dr]^f \\
      S^2 \ar[r]_{f'} & S^2
    }
  \end{xy}
  \]
  Injectivity of this map is clear: If $\psi(f) = \psi(g)$, then also
  \begin{align*}
    f = \psi(f) \circ \pi_{S^2} = \psi(g) \circ \pi_{S^2} = g.
  \end{align*}
  For the surjectivity, let $f'\colon S^2 \to S^2$ be a type II
  normalized $G$-map, then $f = f' \circ \pi_{S^2}\colon X \to S^2$ is a
  type II normalized $G$-map such that $\psi(f) = f'$. 

  Since $\pi_{S^2}$ has degree $+1$ and using the functorial property
  of homology we see that the total degrees of $f$ and $f'$ must
  agree. Regarding the fixed point degree: $\pi_{S^2}$ homeomorphically
  maps the circle $C \subset X$ to the equator $E \subset S^2$. Now
  the only possibility would be that the fixed point degrees differ by a
  sign. But, by definition, the identifications of $C \subset X$ and
  $E \subset S^2$ with $S^1$ differ only by $\pi_{S^2}$ and above the
  orientation of $C \subset X$ has been chosen such that
  $\restr{\pi_{S^2}}{C}$ preserves the orientation.
\end{proof}

This correspondence between type II normalized $G$-maps $X \to S^2$
and type II normalized $G$-maps $S^2 \to S^2$ also implies a statement
on the level of equivariant homotopy:
\begin{remark}
  \label{Class2HomotopiesLiftFromSphere}
  Given two type II normalized $G$-maps $f,g\colon X \to S^2$ and an
  equivariant homotopy $H\colon S^2 \to S^2$ between the induced maps
  $f',g'\colon S^2 \to S^2$, then $f$ and $g$ are also equivariantly
  homotopic.
\end{remark}

\begin{proof}
  Compose the homotopy with the map $\id \times \pi_{S^2}$.
\end{proof}

We now describe a basic correspondence between type II normalized
$G$-maps $S^2 \to S^2$ and type I normalized $G$-maps $X' \to
S^2$. Assume $f$ is a type II normalized $G$-map $S^2 \to S^2$. This
induces a a map
\begin{align*}
  \phi(f)\colon X' &\to S^2\\
  p &\mapsto
  \begin{cases}
    f \circ \Psi (p) & \;\text{if $p \not\in C_0$}\\
    p_0 & \;\text{if $p \in C_0$}
  \end{cases}
\end{align*}
Since $f$ is type II normalized, i.\,e. $f(O_\pm) = p_0$, the map
$\phi(f)$ is continuous everywhere and type I normalized as a map $X'
\to S^2$. By construction $f$ and $\phi(f)$ are related by
\begin{align}
  \label{PsiCorrespondence}
  \Restr{f}{S^2\setminus\{O_\pm\}} \circ \Psi = \Restr{\phi(f)}{X'\setminus
    C_0},
\end{align}
where $\Psi$ is the orientation preserving diffeomorphism from
remark~\ref{Class1TorusWithSphereIdentification}.  The reduction to
type I is now summarized in the following two statements:
\begin{proposition}
  \label{LemmaClass2ReductionToClass1}
  The above map $\psi$ defines a bijection between type II normalized
  $G$-maps $S^2 \to S^2$ and type I normalized $G$-maps $X' \to S^2$ whose
  fixed point degree along the circle $C_0$ is zero. For a type II
  normalized $G$-map $f$, the degree pair of $f$ is $\Pair{d_C}{d}$
  iff the degree triple of $\phi(f)$ is $\Triple{0}{d}{d_C}$.
\end{proposition}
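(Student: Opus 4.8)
The plan is to unpack the statement into four routine verifications, carried out in this order: (1) the assignment $\phi$ is \emph{well-defined}, i.e.\ $\phi(f)$ really is a type~I normalized $G$-map $X'\to S^2$ whose fixed point degree along $C_0$ vanishes; (2) $\phi$ is \emph{injective}; (3) $\phi$ is \emph{surjective} onto that target set; (4) $\phi$ sends the degree pair $\Pair{d_C}{d}$ to the degree triple $\Triple{0}{d}{d_C}$. Everything hinges on the orientation-preserving $G$-diffeomorphism $\Psi\colon X'\setminus C_0\xrightarrow{\;\sim\;}S^2\setminus\{O_\pm\}$ of remark~\ref{Class1TorusWithSphereIdentification} together with the identity (\ref{PsiCorrespondence}): away from the ``bad loci'' $C_0$ and $\{O_\pm\}$ the maps $\phi(f)$ and $f$ differ merely by precomposition with a diffeomorphism, so the whole problem reduces to controlling the maps near those loci.

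For well-definedness I would note that $\phi(f)$ is continuous at $C_0$ because, as $p\to C_0$, $\Psi(p)$ accumulates on $\{O_\pm\}$ (this is visible directly from the formula for $\Psi$, which matches the end of the cylinder $X'\setminus C_0$ at $C_0$ with the two punctures), and $f(O_\pm)=p_0$ by type~II normalization, so $f\circ\Psi(p)\to p_0$, the prescribed value on $C_0$; equivariance is inherited from $\Psi$ and $f$ off $C_0$ and is automatic on $C_0$, whose image $p_0$ lies in the fixed set $E$. Normalization is then read off: $\restr{\phi(f)}{C_0}\equiv p_0$ is the loop $z\mapsto z^0$, giving fixed point degree $0$ there, while $\restr{\phi(f)}{C_1}=\restr{f}{E'}\circ\restr{\Psi}{C_1}$ has the form $z\mapsto z^k$ because the identifications of $C_1\subset X'$, of $E'\subset S^2$ and of $C\subset X$ with $S^1$ were all set up (pp.~\pageref{CjS1Identification}, \pageref{CircleOrientationRemark}) precisely so that $\restr{\Psi}{C_1}$ becomes the identity of $S^1$. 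Injectivity is the one-liner: $\phi(f)=\phi(g)$ forces $f\circ\Psi=g\circ\Psi$ on $X'\setminus C_0$, hence $f=g$ on the dense subset $S^2\setminus\{O_\pm\}$, hence $f=g$. For surjectivity, given a type~I normalized $G$-map $h$ with $\restr{h}{C_0}\equiv p_0$, I would set $f:=h\circ\Psi^{-1}$ away from the poles and $f(O_\pm):=p_0$; continuity at the poles holds because $h$, being continuous and constantly $p_0$ on the entire circle $C_0$, is $\varepsilon$-close to $p_0$ on a neighbourhood of $C_0$ whose $\Psi$-image contains a punctured neighbourhood of $\{O_\pm\}$. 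One then checks $f$ is $G$-equivariant and type~II normalized and that $\phi(f)=h$.

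The invariant comparison is where I expect to spend the most care, although the method is exactly the preimage-counting used in lemma~\ref{MapExtensionDegreeEquality} and in proposition~\ref{TripleBuildingBlocks}(ii). After a smooth approximation of $f$ — harmless, since it changes neither $d$ nor $d_C$ and produces a homotopic $\phi(f)$ — I may assume $f$ is smooth, hence $\phi(f)$ is smooth on $X'\setminus C_0$, and then smooth $\phi(f)$ across $C_0$ as well (it is locally constant there, so this is invisible to regular fibers over points $\neq p_0$). Choosing a regular value $q\neq p_0$ of $\phi(f)$, all of $\phi(f)^{-1}(q)$ lies in $X'\setminus C_0$, so by (\ref{PsiCorrespondence}) it equals $\Psi^{-1}(f^{-1}(q))$, with $q$ also a regular value of $f$ not hit by $O_\pm$; since $\Psi$ preserves orientation the signed counts agree, whence $\deg\phi(f)=\deg f=d$. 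The fixed point degrees have already been identified above: $0$ along $C_0$, and $\deg\bigl(\restr{\phi(f)}{C_1}\bigr)=\deg\bigl(\restr{f}{E'}\bigr)=d_C$ because $\restr{\Psi}{C_1}\colon C_1\to E'$ is orientation-preserving. Hence $\mathcal{T}(\phi(f))=\Triple{0}{d}{d_C}$.

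The genuine obstacle is therefore not any single deep step but the orientation and identification bookkeeping: one must keep straight the several copies of $S^1$ in play ($C_0$, $C_1$, $C\subset X$, $E'\subset S^2$, $E\subset S^2$) and confirm that the conventions fixed on pages~\pageref{CjS1Identification}--\pageref{CircleOrientationRemark} make every relevant restriction of $\Psi$ (and of $\pi_{S^2}$) a degree-$+1$ identification, so that no stray sign enters the triple. Once that is in hand, the degree statement follows from the standard smoothing-and-counting device already used repeatedly in this chapter.
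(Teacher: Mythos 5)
Your proposal is correct and follows essentially the same route as the paper's proof: injectivity and surjectivity via the correspondence $\restr{f}{S^2\setminus\{O_\pm\}}\circ\Psi=\restr{\phi(f)}{X'\setminus C_0}$ together with continuity across the poles and $C_0$, and the degree comparison by equivariant smoothing and counting preimages of a regular value under the orientation-preserving $\Psi$, with the fixed point degrees matching by the chosen identifications of $C_1$, $E'$ and $C$ with $S^1$. The only difference is that you spell out the well-definedness and the continuity argument at the poles in more detail than the paper, which treats these points in the discussion preceding the proposition.
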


\begin{proof}
  Regarding the injectivity: Let $f$ and $g$ be two type II normalized
  $G$-maps $S^2 \to S^2$ such that $\phi(f) = \phi(g)$.  By
  (\ref{PsiCorrespondence}) it follows that
  $\restr{f}{S^2\setminus\{O_\pm\}} =
  \restr{g}{S^2\setminus\{O_\pm\}}$. Since $f$ and $g$ are assumed to
  be type II normalized, hence in particular continuous, this implies
  that $f = g$. Regarding the surjectivity: Let $f'$ be a type I
  normalized $G$-map $X' \to S^2$ with fixed point degree
  $\Bideg{0}{d_1}$. Then (\ref{PsiCorrespondence}) defines a map
  $f\colon S^2\setminus\{O_\pm\} \to S^2$ which, by defining $f(O_\pm) =
  p_0$, uniquely extends to a type II normalized $G$-map $S^2 \to
  S^2$. By construction we obtain $\phi(f) = f'$.
  
  Regarding the correspondence between the degree invariants: Assume
  we are given a $G$-map $f\colon S^2 \to S^2$ with degree pair
  $\Pair{d_C}{d}$. By corollary~\ref{SmoothApproximation2}, $f$ can be
  equivariantly smoothed. Note that the map $\Psi$ in
  (\ref{PsiCorrespondence}) is an orientation preserving
  diffeomorphism. This implies that $\phi(f)$ is smooth, at least away
  from the boundary circle $C_0$. Using
  theorem~\ref{SmoothApproximation1} we can smooth $\phi(f)$ near the
  circle $C_0$ while keeping it unchanged away from $C_0$. Now, let
  $q$ be a regular value of $f$ such that its preimage points
  $p_1,\ldots,p_m$ are contained in $S^2\setminus\{O_\pm\}$. Then $q$
  is also a regular of $\phi(f)$ and its preimage points are
  $\Psi^{-1}(p_j)$, $j=1,\ldots,m$. Since $\Psi$ preserves the
  orientation, $\deg f = \deg \phi(f)$. By definition of the
  identification of $E \subset S^2$ with $S^1$ (through $\Psi$), the
  fixed point degree $d_C$ remains unchanged as well. Finally, by
  construction, the fixed point degree of $\phi(f)$ along $C_0$ is zero.
\end{proof}

Now we can show that equivariant homotopies for maps $X' \to S^2$
induce equivariant homotopies for maps $S^2 \to S^2$:
\begin{proposition}
  \label{LemmaClass2ReductionToClass1Homotopy}
  Given two type II normalized $G$-maps $f,g\colon S^2 \to S^2$ with
  the same degree pair, then they are $G$-homotopic rel $\{O_\pm\}$.
\end{proposition}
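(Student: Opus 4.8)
The plan is to pull the statement back to a type I torus via the correspondence of proposition~\ref{LemmaClass2ReductionToClass1}, settle it there with the type I classification, and then push the resulting homotopy back down to $S^2$. So first I would put $f' = \phi(f)$ and $g' = \phi(g)$; these are type I normalized $G$-maps $X' \to S^2$ whose fixed point degree along $C_0$ vanishes and which satisfy $f'|_{C_0} = g'|_{C_0} = c_{p_0}$ by the very definition of $\phi$. Since $f$ and $g$ have the same degree pair $\Pair{d_C}{d}$, proposition~\ref{LemmaClass2ReductionToClass1} gives that $f'$ and $g'$ have the same degree triple $\Triple{0}{d}{d_C}$. At this point I would invoke not merely theorem~\ref{Classification1}, but the sharper assertion recorded right after its proof: two type I normalized $G$-maps $X' \to S^2$ with equal degree triple are $G$-homotopic \emph{rel} $C = C_0 \cup C_1$. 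This supplies a $G$-homotopy $H'\colon I \times X' \to S^2$ from $f'$ to $g'$ which is stationary on $C$; in particular $H'(t,x) = p_0$ for every $t \in I$ and every $x \in C_0$.

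Next I would build the candidate homotopy on $S^2$ out of the $G$-diffeomorphism $\Psi$ of remark~\ref{Class1TorusWithSphereIdentification}, setting
\[
  H(t,p) =
  \begin{cases}
    H'\bigl(t,\Psi^{-1}(p)\bigr) & \text{if } p \notin \{O_\pm\},\\
    p_0 & \text{if } p \in \{O_\pm\}.
  \end{cases}
\]
By (\ref{PsiCorrespondence}), $H(0,\cdot)$ and $H(1,\cdot)$ agree with $f$ and $g$ on $S^2 \setminus \{O_\pm\}$, and they agree at the two poles as well since $f$ and $g$ are type II normalized; so $H$ has the correct endpoints, and it is stationary on $\{O_\pm\}$ by construction, hence rel $\{O_\pm\}$. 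Equivariance on $S^2 \setminus \{O_\pm\}$ is inherited from that of $H'$ and $\Psi$, and at the poles I would check it by hand: $T$ interchanges $O_+$ and $O_-$ while fixing $p_0 \in E$, so $H\bigl(t,T(O_\pm)\bigr) = p_0 = T(p_0) = T\bigl(H(t,O_\pm)\bigr)$.

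The one genuinely delicate point — and the main obstacle — is the continuity of $H$ along $I \times \{O_\pm\}$, and this is exactly where the rel $C_0$ property is essential. Reading off the formula for $\Psi$, a deleted neighborhood of $O_+$ in $S^2$ is carried by $\Psi^{-1}$ into a prescribed neighborhood of $C_0$ in $X'$ (the end $y \to 1$ of the open cylinder $X' \setminus C_0$), uniformly in $t$, and likewise a deleted neighborhood of $O_-$ goes into a prescribed neighborhood of $C_0$ (the end $y \to 0$). Since $H'$ is continuous and carries the compact set $I \times C_0$ to the single point $p_0$, for each $\varepsilon > 0$ there is a neighborhood $U$ of $I \times C_0$ with $H'(U)$ contained in the $\varepsilon$-ball about $p_0$; shrinking to a deleted neighborhood of $O_+$ whose $\Psi^{-1}$-image stays inside $U$ then forces $H(t,p)$ to be within $\varepsilon$ of $p_0 = H(t,O_+)$, which is continuity at $O_+$, and the identical argument works at $O_-$. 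Without the rel $C_0$ condition, the restrictions of $H'$ to $C_0$ at intermediate times need not be constant, the limit of $H'(t,\cdot)$ as one approaches $C_0$ need not exist, and $H$ would fail to be even well defined at the poles — which is why the bare output of theorem~\ref{Classification1} does not suffice here. Once continuity is in hand, $H$ is the desired $G$-homotopy rel $\{O_\pm\}$ from $f$ to $g$.
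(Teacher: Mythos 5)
Your proposal is correct and follows essentially the same route as the paper: pull back via $\phi$ to type I normalized maps on $X'$ with equal degree triple $\Triple{0}{d}{d_C}$, invoke the rel-$C$ strengthening of theorem~\ref{Classification1} to get a homotopy stationary on $C_0$, and transport it through $\Psi$ to $S^2$ with the poles sent constantly to $p_0$. Your explicit verification of continuity of $H$ along $I \times \{O_\pm\}$ is a detail the paper leaves implicit, and it correctly identifies why the rel-$C_0$ property is indispensable.
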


\begin{proof}
  Without loss of generality we can assume that $f$ and $g$ are both
  type II normalized $G$-maps $S^2 \to S^2$ with the same degree pair.
  Then, by proposition~\ref{LemmaClass2ReductionToClass1}, the induced
  maps $\phi(f),\phi(g)\colon X' \to S^2$ share the same degree
  triple. Hence there exists a $G$-homotopy $H'$ from $\phi(f)$ to
  $\phi(g)$ which is in particular relative with respect to the
  boundary circle $C_0$. In other words, throughout the homotopy, $H'$
  maps the circle
  $C_0$ to $p_0 \in E \subset S^2$. This homotopy then induces a an
  equivariant homotopy $H$ between $f$ and $g$:
  \begin{align*}
    H\colon I \times S^2 &\to S^2\\
    (t,p) &\to
    \begin{cases}
      H'(t,\Psi^{-1}(p)) & \;\text{if $p \not\in \{O_\pm\}$}\\
      p_0 & \;\text{if $p \in \{O_\pm\}$}
    \end{cases}
  \end{align*}
  Hence, $f$ and $g$ are equivariantly homotopic rel $\{O_\pm\}$.
\end{proof}

\paragraph{Main Result}

The main result in this section is the following
\begin{theorem}
  \label{Classification2}
  The $G$-homotopy class of a map $f \in \mathcal{M}_G(X,S^2)$ is
  uniquely determined by its degree pair $\mathcal{P}(f)$. The image
  $\Im(\mathcal{P})$ of the degree pair map
  $\mathcal{P}\colon \mathcal{M}_G(X,S^2) \to \Z^2$ consists of those
  pairs $\Pair{d_C}{d}$ satisfying
  \begin{align*}
    d \equiv d_C \mod 2.
  \end{align*}
\end{theorem}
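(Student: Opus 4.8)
The plan is to assemble the statement from the reduction machinery developed above, deducing both the completeness of the degree pair and the description of its image from the corresponding type~I results (Theorem~\ref{Classification1}). No essentially new geometry is needed; the work is in chaining together the previously established bijections and homotopy-lifting statements.

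First I would prove completeness. Let $f,g\colon X \to S^2$ be $G$-maps with $\mathcal{P}(f) = \mathcal{P}(g)$. By Proposition~\ref{Class2Normalization} there are type~II normalized $G$-maps $f_1,g_1$ with $f \simeq_G f_1$ and $g \simeq_G g_1$; since the degree pair is a $G$-homotopy invariant (Remark~\ref{DegreePairInvariant}), $f_1$ and $g_1$ still share a common degree pair. Passing through the bijection $\psi$ of Proposition~\ref{Class2CorrespondenceXS2}, the associated type~II normalized maps $\psi(f_1),\psi(g_1)\colon S^2 \to S^2$ again have equal degree pairs, so Proposition~\ref{LemmaClass2ReductionToClass1Homotopy} supplies an equivariant homotopy between them (even rel $\{O_\pm\}$). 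Finally Remark~\ref{Class2HomotopiesLiftFromSphere} lifts this homotopy to a $G$-homotopy $f_1 \simeq_G g_1$, whence $f \simeq_G g$.

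For the image I would chase a candidate degree pair $\Pair{d_C}{d}$ along the same chain of bijections. By Proposition~\ref{Class2Normalization}, the pair is realizable if and only if it is realized by a type~II normalized $G$-map $X \to S^2$; by the bijection $\psi$ (Proposition~\ref{Class2CorrespondenceXS2}) this happens if and only if it is realized by a type~II normalized $G$-map $S^2 \to S^2$; and by the bijection $\phi$ of Proposition~\ref{LemmaClass2ReductionToClass1} this in turn holds if and only if the triple $\Triple{0}{d}{d_C}$ lies in the image of the type~I degree triple map $\mathcal{T}$ (using Remark~\ref{BoundaryNormalization} to arrange type~I normalization on $X'$, so that the requirement ``fixed point degree along $C_0$ equal to zero'' is precisely the statement that the first entry of the triple is $0$, and so that no triples of this shape are lost in passing to normalized representatives). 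By Theorem~\ref{Classification1}, $\Triple{0}{d}{d_C} \in \Im(\mathcal{T})$ exactly when $d \equiv 0 + d_C \mod 2$, i.e. $d \equiv d_C \mod 2$, which is the asserted condition.

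I do not expect a real obstacle: all the substantive content is already packaged in the preceding propositions. The one point that merely needs care is checking that the parity constraint transports correctly under $\phi$ — namely that the ``one fixed point degree zero'' normalization built into $\phi$ specializes the type~I condition $d \equiv d_0 + d_1 \mod 2$ to the case $d_0 = 0$, yielding exactly $d \equiv d_C \mod 2$ — and that, on the completeness side, the three bijections and the homotopy-lifting step compose as claimed on normalized representatives.
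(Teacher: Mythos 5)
Your proposal is correct and follows essentially the same route as the paper's own proof: normalize to type~II form, push down to $S^2$ via $\psi$, transfer to the type~I torus via $\phi$, and invoke Theorem~\ref{Classification1} with $d_0=0$ for both completeness and the parity description of the image. The only cosmetic difference is that you phrase the image computation as a chain of ``if and only if'' equivalences, whereas the paper argues necessity and sufficiency separately; the content is identical.
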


\begin{proof}
  First we prove that the degree pairs $\Pair{d_C}{d}$ are a complete
  invariant of equivariant homotopy for maps $X \to S^2$. For this,
  let $f$ and $g$ be two $G$-maps $X \to S^2$ with the same degree
  pair $\Pair{d_C}{d}$. Without loss of generality we can assume that
  $f$ and $g$ are type II normalized
  (Proposition~\ref{Class2Normalization}). In this case, by
  proposition~\ref{Class2CorrespondenceXS2}, $f$ and $g$ push down to
  type II normalized maps $f',g'\colon S^2 \to S^2$ with the same
  degree pair. With
  proposition~\ref{LemmaClass2ReductionToClass1Homotopy} it now
  follows that $f'$ and $g'$ are equivariantly homotopic. Then
  remark~\ref{Class2HomotopiesLiftFromSphere} establishes the
  existence of an equivariant homotopy from $f$ to $g$.

  Regarding the image of $\Im(\mathcal{P})$: Let $d_C$ be an
  integer. We now have to compute all possible integers $d$ such that
  the degree pair $\Pair{d_C}{d}$ is contained in the
  $\Im(\mathcal{P})$. Let us first deduce a necessery condition for
  the given degree pair to be in the image $\Im(\mathcal{P})$. For
  this, let $f\colon X \to S^2$ be a $G$-map with the aforementioned
  degree pair. By proposition~\ref{Class2CorrespondenceXS2} we obtain an
  induced $G$-map $f'\colon S^2 \to S^2$ with the same degree pair. Now
  proposition~\ref{LemmaClass2ReductionToClass1} implies the existence
  of an induced map $\phi(f)\colon X' \to S^2$ with the degree triple
  $\Triple{0}{d}{d_C}$. By theorem~\ref{Classification1}, we must have
  $d \equiv d_C \mod 2$. This shows that the condition is necessary
  for a pair to be contained in $\Im(\mathcal{P})$; what remains to
  show is that this condition is also sufficient. Let $f''\colon X'
  \to S^2$ be a $G$-map with degree triple $\Triple{0}{d}{d_C}$. By
  proposition~\ref{LemmaClass2ReductionToClass1} there exists a
  $G$-map $f'\colon S^2 \to S^2$ with the degree pair $\Pair{d_C}{d}$
  and by proposition~\ref{Class2CorrespondenceXS2} this induces a
  $G$-map $f\colon X \to S^2$ with the same degree pair. This proves the
  statement.
\end{proof}

\subsection{Classification of Maps to $\mathcal{H}_2^*$}
\label{SectionHamiltonianClassificationRank2}

Note that so far the degree triple map (resp. the degree pair map) has
only been defined on $\mathcal{M}_G(X,S^2)$. But since $S^2$ has been
shown to be an equivariant strong deformation retract, we can also
define the degree triple map (resp. degree pair map) on
$\mathcal{M}_G(X,\mathcal{H}_{(1,1)})$. In particular this allows us
to speak of degree triples (resp. of degree pairs) of $G$-maps $X \to
\mathcal{H}_{(1,1)}$. Now we state and prove the main result of this
section:

\HamiltonianClassificationRankTwo
\begin{proof}
  \label{HamiltonianClassificationRankTwoProof}
  The fact that the cases $\sig = (2,0)$ and $\sig = (0,2)$ constitute
  $G$-homotopy classes on their own has already been shown at the
  beginning of this chapter
  (remark~\ref{Rank2DefiniteComponentsContractible}). Regarding the
  case $\sig = (1,1)$: By remark~\ref{Rank2MixedSignatureReduction},
  $\mathcal{H}_{(1,1)}$ has $S^2$ as equivariant, strong deformation
  retract. Therefore,
  \begin{align*}
    [X,\mathcal{H}_{(1,1)}]_G \cong [X,S^2]_G.
  \end{align*}
  Then theorem~\ref{Classification1} (for the type I involution) and
  theorem~\ref{Classification2} (for the type II involution) complete
  the proof.
\end{proof}

\subsection{An Example from Complex Analysis}

The Weierstrass $\wp$-function is a meromorphic and doubly-periodic
function on the complex plane associated to a lattice $\Lambda$. It
can be defined as follows:
\begin{align*}
  \wp(z) = \frac{1}{z^2} + \sum_{\lambda \in \Lambda\setminus\{0\}} \left(\frac{1}{(z-\lambda)^2} - \frac{1}{\lambda^2}\right).
\end{align*}
This function can be regarded as a holomorphic map to $\Proj_1$. Since
the lattice $\Lambda$ is stable under complex conjugation, we have the
identity
\begin{align*}
  \wp(\overline{z}) = \overline{\wp(z)}.
\end{align*}
By the orientation-preserving idenfication $\Proj_1 \cong S^2$ (see
p.~\ref{P1S2OrientationDiscussion})we can regard the map $\wp$ as an
equivariant map $X \to S^2$. Since its total degree as a map to
$\Proj_1$ is two, its degree as a map to $S^2$ is also
two. Analogously, the derivative $\wp'$ defines an equivariant map $X
\to \Proj_1 \cong S^2$ of degree three. Furthermore we state:
\begin{remark}
  \label{BidegreeOfWP}
  The Weierstrass $\wp$-function (resp. $\wp'$), regarded as a map
  \begin{align*}
    X \to \Proj_1 \cong S^2 \hookrightarrow \mathcal{H}_{(1,1)}
  \end{align*}
  has fixed point degrees $\Bideg{0}{0}$ (resp. $\Bideg{1}{0}$).
\end{remark}
\begin{proof}
  Regarding $\wp$: The restriction $\restr{\wp}{C_0}$ to be boundary
  circle $C_0$ is not surjective to the compactified real line
  $\smash{\widehat{\R}}$, since in the square lattice case, the only zero of
  $\wp(z)$ is at the point $z=\text{\textonehalf}(1+i)$ (see
  e.\,g. \cite{EichlerZagier}).  Similarly, the restriction of $\wp$
  to the boundary circle is not surjective to $\RProj_1$, because the
  only pole of $\wp$ is at $z = 0$ (of order 2). This proves that the
  fixed point degrees are both zero.

  Regarding $\wp'$: Since $\wp'(z)$ has its only pole at $z=0$, its
  restriction to the circle $C_1 = I + \text{\textonehalf}i$ cannot be
  surjective to $\smash{\widehat{\R}}$, therefore its fixed point
  degree $d_1$ must be zero. But its fixed point degree $d_0$ is
  $\pm 1$: It is known that the only pole of $\wp'(z_0)$ is at $z=0$
  and its only zero in $C_0$ is at $z=\text{\textonehalf}$. A
  computation shows that $\wp'$ is negative along
  $(0,\text{\textonehalf})$. In other words, along the curve segment
  $(0,\text{\textonehalf})$, the image under $\wp'$ moves from
  $\infty$ to the negative real numbers and finally to $0$. By the
  identity
  \begin{align*}
    \wp'(-z) = -\wp'(z),
  \end{align*}
  it follows that on the curve segment $(\text{\textonehalf})$ the
  image under $\wp'$ moves from $0$ to the positive real numbers until
  it finally reaches $\infty$. By definition of the orientation on
  $\smash{\widehat{\R}}$ (see the discussion on
  p.~\ref{P1S2OrientationDiscussion}), this is a loop of degree $+1$.
\end{proof}

For the type II involution we consider the scaled Weierstrass
functions
\begin{align*}
  &F = i\wp\\
  \text{and}\; &G = e^{i\frac{3\pi}{4}} \wp'.
\end{align*}
As above, they can be considered as equivariant maps $X \to S^2$.
\begin{remark}
  The fixed point degree of $F$ is $0$ and that of $G$ is $1$.
\end{remark}
\begin{proof}
  Regarding $F$: The map $F(z)$ has its only pole at $z=0$ and its
  only zero at $\text{\textonehalf}(1+i)$. Thus, when $z$ linearly
  moves from the origin to the point $\text{\textonehalf}(1+i)$, then
  its image under $F$ defines a curve from $\infty$ to $0$. Since $F$
  is an even function it follows that on the second segment of the
  diagonal circle, $F$ reverses the previous curve, going back from
  $0$ to $\infty$. Thus the fixed point degree for $F$ is $0$.

  Regarding $G$: Along the curve segment from $0$ to
  $\text{\textonehalf}(1+i)$ it defines a curve from $\infty$ to
  $0$. A computation shows that this curve is along the negative real
  numbers. Since $G$ is not an odd map, its restriction to the segment
  from $\text{\textonehalf}(1+i)$ to $1+i$ defines a curve from $0$ to
  $\infty$, but with the opposite sign, i.\,e. along the positive real
  numbers. This defines a degree $+1$ loop.
\end{proof}
To summarize the above:
\begin{remark}
  With respect to the identification $\Proj_1 \cong S^2$ described on
  p.~\ref{P1S2OrientationDiscussion}, we have:
  \begin{align*}
    &\mathcal{T}(\wp) = \Triple{0}{2}{0}\\
    &\mathcal{T}(\wp') = \Triple{1}{3}{0}\\
    &\mathcal{P}(i\wp) = \Pair{0}{2}\\
    &\mathcal{P}(e^{i\frac{3\pi}{4}} \wp') = \Pair{1}{3}.
  \end{align*}
\end{remark}

By choosing a different equivariant identification $\Proj_1 \cong
S^2$, we might introduce signs for the total degrees or for the fixed
point degrees or for both. For instance, we can always compose the
$G$-diffeomorphism $\Proj_1 \to S^2$ with reflections on $S^2$. Thus,
in some sense, the numbers in the above remark are only well-defined
up to sign.

\section{Maps to  $\mathcal{H}_n$}

In this chapter we generalize the results of the previous section to
arbitrary $n > 2$. For this we begin by noting that the unitary group
$U(n)$ acts on $\mathcal{H}_{(p,q)}$ by conjugation. Since every
matrix in $\mathcal{H}_{(p,q)}$ can be diagonalized by unitary
matrices, the set of $U(n)$-orbits in $\mathcal{H}_{(p,q)}$ is
parametrized by the set of (unordered) real eigenvalues $\lambda_1^+,
\ldots, \lambda_p^+, \lambda_1^-,\ldots, \lambda^-_q$. The involution
$T$ stabilizes the components $\mathcal{H}_{(p,q)}$, since the
matrices $H$ and $T(H)$ clearly have the same spectrum. The $T$-action
is also compatible with the $U(n)$-orbit structure of
$\mathcal{H}_{(p,q)}$, as shown in the next remark:
\begin{remark}
  \label{RemarkRealStructureRestrictsToEachOrbit}
  The $T$-action on $\mathcal{H}_n$ stabilizes each $U(n)$-orbit in
  each $\mathcal{H}_{(p,q)}$.
\end{remark}
\begin{proof}
  Let $D = \Diag(\lambda_1,\ldots,\lambda_n)$ be a diagonal matrix in
  $\mathcal{H}_{(p,q)}$. In particular all $\lambda_j$ are non-zero
  real numbers. Let $\mathcal{O}$ be the $U(n)$-orbit of $D$. We have
  to show that $\mathcal{O}$ is $T$-stable. For this, let $M$ be a
  matrix in $\mathcal{O}$. By definition it is of the form $M = UDU^*$
  for some $U \in U(n)$. Then we have
  \begin{align*}
    T(UDU^*) = \overline{UDU^*} = \overline{U}D\overline{U}^*.
  \end{align*}
  But since $U \in U(n)$ implies $\overline{U} \in U(n)$, we can
  conclude that $T(UDU^*)$ is again contained in the orbit
  $\mathcal{O}$.
\end{proof}

In each component $\mathcal{H}_{(p,q)}$ there is a $U(n)$-orbit which
is particularly convenient to work with:
\begin{remark}
  \label{MinimalOrbitInComponent}
  The $U(n)$-orbit of the block diagonal matrix
  \begin{align*}
    \I{p}{q} = \begin{pmatrix}
      \I{p} & 0 \\
      0 & -\I{q}
    \end{pmatrix}
  \end{align*}
  is diffeomorphic to the Grassmannian $\Gr_p(\mathbb{C}^n)$.
\end{remark}

\begin{proof}
  The $U(n)$-isotropy of $\I{p}{q}$ is $U(p) \times U(q)$. Hence, for
  the $U(n)$-orbit we have the following smooth identification:
  \begin{equation*}
    U(n).\I{p}{q} \cong \frac{U(n)}{U(p) \times U(q)} \cong \Gr_p(\mathbb{C}^n)\qedhere
  \end{equation*}
\end{proof}

In order to reduce the classification of maps to
$\mathcal{H}_{(p,q)}$ to the classification of maps to this
Grassmann manifold, it is important to understand the $T$-action on
the latter. For this we make the following remark:
\begin{remark}
  \label{InducedActionOnGrassmannian}
  The induced $T$-action on $\Gr_p(\mathbb{C}^n)$ is given by
  \begin{align*}
    V \mapsto \overline{V},
  \end{align*}
  for each $p$-dimensional subvector space $V \subset \mathbb{C}^n$.
\end{remark}

\begin{proof}
  As a first step we compute the induced $T$-action under the
  identification
  \begin{align*}
    \frac{U(n)}{U(p) \times U(q)} &\xrightarrow{\sim} U(\I{p}{q}) \subset \mathcal{H}_{(p,q)}\\
    [U] &\mapsto U\I{p}{q}U^*.
  \end{align*}
  Now, let $[U]$ be a point in the quotient. The above isomorphism
  sends $[U]$ to $U \I{p}{q} U^*$. The $T$-action on $\mathcal{H}_n$
  maps this to
  $\smash{\overline{U \I{p}{q} U^*} = \overline{U} \I{p}{q}
    \overline{U}^*}$,
  which under the above isomorphism, corresponds to the point
  $\smash{\left[\overline{U}\right]}$ in the quotient. Thus, the
  induced $T$-action on the quotient is via
  $\smash{T\left([U]\right) = \left[\overline{U}\right]}$. For the
  next step we need to use the isomorphism
  \begin{align*}
    \frac{U(n)}{U(p) \times U(q)} &\xrightarrow{\;\sim\;} \Gr_p(\mathbb{C}^n)\\
    [U] &\mapsto U(V_0),
  \end{align*}
  where $V_0$ denotes a fixed base point in the Grassmannian,
  e.\,g. the subvector space spanned by $e_1,\ldots,e_p$.  Now, let $V
  \in \Gr_p(\mathbb{C}^n)$ be a subvector space of $\mathbb{C}^n$
  generated by the vectors $v_1,\ldots,v_p$. This subspace uniquely
  corresponds to a coset $[U]$ such that $U(V_0) = V$. Recalling that
  $V_0$ is by definition generated by the standard basis
  $e_1,\ldots,e_p$, we see that the first $p$ coloumns
  $u_1,\ldots,u_p$ of $U$ constitute a unitary basis of the
  subvector space $V$. The $T$-action on the quotient maps $[U]$ to
  $\smash{\left[\overline{U}\right]}$, which then corresponds to the
  subvector space $\smash{\overline{U}}(V_0)$. This subvector space can be
  described in terms of the basis
  \begin{align*}
    \overline{U}(e_1),\ldots,\overline{U}(e_p),
  \end{align*}
  which is just $\overline{u_1},\ldots,\overline{u_p}$.  Therefore,
  the induced $T$-action on $\Gr_p(\mathbb{C}^n)$ is
  \begin{align*}
    V \mapsto \overline{V},
  \end{align*}
  which finishes the proof.
\end{proof}

As a special case we also note:
\begin{remark}
  \label{InducedActionOnProjectiveSpace}
  The induced action on $\Proj_n = \Gr_1(\mathbb{C}^{n+1})$ in terms
  of homogeneous coordinates is given by
  \begin{align*}
    [z_0:\ldots:z_n] \mapsto \left[\overline{z_0}:\ldots:\overline{z_n}\right].
  \end{align*}
\end{remark}

\begin{proof}
  Follows from remark~\ref{InducedActionOnGrassmannian} together with
  the fact that the homogeneous coordinates for a given line $L =
  \C(\ell_1,\ldots,\ell_{n+1})$ in $\C^{n+1}$ are
  $[\ell_1:\ldots:\ell_{n+1}]$.
\end{proof}

\begin{remark}
  Regard the Grassmannian $\Gr_k(\C^n)$ as being equipped with the
  standard real structure given by complex conjugation. If $T$ is a
  real structure on the $n$-dimensional vectorspace $W$, then there is
  a biholomorphic $G$-map
  \begin{align*}
    \Gr_k(W) \xrightarrow{\;\sim\;} \Gr_k(\C^n).
  \end{align*}
\end{remark}

\begin{proof}
  The real structure $T$ on $W$ induces a decomposition of $W$ as $W =
  W_\R \oplus iW_\R$, where $W_\R$ is an $n$-dimensional, real
  subvector space of $W$. The vectorspace $W$ can be identified with
  $\C^n$ by sending an orthonormal basis $w_1,\ldots,w_n$ of $W_\R$ to
  the standard basis $e_1,\ldots,e_n$. This is equivariant by
  construction and induces an equivariant biholomorphism $\Gr_k(W) \to
  \Gr_k(\C^n)$.
\end{proof}

Let us now look at the topology of the two connected components
$\mathcal{H}_{(n,0)}$ and $\mathcal{H}_{(0,n)}$ of $\mathcal{H}_n^*$:
\begin{remark}
  \label{RemarkDefiniteComponentsRetractable}
  The component $\mathcal{H}_{(n,0)}$ (resp. $\mathcal{H}_{(0,n)}$)
  has $\{\I{n}\}$ (resp. $\{-\I{n}\}$) as a strong
  equivariant deformation retract.
\end{remark}

\begin{proof}
  We only prove the statement for the component $\mathcal{H}_{(n,0)}$.
  For this we define:
  \begin{align*}
    \rho\colon I \times \mathcal{H}_{(n,0)} &\to \mathcal{H}_{(n,0)}\\
    (t,A) &\mapsto (1-t)A + t\I{n}
  \end{align*}
  Clearly, for $t=0$ this is the identity on $\mathcal{H}_{(n,0)}$ and
  for $t=1$ it is constant at the identity $\I{n}$. Hence, in
  order to show that this is a well-defined deformation retract, we
  have to prove that there exists no matrix $A \in
  \mathcal{H}_{(n,0)}$ such that $\rho_t(A)$ is singular for some $t
  \in (0,1)$.

  Let $A$ be a positive definite matrix in $\mathcal{H}_n$. By
  assumption we know that $\det A$, which coincides with the product
  of all eigenvalues of $A$, is positive. Let us assume
  that $\det \rho_t(A) = 0$. By definition this means
  \begin{align}
    \label{PositiveDefiniteRetraction}
    \det ((1-t)A + t\I{n}) = 0.
  \end{align}
  This means that $-t$ is an eigenvalue of $A' := (1-t)A$. But $A'$ is
  just $A$, scaled by a positive real number. Hence $A'$ is also
  positive definite and therefore cannot have a negative eigenvalue
  $-t$. Thus we obtain a contradiction, which proves that $\rho_t$
  really has its image contained in $\mathcal{H}_{(n,0)}$. The proof
  for the positive negative case works completely analogously.

  What is now left to show is equivariance of the above deformation
  retract. But this follows from the fact that the involution on
  $\mathcal{H}_n$ acts by conjugation and this is compatible with
  scaling by real numbers as done in
  (\ref{PositiveDefiniteRetraction}).
\end{proof}

This shows that the components $\mathcal{H}_{(n,0)}$ and
$\mathcal{H}_{(0,n)}$ are equivariantly contractible to a point, thus
the spaces $[X,\mathcal{H}_{(n,0)}]_G$ and $[X,\mathcal{H}_{(0,n)}]_G$
are trivial. On the other hand, as we will see in the following,
interesting topology will occur in the case of mixed signatures: For
$0 < p,q < n$ the components $\mathcal{H}_{(p,q)}$ have the
$U(n)$-orbit of $\I{p}{q}$ as equivariant strong deformation
retract\footnote{Here $\I{p}{q}$ denotes the block diagonal matrix
  with $\I{p}$ in the upper left corner and $-\I{q}$ in the lower
  right corner.}. Proving this requires some preperations; we begin by
introducing the following surjection:
\begin{align*}
  \pi\colon \mathcal{H}_{(p,q)} &\to \Gr_p(\mathbb{C}^n)\\
  H &\mapsto E^+(H),
\end{align*}
where $E^+(H)$ denotes the direct sum of the positive eigenspaces of
$H$. This is a $p$-dimensional subspace of $\C^n$. Fixing a basepoint
$E_0 = \left<e_1,\ldots,e_p\right>$, this defines a fiber bundle with
neutral fiber $F = \pi^{-1}(\{E_0\})$. The fiber consists of those
matrices in $\mathcal{H}_{(p,q)}$ which have $E_0$ as the direct sum
of their positive eigenspaces. Note that $K := U(n)$ acts transitively
on $\Gr_p(\mathbb{C}^n)$. Thus, denoting the stabilizer of $E_0$ in
$K$ with $L$, we can identify $\Gr_p(\mathbb{C}^n)$ with $K/L$. We
equip the product $K \times F$ with a $K$-action given by
multiplication on the first factor:
\begin{align*}
  k(k',H) \mapsto (kk', H).
\end{align*}
The $L$-action on $K \times F$ is given by
\begin{align*}
  \ell(k,H) = (k\ell,\ell^*H\ell).
\end{align*}
It induces the quotient $K \times F \to K \times_L F$. The $K$-action
on $K \times_L F$ is then given by
\begin{align*}
  k[(k',H)] = [k(k',H)] = [(kk',H)].
\end{align*}
The quotient $K \times_L F \to K/L$ is $K$-equivariant by definition
of the respective $K$-actions. We obtain the following diagram of
$K$-spaces:
\[
\xymatrix{
  K \times F \ar[r] \ar[d] & K \times_L F \ar[d]\\
  K \ar[r] & K/L
}
\]

After these remarks we state:
\begin{proposition}
  \label{GrassmannianStrongDeformationRetract}
  The component $\mathcal{H}_{(p,q)}$ has the orbit $U(n).\I{p}{q}$ as
  $T$-equivariant, strong deformation retract.
\end{proposition}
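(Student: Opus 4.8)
The plan is to write down an explicit retraction built from spectral projections, and only afterwards match it with the bundle picture $K\times_L F$ set up above. For $H \in \mathcal{H}_{(p,q)}$ let $E^+(H)$, $E^-(H)$ be the sums of the positive, resp. negative, eigenspaces of $H$; since $H$ is Hermitian these are orthogonal and $\C^n = E^+(H)\oplus E^-(H)$. Let $P^+(H)$ be the orthogonal projection onto $E^+(H)$ and put $P(H) = 2P^+(H) - \I{n}$, i.e. the Hermitian operator acting as $+\id$ on $E^+(H)$ and $-\id$ on $E^-(H)$. I would then define
\begin{align*}
  \rho\colon I \times \mathcal{H}_{(p,q)} \to \mathcal{H}_{(p,q)},\qquad (t,H) \mapsto (1-t)H + tP(H).
\end{align*}
The first thing to verify is that $\rho$ really takes values in $\mathcal{H}_{(p,q)}$: both $H$ and $P(H)$ are block-diagonal for the splitting $\C^n = E^+(H)\oplus E^-(H)$, hence so is $\rho_t(H)$, and its restriction to $E^+(H)$ is $(1-t)\,H|_{E^+(H)} + t\,\id$, a combination of positive-definite operators with non-negative, not-both-zero coefficients, hence positive definite; likewise $\rho_t(H)|_{E^-(H)}$ is negative definite. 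So $\sig \rho_t(H) = (p,q)$ for all $t \in I$.

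Next I would check the retraction properties. Continuity of $\rho$ reduces to continuity of $H \mapsto P^+(H)$ on $\mathcal{H}_{(p,q)}$, which follows from the Riesz projection formula $P^+(H) = \frac{1}{2\pi i}\oint_{\Gamma}(z\I{n} - H)^{-1}\,dz$ for a circle $\Gamma$ in the open right half-plane that separates the positive from the negative spectrum and can be kept fixed on a neighbourhood of any given $H$ (there the spectrum stays bounded and bounded away from $0$). Clearly $\rho_0 = \id$. If $U\in U(n)$ carries $E_0 = \langle e_1,\dots,e_p\rangle$ onto $E^+(H)$ then $P(H) = U\I{p}{q}U^*$, so $\rho_1(H) = P(H) \in U(n).\I{p}{q}$; and every $H$ already lying in that orbit satisfies $H = P(H)$, so $\rho_t(H) = H$ for all $t$. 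Hence $\rho$ is a strong deformation retract of $\mathcal{H}_{(p,q)}$ onto $U(n).\I{p}{q}$.

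It remains to address equivariance. Since $E^+(UHU^*) = U\,E^+(H)$ for $U \in U(n)$, we get $P(UHU^*) = UP(H)U^*$, so $\rho_t$ is $U(n)$-equivariant; in particular, restricting $\rho$ to the fiber $F = \pi^{-1}(E_0)$ recovers precisely the $L$-equivariant (with $L = U(p)\times U(q)$) convex retraction of $F$ onto $\I{p}{q}$ that is implicit in the $K\times_L F$ diagram above. For the $T$-action: if $Hv = \lambda v$ with $\lambda\in\R$ then $\overline H\,\overline v = \lambda\,\overline v$, so $E^+(\overline H) = \overline{E^+(H)}$, hence $P^+(\overline H) = \overline{P^+(H)}$ and $P(\overline H) = \overline{P(H)}$; using $\overline{\I{n}} = \I{n}$ this gives
\begin{align*}
  \rho_t(\overline H) = (1-t)\overline H + tP(\overline H) = \overline{(1-t)H + tP(H)} = \overline{\rho_t(H)},
\end{align*}
so $\rho$ is $T$-equivariant, which finishes the proof.

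The one point that is not pure linear algebra is the continuity of the spectral projection $H\mapsto P^+(H)$; I expect that — making the fiberwise retraction uniform in $H$ — to be the only \emph{genuine obstacle}. Alternatively one could stay inside the bundle picture: deformation retract the fiber $F$ convexly onto $\{\I{p}{q}\}$, note this is $L$-equivariant because $\I{p},\I{q}$ are central in $U(p),U(q)$, push it through the functor $K\times_L(-)$, and deduce $T$-equivariance from $\overline L = L$ together with the fact that conjugation commutes with the convex combination; in that approach the work is instead in checking that $[k,H]\mapsto kHk^*$ is a $K$- and $T$-equivariant homeomorphism $K\times_L F \xrightarrow{\ \sim\ } \mathcal{H}_{(p,q)}$.
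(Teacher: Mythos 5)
Your proof is correct, and it is worth noting that your homotopy is in fact \emph{the same} homotopy as the paper's, written in closed form: for $H$ in the neutral fiber $F$ one has $P(H)=\I{p}{q}$, so the paper's fiberwise retraction $(1-t)H_p+t\I{p}$, $(1-t)H_q-t\I{q}$ is exactly $(1-t)H+tP(H)$ restricted to $F$, and conjugating by $k\in U(n)$ reproduces your global formula because $P(kHk^*)=kP(H)k^*$. The difference lies entirely in where the analytic work is placed. The paper defines the retraction only on $F$, where continuity is trivial, and then transports it via the identification $K\times_L F\cong\mathcal{H}_{(p,q)}$; the price is having to verify $L$-equivariance and that this identification is a homeomorphism (which silently encodes the continuity of the spectral decomposition). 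You instead write the retraction globally and pay the price up front by proving continuity of $H\mapsto P^+(H)$ via the Riesz integral $\frac{1}{2\pi i}\oint_\Gamma(z\I{n}-H)^{-1}\,dz$ with a locally fixed contour -- a correct and standard argument, valid precisely because on $\mathcal{H}_{(p,q)}$ the spectrum stays bounded away from $0$. Your route buys a shorter, coordinate-free verification of both the strong-deformation-retract property ($H=P(H)$ on the orbit) and the $U(n)$- and $T$-equivariance, at the cost of invoking one piece of operator-theoretic machinery; the paper's route is more elementary pointwise but requires the bundle formalism to globalize. Both are complete proofs, and your closing remark correctly identifies that the two approaches simply relocate the same continuity issue.
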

\begin{proof}
  The idea of this proof is to use the above diagram and define a
  strong deformation retract of the fiber $F$ which we then globalize
  to a strong deformation retraction on $K \times_L F$. Using the
  identifications $K \times_L F \cong \mathcal{H}_{(p,q)}$ and $K/L
  \cong \Gr_p(\C^n)$ this defines a $T$-equivariant strong deformation
  retract from $\mathcal{H}_{(p,q)}$ to $U(n).\I{p}{q} \subset
  \mathcal{H}_{(p,q)}$ (see figure~\ref{fig:UnOrbitsInHpq}).

  Note that we have the following isomorphism
  \begin{align}
    \label{EquivariantFiberBundleIsoPsi}
    \Psi\colon K \times_L F &\xrightarrow{\;\sim\;} \mathcal{H}_{(p,q)}\\
    \left[(k,H)\right] &\mapsto kHk^*.\nonumber
  \end{align}
  Set $\Sigma = K.\I{p}{q} \subset \mathcal{H}_{(p,q)}$. This defines
  (the image of) a global section of the bundle $K \times_L F \to
  K/L$. Observe that a matrix $H \in F$ is, by definition,
  positive-definite on $E_0 = \left<e_1,\ldots,e_p\right>$ and
  therefore negative-definite on $E_0^\perp =
  \smash{\left<e_{p+1},\ldots,e_{p+q}\right>}$\footnote{On $\C^n$ we use the
    standard unitary structure.}. This implies that $H$ is of the form
  \begin{align*}
    H =
    \begin{pmatrix}
      H_p & 0 \\
      0 & H_q
    \end{pmatrix},
  \end{align*}
  where $H_p$ and $H_q$ are both hermitian, $H_p$ is positive-definite
  on $E_0$ and $H_q$ is negative-definite on $E_0^\perp$. Now we can
  define a retraction $\rho\colon I \times F \to F$ of the neutral
  fiber:
  \begin{align}
    \label{EquationRetractRho}
    \rho_t(H) =
    \begin{pmatrix}
      (1-t)H_p + t\I{p} & 0 \\
      0 & (1-t)H_q - t\I{q}
    \end{pmatrix}.
  \end{align}
  This is a well-defined map with image in the fiber $F$: The proof of
  remark~\ref{RemarkDefiniteComponentsRetractable} shows that the
  intermediate matrices during a homotopy of a definite matrix to
  $+\I_p$ resp. $-\I_q$ stay definite. This generalizes to the
  situation at hand: Given a matrix $H$ of signature $(p,q)$, then
  $\rho_t(H)$ will be of the same signature for every $t$. Therefore,
  $\rho_t(F) \subset F$ for all $t$. Next, extend $\rho$ to a map
  \begin{align*}
    \hat{\rho}\colon I \times K \times F &\to K \times F\\
    (t,(k,H)) &\mapsto (k,\rho_t(H))
  \end{align*}
  Now we prove that $\smash{\hat{\rho}}$ is $L$-equivariant and
  therefore it pushes down to a map
  \begin{align*}
    I \times K \times_L F \to K \times_L F
  \end{align*}
  For this, let $\ell$ be in $L$. Then
  \begin{align*}
    \ell(\hat{\rho}(t,k,H)) = \ell(k,\rho_t(H)) = (k\ell,\ell^* \rho_t(H) \ell)
  \end{align*}
  On the other hand:
  \begin{align*}
    \hat{\rho}(\ell(t,k,H)) = \hat{\rho}(t,k\ell,\ell^* H \ell) = (k\ell, \rho_t(\ell^* H \ell))
  \end{align*}
  Now, $L$-equivariance follows from the fact that matrix conjugation
  commutes with addition and scalar multiplication of matrices and
  therefore
  \begin{align*}
    \ell^* \rho_t(H) \ell = \rho_t(\ell^* H \ell). 
  \end{align*}
  By the isomorphism $K \times_L F \cong \mathcal{H}_{(p,q)}$,
  $\smash{\hat{\rho}}$ induces a map
  $\smash{\tilde{\rho}}\colon I \times \mathcal{H}_{(p,q)} \to
  \mathcal{H}_{(p,q)}$.
  Note that $\rho_0$ and therefore also $\smash{\hat{\rho_0}}$ as well as
  the push-down to the quotient is the identity.  On the other hand,
  $\rho_1$ retracts the fiber $F$ to $\I{p}{q}$. Thus,
  $\smash{\hat{\rho_1}}$ retracts $K \times F$ to $K \times
  \{\I{p}{q}\}$.
  Using the isomorphism $K \times_L F \cong \mathcal{H}_{(p,q)}$, it
  follows that $\smash{\tilde{\rho}}$, retracts $\mathcal{H}_{(p,q)}$ to
  $\Sigma = K.\I{p}{q}$.

  It remains to prove that $\smash{\tilde{\rho}}_t$ is $T$-equivariant for
  each $t$:
  \begin{align}
    \label{EquationRetractionEquivariance}
    \tilde{\rho}_t(\overline{H}) = \overline{\tilde{\rho}_t(H)}.
  \end{align}
  A short computation shows that the isomorphism $\Psi$
  (\ref{EquivariantFiberBundleIsoPsi}) is $T$-equivariant with respect
  to the $T$-action on $K \times_L F$ given by
  \begin{align*}
    T\left(\left[(k,H)\right]\right) = \left[\left(\overline{k},\overline{H}\right)\right].
  \end{align*}
  Therefore, in order to prove (\ref{EquationRetractionEquivariance})
  we need to show that the induced map $I \times K \times_L F \to K
  \times_L F$ is $T$-equivariant. By definition this boils down to
  showing that
  \begin{align*}
    \left[\left(\overline{k},\rho_t\left(\overline{H}\right)\right)\right] = \left[\left(\overline{k},\overline{\rho_t(H)}\right)\right],
  \end{align*}
  which is a direct consequence of the definition of $\rho_t$ as in
  (\ref{EquationRetractRho}). Summarizing the above we have seen that
  there exists a strong deformation retract $\smash{\tilde{\rho}}$ from
  $\mathcal{H}_{(p,q)}$ to the orbit $K.\I{p}{q}$ which is
  $T$-equviariant.
\end{proof}
\begin{figure}[h]
  \centering
  \begin{tikzpicture}[scale=0.9]
    \foreach \i in {5,6,7,8,10,11,12,13,14}
      \draw[color=gray!60] (0,0) ellipse (40mm-\i*2mm and 30mm-\i*2mm);
    \draw[color=red,thick] (0,0) ellipse (40mm-9*2mm and 30mm-9*2mm);
    \draw[color=blue,thick] (2mm,0) -- (32mm,0mm);
    \draw[font=\footnotesize] (-40mm,8mm) node[left] {$U(n).\I{p}{q}$};
    \draw[color=black,->] (-40mm,8mm) -- (-18mm,8mm);
    \draw[font=\footnotesize] (-40mm,0mm) node[left] {$F$};
    \draw[color=black,->] (-40mm,0mm) -- (0mm,0mm);
    \draw[thick,decorate,decoration={brace,mirror,amplitude=10pt}] [yshift=-0mm](36mm,-20mm) -- (36mm,20mm) node[yshift=-20mm,xshift=5mm,right] {$U(n)$-orbits};
    \draw[color=blue,->] (2mm,0mm) -- (16mm,0mm);
    \draw[color=blue,->] (32mm,0mm) -- (26mm,0mm);
    \end{tikzpicture}
  \caption{The $U(n)$-orbits in $\mathcal{H}_{(p,q)}$.}
  \label{fig:UnOrbitsInHpq}
\end{figure}
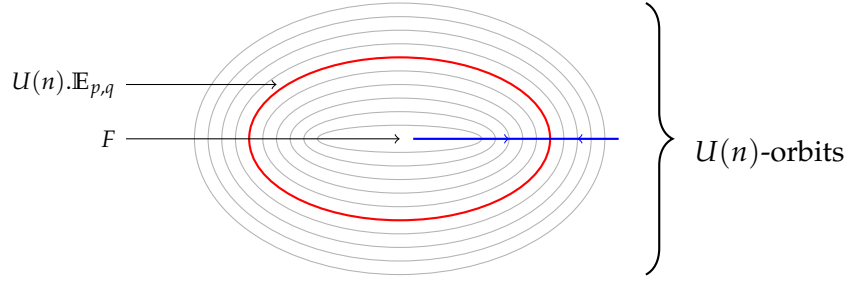

The $U(n)$-orbit of $\I{p}{q}$ can be equivariantly and
diffeomorphically identified with the complex Grassmann manifold
$\Gr_p(\C^n)$.
This reduces the problem of describing the sets
$\smash{[X,\mathcal{H}_{(p,q)}]_G}$ to the problem of describing the sets
$\smash{[X,\Gr_p(\C^n)]_G}$. As a first step towards the study of equivariant
maps to $\Gr_p(\C^n)$ we describe $(\Gr_p(\C^n))_{\mathbb{R}}$, the
space of real points with respect to the real structure $T$ on
$\Gr_p(\C^n)$:
\begin{remark}
  The space $(\Gr_p(\C^n))_{\mathbb{R}}$ of real points is
  diffeomorphic to the real Grassmannian $\Gr_p(\mathbb{R}^n)$.
\end{remark}

\begin{proof}
  Given any $T$-stable subspace $V$ in $\C^n$, then $T$ defines a
  decomposition $V = V_\R \oplus i V_\R$ where $V_\R = \Fix T$ is a
  subspace of $\R^n$. Note that the group $O(n,\R)$ acts transitively
  on $\Gr_p(\R^n)$. Therefore, for any two $T$-stable $p$-dimensional
  subvector spaces $V, V' \subset \C^n$, there exists an orthogonal
  transformation $g$ mapping the $V_\R$ to $V'_\R$. Such a
  transformation extends uniquely to a unitary transformation of
  $\C^n$ which sends $V$ to $V'$. Now, a computation shows that the
  stabilizer of the base point
  $V_0 = \smash{\left<e_1,\ldots,e_p\right>}$ in $O(n,R)$ consists of
  the matrices of the form
  \begin{align*}
    g =
    \begin{pmatrix}
      g_1 & 0 \\
      0 & g_2
    \end{pmatrix},
  \end{align*}
  where $g_1 \in O(p,\R)$ and $g_2 \in O(q,\R)$. It follows that
  \begin{equation*}
    (\Gr_p(\C^n))_\R \cong \frac{O(n,\R)}{O(p,\R) \times O(q,\R)} \cong \Gr_p(\R^n).\qedhere
  \end{equation*}
\end{proof}

We now proceed with the classification of equivariant maps to the
space of non-singular $n \times n$ hermitian matrices:
\begin{align*}
  H\colon X \to \mathcal{H}^*_n.
\end{align*}
As shown earlier, such a map has a well-defined signature $(p,q)$
(with $p+q = n$) specifying that $H(x)$ has exactly $p$ positive
eigenvalues and $q$ negative eigenvalues for every $x \in X$. The
classification for the image space $\mathcal{H}_n$ will be achieved in
the following way:
\begin{enumerate}
\item First we handle the classification of maps $X \to \mathcal{H}_3$
  seperately. In this situation the relevant image space can be
  reduced to the Grassmannian $\Gr_1(\C^3) \cong \mathbb{P}_2$. The
  classification with respect to this image space works by making a
  reduction to the case of maps to $\Proj_1$.
\item Then, for a general Grassmannian $\Gr_p(\mathbb{C}^n)$ we apply
  a reduction method iteratively until we have reduced the situation
  to a copy of $\mathbb{P}_1$ embedded in $\Gr_p(\C^n)$ as a Schubert
  variety. The seperate discussion of $\Proj_2$ will act as a guiding
  example and provide useful statements for the final step of the
  iterative reduction.
\end{enumerate}

For the following we need a notion of \emph{total degree}, as the
standard notion of degree (i.\,e. the Brouwer degree) is not
applicable here, since $X$ and a general Grassmann manifold
$\Gr_p(\C^n)$ have different dimensions. But because of the convenient
geometrical properties of Grassmannians (see
proposition~\ref{TopologyGrassmannians}) we in fact do not have to
change much: The second homology group is still infinite cyclic, which
allows us to generalize the notion of total degree. For this, let
$f\colon X \to \Gr_p(\C^n)$ be a $G$-map. The homology group
$H_2(X,\mathbb{Z})$ is generated by the fundamental class $[X]$. The
homology group $H_2(\Gr_p(\C^n),\Z)$ is generated by $[\mathcal{S}]$,
where $\mathcal{S}$ denotes the Schubert variety
\begin{align}
  \label{DefOfSchubertVarietyC}
  \mathcal{S} = \left\{E \in \Gr_p(\C^n)\colon \C^{p-1} \subset E \subset \C^{p+1}\right\}.
\end{align}
Here, $\C^\ell$ is regarded as being embedded in $\C^n$, for $\ell <
n$, via
\begin{align*}
 (z_1,\ldots,z_\ell) \mapsto (z_1,\ldots,z_\ell,\underbrace{0,\ldots,0}_{n-\ell}).
\end{align*}
In the terminology of e.\,g. \cite{GriffithsHarris},
$\mathcal{S}$ is the Schubert variety\label{SchubertVarietyDiscussion}
\begin{align*}
  \mathcal{S} = \left\{E \in \Gr_p(\C^k)\colon \dim(E \cap V_{k-p+i-a_i}) \geq i \;\text{for all $i=1,\ldots,p$}\right\}
\end{align*}
defined by the sequence
\begin{align*}
  a_i =
  \begin{cases}
    n - p & \;\text{for $1 \leq i < p-1$} \\
    n - p - 1 & \;\text{for $p - 1 \leq i \leq p$}.
  \end{cases}
\end{align*}
After having fixed the standard flag $V_1 \subset V_2 \subset \ldots
\subset V_n$, where $V_j$ is the vector space generated by the
standard basis vectors $e_1,\ldots,e_j$, $\mathcal{S}$ is the unique
one-dimensional Schubert variety in $\Gr_p(\C^n)$. It is biholomorphic
to $\Proj_1$ and comes equippd with its canonical orientation as a
complex manifold. As it has been discussed on
p.~\pageref{P1S2OrientationDiscussion}, this space can be
equivariantly identified with the $2$-sphere. Now we can make the
following
\begin{definition}
  \label{TotalDegreeGr}
  Let $f\colon X \to \Gr_p(\C^n)$ be a $G$-map. Then its induced map
  on the second homology is of the form
  \begin{align*}
    f_*\colon H_2(X,\mathbb{Z}) &\to H_2(\Gr_p(\C^n),\mathbb{Z})\\
    [X] &\mapsto d[\mathcal{S}].
  \end{align*}
  We define the \emph{total degree} of $f$ to be this integer $d$ and
  write $\deg f$ for the total degree of $f$.
\end{definition}
Clearly, the total degree defined this way is a homotopy invariant of
maps $X \to \Gr_p(\C^n)$. We also need a slight generalization of
definition~\ref{DefinitionFixpointDegree} to account for the
fundamental group of $(\Gr_p(\C^n))_{\R}$ not being infinite cyclic
unless $p = 1$ and $n=2$:
\begin{definition}
  \label{DefinitionFixpointSignature} (Fixed point signature) Let
  $f\colon M \to Y$ be a $G$-map between $G$-manifolds. Assume that
  the fixed point set $Y^G \subset Y$ is path-connected, $\pi_1(Y^G) =
  C_2$ and that the fixed point set $M^G$ is the disjoint union
  \begin{align*}
    M^G = \bigcup_{j=0,\ldots,k} K_j
  \end{align*}
  of circles $K_j$. Then we define the \emph{fixed point signature} of
  $f$ to be the $k$-tuple $(m_0,\ldots,m_k)$ where
  \begin{align*}
    m_j =
    \begin{cases}
      0 & \;\text{if the loop $\Restr{f}{K_j}\colon K_j \to Y^G$ is contractible in $Y^G$}\\
      1 & \;\text{else}.
    \end{cases}
  \end{align*}
\end{definition}

In the following let $n > 2$, i.\,e. we exclude the case
$\Gr_p(\C^n) = \Proj_1$, which has already been dealt with in
section~\ref{SectionN=2}. For the type I involution on $X$ we make the
following definition:
\begin{definition} (Degree triple map)
  For $n > 2$, we define the \emph{degree triple map} to be the map
  \begin{align*}
    \mathcal{T}\colon \mathcal{M}_G(X,\Gr_p(\C^n)) &\to \{0,1\} \times \Z \times \{0,1\}\\
    f &\mapsto \Triple{m_0}{d}{m_1},
  \end{align*}
  where $\Bideg{m_0}{m_1}$ is the fixed point signature of $f$ with
  respect to the circles $C_0, C_1$ and $d$ is the total degree of
  $f$. We call a given triple \emph{realizable} if it is contained in
  the image $\Im(\mathcal{T})$.  For a map
  \begin{align*}
   f\colon (Z,C) \to \left(\Gr_p(\C^n),\Gr_p(\C^n)_\R\right)
  \end{align*}
  we define its \emph{degree triple} to be the degree triple of the
  equivariant extension $\smash{\hat{f}}\colon X \to \Gr_p(\C^n)$.
\end{definition}
  
For the type II involution on $X$ we define:
\begin{definition} (Degree pair map)
  For $n > 2$, we define the \emph{degree pair map} to be the map
  \begin{align*}
    \mathcal{P}\colon \mathcal{M}_G(X,\Gr_p(\C^n)) &\to \Z \times \{0,1\}\\
    f &\mapsto \Pair{m}{d},
  \end{align*}
  where $(m)$ is the fixed point signature of $f$ and $d$ is the total
  degree of $f$. We call a given pair \emph{realizable} if it is
  contained in the image $\Im(\mathcal{P})$.  For a map
  \begin{align*}
    f\colon (Z,C) \to \left(\Gr_p(\C^n),\Gr_p(\C^n)_\R\right)
  \end{align*}
  we define its \emph{degree pair} to be the degree pair of the
  equivariant extension $\smash{\hat{f}}\colon X \to \Gr_p(\C^n)$.
\end{definition}

The degree triple resp. the degree pair is clearly an invariant of
equivariant homotopy.

\subsection{Maps to $\mathcal{H}_3$}

In this section we study the equivariant homotopy of maps $X \to
\mathcal{H}_3^*$. The connected components of $\mathcal{H}_3^*$ are
parameterized by the eigenvalue signatures $(3,0)$, $(2,1)$, $(1,2)$
and $(0,3)$. The group actions are exactly the same as in the first
chapter: we can regard $X$ as being defined by the standard square
torus, equipped with either the type I or the type II real structure
and the involution on $\mathcal{H}_3$ is given by conjugation. It
turns out that the classification of maps in the case where the image
space is $\mathcal{H}_3^*$ is not fundamentally different from the
classification where it is $\mathcal{H}_2^*$. As before we start with
a reduction procedure.

Let $H\colon X \to \mathcal{H}_3^*$ be a map with the signature
$(p,q)$. Hence, $H$ can be regarded as a map $X \to
\mathcal{H}_{(p,q)}$. In the concrete case of maps to $\mathcal{H}_3$
with mixed signature we can now conclude with
proposition~\ref{GrassmannianStrongDeformationRetract} that
$\mathcal{H}_{(2,1)}$ (resp. $\mathcal{H}_{(1,2)}$) can be
equivariantly retracted to the orbit $U(3).\I{2}{1}$
(resp. $U(3).\I{1}{2}$). These orbits are equivariantly and
diffeomorphically identifiable with $\Gr_2(\mathbb{C}^3)$
(resp. $\Gr_1(\mathbb{C}^3)$). Since these two Grassmann manifolds are
by remark~\ref{GrassmannianIdentificationEquivariant} equivalent as
$G$-spaces, we can assume without loss of generality that we are
dealing with $G$-maps of the form
\begin{align*}
  X \to \Gr_1(\mathbb{C}^3) \cong \Proj_2,
\end{align*}
where the involution $T$ on $\Proj_2$ is given by standard complex
conjugation:
\begin{align*}
  T\colon [z_0:z_1:z_2] \mapsto \left[\overline{z_0}:\overline{z_1}:\overline{z_2}\right].
\end{align*}

\subsubsection{Maps to $\Proj_2$}
\label{SectionClassificationP2}

For the following, let $p_0$ be the base point $[0:0:1] \in
\mathbb{P}_2$ and regard $\Proj_1$ as being embedded in $\Proj_2$ as
\begin{align*}
  [z_0:z_1] \mapsto [z_0:z_1:0].
\end{align*}

\begin{lemma}
  \label{RetractFromP2toP1}
  There exists an equivariant strong deformation retract $\rho$ from
  $\mathbb{P}_2\setminus\{p_0\}$ to $\mathbb{P}_1 \subset
  \mathbb{P}_2$.
\end{lemma}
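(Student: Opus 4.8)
The plan is to write down an explicit deformation retract using homogeneous coordinates, modelled on the standard retraction of $\Proj_2 \setminus \{[0:0:1]\}$ onto the hyperplane $\Proj_1 = \{z_2 = 0\}$, and then check that it is continuous, that it is a strong deformation retract, and—crucially—that it is $G$-equivariant for the complex-conjugation involution. First I would define, for $t \in I$ and $[z_0:z_1:z_2] \in \Proj_2 \setminus \{p_0\}$,
\begin{align*}
  \rho_t([z_0:z_1:z_2]) = [z_0 : z_1 : (1-t)z_2].
\end{align*}
This is well-defined: since $(z_0,z_1) \neq (0,0)$ on the domain, the triple $(z_0,z_1,(1-t)z_2)$ is never the zero vector, so it represents a genuine point of $\Proj_2$; scaling $(z_0,z_1,z_2)$ by $\lambda \in \C^*$ scales the image by the same $\lambda$, so the map descends to projective space. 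Continuity of the induced map $I \times (\Proj_2 \setminus \{p_0\}) \to \Proj_2$ follows because it lifts to the obviously continuous map $(t,(z_0,z_1,z_2)) \mapsto (z_0,z_1,(1-t)z_2)$ on the punctured cone and projective space carries the quotient topology. Moreover $\rho_0 = \id$, $\rho_1$ has image exactly $\Proj_1 = \{[z_0:z_1:0]\}$, and for a point already in $\Proj_1$ we have $z_2 = 0$, so $\rho_t$ fixes it for all $t$; hence $\rho$ is a strong deformation retract.

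It remains to verify equivariance, which is the only step requiring any care. Recall (remark~\ref{InducedActionOnProjectiveSpace}) that $T$ acts by $[z_0:z_1:z_2] \mapsto [\overline{z_0}:\overline{z_1}:\overline{z_2}]$, and note that $p_0 = [0:0:1]$ is a $T$-fixed point, so $T$ restricts to an involution of $\Proj_2 \setminus \{p_0\}$; likewise $T$ preserves $\Proj_1 = \{z_2 = 0\}$. Since $1-t$ is a real number, it commutes with complex conjugation, so
\begin{align*}
  \rho_t(T([z_0:z_1:z_2])) = \rho_t([\overline{z_0}:\overline{z_1}:\overline{z_2}]) = [\overline{z_0}:\overline{z_1}:(1-t)\overline{z_2}] = T([z_0:z_1:(1-t)z_2]) = T(\rho_t([z_0:z_1:z_2])).
\end{align*}
Thus each $\rho_t$ is a $G$-map and $\rho$ is an equivariant strong deformation retract from $\Proj_2 \setminus \{p_0\}$ onto $\Proj_1$, as claimed.

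I do not expect a serious obstacle here: the only thing one must not overlook is that the retraction parameter enters as a \emph{real} scalar, which is exactly what makes the whole construction compatible with the antiholomorphic involution $T$ (the same mechanism already used in remark~\ref{Rank2DefiniteComponentsContractible} and remark~\ref{RemarkDefiniteComponentsRetractable}, where scaling by real numbers is compatible with complex conjugation). If one wanted a cleaner geometric description, one could instead normalise representatives so that $|z_0|^2 + |z_1|^2 + |z_2|^2 = 1$ and linearly interpolate the $\C^2$-part and the $z_2$-part separately with renormalisation, but the homogeneous-coordinate formula above is the most economical and makes equivariance transparent.
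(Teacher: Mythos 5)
Your proof is correct and uses exactly the same construction as the paper: the retraction $[z_0:z_1:z_2]\mapsto[z_0:z_1:(1-t)z_2]$ on $\Proj_2\setminus\{p_0\}$, with equivariance following because the real scalar $1-t$ commutes with complex conjugation. Your write-up is simply a more detailed version of the paper's argument.
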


\begin{proof}
  Define $\rho$ as follows:
  \begin{align*}
    \rho\colon I \times \mathbb{P}_2\setminus\{p_0\} &\to \mathbb{P}_2\setminus\{p_0\}\\
    \left(t,[z_0:z_1:z_2]\right) &\mapsto [z_0:z_1:(1-t) z_2].
  \end{align*}
  For $t=0$ this is the identity of $\mathbb{P}_2\setminus\{p_0\}$,
  for $t=1$, this is the map
  \begin{align*}
    [z_0:z_1:z_2] \mapsto [z_0:z_1:0].
  \end{align*}
  This is well-defined, since $\rho$ is only defined on the complement
  of the point $p_0$.  This construction is equivariant, because the
  multiplication with real numbers commutes with complex conjugation.
\end{proof}

\begin{figure}[h]
  \centering
  \begin{tikzpicture}[scale=0.7]
    \draw (0,{sqrt(3)*2}) coordinate (p0) node[color=red,above] {$p_0$};
    \draw [color=blue] 
    (-2,0) -- node[below] {$\Proj_1$} (2,0);
    \draw (-2,0) -- (0,{sqrt(3)*2})
    (0,{sqrt(3)*2}) -- (2,0);
    \draw
    (1.5,0) -- (p0)
    (1,0) -- (p0)
    (0.5,0) -- (p0)
    (0,0) -- (p0)
    (-0.5,0) -- (p0)
    (-1.0,0) -- (p0)
    (-1.5,0) -- (p0);
    \draw[thick,decorate,decoration={brace,amplitude=10pt}] [xshift=-2mm](-2,-0.1) -- node[yshift=0mm,xshift=-4mm,left] {$\Proj_2$} (-2,{sqrt(3)*2+0.1});
    \draw[red,fill=red] (p0) circle (2pt);

    \draw (5,{sqrt(3)*2}) coordinate (p1) ;
    \draw [color=blue] 
    (3,0) -- node[below] {$\Proj_1$} (7,0);
    \draw (3,0) -- (5,{sqrt(3)*2})
    (5,{sqrt(3)*2}) -- (7,0);
    \draw
    (6.5,0) -- (p1)
    (6,0) -- (p1)
    (5.5,0) -- (p1)
    (5,0) -- (p1)
    (4.5,0) -- (p1)
    (4.0,0) -- (p1)
    (3.5,0) -- (p1);
    \fill [white] (3,{sqrt(3)*2-1.5}) rectangle (7,{sqrt(3)*2+0.1});

    \draw[thick,->] (2.2,1) -- (2.8,1);
    \draw[thick,->] (7.2,1) -- (7.8,1);
    \draw [color=blue] 
    (8,0) -- node[below] {$\Proj_1$} (12,0);
  \end{tikzpicture}
  \caption{$\Proj_2\smallsetminus\{p_0\}$ retracts to $\Proj_1 \subset \Proj_2$.}
  \label{P2VectorBundleOverP1}
\end{figure}

The strong deformation retract $\rho$ (see
figure~\ref{P2VectorBundleOverP1} for a depiction) constructed in the
previous lemma also defines a strong deformation retract of the fixed
point set $(\Proj_2)_\R$ to the fixed point set $(\Proj_1)_\R$. For
understanding what happens to fixed point signatures of maps
$X \to \Proj_2$ during such a deformation retract we make the
following remark:
\begin{remark}
  \label{NonTrivialLoopInRPn}
  Assume $n > 1$. Let $\gamma\colon I \to \RProj_1$ be the loop which
  wraps around $\RProj_1$ once. Let $\iota_n$ be the following embedding:
  \begin{align*}
    \iota_n\colon \RProj_1 &\to \RProj_n\\
    [x_0:x_1] &\mapsto [x_0:x_1:0:\ldots:0].
  \end{align*}
  Then $\iota_n \circ \gamma$ defines the (up to homotopy) unique
  non-trivial loop in $\RProj_n$.
\end{remark}

\begin{proof}
  To simplify the notation set $\iota = \iota_n$.
  We have the following commutative diagram
  \[
  \begin{xy}
    \xymatrix{
      S^1 \ar@{^{(}->}[r]^{\hat{\iota}} \ar[d] & S^n \ar[d]\\
      \mathbb{RP}_1 \ar@{^{(}->}[r]_\iota & \mathbb{RP}_n
    }
  \end{xy}
  \]
  Here, $\smash{\hat{\iota}}$ can be regarded as the restriction of
  the embedding $\mathbb{R}^2 \hookrightarrow \mathbb{R}^{n+1}$ to
  $S^1 \subset \mathbb{R}^2$. Denote the loop which wraps around
  $\mathbb{RP}_1$ once by $\gamma\colon I \to \mathbb{RP}_1$. It can
  be lifted to a path $\smash{\hat{\gamma}}\colon I \to S^1$, which
  makes a one-half loop in $S^1$. Then, by commutativity of the
  diagram, it follows that
  $\smash{\widehat{\iota \circ \gamma}} := \smash{\hat{\iota}} \circ
  \smash{\hat{\gamma}}$
  is a lift of $\iota \circ \gamma$, the latter being a representant
  of the homotopy class $\iota_*[\gamma] \in \pi_1(\RProj_n)$. It is
  well known (see e.\,g. \cite[p.~74]{Hatcher}) that $\pi_1(\RProj_n)$
  (for $n > 1$) is generated by the projection to $\RProj_n$ of a
  curve in the universal covering space $S^n$ which connects two
  antipodal points.  Thus it is enough to show that
  $\widehat{\iota \circ \gamma}$ is not a closed loop in $S^n$. But
  this is clear, since $\smash{\hat{\gamma}}$ is not a closed loop and
  $S^1 \hookrightarrow S^n$ is just an embedding.
\end{proof}

As a consequence of the previous remark we can formulate:
\begin{remark}
  \label{RPnFundamentalGroupIsomorphism}
  Assume $n > 1$ and let $\iota$ be the embedding
  \begin{align*}
    \iota\colon \RProj_n &\to \RProj_{n+1}\\
    [x_0:\ldots:x_n] &\mapsto [x_0:\ldots:x_n:0].
  \end{align*}
  Then the induced map $\iota_*\colon \pi_1(\RProj_n) \to
  \pi_1(\RProj_{n+1})$ is an isomorphism.
\end{remark}

\begin{proof}
  Since both fundamental groups are cyclic of order 2 it suffices to
  show that $\iota_*$ maps the non-trivial homotopy class $[\gamma]
  \in \pi_1(\RProj_n)$ to the non-trivial homotopy class in
  $\pi_1(\RProj_{n+1})$. Let $\alpha$ be the non-trivial loop in
  $\RProj_n$ which is defined in terms of the embedding of $\RProj_1
  \hookrightarrow \RProj_n$ (see
  remark~\ref{NonTrivialLoopInRPn}). Then we have $[\gamma] =
  [\alpha]$. The homotopy class $\iota_*([\gamma])$ can thus be
  regarded as the homotopy class of the loop $\iota \circ \alpha$. But
  this is just the non-trivial loop constructed in the previous remark
  for $\RProj_{n+1}$. Therefore, $\iota \circ \alpha$ is non-trivial
  and the statement is proven.
\end{proof}

Note that there is a canonical embedding of $S^1 \cong \mathbb{RP}_1
\hookrightarrow \mathbb{RP}_2$ as the line at infinity:
\begin{align*}
  [x_1:x_2] \mapsto [x_1:x_2:0]
\end{align*}
Fix a point $p_0$ in $\mathbb{RP}_1$. We now define two loops: Let
$\gamma_0$ be the loop which is constant at $p_0$ and let $\gamma_1$
be a fixed loop which wraps around $\mathbb{RP}_1 \subset
\mathbb{RP}_2$ once.  By remark~\ref{NonTrivialLoopInRPn} this defines
the non-trivial element in $\pi_1(\mathbb{RP}_2)$.

\begin{definition}
  \label{DefinitionFixpointNormalizationP2}
  A $G$-map $f\colon X \to \Proj_2$ is \emph{fixed point normalized} if
  $\restr{f}{C_j}$ is either $\gamma_0$ or $\gamma_1$ for all fixed point
  circles $C_j$ in $X$.
\end{definition}

\begin{remark}
  \label{FixpointNormalizationP2}
  Let $X$ be equipped with the type I or the type II involution. Then
  every $G$-map $f\colon X \to \Proj_2$ can be fixed point normalized.
\end{remark}
\begin{proof}
  Using the equivariant version of the homotopy extension property of
  the pair $(X,C)$ (see e.\,g. corollary~\ref{G-HEP}) we can prescribe
  a homotopy on each of the the fixed point circles of the torus which
  deforms the corresponding loop to either $\gamma_0$ or $\gamma_1$,
  depending on the fixed point signature of the map.
\end{proof}

Thus, after this normalization we can assume that maps with the same
fixed point signature agree along the boundary circles.

\begin{lemma}
  \label{ReductionP2toP1}
  Let $f\colon X \to \mathbb{P}_2$ be a fixed point normalized $G$-map
  for the type I or the type II involution.  Then:
  \begin{enumerate}[(i)]
  \item The map $f$ is equivariantly homotopic to a map $f'$ which has
    its image contained in $\mathbb{P}_1 \subset \mathbb{P}_2$.
  \item The degree of $f$ (definition~\ref{TotalDegreeGr}) agrees with
    $\deg f'$, where $f'$ is regarded as a map $X \to \mathbb{P}_1$.
  \end{enumerate}
\end{lemma}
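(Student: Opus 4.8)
The plan is to prove both statements by combining the equivariant strong deformation retract $\rho$ from lemma~\ref{RetractFromP2toP1} with the fixed point normalization of $f$. For part (i): since $f$ is fixed point normalized, the restrictions $\restr{f}{C_j}$ are either $\gamma_0$ (constant at $p_0$) or $\gamma_1$ (a loop wrapping once around $\mathbb{RP}_1 \subset \mathbb{RP}_2$); in particular, the image of each fixed point circle lies in $\mathbb{RP}_1 \subset \mathbb{RP}_2$, and hence — crucially — misses the point $p_0$. The first step is to argue that after an equivariant homotopy we may assume $f$ itself misses $p_0$ entirely. Since $p_0$ is a single point in the $4$-real-dimensional manifold $\mathbb{P}_2$ and $X$ is a $2$-manifold, a generic (smooth) equivariant representative of $f$ will automatically avoid $p_0$; more carefully, I would invoke the equivariant version of transversality (or a Hausdorff-dimension argument as hinted at in the introduction) together with the fact that $f$ already avoids $p_0$ on a neighborhood of the fixed point set $X^G$, so the homotopy pushing $f$ off $p_0$ can be taken to fix $X^G$ and therefore to be equivariant. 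Once $f$ has image in $\mathbb{P}_2 \setminus \{p_0\}$, composing with the retract $\rho_t$ of lemma~\ref{RetractFromP2toP1} gives an equivariant homotopy from $f$ to a map $f' = \rho_1 \circ f$ with image in $\mathbb{P}_1$. Since $\rho$ is equivariant and fixes $(\mathbb{P}_1)_\R$, this homotopy is through $G$-maps, proving (i).

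For part (ii), I would use the functoriality of homology together with the characterization of the total degree in definition~\ref{TotalDegreeGr}. Here $\Gr_1(\C^3) = \Proj_2$ and the Schubert variety $\mathcal{S}$ generating $H_2(\Proj_2,\Z)$ is exactly the embedded $\Proj_1$. The inclusion $j\colon \mathbb{P}_1 \hookrightarrow \mathbb{P}_2$ induces an isomorphism $j_*\colon H_2(\mathbb{P}_1,\Z) \xrightarrow{\sim} H_2(\mathbb{P}_2,\Z)$ sending the fundamental class $[\mathbb{P}_1]$ to $[\mathcal{S}]$ (this is standard: $\mathbb{P}_1$ is the generator of $H_2$ of $\mathbb{P}_2$). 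Since $f \simeq_G j \circ f'$, we have $f_* = j_* \circ (f')_*$ on $H_2$. Writing $(f')_*[X] = d'[\mathbb{P}_1]$ for the total degree $d'$ of $f'$ regarded as a map $X \to \mathbb{P}_1$, we get $f_*[X] = j_*(d'[\mathbb{P}_1]) = d'[\mathcal{S}]$, so the total degree of $f$ equals $d'$. Homotopy invariance of the induced map on homology guarantees this does not depend on the choice of equivariant representative.

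The main obstacle is the first step of part (i): rigorously producing the equivariant homotopy that removes the point $p_0$ from the image. The subtlety is that a naive generic perturbation need not be equivariant, so one must arrange the perturbation to respect the $G$-action — for instance by perturbing only on a fundamental domain (the cylinder $Z$ for type I, or the pseudofundamental region $R$ for type II) and extending equivariantly via lemma~\ref{Class1EquivariantExtension} resp. lemma~\ref{Class2EquivariantExtension}, using that $f$ already avoids $p_0$ near the fixed circles so that the perturbation can be taken to vanish there and hence to glue consistently. Everything else — applying the retract, chasing the homology diagram — is routine.
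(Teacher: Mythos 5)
Your overall skeleton matches the paper's: arrange that $p_0=[0:0:1]$ is not in the image, compose with the equivariant retract of lemma~\ref{RetractFromP2toP1}, and for (ii) use that $\Proj_1\hookrightarrow\Proj_2$ induces an isomorphism on $H_2$ carrying $[\Proj_1]$ to the generator, so $f_*=\iota_*\circ f'_*$ forces equality of degrees. Part (ii) is essentially identical to the paper's argument. Where you diverge is in how the point is removed. The paper does not perturb $f$ at all: it picks a point $p\in\RProj_1\subset\RProj_2$ missed by the image (using that the relevant image is low-dimensional, with the full justification via the Hausdorff-dimension machinery of lemma~\ref{LemmaSmallIntersectionWithStrata} and proposition~\ref{PropIterativeRetractionDecomposition}) and then transports $p$ to $p_0$ by a path in $\SOR{3}\subset\SU(3)$; since $\SOR{3}$ commutes with conjugation, the ambient isotopy $g_t\circ f$ is automatically equivariant. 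You instead keep $p_0$ fixed and perturb $f$ off it by equivariant transversality rel $X^G$, exploiting that fixed point normalization already forces $f(X^G)\subset\RProj_1\not\ni p_0$. Your route avoids having to find a \emph{real} point missed by the entire ($2$-dimensional) image inside the $2$-dimensional $\RProj_2$ --- which is the genuinely delicate point the paper's dimension-counting glosses over here --- at the price of having to justify the equivariant perturbation by hand. Both are legitimate; the paper's version buys equivariance for free from $\SOR{3}$, yours buys a cleaner dimension count (codimension $4$ versus domain dimension $2$) from ordinary transversality rel a closed set where the condition already holds.

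One caveat on your execution: for the type II involution the boundary of the pseudofundamental region $R$ is not just the fixed circle $C$ but also contains $A_1\cup A_2$, which are interchanged by $T$ rather than fixed, and fixed point normalization says nothing about $f$ on $A$. So ``perturb on the fundamental domain rel the fixed set and extend'' needs one extra step there: either first homotope $f$ so that $f(A)$ (which is $1$-dimensional) misses $p_0$ and then perturb rel $C\cup A$, or prescribe the perturbation on $A_1$ and transport it to $A_2$ by $T$ before extending via lemma~\ref{Class2EquivariantExtension}. With that addition the argument goes through.
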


\begin{proof}
  Regarding (i): Let $\smash{\widehat{C}}$ be the union of the
  (finitely many) fixed point circles.  Because of dimension reasons
  there exists a point $p \in \mathbb{RP}_2$ which is not contained in
  the image of $\smash{\restr{f}{\widehat{C}}}$. Note that $\SOR{3}$ acts
  transitively on $\mathbb{RP}_2$. Thus, let $g_1 \in \SOR{3}$ be the
  transformation in $\SOR{3}$ which sends $p$ to $p_0 :=
  [0:0:1]$.
  This transformation $g_1$ can also be regarded as a transformation
  of $\mathbb{P}_2$, since $\SOR{3} \subset \SU(3)$. Also, since
  $\SOR{3}$ is path-connected we can find a path $g_t$ from the
  identity in $\SOR{3}$ to $g_1$. The composition $g_t \circ f$ is
  then a homotopy of $f$ to a map $\smash{\tilde{f}}$ such that
  $p_0 \not\in \Im \smash{\tilde{f}}$.
  This homotopy is also equivariant. Thus, without loss of generality
  we can assume that, at least after a $G$-homotopy, a given $G$-map
  $f\colon X \to \mathbb{P}_2$ does not have $p_0$ in its image. Let
  us assume that the given map $f$ has this property. Then we can
  compose $f$ with the equivariant strong deformation retract from
  remark~\ref{RetractFromP2toP1} and therefore obtain an equivariant
  homotopy from $f$ to a map $f'$ such that $\Im f' \subset
  \mathbb{P}_1 \subset \mathbb{P}_2$.

  Regarding (ii): The degree $\deg f$ is the number $d$ such that
  \begin{align*}
    f_*\colon H_2(X,\mathbb{Z}) \to H_2(\mathbb{P}_2,\mathbb{Z})
  \end{align*}
  is the map
  \begin{align*}
    [X] \mapsto d[\Proj_1].
  \end{align*}
  while the degree $\deg f'$ is the number $d'$ such that
  \begin{align*}
    f'_*\colon H_2(X,\mathbb{Z}) \to H_2(\mathbb{P}_1,\mathbb{Z})
  \end{align*}
  is the map
  \begin{align*}
    [X] \mapsto d'[\Proj_1].
  \end{align*}
  The inclusion $\iota\colon \mathbb{P}_1 \hookrightarrow
  \mathbb{P}_2$ induces an isomorphism in the second homology
  group. Thus we can also regard $f_*$ as taking on values in
  $H_2(\mathbb{P}_2,\mathbb{Z})$. But since $f$ and $\iota \circ f'$
  are homotopic, their induced maps in homology must agree. This means
  $d = d'$.
\end{proof}

\paragraph{Type I}

In this paragraph we are assuming the involution on $X$ to be of type
I.

\begin{remark}
  \label{ReductionP2toP1Class1}
  Let $f\colon X \to \mathbb{P}_2$ be a fixed point normalized
  $G$-map. Then the map $f$ has fixed point signature $\Pair{m_0}{m_1}$
  iff $f'$ (from lemma~\ref{ReductionP2toP1}) has fixed point degree
  $\Bideg{m_0}{m_1}$.
\end{remark}

\begin{proof}
  Let us first assume that $\restr{f}{C_j}\colon C_j \to
  \mathbb{RP}_2$ is trivial, i.\,e. $m_j = 0$. Since $f$ is fixed point
  normalized, we know that $\restr{f}{C_j}$ is the constant
  map. Therefore, after making the deformation retract to $\RProj_1
  \subset \mathbb{P}_1$ it is still constant, hence has degree
  zero. On the other hand, if a map $S^1 \to \RProj_1 \subset
  \mathbb{P}_1$ has degree zero, then it is already null-homotopic as
  a map to $\RProj_1$. Therefore it will also be null-homotopic as a
  map to $\RProj_2 \supset \RProj_1$.

  Let us now assume that $\restr{f}{C_j}\colon C_j \to \mathbb{RP}_2$
  is non-trivial, i.\,e. $m_j = 1$. In this case $\restr{f}{C_j}$ is
  the map which wraps around $\mathbb{RP}_1$ once. In particular, this
  loop will be fixed when composing with the strong deformation
  retract to $\mathbb{P}_1$. Therefore, after the retraction, the
  resulting map will still have degree one along the boundary circle
  $C_j$. The reverse direction follows from
  remark~\ref{NonTrivialLoopInRPn}, where we discussed the loop which
  wraps around $\RProj_1$ once; using the canonical embedding
  $\mathbb{RP}_1 \hookrightarrow \mathbb{RP}_2$, this loops defines a
  non-trivial element in $\pi_1(\mathbb{RP}_2)$.
\end{proof}

\begin{theorem}
  \label{TheoremClassificationMapsToP2Class1}
  The $G$-homotopy class of a map $f \in \mathcal{M}_G(X,\Proj_2)$ is
  uniquely determined by its degree triple $\mathcal{T}(f)$. The image
  $\Im(\mathcal{T})$ of the degree triple map consists of those
  triples $\Triple{m_0}{d}{m_1}$ (with $m_0, m_1 \in \{0,1\}$)
  satisfying
  \begin{align*}
    d \equiv m_0 + m_1 \mod 2.
  \end{align*}
\end{theorem}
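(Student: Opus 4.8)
The plan is to transfer the statement to the already-settled classification of equivariant maps $X\to S^2$ (Theorem~\ref{Classification1}) by way of the retraction $\Proj_2\setminus\{p_0\}\to\Proj_1$ from Remark~\ref{RetractFromP2toP1} and the standard equivariant identification $\Proj_1\cong S^2$. Essentially all of the geometry has already been carried out in Lemma~\ref{ReductionP2toP1} and Remark~\ref{ReductionP2toP1Class1}; what remains is to assemble these statements correctly.

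For the completeness assertion, I would take two $G$-maps $f,g\colon X\to\Proj_2$ with the same degree triple $\Triple{m_0}{d}{m_1}$. By Remark~\ref{FixpointNormalizationP2} we may assume both are fixed point normalized, so that on each fixed point circle $C_j$ the restrictions $\restr{f}{C_j}$ and $\restr{g}{C_j}$ are the \emph{same} loop, namely $\gamma_0$ if $m_j=0$ and $\gamma_1$ if $m_j=1$. Lemma~\ref{ReductionP2toP1}(i) then produces $G$-homotopies $f\simeq_G f'$ and $g\simeq_G g'$ with $f'$ and $g'$ having image in $\Proj_1$. Regarding $f'$ and $g'$ as equivariant maps $X\to\Proj_1\cong S^2$, Lemma~\ref{ReductionP2toP1}(ii) gives $\deg f'=\deg f=d=\deg g=\deg g'$, and Remark~\ref{ReductionP2toP1Class1} gives that both $f'$ and $g'$ have fixed point degree $\Bideg{m_0}{m_1}$. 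Hence $f'$ and $g'$ are equivariant maps to $S^2$ with the same degree triple $\Triple{m_0}{d}{m_1}\in\Z^3$, so Theorem~\ref{Classification1} yields $f'\simeq_G g'$, and therefore $f\simeq_G g$.

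For the description of $\Im(\mathcal{T})$, necessity of the parity relation is immediate: given a $G$-map $f$ with triple $\Triple{m_0}{d}{m_1}$, normalize it and pass to the retracted map $f'\colon X\to\Proj_1\cong S^2$ as above; it has degree triple $\Triple{m_0}{d}{m_1}$ in $\Z^3$, which by Theorem~\ref{Classification1} forces $d\equiv m_0+m_1\mod 2$. For sufficiency, fix $m_0,m_1\in\{0,1\}$ and an integer $d$ with $d\equiv m_0+m_1\mod 2$. Then the triple $\Triple{m_0}{d}{m_1}$ satisfies the realizability condition of Theorem~\ref{Classification1}, so there is a $G$-map $h\colon X\to S^2\cong\Proj_1$ with this degree triple. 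Composing with the equivariant inclusion $\iota\colon\Proj_1\hookrightarrow\Proj_2$ gives $f:=\iota\circ h$. Since $\iota$ induces an isomorphism on $H_2$ (as in the proof of Lemma~\ref{ReductionP2toP1}(ii)), the total degree of $f$ equals $d$; and since $\restr{h}{C_j}$ is a loop of degree $m_j$ in $\RProj_1$, Remark~\ref{NonTrivialLoopInRPn} shows that $\restr{f}{C_j}=\iota\circ\restr{h}{C_j}$ is null-homotopic in $\RProj_2$ exactly when $m_j=0$. Thus $f$ has fixed point signature $\Bideg{m_0}{m_1}$, that is, $\Triple{m_0}{d}{m_1}\in\Im(\mathcal{T})$.

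The one point deserving care — and it is already absorbed into the cited results — is the interplay between the \emph{fixed point signature}, which is valued in $\pi_1(\RProj_2)=\{0,1\}$, and the \emph{fixed point degree} of the retracted map into $\RProj_1\cong S^1$: one needs that a fixed point normalized boundary loop is either constant or wraps exactly once around $\RProj_1\subset\RProj_2$, that the retraction $\Proj_2\setminus\{p_0\}\to\Proj_1$ leaves this behavior unchanged, and (for Lemma~\ref{ReductionP2toP1}(i) itself) that $f$ can be pushed off the point $p_0$ by a general position argument, using that $X$ is $2$-dimensional while $\Proj_2$ is $4$-dimensional as a real manifold. Granting these, the argument is a routine transfer of the $n=2$ classification.
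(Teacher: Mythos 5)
Your proposal is correct and follows essentially the same route as the paper's own proof: normalize on the fixed point circles, push the maps into $\Proj_1 \subset \Proj_2$ via Lemma~\ref{ReductionP2toP1}, match up total degree and fixed point data using Remark~\ref{ReductionP2toP1Class1}, and then invoke Theorem~\ref{Classification1} for both the completeness of the invariant and the description of the image. The only difference is that you spell out explicitly the fixed point normalization step and the verification that the composed map $\iota\circ h$ has the right invariants, which the paper leaves implicit in its citations.
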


\begin{proof}
  First we show that two maps with the same degree triple are
  equivariantly homotopic. Thus, let $f$ and $g$ be two $G$-maps $X
  \to \mathbb{P}_2$, both having the triple $\Triple{m_0}{d}{m_1}$.
  Using lemma~\ref{ReductionP2toP1} (i) we can assume that $f$ and $g$
  have their images contained in $\mathbb{P}_1 \subset
  \mathbb{P}_2$. Applying lemma~\ref{ReductionP2toP1} (ii) and
  lemma~\ref{ReductionP2toP1Class1} it then follows that the degree
  triple of $f$ and $g$, regarded as a map $X \to \mathbb{P}_1$ is
  again $\Triple{m_0}{d}{m_1}$. Hence, by
  theorem~\ref{Classification1}, these maps are equivariantly
  homotopic.

  Regarding the image $\Im(\mathcal{T})$: Let us first assume that
  $\Triple{m_0}{d}{m_1}$ is in the image. Then, by definition, there
  exists a map $f\colon X \to \mathbb{P}_2$ with this
  triple. Lemma~\ref{ReductionP2toP1} then implies the existence of a
  map $f'\colon X \to \mathbb{P}_1$ with the same triple. Hence, by
  theorem~\ref{Classification1}, $d \equiv m_0 + m_1 \mod 2$. On the
  other hand, given a triple $\Triple{m_0}{d}{m_1}$ with $m_0,m_1 \in
  \{0,1\}$ and $d \equiv m_0 + m_1 \mod 2$, there exists a map
  $f\colon X \to S^2 \cong \mathbb{P}_1$ with this triple. Composing
  with the embedding $\mathbb{P}_1 \hookrightarrow \mathbb{P}_2$ we
  have produced a map $X \to \mathbb{P}_2$ which, again by
  lemma~\ref{ReductionP2toP1}, has the same triple
  $\Triple{m_0}{d}{m_1}$.
\end{proof}

\paragraph{Type II}

In this paragraph we are assuming the involution on $X$ to be of type
II.

\begin{remark}
  \label{ReductionP2toP1Class2}
  Let $f\colon X \to \mathbb{P}_2$ be a fixed point normalized
  $G$-map. Then the map $f$ has the fixed point signature $m$ iff $f'$
  (from lemma~\ref{ReductionP2toP1}) has the fixed point degree $m$.
\end{remark}

\begin{proof}
  The proof is exactly the same as in
  lemma~\ref{ReductionP2toP1Class1}. One only has consider the single
  circle $C$ in $X$ instead of two circles $C_0$ and $C_1$.
\end{proof}

\begin{theorem}
  \label{TheoremClassificationMapsToP2Class2}
  The $G$-homotopy class of a map $f \in \mathcal{M}_G(X,\Proj_2)$ is
  uniquely determined by its degree pair $\mathcal{P}(f)$. The image
  $\Im(\mathcal{P})$ of the degree pair map consists of those pairs
  $\Pair{m}{d}$ (with $m \in \{0,1\}$) satisfying
  \begin{align*}
    d \equiv m \mod 2.
  \end{align*}
\end{theorem}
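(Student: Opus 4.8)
The plan is to reduce the statement to the already-established type~II
classification of maps $X\to\Proj_1\cong S^2$
(Theorem~\ref{Classification2}), in exactly the way that
Theorem~\ref{TheoremClassificationMapsToP2Class1} reduces the type~I case
of $\Proj_2$ to Theorem~\ref{Classification1}. The three ingredients are:
the equivariant fixed point normalization
(Remark~\ref{FixpointNormalizationP2}); the equivariant retraction
$\Proj_2\setminus\{p_0\}\to\Proj_1$ together with the degree and
fixed point bookkeeping (Lemma~\ref{ReductionP2toP1} and
Remark~\ref{ReductionP2toP1Class2}); and the type~II classification for
$S^2$.

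First I would prove completeness. Let $f,g\in\mathcal{M}_G(X,\Proj_2)$
have the same degree pair $\Pair{m}{d}$. By
Remark~\ref{FixpointNormalizationP2} I may assume $f$ and $g$ are
fixed point normalized, and by Lemma~\ref{ReductionP2toP1}~(i) I may
equivariantly homotope each of them so that its image lies in
$\Proj_1\subset\Proj_2$. By Lemma~\ref{ReductionP2toP1}~(ii) the total
degree stays $d$, and by Remark~\ref{ReductionP2toP1Class2} the fixed
point signature $m$ of the original map equals the fixed point degree
of the resulting map into $\Proj_1$. Hence $f$ and $g$, now regarded as
$G$-maps $X\to\Proj_1\cong S^2$, have the same degree pair $\Pair{m}{d}$,
so Theorem~\ref{Classification2} supplies an equivariant homotopy between
them inside $\Proj_1$; composing this homotopy with the inclusion
$\Proj_1\hookrightarrow\Proj_2$ gives the required $G$-homotopy of the
original maps.

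Next I would determine $\Im(\mathcal{P})$. Necessity is immediate: if
$\Pair{m}{d}\in\Im(\mathcal{P})$, the reduction above yields a $G$-map
$X\to\Proj_1$ with the same degree pair, so Theorem~\ref{Classification2}
forces $d\equiv m\mod 2$. For sufficiency, fix $m\in\{0,1\}$ and $d$ with
$d\equiv m\mod 2$. Since $m\in\{0,1\}$ and $d\equiv m\mod 2$, the pair
$\Pair{m}{d}$ satisfies the condition of Theorem~\ref{Classification2},
so there is a $G$-map $f\colon X\to S^2\cong\Proj_1$ with fixed point
degree $m$ and total degree $d$. Composing $f$ with the inclusion
$\iota\colon\Proj_1\hookrightarrow\Proj_2$, the total degree is unchanged
because $\iota_*$ is an isomorphism on $H_2$, while $\restr{f}{C}$ is a
loop of degree $m\in\{0,1\}$ in $\RProj_1$, so by
Remark~\ref{NonTrivialLoopInRPn} the loop $\restr{(\iota\circ f)}{C}$ is
contractible in $\RProj_2$ precisely when $m=0$. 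Thus the fixed point
signature of $\iota\circ f$ is $m$ and $\mathcal{P}(\iota\circ f)=\Pair{m}{d}$,
as wanted.

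I do not expect a serious obstacle here: the substantive work was already
carried out in the type~II analysis of $S^2$ and in the reduction lemmas,
which were deliberately stated uniformly for the type~I and type~II
involutions. The only point demanding care is the translation between the
integer fixed point degree $d_C$ of a map into $\Proj_1$ and the
$\{0,1\}$-valued fixed point signature $m$ of a map into $\Proj_2$; this is
handled by Remark~\ref{ReductionP2toP1Class2} in the one direction and by
Remark~\ref{NonTrivialLoopInRPn} --- the fact that a once-around loop of
$\RProj_1$ generates $\pi_1(\RProj_2)$ while a contractible one stays
contractible --- in the other.
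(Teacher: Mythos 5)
Your proposal is correct and follows essentially the same route as the paper's own proof: fixed point normalization, retraction to $\Proj_1$ via Lemma~\ref{ReductionP2toP1} and Remark~\ref{ReductionP2toP1Class2}, reduction to Theorem~\ref{Classification2}, and the embedding $\Proj_1 \hookrightarrow \Proj_2$ for realizability. Your explicit check of the fixed point signature of $\iota\circ f$ via Remark~\ref{NonTrivialLoopInRPn} is a slightly more careful spelling-out of a step the paper leaves implicit, but it is not a different argument.
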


\begin{proof}
  The proof works completely analogously to the proof of
  theorem~\ref{TheoremClassificationMapsToP2Class1}. For the first
  statement it suffices to show that two maps with the same degree
  pair are equivariantly homotopic. Thus, let $f$ and $g$ be two
  $G$-maps $X \to \mathbb{P}_2$, both having the pair
  $\Pair{m}{d}$. Using lemma~\ref{ReductionP2toP1} (i) we can assume
  that $f$ and $g$ have their images contained in $\mathbb{P}_1 \subset
  \mathbb{P}_2$. Applying lemma~\ref{ReductionP2toP1} (ii) and
  lemma~\ref{ReductionP2toP1Class2} it then follows that the degree
  pair of $f$ and $g$, regarded as a map $X \to \mathbb{P}_1$ is again
  $\Pair{m}{d}$. Hence, by theorem~\ref{Classification2}, these maps
  are equivariantly homotopic, which proves the statement.

  Regarding the image $\Im(\mathcal{P})$: Let us assume that
  $\Pair{m}{d}$ is in the image. By definition there exists a map
  $f\colon X \to \mathbb{P}_2$ with this
  pair. Lemma~\ref{ReductionP2toP1} then implies the existence of a
  map $f'\colon X \to \mathbb{P}_1$ with the same pair. Hence, by
  theorem~\ref{Classification2}, $d \equiv m \mod 2$ is a necessary
  condition for a pair to be contained in $\Im(\mathcal{P})$. On the
  other hand, given a pair $\Pair{m}{d}$ with $m \in \{0,1\}$ and $d
  \equiv m \mod 2$, there exists a map $f\colon X \to S^2 \cong
  \mathbb{P}_1$ with this pair. Composing with the embedding
  $\mathbb{P}_1 \hookrightarrow \mathbb{P}_2$ we have produced a map
  $X \to \mathbb{P}_2$ which, again by lemma~\ref{ReductionP2toP1},
  has the same pair $\Pair{m}{d}$. Therefore, $\Pair{m}{d}$ is
  contained in the image $\Im(\mathcal{P})$.
\end{proof}

\subsubsection{Classification of Maps to $\mathcal{H}_3^*$}

So far the degree triple map (resp. the degree pair map) has only been
defined for $G$-maps $X \to \Proj_2$. But after fixing an
identification of the orbit $U(3).\I{p}{q} \subset
\mathcal{H}_{(p,q)}$, which has been shown to be an equivariant strong
deformation retract, with $\Proj_2$, the degree triple map (resp. the
degree pair map) is also defined on all mapping spaces
$\mathcal{M}_G(X,\mathcal{H}_{(p,q)})$ with $p+q=3$. This remark then
allows us to state and prove the main result of this section:

\begin{theorem}
  \label{HamiltonianClassificationRank3}
  Let $X$ be a torus equipped with either the type I or the type II
  involution. Then:
  \begin{enumerate}[(i)]
  \item The sets $[X,\mathcal{H}_{(3,0)}]_G$ and
    $[X,\mathcal{H}_{(0,3)}]_G$ are trivial.
  \item Two $G$-maps $X \to \mathcal{H}_{(p,q)}$ (with $0<p,q<3$) are
    $G$-homotopic iff their degree triples (type I) resp. their degree
    pairs (type II) agree.
  \item The realizable degree triples $\Triple{m_0}{d}{m_1}$ (type I)
    resp. degree pairs $\Pair{m}{d}$ (type II) are exactly those
    which satisfy
    \begin{align*}
      d \equiv m_0 + m_1 \mod 2 \;\text{resp.}\;d \equiv m \mod 2.
    \end{align*}
  \end{enumerate}
\end{theorem}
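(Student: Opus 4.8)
The plan is to run the same three-step assembly that proved theorem~\ref{HamiltonianClassificationRank2} for $n=2$, now feeding in the reductions established in the present section for $n=3$.

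For part~(i) I would simply quote remark~\ref{RemarkDefiniteComponentsRetractable}: the components $\mathcal{H}_{(3,0)}$ and $\mathcal{H}_{(0,3)}$ are equivariant strong deformation retracts onto the single points $\{\I{3}\}$ and $\{-\I{3}\}$, so $[X,\mathcal{H}_{(3,0)}]_G$ and $[X,\mathcal{H}_{(0,3)}]_G$ are one-point sets. The contracting homotopies there act by rescaling and commute with conjugation, so this is insensitive to whether $X$ carries the type~I or the type~II involution.

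For parts~(ii) and~(iii) the only mixed signatures are $(2,1)$ and $(1,2)$. The plan is: apply proposition~\ref{GrassmannianStrongDeformationRetract} to retract $\mathcal{H}_{(p,q)}$ equivariantly onto the orbit $U(3).\I{p}{q}\cong\Gr_p(\C^3)$, and then use the equivariant diffeomorphism $\Gr_2(\C^3)\cong\Gr_1(\C^3)=\Proj_2$ (remark~\ref{GrassmannianIdentificationEquivariant}) to land in both cases on $\Proj_2$ equipped with standard conjugation. This gives a bijection $[X,\mathcal{H}_{(p,q)}]_G\cong[X,\Proj_2]_G$ under which, by the discussion opening this subsection, the total degree of a map to $\mathcal{H}_{(p,q)}$ (in the sense of definition~\ref{TotalDegreeGr}) and its fixed point signature agree with the total degree and fixed point signature of the corresponding map to $\Proj_2$. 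Once that is in place, statements~(ii) and~(iii) are exactly theorem~\ref{TheoremClassificationMapsToP2Class1} (type~I) and theorem~\ref{TheoremClassificationMapsToP2Class2} (type~II).

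The only thing that genuinely needs checking rather than citing — and the closest this proof has to an obstacle — is that the chain of identifications transports the invariants correctly, i.e. the total degree is not rescaled and the fixed point signature is not permuted among the circles $C_0,C_1$ of $X$. One handles this by observing that every map in the chain is an equivariant homotopy equivalence or an equivariant diffeomorphism: it therefore induces an isomorphism $H_2(\Gr_p(\C^3),\Z)\xrightarrow{\sim} H_2(\Proj_2,\Z)$ sending the Schubert generator $[\mathcal{S}]$ to $\pm[\Proj_1]$, and it restricts to a diffeomorphism of the real-point sets, hence an isomorphism of their cyclic fundamental groups, so it preserves the triviality or nontriviality of each boundary loop. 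The residual sign/generator ambiguity is the same one already absorbed into the fixed orientation and identification choices made on $\Proj_1\cong S^2$ and on the torus in section~\ref{SectionN=2}. No new ideas beyond those of that section and the preceding Grassmannian reductions are needed.
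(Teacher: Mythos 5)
Your proposal follows exactly the paper's own route: part (i) via remark~\ref{RemarkDefiniteComponentsRetractable}, and parts (ii)--(iii) by retracting $\mathcal{H}_{(2,1)}$ resp. $\mathcal{H}_{(1,2)}$ onto $U(3).\I{p}{q}\cong\Gr_p(\C^3)\cong\Proj_2$ and then invoking theorem~\ref{TheoremClassificationMapsToP2Class1} and theorem~\ref{TheoremClassificationMapsToP2Class2}. Your extra paragraph verifying that the identifications preserve total degree and fixed point signature is a sound (and slightly more careful) elaboration of what the paper leaves implicit.
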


\begin{proof}
  The $\sig = (3,0)$ resp. the $\sig = (0,3)$ case is handled by
  remark~\ref{RemarkDefiniteComponentsRetractable}, which proves that
  maps $X \to \mathcal{H}_{(2,0)}$ (resp. $X \to \mathcal{H}_{(0,2)}$)
  are equivariantly retractable to the map which is constant the
  identity (resp. minus identity). In the case of the signature being
  $(2,1)$ (resp. $(1,2)$) the image space has the projective space
  $\Proj_2$ as an equivariant strong deformation retract. In this case
  theorem~\ref{TheoremClassificationMapsToP2Class1} (for type I) and
  theorem~\ref{TheoremClassificationMapsToP2Class2} (for type II)
  complete the proof.
\end{proof}

\begin{remark}
  \label{HamiltonianMapRealizationRank3}
  Clearly, the case $\sig = (3,0)$ (resp. $\sig = (0,3)$) is trivially
  realized by the map, which is constant the identity (resp. minus
  identity). Representants for the homotopy classes for $\sig = (2,1)$
  and $\sig = (1,2)$ can be constructed as follows: Let
  $f\colon X \to S^2$ be a $G$-map. Using the orientation-preserving
  diffeomorphism $S^2 \xrightarrow{\;\sim\;} \Proj_1$, we can regard
  $f$ as a map to $\Proj_1$. Composing this map with the embedding
  $\mathbb{P}_1 \hookrightarrow \mathbb{P}_2$ yields a $G$-map
  $\smash{\tilde{f}}\colon X \to \Proj_2$. Finally, we can compose
  $\smash{\tilde{f}}$ with one of the two embeddings
  $\iota_{(2,1)},\iota_{(1,2)}\colon \mathbb{P}_2 \hookrightarrow
  \mathcal{H}_3^*$,
  which embed $\mathbb{P}_2$ in the respective component of
  $\mathcal{H}_3^*$ associated to the signature $(2,1)$ or $(1,2)$.
\end{remark}

\subsection{Iterative Retraction Method}

In this section we describe a reduction procedure which can be used
iteratively until we arrive in a known situation, i.\,e.
$\Gr_1(\C^2) \cong \Proj_1$. We call this procedure \emph{iterative
  retraction} of Grassmann manifolds. Recall
section~\ref{SectionClassificationP2}, in particular
lemma~\ref{ReductionP2toP1}: There we start with a $G$-map
$f\colon X \to \Proj_2$ and remove a certain subset $S$ (in this case,
$S$ is a point) from $\Proj_2$ which -- at least after a $G$-homotopy
-- is not contained in the image $\Im(f)$. The resulting space
$\Proj_2\setminus S$ has the submanifold $\Proj_1 \subset \Proj_2$ as
equivariant, strong deformation retract. In this section we show that
this method can be generalized to general Grassmann manifolds
$\Gr_p(\C^n)$. Fundamental for this reduction is the fact that the
Schubert variety $\mathcal{S} \subset \Gr_p(\C^n)$ is equivariantly
identifiable with $\Proj_1 \cong S^2$ (see the definition of
discussion of $\mathcal{S}$ on p.~\pageref{DefOfSchubertVarietyC}).

The strategy in this chapter is to identify a higher-dimensional
analog of the subset $S$ in $\Gr_p(\C^\ell)$ such that
$\Gr_p(\C^\ell)\setminus S$ has $\Gr_p(\C^{\ell-1}) \subset
\Gr_p(\C^n)$ as equivariant strong deformation retract. This reduction
step is the basic building block for the reducing the classification
of maps to $\Gr_p(\C^n)$ to the classification of maps to the Schubert
variety $\mathcal{S}$. It works as follows:

Starting with a $G$-map $f\colon X \to \Gr_p(\C^n)$, we identify a
subset $S$ with the aforementioned property and such that (after a
$G$-homotopy) the image $\Im(f)$ does not intersect $S$. Then, using
the equivariant deformation retract to $\Gr_p(\C^{n-1})$, we can
regard $f$ as having its image contained in $\Gr_p(\C^{n-1})$. This
step can be repeated until it can be assumed that the map $f$ has its
image contained in $\Gr_p(\C^{p+1})$. The latter can be equivariantly
identified with $\Gr_1(\C^{p+1})$. It follows that the reduction
procedure just outlined can be repeated again until it can be assumed
that the map $f$ as its image contained in $\Gr_1(\C^2)$. This
subspace $\Gr_1(\C^2)$ identified using the procedure just described
is exactly the Schubert variety $\mathcal{S}$.
We begin with the following remark in this direction:
\begin{remark}
  \label{GrassmannianIdentificationEquivariant}
  Using the standard unitary structure on $\C^n$, the canonical map
  \begin{align*}
    \Psi\colon \Gr_k(\mathbb{C}^n) \to \Gr_{n-k}(\mathbb{C}^n),
  \end{align*}
  where both Grassmannians are equipped with the real structure $T$
  sending a space $V$ to $\overline{V}$, is equivariant with respect
  to $T$.
\end{remark}

\begin{proof}
  It must be shown that, for $V \in \Gr_k(\mathbb{C}^n)$, $T(V)^\perp =
  T(V^\perp)$. This is equivalent to showing that $T(V)$ and
  $T(V^\perp)$ are orthogonal with respect to the standard hermitian
  form on $\mathbb{C}^n$. Thus, let $T(v)$ be in $T(V)$ (with $v \in
  V$) and $T(w)$ be in $T(V^\perp)$ (with $w \in V^\perp$). We have to
  show that $\left<T(v),T(w)\right> = 0$. Equivalently we can show
  that $\smash{\overline{\left<T(v),T(w)\right>}} = 0$. A short computation
  yields:
  \begin{align*}
    \overline{\left<T(v),T(w)\right>} = \overline{T(v)^*T(w)} = \overline{\overline{v}^*\overline{w}} = v^*w = 0.
  \end{align*}
  This proves that $T(V)$ is orthogonal to $T(V^\perp)$ and therefore
  $T(V)^\perp = T(V^\perp)$.
\end{proof}

For the technical arguments in this section we require that the smooth
manifolds $X$ and $\Gr_p(\mathbb{C}^n)$ are both smoothly embedded in
some euclidean space (using e.\,g. the Whitney embedding theorem).
By $\dim_H(\cdot)$ we denote the Hausdorff dimension of a
topological space. See section~\ref{AppendixHausdorffDimension} for
some introductory statements about Hausdorff dimensions. The set $S$,
being the higher dimensional analog of a point in the $\Proj_2$
situation, will be denoted by $\mathcal{L}_L$, where $L$ is a
line\footnote{By \emph{line} we mean a $1$-dimensional complex
  subspace.} in $\C^n$, and is defined as follows:
\begin{align*}
  \mathcal{L}_L = \left\{E \in \Gr_p(\mathbb{C}^n)\colon L \subset E\}\right.
\end{align*}

\begin{remark}
  \label{LLManifold}
  The set $\mathcal{L}_L$ is a submanifold of $\Gr_p(\C^n)$ of
  complex dimension $(p-1)(n-p)$.
\end{remark}
\begin{proof}
  Consider the map
  \begin{align*}
    f\colon \Gr_{p-1}(L^\perp) &\to \Gr_p(\C^n)\\
    E &\mapsto \left<L,E\right>,
  \end{align*}
  where $\left<L,E\right>$ denotes the $p$-plane that is spanned by
  the line $L$ together with the $p-1$-dimensional plane $E \subset
  L^\perp$. Clearly, $\Im f = \mathcal{L}_L$. One can write down $f$
  in local coordinates for the Grassmannians, which shows that $f$ is
  a holomorphic immersion. Furthermore, it is injective. Together with
  the compactness of $\Gr_p(L^\perp)$ this shows that $f$ is an
  holomorphic embedding (see
  e.\,g. \cite[p.~214]{FritzscheGrauert}). In other words,
  $\mathcal{L}_L$ is biholomorphically equivalent to
  $\Gr_{p-1}(\C^{n-1})$, a complex manifold of dimension $(p-1)(n-p)$.
\end{proof}

Let $\C^n = L \oplus W$ be a decomposition of $\C^n$ as direct sum of
a $T$-stable line $L$ and a 1-codi\-men\-sio\-nal, $T$-stable
subvector space $W$. In this setup, we can regard the Grassmann
manifold $\Gr_p(W)$ as a submanifold of $\Gr_p(\C^n)$. The usefulness
of the submanifold $\mathcal{L}_L$ is illustrated by the following
lemma:
\begin{lemma}
  \label{GrassmannianDeformationRetract}
  The space $\Gr_p(\C^n)\smallsetminus \mathcal{L}_L$ has $\Gr_p(W)$ as
  equivariant strong deformation retract.
\end{lemma}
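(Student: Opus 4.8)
The plan is to construct an explicit equivariant strong deformation retract $\rho\colon I \times (\Gr_p(\C^n)\setminus\mathcal{L}_L) \to \Gr_p(\C^n)\setminus\mathcal{L}_L$ onto $\Gr_p(W)$, mimicking the construction in Lemma~\ref{RetractFromP2toP1} but now with a line $L$ being ``pushed out'' of each $p$-plane instead of a point being removed from $\Proj_2$. First I would fix the $T$-stable splitting $\C^n = L \oplus W$ and observe that, relative to this splitting, every $p$-plane $E$ with $E \notin \mathcal{L}_L$ (i.e. $L \not\subset E$) has $E \cap L = 0$, so the projection $\pi_W\colon \C^n \to W$ along $L$ restricts to an \emph{injective} linear map on $E$; hence $\pi_W(E)$ is a subspace of $W$ of dimension $p$ or $p-1$. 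The idea is to homotope $E$ by shrinking its ``$L$-component''. Concretely, writing the identity $\C^n = L \oplus W$ and letting $\phi_t\colon \C^n \to \C^n$ be the linear map which is $(1-t)\,\mathrm{id}$ on $L$ and $\mathrm{id}$ on $W$, I would set $\rho(t,E) = \phi_t(E)$ for $t \in [0,1)$ and $\rho(1,E) = \lim_{t\to 1}\phi_t(E)$.

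The key steps, in order: (1) check that $\phi_t$ is invertible for $t \in [0,1)$, so $\phi_t(E)$ is again a $p$-plane, and that $L \not\subset \phi_t(E)$ — this holds because $\phi_t^{-1}$ preserves the splitting and $L \subset \phi_t(E)$ would force $L = \phi_t^{-1}(L) \subset E$; hence $\rho_t$ maps $\Gr_p(\C^n)\setminus\mathcal{L}_L$ into itself for $t<1$. (2) Show $\rho_1(E) := \lim_{t\to1}\phi_t(E)$ exists in $\Gr_p(\C^n)$ and equals a $p$-plane contained in $W$. Here I would argue in the Plücker embedding, or more concretely: pick a basis $v_1,\dots,v_p$ of $E$ and write $v_i = \ell_i + w_i$ with $\ell_i\in L$, $w_i\in W$; then $\phi_t(v_i) = (1-t)\ell_i + w_i \to w_i$. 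Since $L\not\subset E$, the map $\pi_W|_E$ is injective, so $w_1,\dots,w_p$ are linearly independent, hence span a genuine $p$-plane in $W$, and one checks the convergence is continuous in $E$ (e.g. by a local-coordinate computation: on the standard affine chart where $E$ is the graph of a matrix, $\phi_t$ acts by scaling the relevant block, which is manifestly smooth down to $t=1$). (3) Verify $\rho_0 = \mathrm{id}$, $\rho_1$ has image exactly $\Gr_p(W)$ (surjectivity is clear since $\rho_1$ fixes every plane already inside $W$), and $\rho_t|_{\Gr_p(W)} = \mathrm{id}$ for all $t$ (because $\phi_t$ is the identity on $W$, so it fixes every subspace of $W$); this last point gives the \emph{strong} deformation retract property. (4) Equivariance: since $L$ and $W$ are $T$-stable and $T$ is conjugate-linear, $\phi_t$ commutes with $T$ in the sense $T\circ\phi_t = \phi_t\circ T$ (scaling by the real number $1-t$ commutes with complex conjugation, as in the proof of Lemma~\ref{RetractFromP2toP1}); therefore $\rho_t(T(E)) = \phi_t(\overline{E}) = \overline{\phi_t(E)} = T(\rho_t(E))$, and the same for $t=1$ by continuity.

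I expect the main obstacle to be step~(2): making rigorous that the naive ``limit of $\phi_t(E)$'' is well-defined and continuous on all of $\Gr_p(\C^n)\setminus\mathcal{L}_L$, uniformly enough that $\rho$ is continuous as a map on $I \times (\Gr_p(\C^n)\setminus\mathcal{L}_L)$ (the other steps are routine once this is in place). The cleanest way around it is to note that $\mathcal{L}_L \cong \Gr_{p-1}(\C^{n-1})$ (Remark~\ref{LLManifold}) and that $\Gr_p(\C^n)\setminus\mathcal{L}_L$ is precisely the total space of the tautological-type affine bundle over $\Gr_p(W)$ whose fiber over $F \in \Gr_p(W)$ is $\mathrm{Hom}(F, L) \cong \C^p$ (sending a plane $E$ with $\pi_W|_E$ injective of rank $p$ to the pair $(F,\psi)$ where $F = \pi_W(E)$ and $\psi\colon F \to L$ has graph $E$ — so $E \notin \mathcal{L}_L$ corresponds exactly to $\dim\pi_W(E) = p$, i.e. $F$ ranges over $\Gr_p(W)$ and there is no constraint on $\psi$). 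Wait — I should double check: $E \notin \mathcal{L}_L$ means $L \not\subset E$, equivalently $E \cap L$ has dimension $\le 0$... but actually $E$ could still have $\pi_W(E)$ of dimension $p-1$ if $E \cap L \ne 0$ while $L \not\subset E$ is impossible since $\dim L = 1$; so indeed $L \not\subset E \iff E \cap L = 0 \iff \pi_W|_E$ injective $\iff \dim\pi_W(E) = p$. Good. Under this bundle identification $\phi_t$ is just fiberwise scalar multiplication by $(1-t)$ on $\mathrm{Hom}(F,L)$, which is manifestly a strong deformation retract of the total space onto the zero section $\Gr_p(W)$, continuous down to $t=1$, and visibly equivariant since $T$ acts on the bundle covering its action on $\Gr_p(W)$ and is conjugate-linear on the fibers. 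This reduces the whole lemma to the standard fact that an affine (complex vector) bundle deformation-retracts onto its zero section, and the only real content is the bundle identification, which follows from the decomposition $\C^n = L \oplus W$ exactly as in the $\Proj_2 = \mathcal{L}_{e_3}\text{-complement} \cup \Proj_1$ picture of Lemma~\ref{RetractFromP2toP1}.
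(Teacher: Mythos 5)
Your proposal is correct and follows essentially the same route as the paper: identifying $\Gr_p(\C^n)\setminus\mathcal{L}_L \to \Gr_p(W)$ as a complex vector bundle (your $\mathrm{Hom}(F,L)$-description is the frame-free version of the paper's local trivializations) and retracting fiberwise by the scaling $(1-t)$ on the $L$-component, with equivariance following because $L$ and $W$ are $T$-stable and real scaling commutes with conjugation. The continuity issue at $t=1$ that you flag is resolved exactly as you suggest, so there is no gap.
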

\begin{proof}
  To ease the notation set $\Omega = \Gr_p(\C^n)\setminus
  \mathcal{L}_L$. The idea of this proof is that
  \begin{align*}
    \pi\colon \Omega \to \Gr_p(W)
  \end{align*}
  can be regarded as a complex rank-$k$ vector bundle, where $\pi$ is
  defined in terms of the projection $\C^n \to W$. Here we use the
  fact that a $p$-dimensional plane $E$ in $\C^n$ which does not
  contain the line $L$ projects down to a $p$-dimensional plane in
  $W$.
  The local trivializations of this bundle are defined using local
  trivializations of the tautological bundle $\mathcal{T} \to
  \Gr_p(W)$: Let $U_\alpha$ be a trivializing neighborhood in $\Gr_p(W)$ of
  the tautological bundle and
  \begin{align*}
    \psi_\alpha\colon \Restr{\mathcal{T}}{U_\alpha} \to U_\alpha \times \C^k  
  \end{align*}
  a local trivialization. In other words, the $k$ maps
  \begin{align*}
    f_{\alpha,j}\colon U_\alpha &\to \Restr{\mathcal{T}}{U_\alpha}\\
    E &\mapsto \psi^{-1}_\alpha(E,e_j)\;\text{ for $j=1,\ldots,k$}
  \end{align*}
  define a local frame over $U_\alpha$.  Let $\ell$ be non-zero vector
  in the line $L$. Notice that the fiber of $\Omega \to \Gr_p(W)$ over
  a plane $E \in U_\alpha$ consists of those planes $\smash{\widehat{E}}$
  which are spanned by bases of the form
  \begin{align*}
    \lambda_1 \ell + f_{\alpha,1}(E), \ldots, \lambda_k \ell + f_{\alpha,k}(E),
  \end{align*}
  for a unique vector $\lambda = (\lambda_1,\ldots,\lambda_k)$.  The
  uniqueness follows from the fact that every vector in $\C = L \oplus
  W$ uniquely decomposes into $v_L + v_W$. Now we can define a local
  trivialization for $\Omega \to \Gr_p(W)$ over $U_\alpha$:
  \begin{align*}
    \varphi_\alpha\colon U_\alpha &\to \C^k\\
    \widehat{E} &\mapsto \left(\pi\left(\widehat{E}\right), \lambda\right).
  \end{align*}
  This also defines the vectorspace structure on the fibers: For a given $k$-vector $\lambda$, define
  \begin{align*}
    \widehat{E}_\lambda = \left<\lambda_1 \ell + f_{\alpha,1}(E),\ldots,\lambda_k \ell + f_{\alpha,k}(E)\right>.
  \end{align*}
  Using this notation we obtain
  \begin{align*}
    \widehat{E}_\lambda + \widehat{E}_\mu = \widehat{E}_{\lambda + \mu} \;\text{for all $\lambda,\mu \in \C^k$} \;\text{and}\; a \widehat{E}_{\lambda} = \widehat{E}_{\alpha\lambda} \;\text{for all $a \in \C$}.
  \end{align*}
  Thus, $\pi\colon \Omega \to \Gr_p(W)$ defines a rank-$k$ vector
  bundle. Note that the zero section of this bundle can be naturally
  identified with $\Gr_p(W)$. Thus it remains to show that there is a
  $G$-equivariant retraction of $\Omega$ to its zero section. We
  define the retraction $\varphi_t\colon \Omega \to \Omega$ in local
  trivializations for a covering $\{U_\alpha\}$ via
  \begin{align*}
    \varphi_{\alpha}\colon I \times \C^k &\to \C^k\\
    \lambda &\mapsto t\lambda.
  \end{align*}
  We now show that this gives a globally well-defined map
  $\varphi\colon I \times \Omega \to \Omega$: Let $E$ be a plane in
  $U_\alpha \cap U_\beta$. Over $U_\alpha$, the map $\varphi_\alpha$
  is defined using the frame $f_{\alpha,1},\ldots,f_{\alpha,k}$, while
  $\varphi_\beta$ is defined using the frame
  $f_{\beta,1},\ldots,f_{\beta,k}$. Let $\smash{\widehat{E}}$ be a
  plane in $\restr{\Omega}{U_\alpha \cap U_\beta}$ such that
  \begin{align*}
    \widehat{E} = \left<\ell^\alpha_1 + w^\alpha_1,\ldots,\ell_k^\alpha + w_k^\alpha\right> = \left<\ell^\beta_1 + w^\beta_1,\ldots,\ell^\beta_k + w^\beta_k\right>,
  \end{align*}
  where
  \begin{align*}
    w^\alpha_j = f_{\alpha,j}\left(\pi\left(\widehat{E}\right)\right) \;\text{resp.}\; w^\beta_j = f_{\beta,j}\left(\pi\left(\widehat{E}\right)\right).
  \end{align*}
  Since these two bases generate the same plane, there exists a $k
  \times k$ matrix $A = (a_{ij})$ such that
  \begin{align*}
    \ell^\alpha_j + w^\alpha_j = \sum a_{ij}\left(\ell^\beta_i + w^\beta_i\right).
  \end{align*}
  Using the fact that the intersection $L \cap W$ is trivial we obtain
  \begin{align*}
    \ell^\alpha_j = \sum a_{ij} \ell^\beta_i \;\text{ and }\; w^\alpha_j = \sum a_{ij} w^\beta_i.
  \end{align*}
  But then the bases
  \begin{align*}
    t\ell^\alpha_1 + w^\alpha_1,\ldots,t\ell_k^\alpha + w_k^\alpha \;\text{ and }\; t\ell^\beta_1 + w^\beta_1,\ldots,t\ell^\beta_k + w^\beta_k
  \end{align*}
  are related by the same matrix $A$, hence they define the same plane
  $\smash{\widehat{E}}_t$. In other words, the deformation retract $\varphi_t$
  is globally well-defined on $\Omega$. In particular the above shows
  that for any plane $E = \left<\ell_1 + w_1, \ldots, \ell_k +
    w_k\right>$, where the $W$-vectors are not necessarily the ones
  defined by one of the trivializing frames of the tautological bundle
  we have
  \begin{align*}
    \varphi_t(E) = \left<(1-t)\ell_1 + w_1, \ldots, (1-t)\ell_k + w_k\right>.
  \end{align*}
  Now, using the fact that $L$ and $W$ are both $T$-stable,
  equivariance follows by a computation:
  \begin{align*}
    \varphi_t \circ T(E) &= \varphi_t\left(T\left(\left<\ell_1 + w_1, \ldots, \ell_k + w_k\right>\right)\right) \\
    &= \varphi_t\left(\left<\overline{\ell_1 + w_1}, \ldots, \overline{\ell_k + w_k}\right>\right) \\
    &= \left<(1-t)\overline{\ell_1} + \overline{w_1}, \ldots, (1-t)\overline{\ell_k} + \overline{w_k}\right> \\
    &= \left<\overline{(1-t)\ell_1 + w_1}, \ldots, \overline{(1-t)\ell_k + w_k}\right> \\
    &= T\left(\left<(1-t)\ell_1 + w_1, \ldots, (1-t)\ell_k + w_k\right>\right) \\
    &= T\left(\varphi_t\left(\left<\ell_1 + w_1, \ldots, \ell_k + w_k\right>\right)\right)\\
    &= T \circ \varphi_t(E).
  \end{align*}
  This proves the statement.
\end{proof}

The next intermediate result will be the statement that, given a map
$H$ from $X$ to $\Gr_p(\C^n)$ where $n - 1 > p$, there always exists a
line $L$ in $\C^n$ such that -- at least after a $G$-homotopy -- the
image of $H$ does not intersect $\mathcal{L}_L$. Then,
lemma~\ref{GrassmannianDeformationRetract} is applicable and we obtain
an equivariant homotopy from $H$ to a map $H'$ having its image
contained in the lower-dimensional Grassmannian embedded in
$\Gr_p(\C^n)$. We begin by outlining some technical preparations and
introducing a convenient notation. For a plane $p$-plane $E$ in $\C^n$
we set $E_T = E \cap T(E)$.  Note that $0 \leq \dim E_T \leq p$. For
$k=0,\ldots,p$ define
$\mathcal{E}_k = \{E \in \Gr_p(\mathbb{C}^n)\colon \dim E_T = k\}$.
This induces a stratification of $\Gr_p(\C^n)$:
\begin{align*}
  \Gr_p(\C^n) = \bigcup_{k=0,\ldots,p} \mathcal{E}_k.
\end{align*}
Note that in particular we have
\begin{align*}
 \mathcal{E}_p = \left\{E \in \Gr_p(\C^n)\colon T(E) = E\right\} = (\Gr_p(\C^n))_{\R}
\end{align*}
Define the following incidence set:
\begin{align*}
  \mathcal{I} = \left\{(L,E) \in \Proj(\C^n) \times \Gr_p(\C^n)\colon L \subset E\right\}.
\end{align*}
The set $\mathcal{I}$ is a manifold, which can be seen by considering
the diagonal action of $U(n)$ on the product manifold
$\Proj(\C^n) \times \Gr_p(\C^n)$: $U(n)$ acts transitively on
$\mathcal{I}$. Furthermore we introduce
\begin{align*}
  \mathcal{I}_{\R} = \left\{(L,E) \in (\Proj(\C^n))_{\R} \times \Gr_p(\C^n)\colon L \subset E\right\} = \left\{(L,E) \in \mathcal{I}\colon T(L) = L\right\}.
\end{align*}
We obtain two projections, namely
\begin{align*}
  & \pi_1\colon \mathcal{I} \to \Proj(\C^n)\\
  \;\text{and}\; & \pi_2\colon \mathcal{I} \to \Gr_p(\C^n),
\end{align*}
together with their ``real'' counterparts
\begin{align*}
  & \pi_{1,\R}\colon \mathcal{I}_{\R} \to (\Proj(\C^n))_\R\\
  \;\text{and}\; & \pi_{2,\R}\colon \mathcal{I}_{\R} \to \Gr_p(\C^n).
\end{align*}
We regard the incidence manifold $\mathcal{I}$ as being smoothly
embedded in some $\R^N$ (e.\,g. with the Whitney embedding theorem).

Defining $M_k := \smash{\pi_{2,\R}^{-1}}(\mathcal{E}_k)$ we obtain -- for each
$k$ -- a fiber bundle
\begin{align*}
  M_k \xrightarrow{\;\pi_{2,\R}\;} \mathcal{E}_k
\end{align*}
with the fiber, over a plane $E \in \mathcal{E}_k$, being
\begin{align*}
  F_k &= \left\{(L,E)\colon \text{$L \in \Proj(\C^n)$, $T(L) = L$ and $L \subset E$}\right\}\\
  &= \left\{(L,E)\colon \text{$L \in \Proj(\C^n)$ and $L \subset E_T$}\right\}\\
  &\cong \Proj(E_T)\\
  &\cong \Proj_{k-1}
\end{align*}
for all $k=1,\ldots,p$. In particular we obtain the estimate
\begin{align}
  \label{IterativeRetractionDiscussionFirstEstimate}
  \dim_{\R} F_k = \dim_{\C}(E_T) - 1 = k - 1 \leq p - 1.
\end{align}

Let us now focus on the image $\Im H$ of a map $H\colon X \to
\Gr_p(\C^n)$. Without loss of generality we can assume that $H$ is
equivariantly smoothed (theorem~\ref{SmoothApproximation1}). For each
$k=0,\ldots,p$ we set $H(X)_k = H(X) \cap \mathcal{E}_k$ and $M =
\smash{\pi_{2,\R}^{-1}}(H(X)) \subset \mathcal{I}_\R$. Note that
\begin{align*}
  M = \left\{(L,E) \in (\Proj(\C^n))_\R \times \Gr_p(\C^n)\colon \text{$E \in \Im H$ and $L \subset E$}\right\}.
\end{align*}
Having the above formalism in place, we can now focus on the
fundamental problem: Understanding the image of $M$ under the
projection $\smash{\pi_{1,\R}}$. The following remark highlights what
kind of dimension estimate we need in order to conclude the existence
of a line $L$ such that the image $\Im H$ does not intersect
$\mathcal{L}_L$.
\begin{remark}
  If the inequality
  \begin{align*}
    \hdim(\pi_{1,\R}(M)) < \hdim(\Proj(\C^n)_\R) = n-1
  \end{align*}
  is satisfied, then there exists a line $L$ in $\C^n$ such that
  $\mathcal{L}_L \cap H(X) = \emptyset$.
\end{remark}

\begin{proof}
  The inequality implies that
  $\pi_{1,\R}(M) \subsetneq \Proj(\C^n)_\R$, since otherwise both sets
  would have the same Hausdorff dimension. Recall that
  $M = \smash{\pi_{2,\R}^{-1}}(H(X))$. Thus, the non-surjectivity of
  $\smash{\restr{\pi_{1,\R}}{M}}$ means that there exists a $T$-fixed
  line $L$ in $\C^n$ which has no $\smash{\pi_{1,\R}}$-preimage in the
  set
  \begin{align*}
    \pi_{2,\R}^{-1}(H(X)) = \left\{(L,E) \in \Proj(\C^n)_\R \times \Gr_p(\C^n)\colon \text{$E \in H(X)$ and $L \subset E$}\right\}.
  \end{align*}
  If there was a plane $E \in H(X)$ with $L \subset E$, then $(L,E)$
  would be in ${\pi_{2,\R}}^{-1}(H(X))$, which is a contradiction. Hence,
  there cannot exist such a plane $E$ in the image of $H$.
\end{proof}

By the above remark together with theorem~\ref{HDimProperties} (5) it
suffices to prove that $\hdim(M)$ is smaller than $n - 1$ in order to
conclude the existence of a $T$-fixed line $L$ such that $\Im H$ has
empty intersection with $\mathcal{L}_L$. This is the next goal. The
preimage $M$ can be written as a finite union
\begin{equation*}
  M = \bigcup_{k=0,\ldots,p} {\pi_{2,\R}}^{-1}(H(X)_k).
\end{equation*}
With corollary~\ref{HDimViaSetDecomposition} it follows that
\begin{align}
  \label{HDimOfMAsMax}
  \hdim M = \max_k \left\{\hdim(\pi_{2,\R}^{-1}(H(X)_k))\right\}.
\end{align}

But each set $\smash{\pi_{2,\R}^{-1}}(H(X)_k)$ is contained in the
total space $M_k$ of the fiber bundle $M_k \to
\mathcal{E}_k$. Applying corollary~\ref{HDimOfTotalSpace} yields
\begin{align*}
  \hdim\left(\pi_{2,\R}^{-1}(H(X)_k)\right) = \hdim(H(X)_k) + \dim(F_k).
\end{align*}
Substituting the dimension of the fiber computed in
(\ref{IterativeRetractionDiscussionFirstEstimate}) yields
\begin{align*}
  \hdim\left(\pi_{2,\R}^{-1}(H(X)_k)\right) = \hdim(H(X)_k) + k - 1.
\end{align*}
Therefore, (\ref{HDimOfMAsMax}) implies:
\begin{align}
  \label{HDimOfMAsMaxConcrete}
  \hdim(M) = \max_k \left\{\hdim(H(X)_k) + k - 1\right\}.
\end{align}
Thus, in order to control $\hdim(M)$ it suffices to control each
$\hdim(H(X)_k)$. But since $H(X)$ cannot have Hausdorff dimension
bigger than two ($H$ is assumed to be smooth) and under the assumption
that $p \leq n - 2$, we directly obtain the estimate
\begin{align*}
  \hdim(M) &\leq \max\{p, \hdim(H(X)_p) + p - 1\}\\
  &\leq \max\{n-2,\hdim(H(X)_p) + p - 1\}.
\end{align*}
Thus we only have to control the dimension of $H(X)_p$. In order to
obtain the desired estimate $\hdim(M) < n-1$ it suffices to have
$\hdim(H(X)_p) \leq 1$. In the following we show that -- up to
$G$-homotopy -- this can be assumed. Although, for proving the main
result of this section, it suffices to reduce a general Grassmannian
$\Gr_p(\C^n)$ until we arrive at $\Gr_1(\C^3) \cong \Proj_2$, we state
the following in its general form, i.\,e. including the statement for
the reduction from $\Proj_2$ to $\Proj_1$:
\begin{proposition}
  \label{PropIterativeRetractionDecomposition}
  Given an equviariant map $f\colon X \to \Gr_p(\mathbb{C}^n)$ with
  $n \geq 3$ and $1 \leq p \leq n-2$, there exists a decomposition of
  $\mathbb{C}^n$ into the direct sum of a line
  $L \subset \mathbb{C}^n$ and a 1-codimensional subvector space
  $W \subset \mathbb{C}^n$, both $T$-stable, such that
  $\mathcal{L}_L \cap \Im f = \emptyset$.
\end{proposition}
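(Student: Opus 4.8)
The plan is to feed the image of $f$ into the Hausdorff-dimension estimates obtained just above. By the estimate following (\ref{HDimOfMAsMaxConcrete}) together with the remark recorded above, the whole statement reduces to arranging, after a $G$-homotopy, that the set $H(X)_p = \Im f \cap (\Gr_p(\C^n))_\R$ has Hausdorff dimension at most $1$. Indeed, given $\hdim(H(X)_p)\le 1$ one obtains $\hdim(M)\le\max\{n-2,\,p\}=n-2<n-1$ from that estimate, using $p\le n-2$; hence $\hdim(\pi_{1,\R}(M))\le\hdim(M)<n-1$ by theorem~\ref{HDimProperties}~(5), and the quoted remark then produces a $T$-fixed line $L\subset\C^n$ with $\mathcal{L}_L\cap\Im f=\emptyset$. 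For the complement one takes $W=L^\perp$ with respect to the standard unitary structure, which is $T$-stable because $T$ is conjugate-linear (the computation in the proof of remark~\ref{GrassmannianIdentificationEquivariant}); then $\C^n=L\oplus W$ is the required $T$-stable decomposition.

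So the task is to force $\hdim(H(X)_p)\le 1$. First I would pass to an equivariantly smoothed representative of $f$ (theorem~\ref{SmoothApproximation1}). The geometric input is that the real locus $(\Gr_p(\C^n))_\R=\Gr_p(\R^n)$ is a closed submanifold of $\Gr_p(\C^n)$ of real codimension $p(n-p)$, and that $p(n-p)\ge 2$ since $1\le p\le n-2$ forces $n-p\ge 2$. I would then $G$-homotope $f$ to a map $f'$ that is transverse to $\Gr_p(\R^n)$ at every point of $X\setminus X^G$. One cannot hope for transversality on all of $X$: equivariance forces $f'(X^G)\subseteq\Gr_p(\R^n)$, whereas a fully transverse preimage would have dimension $2-p(n-p)\le 0$ and so could not contain the $1$-dimensional set $X^G$. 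Granting the partial transversality, $(f')^{-1}(\Gr_p(\R^n))\setminus X^G$ is a submanifold of $X\setminus X^G$ of dimension $2-p(n-p)\le 0$, hence a discrete, in particular countable, set; therefore $(f')^{-1}(\Gr_p(\R^n))$ is $X^G$ together with a countable set and has Hausdorff dimension $\max\{1,0\}=1$. Since $f'$ is locally Lipschitz this bounds $\hdim(H(X)_p)$ by $1$, which is what is needed.

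The main obstacle is the equivariant transversality itself, i.e. constructing $f'$ by a $G$-homotopy. On $X\setminus X^G$ the $G$-action is free, so there one may descend to the quotient double cover and use a standard genericity argument to make $f$ transverse to $\Gr_p(\R^n)$ while leaving it unchanged near $X^G$. Near $X^G$ I would work in $G$-equivariant normal coordinates: around a point of $X^G$ the involution reads $(q,p)\mapsto(q,-p)$ with $X^G=\{p=0\}$, and around its image it reads $v\mapsto -v$ on the $m:=p(n-p)$ normal directions of $\Gr_p(\R^n)=\{v=0\}$; equivariance forces the normal component of $f$ to be odd in $p$, hence of the form $p\,\rho(q,p)$ with $\rho$ smooth, so that near $X^G$ one has $(f')^{-1}(\Gr_p(\R^n))=X^G\cup\{\rho=0\}$. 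It then suffices to perturb $f$ equivariantly so that the normal first-order jet of $f$ along $X^G$ is nowhere vanishing; this jet is a section of a real rank-$m$ bundle over the finitely many circles making up $X^G$, and since $m\ge 2>1=\dim X^G$ a generic section of such a bundle has no zeros, so that after the perturbation $\{\rho=0\}$ avoids a neighbourhood of $X^G$ altogether. Patching the homotopy on the free part with the one supported in a $G$-invariant tubular neighbourhood of $X^G$ (using the equivariant homotopy extension property, corollary~\ref{G-HEP}) yields $f'$ and completes the argument.
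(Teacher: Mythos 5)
Your argument is correct, and while the overall skeleton (reduce everything to the estimate $\hdim(\Im f\cap(\Gr_p(\C^n))_\R)\le 1$, feed it into the bound $\hdim(M)=\max_k\{\hdim(H(X)_k)+k-1\}\le n-2<n-1$, invoke the non-surjectivity remark, and take $W=L^\perp$) matches the paper, the key technical step is carried out by a genuinely different method. Two differences are worth recording. First, the paper splits into the cases $p\le n-3$, where the trivial bound $\hdim(H(X)_k)\le 2$ already suffices, and $p=n-2$, where lemma~\ref{LemmaSmallIntersectionWithStrata} is invoked; you treat all $p$ uniformly by always arranging $\hdim(H(X)_p)\le 1$, which costs nothing since $\max\{n-2,p\}=n-2$. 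Second, and more substantially, the paper proves the analogue of your dimension bound by perturbing in the target: it translates the image of $H$ inside a tubular neighbourhood of $Y_\R$ by a generic section $s$ of the normal bundle (damped by a function vanishing over $\Psi^{-1}(H(\Fix T))$ to preserve equivariance), and then controls the resulting critical set by applying Sard's theorem to the ratio function $h_s$ and rescaling $s$ by a regular value. You instead argue by equivariant transversality: on the free part $X\setminus X^G$, ordinary transversality to the codimension-$p(n-p)\ge 2$ submanifold $Y_\R$ makes the preimage discrete, and along $X^G$ you use the local models $(q,p)\mapsto(q,-p)$ and $v\mapsto-v$ to factor the normal component as $p\,\rho$ and then perturb the $1$-jet $\rho|_{X^G}$, a section of a rank-$m\ge 2$ bundle over a $1$-complex, to be nowhere vanishing. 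Your route makes the geometric reason for the bound transparent (a codimension count forces $\Im f\cap Y_\R$ to be $f(X^G)$ plus the image of a countable set) and handles types I and II uniformly, whereas the paper must restrict to a (pseudo)fundamental region and re-extend; the price is that you lean on an equivariant tubular neighbourhood of $Y_\R$ and on the realizability of the jet perturbation by an equivariant homotopy, both standard (the perturbation of the normal component by $p\,\chi\,\sigma(q)$ is odd in $p$, hence equivariant) but left implicit, and your phrase about ``descending to the quotient double cover'' should be read as the usual fundamental-domain or free-action transversality argument, since $f$ itself does not descend. The conclusion $\mathcal{L}_L\cap\Im f'=\emptyset$ for a $G$-homotopic $f'$, with $W=L^\perp$ being $T$-stable by the computation in remark~\ref{GrassmannianIdentificationEquivariant}, agrees with what the paper actually establishes.
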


\begin{proof}
  We prove the statement in two parts. First we prove that the
  statement is true for $0 < p \leq n - 3$ and then we seperately deal
  with the case $p = n - 2$. Both proofs work by analyzing the image
  $\Im f$ and then using dimension estimates to deduce the existence
  of a line $L \subset \mathbb{C}^n$ with $\mathcal{L}_L \cap \Im f$
  being the empty set. Without loss of generality we can assume that
  $f$ is a smooth $G$-map (see
  theorem~\ref{SmoothApproximation1}). Recall that smooth functions
  are, in particular, Lipschitz continuous.

  First assume $p \leq n - 3$. Since $\hdim(X) = 2$, it follows that
  $\hdim(H(X)) \leq 2$ by theorem~\ref{HDimProperties} (5). In
  particular, $\hdim(H(X)_k) \leq 2$ for all $k=0,\ldots,p$. Thus,
  using (\ref{HDimOfMAsMaxConcrete}) we obtain
  \begin{align*}
    \hdim(M) &= \max_{k=0,\ldots,p} \left\{\hdim(H(X)_k) + k - 1\right\} \\
    &\leq p + 1 \\
    &\leq n - 2 \\
    &< n - 1.
  \end{align*}
  Hence, the non-surjectivity of $\smash{\restr{\pi_{1,\R}}{M}}$ is
  established in the case $p \leq n - 3$. For the remaining case $p =
  n - 2$ we can apply lemma~\ref{LemmaSmallIntersectionWithStrata}
  (see below) which guarantees that, at least after a $G$-homotopy:
  \begin{align*}
    \hdim(H(X)_p) \leq 1.
  \end{align*}
  In this case we can write
  \begin{align*}
    \hdim(M) = \max_{k=0,\ldots,p} \{\hdim(H(X)_k) + k - 1\} \leq p = n - 2 < n - 1.
  \end{align*}

  Thus, in both cases we have constructed a homotopy from the map $H$
  to a map $H'$ such that $\mathcal{L}_L \cap \Im(H') = \emptyset$ for
  some $T$-fixed line $L \subset \C^n$. Take $W$ to be the orthogonal
  complement $L^\perp \subset \C^n$. In particular $W$ is also
  $T$-stable.
  This finishes the proof.
\end{proof}

The following lemma completes the proof of the previous
proposition~\ref{PropIterativeRetractionDecomposition}:
\begin{lemma}
  \label{LemmaSmallIntersectionWithStrata}
  Every $G$-map $H\colon X \to \Gr_p(\mathbb{C}^n)$ is
  equivariantly homotopic to a $G$-map $H'$ such that $\hdim(H'(X)_p)
  \leq 1$.
\end{lemma}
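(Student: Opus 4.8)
The plan is to make $H$ generic in a $G$-equivariant sense, so that the preimage of the $T$-fixed submanifold $\mathcal{E}_p=(\Gr_p(\C^n))_{\R}$ is the union of the fixed circles $X^G$ with a finite set, and then to read off the Hausdorff dimension of the image. If $p\in\{0,n\}$ then $\Gr_p(\C^n)$ is a point and there is nothing to prove, so assume $1\le p\le n-1$; by theorem~\ref{SmoothApproximation1} we may assume $H$ is a smooth $G$-map. For every $x\in X^G$ one has $H(x)=H(Tx)=TH(x)$, so $H(x)\in\mathcal{E}_p$ and thus $X^G\subset H^{-1}(\mathcal{E}_p)$ regardless of $H$; the real task is to control the part of the preimage lying outside $X^G$. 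The normal bundle $\nu$ of $\mathcal{E}_p$ in $\Gr_p(\C^n)$ is a real $G$-vector bundle of rank $p(n-p)$ on which $T$ acts by $-1$ (these are the ``imaginary'' normal directions), and since $n>2$ this rank is at least $n-1\ge 2$. Fix a $G$-invariant tubular neighbourhood $\mathcal{N}$ of $\mathcal{E}_p$ with a normal coordinate $q\colon\mathcal{N}\to\nu$ satisfying $q\circ T=-q$ and $\mathcal{E}_p=q^{-1}(0)$, and a closed $G$-invariant tubular neighbourhood $N$ of $X^G$ in $X$; each component of $N$ is a component $K_j$ of $X^G$ thickened to $K_j\times[-1,1]$ with $T(\theta,s)=(\theta,-s)$.

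On $H^{-1}(\mathcal{N})$ put $\sigma:=q\circ H$, so that $\sigma\circ T=-\sigma$ and $H^{-1}(\mathcal{E}_p)=\sigma^{-1}(0)$. Over $X\setminus X^G$ the group $G$ acts freely, so a $T$-anti-invariant map into $\nu$ descends to a section of a rank-$p(n-p)$ bundle over the quotient surface, and Thom's transversality theorem produces an arbitrarily small $G$-homotopy of $H$, supported away from $X^G$, after which $\sigma$ is transverse to the zero section on $X\setminus X^G$; since $2=\dim(X\setminus X^G)\le p(n-p)$ the zero set there is discrete. Near $X^G$, in the tube coordinates, write $\sigma(\theta,s)=s\,\beta(\theta,s)$ with $\beta$ smooth and even in $s$ -- this evenness is precisely the anti-invariance of $\sigma$ -- so that $\sigma^{-1}(0)\cap N=X^G\cup\{\beta=0\}$. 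The even map $\beta$ is the same datum as a map $K_j\times[0,1]\to\nu$, and a small generic perturbation of the latter pulls back to a $T$-anti-invariant perturbation of $\sigma$, hence to a small $G$-homotopy of $H$ (one that fixes $H$ along $X^G$ but alters its normal $1$-jet there); again $p(n-p)\ge 2$ against a $2$-dimensional domain forces $\{\beta=0\}$ to be discrete and disjoint from $X^G$. Carrying out these two $C^\infty$-small $G$-homotopies compatibly -- doing the one near $X^G$ first, then the other relative to the region where transversality already holds -- yields a $G$-map $H'$ that is $G$-homotopic to $H$ and satisfies $H'^{-1}(\mathcal{E}_p)=X^G\cup F$ with $F$ finite.

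Finally $H'(X)_p=H'\bigl(H'^{-1}(\mathcal{E}_p)\bigr)=H'(X^G)\cup H'(F)$. As $H'$ is smooth, hence locally Lipschitz, and $X^G$ is a $1$-manifold, theorem~\ref{HDimProperties}~(5) gives $\hdim H'(X^G)\le 1$, while $H'(F)$ is finite and so has Hausdorff dimension $0$; by corollary~\ref{HDimViaSetDecomposition}, $\hdim(H'(X)_p)\le 1$, as required. The delicate point is the analysis along $X^G$: the intersection of $\Im H$ with $\mathcal{E}_p$ can never be removed there, and no ordinary equivariant perturbation of $H$ moves $\sigma$ off zero along $X^G$, so one must instead perturb the desingularised normal derivative $\beta=\sigma/s$ and verify that this is still a genuine equivariant homotopy; the contribution of the free part $X\setminus X^G$ is then routine Thom transversality.
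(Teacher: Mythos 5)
Your argument is correct, but it is a genuinely different proof from the one in the paper, even though both work in a tubular neighbourhood of $Y_\R=\mathcal{E}_p$ inside $Y=\Gr_p(\C^n)$. The paper does not perturb $H$ towards transversality: it post-composes $H$ with a diffeomorphism of $Y$ obtained by translating the fibres of the tubular neighbourhood by a generic section $s$ of the normal bundle (cut off and multiplied by a function vanishing over $H(\Fix T)$ so that fixed points still land in $Y_\R$), and then bounds the resulting set $\{x: H^s_1(x)\in Y_\R\}$ by a Sard-type scaling argument on the auxiliary quotient function $h_s=\|s\circ\pi\circ\Psi^{-1}\circ H\|/\|\Psi^{-1}\circ H\|$, choosing $\varepsilon$ close to $1$ so that $h_s^{-1}(\varepsilon)$ is a curve; since that construction destroys equivariance, the paper restores it afterwards by restricting to a (pseudo)fundamental region and extending, which forces a separate, more delicate treatment of the type~II case along $A_1\cup A_2$. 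You instead keep everything equivariant from the start: the normal component $\sigma=q\circ H$ is $T$-anti-invariant, Thom transversality applied on the free quotient handles $X\setminus X^G$, and along $X^G$ you correctly observe that $\sigma$ is forced to vanish to first order, factor it as $\sigma=s\,\beta$ with $\beta$ even, and perturb $\beta$ generically -- the rank count $p(n-p)\ge n-1\ge 2$ is exactly what makes both zero sets discrete. Your route buys a sharper conclusion ($H'^{-1}(\mathcal{E}_p)=X^G\cup F$ with $F$ finite, from which the Hausdorff estimate is immediate via Lipschitz images and corollary~\ref{HDimViaSetDecomposition}) and eliminates the type~I/type~II case distinction entirely; the paper's route avoids equivariant transversality and the desingularisation step at the cost of the scaling argument and the equivariance repair. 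The only points you should not leave implicit are routine: the $T$-equivariant choice of tubular neighbourhood and of local trivialisations of the pulled-back normal bundle (needed to apply Hadamard's lemma to the bundle-valued odd map $\sigma$), and the fact that the two perturbations are performed so that the second, being $C^1$-small, does not destroy the transversality already achieved by the first.
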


\begin{proof}
  Recall that $H(X)_p = H(X) \cap \mathcal{E}_p$ and $\mathcal{E}_p =
  (\Gr_p(\C^n))_{\R}$. Thus in order to minimize the dimension of
  $H(X)_p$ we need to modify $H$ by moving its image away from the
  real points in $\Gr_p(\C^n)$. To ease the notation we set $Y =
  \Gr_p(\C^n)$. Now we let
  \begin{align*}
    N \xrightarrow{\;\pi_{Y,\R}\;} Y_\R
  \end{align*}
  be the normal vector bundle of $Y_{\R}$ in $Y$.
  The bundle $N$ comes equipped with a bundle norm $\|\cdot\|$.
  Now we can use the standard method
  of diffeomorphically identifying an open tubular neighborhood $U$ of
  the 0-section of $N$ (which can be identified with $Y_\R$ itself)
  with an open neighborhood $V$ of $Y_\R$ in $Y$. We denote this
  diffeomorphism $U \to V$ by $\Psi$.
  In the following we construct an equivariant homotopy $H_t$ such
  that $H_0 = H$ and $H_1$ has the desired property of $\hdim(H_1(X)
  \cap Y_\R)$ being at most one.

  For this, let $s$ be a generic section in $\Gamma(Y_\R,N)$. In
  particular this means that there are only finitely many points over
  which $s$ vanishes. Furthermore, let $\chi$ be a smooth cut-off
  function which is constantly one in a small neighborhood $U'
  \subset\subset U$ of the zero section in $N$ and which vanishes
  outside of $U$. Now define
  \begin{align*}
    g^s_t(v) = v - t\chi(v)s(\pi_{Y,\R}(v)).
  \end{align*}
  After scaling $s$ by a constant such that $\|s(\cdot)\|$ is
  sufficiently small, $g^s_t$ defines a diffeomorphism $U \to U$.
  Then, via the diffeomorphism $\Psi$, we obtain an induced
  diffeomorphism $\smash{\tilde{g}^s_t}\colon V \to V$. Since the
  cut-off function $\chi$ vanishes near the boundary of $U$,
  $\smash{\tilde{g}^s_t}$ extends to a diffeomorphism $Y \to Y$ which
  is the identity outside of $V$.  Define
  \begin{align*}
    H^s_t := \tilde{g}^s_t \circ H\colon X \to Y.
  \end{align*}

  Note that $H^s_t$ is not necessarily equivariant anymore for
  positive $t$.  This will be corrected later in the proof by
  restricting the maps $H^s_t$ to a (pseudo-)fundamental region $R$ of
  $X$ and then equivariantly extending to all of $X$. As a first step
  towards equivariance of $H^s_t$, we need to make sure that $H^s_t$
  maps $\Fix T \subset X$ into $Y_\R$ for all $t$. To guarantee this,
  let $f$ be a $C^\infty$ function on $Y_\R$ such that
  \begin{align}
    \label{IterativeRetractionProofHelperFunction}
    \{f = 0 \} = \Psi^{-1}(H(\Fix T)) \subset U.
  \end{align}
  By multiplying the section $s$ with this function $f$ (and, by abuse
  of notation, denoting the resulting section again $s$) it is
  guaranteed that $\smash{\tilde{g}^s_t}$ is the identity along
  $\Fix T \subset X$, hence $H^s_t$ still maps the fixed point set
  $X^T$ into $Y^T = Y_\R$ for all $t$. In the next step we need to
  estimate of the ``critical set''
  \begin{align*}
    C_s = \{x \in X\colon H^s_1(x) \in Y_\R\}.
  \end{align*}
  By construction, a matrix $H^s_1(x)$ is contained in the real part
  $Y_\R$ iff $\Psi^{-1} \circ H^s_1(x)$ is contained in the
  zero-section in the normal bundle $N$.  By definition this means
  \begin{align*}
    g^s_1 \circ \Psi^{-1} \circ H(x) = 0,
  \end{align*}
  which, by definition of $g^s_t$, is equivalent to saying that
  \begin{align*}
    \Psi^{-1}(H(x)) = s\left(\pi_{Y,\R}\left(\Psi^{-1}(H(x))\right)\right).
  \end{align*}
  Using the fact that $C_s$ must be contained in $H^{-1}(V)$ we can
  conclude that
  \begin{align*}
    C_s = \left\{x \in H^{-1}(V)\colon \Psi^{-1} \circ H(x) = s \circ \pi_{Y,\R} \circ \Psi^{-1} \circ H(x)\right\}.
  \end{align*}
  We have to prove that, for some choice of $s$, $\hdim(C_s) \leq 1$.
  If $H(x) \in Y_\R$ for some $x$, then $H^s_1(x) \in Y_\R$ is almost
  never satisfied ($s$ is almost never zero). Hence, it suffices to
  estimate the dimension of the set
  \begin{align*}
    C_s \setminus H^{-1}(Y_\R) = \{x \in \Omega\colon \Psi^{-1} \circ H(x) = s \circ \pi_{Y,\R} \circ \Psi^{-1} \circ H(x)\},
  \end{align*}
  where
  \begin{align*}
    \Omega = H^{-1}(V)\setminus H^{-1}(Y_\R).
  \end{align*}
  We show that by scaling the section $s$ appropriately, we obtain the
  desired estimate $\hdim(C_s) \leq 1$.  Observe that $\Omega$ is an
  open set in $X$, hence in particular a real manifold of dimension
  two, not neccesarily connected. But it has at most countably many
  connected components which we denote by $\{\Omega_j\}_{j \in J}$.
  Define the following function
  \begin{align*}
    h_s\colon \Omega &\to \R\\
    x &\mapsto \frac{\|s(\pi_{Y,\R}(\Psi^{-1}(H(x))))\|}{\|\Psi^{-1}(H(x))\|}.
  \end{align*}
  This quotient is well-defined on $\Omega$, since $\Omega$ does not
  include the $H$-preimage of $Y_\R$, which corresponds to the
  zero-section in $U$. Therefore $h_s$ defines a smooth function on
  $\Omega$. It now follows that
  \begin{align*}
    C_s \setminus H^{-1}(Y_\R) = h_s^{-1}(\{1\}).
  \end{align*}
  Now we introduce the scaling of the section $s$ such that
  $h_s^{-1}(\{1\})$ is at most one-dimensional. Since the number of
  connected components $\Omega_j$ ist at most countable, there exists
  a number $\varepsilon$ arbitrary close to 1 such that there exists
  no component $\Omega_j$ on which $h_s \equiv \varepsilon$ and
  furthermore such that $\varepsilon$ is a regular value for $h_s$ on
  all the components $\Omega_j$ on which $h_s$ is not constant. It
  then follows that $h_s^{-1}(\{\varepsilon\})$ is one-dimensional and
  \begin{align*}
    h_s^{-1}(\{\varepsilon\}) = h_{\varepsilon s}^{-1}(\{1\}) = C_{\varepsilon s}\smallsetminus H^{-1}(Y_\R).
  \end{align*}
  Thus, for the section $\varepsilon s$ we have the desired dimension
  estimate of the critical set.

  Finally we correct the missing equivariance of $H_1$ as follows: In
  the type I case we let $Z$ be the cylinder fundamental region and
  restrict the homotopy $H_t$ just constructed to $Z$. By
  remark~\ref{Class1EquivariantExtension}, the homotopy $\restr{H}{Z}$
  extends uniquely to an equivariant homotopy
  $\smash{\widetilde{H}}\colon I \times X \to Y$. Define
  $H' = \widetilde{H}_1$.
  It remains to check that the above estimate of $h_s^{-1}(\{1\})$
  remains valid.  But this is follows with
  corollary~\ref{HDimViaSetDecomposition}, which proves the statement
  for the type I involution.

  For the type II case we let $R$ be the pseudofundamental region
  introduced in the beginning of section
  (p.~\pageref{ParagraphGeometryOfClass2}). In this case we need to
  make sure that we do not destroy the equivariance propery on the set
  $A = A_1 \cup A_2$ (see p.~\pageref{ParagraphGeometryOfClass2}).  In
  order to be able to use the method of restriction (to $R$) followed
  by equivariant extension, we need to make sure that the homotopy
  $\restr{H}{I \times R}$ behaves well on the boundary of $R$ (see
  remark~\ref{RemarkClassIIEquivariance}). For this we make two small
  adjustments to the above construction. First, using the homotopy
  extension property together with the simply-connectedness of
  $Y$
  we make a homotopy to the original map $H$ such that it is constant
  along $A_1, A_2 \subset \partial R$ (compare with the type II
  normalization, in particular proposition~\ref{Class2Normalization},
  p.~\ref{Class2Normalization}).  It follows that the images $H(A_1)$
  and $H(A_2)$ are one-point sets contained in $Y_\R$. Second, we
  modify the function $f$ introduced in
  (\ref{IterativeRetractionProofHelperFunction}): Instead of letting
  this helper function $f$ vanish exactly over $\Psi^{-1}(H(\Fix T))$,
  we let it vanish over the bigger set $\Psi^{-1}(H(\partial R))$
  (notice that $\Fix T \subset \partial R$). It then follows that the
  homotopy $H_t$ does not change $H$ along the image $H(\partial R)
  \subset V$. In particular, it preserves the compatibility condition
  on $A \subset \partial R$ which is required for the
  equivariance. Now we can construct the equivariant extension to all
  of $X$ as in the type I case above. The desired dimension estimate
  remains satisfied because of
  corollary~\ref{HDimViaSetDecomposition}.
\end{proof}

Having proposition~\ref{PropIterativeRetractionDecomposition} in
place, we formulate the main result of this section:
\begin{restatable*}{proposition}{PropositionReductionOfGrToCurve}
  \label{LemmaReductionOfGrToP1}
  Assume $n \geq 3$ and $1 \leq p \leq n-1$. Let $f\colon X \to
  \Gr_p(\C^n)$ be a $G$-map. Then $f$ is equivariantly homotopic to
  the map $\iota \circ f'$ where $\Im(f')$ is contained in the
  Schubert variety $\mathcal{S}$ and $\iota$ is the above embedding of
  $\mathcal{S}$ into $\Gr_p(\C^n)$. By identifying $\mathcal{S} \cong
  \Proj_1$, the degree triples (resp. degree pairs) of $f\colon X \to
  \Gr_p(\C^n)$ and $f'\colon X \to \mathcal{S} \cong \Proj_1$ agree.
\end{restatable*}
Its proof will be given on p.~\pageref{LemmaReductionOfGrToP1}. By
proposition~\ref{PropIterativeRetractionDecomposition} we know there
exists a line $L$ such that $\mathcal{L}_L$ is not contained in the
image of a given map $H$. For using this statement as the building
block for the iterative retraction procedure it is convenient to be
able to normalize this line $L$. This is made precise in the following
remark:
\begin{remark}
  \label{SOnRNormalizingInGrassmannian}
  Let $L$ be a $T$-stable line in $\C^n$. Then there exists a curve
  $g(t)$ in $\SOR{n}$ such that $g(0) = \Id$ and $g(1)$ maps
  $\mathcal{L}_L$ to $\mathcal{L}_{L_0}$ where $L_0$ is the $n$-th
  standard line $L_0 = \C.e_n$.
\end{remark}

\begin{proof}
  By assumption the line $L$ is $T$-stable. This implies that $L$ is
  generated by a vector $v_n$ of unit length such that $T(v_n) =
  v_n$. In other words, $v_n$ is in $(\C^n)_\R = \R^n$. Furthermore,
  the orthogonal complement $L^\perp$ of $L$ is also
  $T$-invariant. Let $(v_1,\ldots,v_{n-1})$ be an orthonormal basis of
  $(L^\perp)_\R$. Then, $(v_1,\ldots,v_n)$ is an orthonormal basis of
  $\C^n$ consisting solely of real vectors. Define $g$ as the element
  of $\SOR{n}$ which maps $v_j$ to $e_j$ for all
  $j=1,\ldots,n$. Using the path-connectedness of $\SOR{n}$ we can
  find a path $g(t)$ such that $g(0) = \Id$ and $g(1) = g$. Hence, by
  construction, $g(1)$ maps the line $L$ to $L_0$. It remains to show
  that
  \begin{align*}
    g(\mathcal{L}_L) = \mathcal{L}_{L_0}. 
  \end{align*}
  For this, let $E$ be a $p$-plane in $\mathcal{L}_L$. Then, by
  definition, $L \subset E$. But then also $g(L) = L_0 \subset
  g(E)$. On the other hand, given a plane $E$ in with $L_0 \subset E$,
  then define $E' = g^{-1}(E)$. It follows that $L \subset E'$, on
  other words $E' \in \mathcal{L}_L$. Now we see that $E = g(E') \in
  \mathcal{L}_L$.
\end{proof}

Combining the above we can make a reduction from $\Gr_p(\C^n)$ to
the smaller manifold $\Gr_p(\C^{n-1})$:
\begin{proposition}
  \label{PropositionIterativeRetraction}
  Given an equivariant map $f\colon X \to \Gr_p(\mathbb{C}^n)$ where
  $3 \leq n$ and $1 \leq p \leq n - 2$, then there exists a
  $G$-homotopy from $f$ to a map $f'$ whose image is contained in
  $\Gr_p(\mathbb{C}^{n-1})$, where $\C^{n-1}$ is regarded as being
  embedded in $\C^n$ as $(z_1,\ldots,z_n) \mapsto (z_1,\ldots,z_n,0)$.
\end{proposition}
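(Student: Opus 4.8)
The plan is to assemble this reduction from three ingredients established above: the decomposition proposition~\ref{PropIterativeRetractionDecomposition}, the equivariant vector bundle retraction of lemma~\ref{GrassmannianDeformationRetract}, and the $\SOR{n}$-normalization of remark~\ref{SOnRNormalizingInGrassmannian}. Since $n \geq 3$ and $1 \leq p \leq n-2$, proposition~\ref{PropIterativeRetractionDecomposition} applies and provides a $G$-homotopy from $f$ to a $G$-map $f_1$ together with a decomposition $\C^n = L \oplus W$ into $T$-stable subspaces, $L$ a line and $W = L^\perp$ of codimension one, such that $\mathcal{L}_L \cap \Im(f_1) = \emptyset$. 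Thus $f_1$ factors through $\Gr_p(\C^n) \setminus \mathcal{L}_L$.

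Next I would compose $f_1$ with the equivariant strong deformation retract $\varphi_t$ of $\Gr_p(\C^n)\setminus\mathcal{L}_L$ onto $\Gr_p(W)$ supplied by lemma~\ref{GrassmannianDeformationRetract}. The map $t \mapsto \varphi_t \circ f_1$ is a $G$-homotopy (the retraction is $G$-equivariant by that lemma, and $f_1$ is a $G$-map), ending at a $G$-map $f_2$ with $\Im(f_2) \subset \Gr_p(W)$. It remains only to move the $(n-1)$-dimensional $T$-stable subspace $W$ into the standard coordinate position. Apply remark~\ref{SOnRNormalizingInGrassmannian} to the $T$-stable line $L$: there is a path $g(t)$ in $\SOR{n}$ with $g(0) = \Id$ and $g(1)(L) = \C e_n$, hence $g(1)(W) = g(1)(L^\perp) = (\C e_n)^\perp = \C^{n-1}$, the standard embedded subspace. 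Because every $g(t)$ has real entries, $\overline{g(t)(E)} = g(t)(\overline{E})$ for all $E$, so $E \mapsto g(t)(E)$ is an equivariant diffeomorphism of $\Gr_p(\C^n)$; therefore $t \mapsto g(t) \circ f_2$ is a $G$-homotopy from $f_2$ to the map $f' := g(1) \circ f_2$, whose image lies in $\Gr_p(\C^{n-1})$. Concatenating the three $G$-homotopies (reparametrising in the standard way) yields the desired $G$-homotopy from $f$ to $f'$.

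There is no deep obstacle left here: all of the genuinely hard work --- the Hausdorff-dimension estimates forcing the image of some $G$-homotopic representative off a variety $\mathcal{L}_L$ --- is packaged into proposition~\ref{PropIterativeRetractionDecomposition} and lemma~\ref{LemmaSmallIntersectionWithStrata}. The only points requiring a little care are bookkeeping: (i) verifying at each stage that composition with the retraction and with the $\SOR{n}$-path preserves $G$-equivariance, the key observation being that real orthogonal transformations of $\C^n$ commute with the real structure $V \mapsto \overline{V}$; and (ii) confirming that the normalized complement $g(1)(W)$ is \emph{exactly} the standard coordinate subspace $\{(z_1,\ldots,z_{n-1},0)\} \subset \C^n$, so that the conclusion is stated with respect to the embedding named in the proposition. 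Given these, the result follows by concatenation.
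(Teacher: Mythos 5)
Your proof is correct and uses exactly the same three ingredients as the paper's argument --- proposition~\ref{PropIterativeRetractionDecomposition}, remark~\ref{SOnRNormalizingInGrassmannian}, and lemma~\ref{GrassmannianDeformationRetract} --- differing only in order: you retract onto $\Gr_p(W)$ first and then rotate $W$ into the standard coordinate position via the $\SOR{n}$-path, whereas the paper first rotates $L$ to $\C e_n$ (so the excluded set becomes $\mathcal{L}_{\C e_n}$) and then retracts directly onto $\Gr_p(\C^{n-1})$. The two orderings are interchangeable, and your equivariance bookkeeping (real orthogonal transformations commute with $V \mapsto \overline{V}$, and $g(1)(L^\perp) = (\C e_n)^\perp = \C^{n-1}$) is exactly what is needed.
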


\begin{proof}
  Due to the conditions on $p$ and $n$,
  proposition~\ref{PropIterativeRetractionDecomposition} is
  applicable. Hence there exists a $T$-stable decomposition $\C^n = L
  \oplus W$ such that $\mathcal{L}_L \cap \Im f = \emptyset$. Using
  remark~\ref{SOnRNormalizingInGrassmannian} it follows that there
  exists a curve $g(t)$ of $\SOR{n}$ transformations such that
  $g(1)$ maps $\mathcal{L}_L$ to $\mathcal{L}_{L_0}$ where $L_0$ is
  the standard line $\C.e_n$. Now define a homotopy $F_t =
  g(t)f$. Then, by construction, $F_0 = f$ and $\Im F_1 \cap
  \mathcal{L}_{L_0} = \emptyset$. In this situation
  lemma~\ref{GrassmannianDeformationRetract} is applicable and we
  obtain an equivariant homotopy from $f$ to a map $f'$ such that
  $\Im(f')$ is contained in the lower dimensional Grassmannian
  $\Gr_p(\C^{n-1})$.
\end{proof}

Taking the degree invariants (triples and pairs) into account we state
the following addition to the previous proposition:
\begin{lemma}
  \label{DegreeInvariantsConstantDuringRetraction}
  As before, assume $n \geq 4$ and $1 \leq p \leq n - 2$. Let $f$ be a
  $G$-map $X \to \Gr_p(C^n)$ and denote the canonical embedding
  $\Gr_p(\C^{n-1}) \hookrightarrow \Gr_p(\C^n)$ by $\iota$. Assume
  there exists a $G$-map $f'\colon X \to \Gr_p(\C^{n-1})$ such that
  $f$ and $\iota \circ f'$ are equivariantly homotopic. Then,
  depending on the involution type, the degree triples (type I)
  resp. the degree pairs (type II) of $f$ and $f'$ agree for any two
  such maps.
\end{lemma}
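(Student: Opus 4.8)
The plan is to show that both the total degree and the fixed point invariants are preserved under the inclusion $\iota\colon \Gr_p(\C^{n-1}) \hookrightarrow \Gr_p(\C^n)$, so that reading off the degree triple (resp. pair) of $f = \iota \circ f'$ gives the same numbers as reading off the degree triple (resp. pair) of $f'$. Since any two maps $f'$ with $f \simeq_G \iota \circ f'$ are themselves equivariantly homotopic (their composites with $\iota$ are equivariantly homotopic, and $\iota$ is an embedding which admits the equivariant retraction coming from iterating Lemma~\ref{GrassmannianDeformationRetract}, so an equivariant homotopy $\iota\circ f' \simeq_G \iota\circ f''$ post-composes with the retraction to an equivariant homotopy $f'\simeq_G f''$), it suffices to check the two invariants for a single such $f'$.

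First I would treat the total degree. By Definition~\ref{TotalDegreeGr}, $\deg f$ is the integer $d$ with $f_*[X] = d[\mathcal{S}_n]$ in $H_2(\Gr_p(\C^n),\Z)$, and $\deg f'$ is the integer $d'$ with $f'_*[X] = d'[\mathcal{S}_{n-1}]$ in $H_2(\Gr_p(\C^{n-1}),\Z)$, where $\mathcal{S}_n$, $\mathcal{S}_{n-1}$ are the respective one-dimensional Schubert varieties generating these (infinite cyclic) second homology groups (cf.\ proposition~\ref{TopologyGrassmannians}). The inclusion $\iota$ carries $\mathcal{S}_{n-1}$ to $\mathcal{S}_n$ (both are defined by the first few standard basis vectors, and $\mathcal{S}_{n-1}$ only involves $e_1,\dots,e_{p+1}$, which lie in $\C^{n-1}$), hence $\iota_*\colon H_2(\Gr_p(\C^{n-1}),\Z) \to H_2(\Gr_p(\C^n),\Z)$ sends the generator to the generator and is an isomorphism. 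Since $f = \iota \circ f'$ up to equivariant (in particular ordinary) homotopy, functoriality of homology gives $f_* = \iota_* \circ f'_*$, so $d[\mathcal{S}_n] = \iota_*(d'[\mathcal{S}_{n-1}]) = d'[\mathcal{S}_n]$, whence $d = d'$.

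Next, the fixed point invariants. The fixed point set $X^G$ is a disjoint union of circles ($C_0 \cup C_1$ for type I, the single circle $C$ for type II), and the invariant $m_j$ records whether $\restr{f}{K_j}\colon K_j \to (\Gr_p(\C^n))_\R \cong \Gr_p(\R^n)$ is null-homotopic in the fixed point set. The inclusion $\iota$ restricts to the standard inclusion $\Gr_p(\R^{n-1}) \hookrightarrow \Gr_p(\R^n)$ on real points. By the same kind of argument as in remark~\ref{RPnFundamentalGroupIsomorphism} — or directly, since for $p\ge 1$, $n\ge 4$ the fundamental groups $\pi_1(\Gr_p(\R^{n-1}))$ and $\pi_1(\Gr_p(\R^n))$ are both cyclic of order two and the generator is represented by a loop lying in the embedded $\RProj_1 \subset \Gr_p(\R^{n-1})$ (a fixed line through a $2$-plane, varying by an angle), which maps to the corresponding generator of $\pi_1(\Gr_p(\R^n))$ — the induced map $\iota_*$ on $\pi_1$ of real points is an isomorphism. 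Therefore $\restr{f}{K_j} = \iota \circ \restr{f'}{K_j}$ is contractible in $(\Gr_p(\C^n))_\R$ if and only if $\restr{f'}{K_j}$ is contractible in $(\Gr_p(\C^{n-1}))_\R$, i.e.\ $m_j(f) = m_j(f')$ for each $j$. Combining this with the total-degree equality gives equality of the degree triples (type I) resp.\ degree pairs (type II).

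The only real content is the computation of $\pi_1$ of the real Grassmannians and the verification that $\iota_*$ is an isomorphism on it; the total-degree part and the passage from a single $f'$ to all such $f'$ are formal. The main obstacle is thus making precise that the generator of $\pi_1(\Gr_p(\R^m))$ ($m = n-1, n$) is carried by the standard $\RProj_1$ and survives under the inclusion — this is where I would invoke (or re-derive the analogue of) remark~\ref{NonTrivialLoopInRPn} and remark~\ref{RPnFundamentalGroupIsomorphism}, now for Grassmannians rather than projective spaces, via the inclusion $\RProj_1 \hookrightarrow \Gr_p(\R^m)$ sending a line $\ell$ to $\ell \oplus \R e_{n-p+1} \oplus \cdots$ (a fixed complementary $(p-1)$-flat).
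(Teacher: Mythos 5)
Your proposal is correct and follows essentially the same route as the paper: total degree via functoriality of $H_2$ and the fact that $\iota_*$ sends the generating Schubert cycle to the generating Schubert cycle (the paper's remark~\ref{GrassmannianEmbeddingIsoInHomology}), and fixed point invariants via the isomorphism $\iota_*\colon \pi_1(\Gr_p(\R^{n-1})) \to \pi_1(\Gr_p(\R^n))$ (the paper's remark~\ref{RealGrassmannianEmbeddingIsoInHomotopy}, which the paper proves by the oriented-Grassmannian double cover rather than by tracking the embedded $\RProj_1$, an immaterial difference). Your preliminary reduction to a single $f'$ is unnecessary — the main argument already applies to an arbitrary such $f'$ — and as stated it leans on the retraction of lemma~\ref{GrassmannianDeformationRetract}, which is only defined on the complement of $\mathcal{L}_L$, but this does not affect the validity of the rest.
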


\begin{proof}
  The existence of the map $f'$ is the statement of
  proposition~\ref{PropositionIterativeRetraction}: we obtain a map $f'$,
  equivariantly homotopic to $f$, whose image is contained in
  $\Gr_p(\C^{n-1})$. Thus, we can regard $f'$ as a map to
  $\Gr_p(\C^{n-1})$ and clearly we then obtain $f = \iota \circ f'$.
  Regarding the degree invariants: By
  remark~\ref{GrassmannianEmbeddingIsoInHomology}, the embedding
  $\iota$ induces an isomorphism
  \begin{align*}
    \iota_*\colon H_2(\Gr_p(\C^{n-1}),\Z) \xrightarrow{\;\sim\;} H_2(\Gr_p(\C^n),\Z).
  \end{align*}
  Note that $\iota_*$ maps a generating cycle $[\mathcal{C}]$ of
  $H_2(\Gr_p(\C^{k-1}),\Z)$ to the generating cycle $\iota_*([C]) =
  [\iota(\mathcal{C})]$ of $H_2(\Gr_p(\C^k),\Z)$. We regard
  $\mathcal{C}$ as being canonically oriented as a complex manifold.
  Since $f_* = \iota_* \circ f'_*$, it follows that $f_*$ and $f'_*$
  are defined in terms of the same multiplication factor; in other
  words: The total degree of the map does not change when we regard it
  as a map to the lower dimensional Grassmann manifold.

  It remains to check that the fixed point signatures do not change
  during the retraction. For this let $C \subset X$ be one of the
  fixed point circles (any of the two circles in type I or the unique
  circle in type II). The restrictions
  \begin{align*}
    \Restr{f}{C}\colon C \to \Gr_p(\R^n) \;\text{ and }\; \Restr{\iota \circ f'}{C}\colon C \to \Gr_p(\R^n)
  \end{align*}
  are homotopic, thus they define the same class $[\gamma]$ in the
  fundamental group of $\Gr_p(\R^n)$. By construction $f'$ has its
  image contained in $\Gr_p(\R^{n-1}) \subset \Gr_p(\R^n)$, thus its
  restriction to the circle $C$ can be regarded as a map $C \to
  \Gr_p(\R^{n-1})$, defining a homotopy class $[\gamma']$ in
  $\pi_1(\Gr_p(\R^{n-1}))$. By
  remark~\ref{RealGrassmannianEmbeddingIsoInHomotopy}, the inclusion
  $\Gr_p(\R^{n-1}) \hookrightarrow \Gr_p(\R^n)$ induces an isomorphism
  of their fundamental groups if $n \geq 4$. This implies that $[\gamma]$
  and $[\gamma']$ are either both trivial or both non-trivial, which
  proves the statement.
\end{proof}
To complete the previous lemma we need the following two remarks:
\begin{remark}
  \label{GrassmannianEmbeddingIsoInHomology}
  Assume $0 < p < k - 1$. The embedding
  \begin{align*}
    \iota\colon \Gr_p(\C^{k-1}) \hookrightarrow \Gr_p(\C^k) 
  \end{align*}
  of Grassmann manifold induces an isomorphism on their second homology
  groups. More precisely, let $\mathcal{S}$ be the Schubert variety
  generating $H_2(\Gr_p(\C^{k-1}),\Z)$ (see
  p.~\pageref{DefOfSchubertVarietyC}), then
  \begin{align*}
    \iota_*([\mathcal{S}]) = [\iota(\mathcal{S})].
  \end{align*}
\end{remark}

\begin{proof}
  Note that the second homology groups of complex Grassmann manifolds
  are infinite cyclic. Fix the standard flag in $\C^k$ and let
  $\mathcal{C}$ be the Schubert variety with respect to this flag
  which generates $H_2(\Gr_p(\C^{k-1}),\Z)$. By means of the embedding
  $\iota$ it can also be regarded as being contained in the bigger
  Grassmannian $\Gr_p(\C^k)$, where it also generates
  $H_2(\Gr_p(\C^k),\Z)$. In other words: the embedding $\iota$ maps
  the generator of $H_2(\Gr_p(\C^{k-1}),\Z)$ to the generator of
  $H_2(\Gr_p(\C^k),\Z)$. It follows that the induced map
  \begin{align*}
    \iota_*\colon H_2\left(\Gr_p\left(\C^{k-1}\right),\Z\right) \to H_2\left(\Gr_p\left(\C^k\right),\Z\right)
  \end{align*}
  is an isomorphism.
\end{proof}

\begin{remark}
  \label{RealGrassmannianEmbeddingIsoInHomotopy}
  Let $k \geq 4$. The embedding
  \begin{align*}
    \iota\colon \Gr_p(\R^{k-1}) \hookrightarrow \Gr_p(\R^k)
  \end{align*}
  of real Grassmannians induces an isomorphism of their fundamental
  groups.
\end{remark}

\begin{proof}
  We begin the proof with a general remark: For every Grassmannian
  $\Gr_p(\R^m)$ we have the following double cover
  \begin{align}
    \label{GrassmannianCovering}
    \widetilde{\Gr}_p(\R^m) \to \Gr_p(\R^m),
  \end{align}
  where $\smash{\widetilde{\Gr}_p}(\R^k)$ denotes the \emph{oriented}
  Grassmannian. The covering map is given by forgetting the
  orientation of each subspace. It is known that for $m > 2$ the
  oriented Grassmannian $\Gr_p(\R^m)$ is simply connected. Thus, by
  assumption about $k$, the oriented Grassmannians
  $\smash{\widetilde{\Gr}_p}(\R^{k-1})$ and
  $\smash{\widetilde{\Gr}_p}(\R^k)$ are
  simply-connected.
  Thus, (\ref{GrassmannianCovering}) defines the universal cover of
  $\Gr_p(\R^m)$, and this implies that $\pi_1(\Gr_p(\R^m))$ is
  isomorphic to the Deck transformation group
  $\Deck(\smash{\widetilde{\Gr}_p}(\R^m)/\Gr_p(\R^m))$ (see
  \cite[p.~71]{Hatcher}). The Deck transformation group in this case
  consists of the single homeomorphism $\sigma$, which flips the
  orientation on each subspace, thus it is $C_2$ and we obtain
  $\pi_1(\Gr_p(\R^k)) \cong C_2$.

  To show that the induced map
  \begin{align*}
    \iota_*\colon \pi_1(\Gr_p(\R^{k-1})) \to \pi_1(\Gr_p(\R^k))
  \end{align*}
  is an isomorphism it suffices to show that a non-trivial loop
  $\gamma$ in $\Gr_p(\R^{k-1})$ will still be non-trivial when it is,
  using the embedding $\iota$, regarded as a loop in $\Gr_p(\R^k)$. We
  have the following diagram:
  \[
  \xymatrix{
    \widetilde{\Gr}_p(\R^{k-1}) \ar[d] \ar@{^{(}->}[r]^{\hat{\iota}} & \widetilde{\Gr}_p(\R^k) \ar[d]\\
    \Gr_p(\R^{k-1}) \ar@{^{(}->}[r]_{\iota} & \Gr_p(\R^k)
  }
  \]
  Let $\gamma$ be a non-trivial loop in $\Gr_p(\R^{k-1})$, say
  $\gamma(0) = \gamma(1) = E$. Its lift to the universal cover is a
  non-closed curve $\smash{\hat{\gamma}}$ with
  $\smash{\hat{\gamma}}(0) = E^+$ and $\smash{\hat{\gamma}}(1) = E^-$,
  where $E^+$ and $E^-$ denote the same plane $E$ but equipped with
  different orientations, i.\,e. $\sigma(E^+) = \sigma(E^-)$. It
  follows that $\smash{\hat{\iota}} \circ \smash{\hat{\gamma}}$ is a lift of
  $\iota \circ \gamma$. The curve
  $\smash{\hat{\iota}} \circ \smash{\hat{\gamma}}$ is not closed, as it is
  still a curve whose endpoints are related by the
  orientation-flipping map $\sigma$. Under the isomorphism from the
  Deck transformation group to the fundamental group of the base (see
  e.\,g. on p.~34 the proof of theorem~5.6 in
  \cite{ForsterRiemannSurfaces}), $\sigma$ corresponds to the curve
  $\iota \circ \gamma$. Since $\sigma$ is non-trivial, so is
  $\iota \circ \gamma$. This proves that the embedding $\iota$ induces
  an isomorphism on the fundamental groups.
\end{proof}

Now we can finally prove our main reduction statement:
\PropositionReductionOfGrToCurve
\begin{proof}
  Note that in the case $n=3$, this is just the statement of
  lemma~\ref{ReductionP2toP1} together with
  remark~\ref{ReductionP2toP1Class1}
  resp. remark~\ref{ReductionP2toP1Class2}.

  If $n \geq 4$, apply proposition~\ref{PropositionIterativeRetraction}
  iteratively until we arrive at $n = p + 1$, producing a $G$-homotopy
  from $f$ to a map $f'$ whose image is contained in
  $\Gr_p(\C^{p+1})$. Note that by assumption $p + 1 > 3$. This space
  can be equivariantly identified with $\Gr_1(\C^{p+1})$ by
  remark~\ref{GrassmannianIdentificationEquivariant}. Now
  proposition~\ref{PropositionIterativeRetraction} can be applied again
  iteratively to the map
  \begin{align*}
    \tilde{f}\colon X \to \Gr_p(\C^{p+1}) \cong \Gr_1(\C^{p+1})
  \end{align*}
  until we arrive at $p = 2$, yielding a map
  \begin{align*}
    f'\colon X \to \Gr_1(\C^3) \cong \Proj_2.
  \end{align*}
  By lemma~\ref{DegreeInvariantsConstantDuringRetraction}, up to this
  point, the degree triple (type I) resp. the degree pair (type II) of
  the map is unchanged. For the last reduction step to $\Gr_1(\C^2)$
  we first apply a $G$-homotopy in order to make sure that both maps
  are fixed point normalized (see
  definition~\ref{DefinitionFixpointNormalizationP2}). Then
  lemma~\ref{ReductionP2toP1} and remark~\ref{ReductionP2toP1Class1}
  (type I) resp. remark~\ref{ReductionP2toP1Class2} (type II) together
  with remark~\ref{GrassmannianEmbeddingIsoInHomology} imply that the
  final reduction step to $\mathcal{S} \cong \Proj_1$ also keeps the
  degree triple (resp. the degree pair) unchanged. Thus, in the end we
  have an equivariant homotopy from $f$ to a map whose image is
  contained in $\mathcal{S}$ and whose degree triples (resp. pairs) as
  a map $X \to \mathcal{S} \cong \Proj_1$ are those of $f$.
\end{proof}

We can now prove the main result for the equivariant homotopy
classification of maps $X \to \Gr_p(\C^n)$:
\begin{theorem}
  \label{ClassificationMapsToGrassmannians}
  Let the torus $X$ be equipped with the type I involution (resp. the
  type II involution).  Assume $n > 3$ and $1 < p < n$. Then the
  homotopy class of a map $f$ in $\mathcal{M}_G(X,\Gr_p(\C^n))$ is
  completely determined by its degree triple (type I) resp. its degree
  pair (type II). Furthermore, the image $\Im(\mathcal{T})$
  (resp. $\Im(\mathcal{P})$) consists of those degree triples
  $\Triple{m_0}{d}{m_1}$ (resp. degree pairs $\Pair{m}{d}$) satisfying
  \begin{align*}
    d \equiv m_0 + m_1 \mod 2 \;\text{(resp. $d \equiv m \mod 2$)}.
  \end{align*}
\end{theorem}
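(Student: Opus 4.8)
The plan is to reduce everything to the already-established classification on the Schubert curve $\mathcal S \cong \Proj_1 \cong S^2$ by means of the iterative retraction machinery developed in this section, and then invoke the $n=2$ results (theorem~\ref{Classification1} and theorem~\ref{Classification2}). Concretely, given a $G$-map $f\colon X \to \Gr_p(\C^n)$, proposition~\ref{LemmaReductionOfGrToP1} provides a $G$-homotopy from $f$ to $\iota \circ f'$ where $f'\colon X \to \mathcal S \cong \Proj_1$ and $\iota$ is the inclusion of the Schubert variety; moreover the degree triple (type I) resp.\ degree pair (type II) of $f$ equals that of $f'$. The point to keep in mind is that the fixed point \emph{degrees} $(d_0,d_1)$ of $f'$ as an honest map $X \to S^2$ get truncated to fixed point \emph{signatures} $(m_0,m_1) \in \{0,1\}^2$ when $f'$ is regarded as a map into $\Gr_p(\C^n)$, because $\pi_1(\Gr_p(\R^n)) = C_2$ for $n > 2$; this is exactly why the statement is phrased in terms of degree triples/pairs with entries in $\{0,1\}$ rather than in $\Z$.

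First I would prove the completeness half. Let $f, g\colon X \to \Gr_p(\C^n)$ be $G$-maps with the same degree triple $\Triple{m_0}{d}{m_1}$ (type I; the type II case with pairs $\Pair{m}{d}$ is identical, using the single fixed point circle). Apply proposition~\ref{LemmaReductionOfGrToP1} to both, obtaining $f', g'\colon X \to \mathcal S \cong \Proj_1 \cong S^2$ with the same degree triple (now interpreted as a signature triple). Using remark~\ref{BoundaryNormalization} I would $G$-homotope $f'$ and $g'$ so their restrictions to each fixed point circle $C_j$ are the normalized loops $z \mapsto z^{k_j}$; since the relevant invariant is only the mod-$2$ class $m_j$, we may further take $k_j \in \{0,1\}$, i.e.\ arrange $f'|_{C_j} = g'|_{C_j}$. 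At this stage $f'$ and $g'$ are $G$-maps $X \to S^2$ that agree on $X^G$; but their fixed point degrees (as $S^2$-valued maps) are then both $0$ or both $1$ on each circle, hence the degree triples of $f'$ and $g'$ \emph{as maps to $S^2$} coincide. Theorem~\ref{Classification1} (type I) resp.\ theorem~\ref{Classification2} (type II) now gives a $G$-homotopy $f' \simeq_G g'$, and composing with $\iota$ and concatenating with the two homotopies supplied by proposition~\ref{LemmaReductionOfGrToP1} yields $f \simeq_G g$.

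Next, the image computation. Necessity of $d \equiv m_0 + m_1 \bmod 2$: given $f$ with triple $\Triple{m_0}{d}{m_1}$, proposition~\ref{LemmaReductionOfGrToP1} produces $f'\colon X \to S^2 \cong \Proj_1$ whose degree triple has total degree $d$ and fixed point degrees $(k_0, k_1)$ with $k_j \equiv m_j \bmod 2$; by theorem~\ref{Classification1} we have $d \equiv k_0 + k_1 \equiv m_0 + m_1 \bmod 2$. Sufficiency: given $(m_0, d, m_1)$ with $m_j \in \{0,1\}$ and $d \equiv m_0 + m_1 \bmod 2$, theorem~\ref{Classification1} furnishes a $G$-map $h\colon X \to S^2$ with degree triple $\Triple{m_0}{d}{m_1}$; identifying $S^2 \cong \Proj_1 \cong \mathcal S$ and composing with $\iota\colon \mathcal S \hookrightarrow \Gr_p(\C^n)$ gives a $G$-map $X \to \Gr_p(\C^n)$, and by remark~\ref{GrassmannianEmbeddingIsoInHomology} the total degree is preserved, while the fixed point signature of the composite is $m_j$ by remark~\ref{NonTrivialLoopInRPn} (the nontrivial loop in $\RProj_1$ maps to the nontrivial element of $\pi_1(\Gr_p(\R^n))$). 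The type II case is verbatim the same with one circle and theorem~\ref{Classification2}.

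I expect the main obstacle to be purely bookkeeping rather than conceptual: namely being careful about the passage from the integer-valued fixed point degrees of $S^2$-valued maps to the $\{0,1\}$-valued fixed point signatures of $\Gr_p(\C^n)$-valued maps, and checking that the iterative retraction of proposition~\ref{LemmaReductionOfGrToP1} is genuinely available here --- i.e.\ that the hypotheses $n > 3$, $1 < p < n$ of the theorem fall under the scope $n \geq 3$, $1 \leq p \leq n-1$ of that proposition (they do). One should also note that the boundary case $p = 1$, $n = 3$ is $\Gr_1(\C^3) = \Proj_2$, already handled by theorem~\ref{TheoremClassificationMapsToP2Class1} and theorem~\ref{TheoremClassificationMapsToP2Class2}, so no separate argument is needed there; the theorem as stated only claims $n > 3$. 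Everything else --- homotopy invariance of the degree triple/pair, and the fact that $\iota$ is an iso on $H_2$ and on $\pi_1$ of the real loci --- has already been recorded in the preceding remarks.
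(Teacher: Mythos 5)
Your proof follows the same route as the paper's: reduce to the Schubert curve $\mathcal{S} \cong \Proj_1$ via proposition~\ref{LemmaReductionOfGrToP1} and then quote theorem~\ref{Classification1} resp.\ theorem~\ref{Classification2}; the paper's own proof is exactly this, stated more tersely. One caution on your completeness step: the move where you ``further take $k_j \in \{0,1\}$'' is not something remark~\ref{BoundaryNormalization} can do for you, since the fixed point degree of an $S^2$-valued $G$-map is a $G$-homotopy invariant and a boundary loop of degree $3$ can never be homotoped to one of degree $1$ within $E \subset S^2$. What actually guarantees $k_j = m_j \in \{0,1\}$ is the degree-preservation clause of proposition~\ref{LemmaReductionOfGrToP1} itself: its proof performs the fixed point normalization at the $\Proj_2$ stage (definition~\ref{DefinitionFixpointNormalizationP2}), where $\pi_1(\RProj_2) = C_2$ still allows the boundary loop to be reduced mod $2$ \emph{before} the final descent to $\Proj_1$. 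With the justification relocated there, your argument is complete.
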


\begin{proof}
  We only prove the statement for the type I involution case, as the
  other case works analogously. Let $f$ and $g$ be two $G$-maps with
  the same degree triple. By proposition~\ref{LemmaReductionOfGrToP1}
  these maps are equivariantly homotopic to maps $\iota \circ f'$ and
  $\iota \circ g'$, where $f'$ and $g'$ are $G$-maps $X \to
  \mathcal{S}$ and $\iota$ is the embedding of the Schubert variety
  $\mathcal{S}$ into $\Gr_p(\C^n)$. Furthermore, by the same
  statement, the degree invariants remain unchanged.
  By transitivity, this proves the first statement.

  Regarding the image $\Im(\mathcal{T})$ (resp. $\Im(\mathcal{P})$):
  As before, we can use the embedding
  \begin{align*}
  \mathcal{S} \hookrightarrow \Gr_p(\C^n)  
  \end{align*}
  together with the iterative retraction method to show that the
  conditions
  \begin{align*}
    d \equiv m_0 + m_1 \mod 2\;\; \text{(type I)}\;\;\text{resp.}\;\; d \equiv m \mod 2\;\; \text{(type II)}
  \end{align*}
  are both sufficient and necessary for the degree triples
  (resp. degree pairs) to be in the image of $\mathcal{T}$ (type I)
  resp. $\mathcal{P}$ (type II).
\end{proof}

\subsection{Classification of Maps to $\mathcal{H}_n^*$}

As in the previous cases we note that the degree triple map (resp. the
degree pair) is so far only defined on the mapping spaces
$\mathcal{M}_G(X,\Gr_p(\C^n))$. But after fixing an identification of
each orbit $U(n).\I{p}{q} \subset \mathcal{H}_{(p,q)}$ ($0 < p,q < n$)
with $\Gr_p(\C^n)$, the degree triple map (resp. the degree pair map)
is also defined on the mapping spaces
$\mathcal{M}_G(X,\mathcal{H}_{(p,q)})$ for each non-definite signature
$(p,q)$. This allows us to state and prove the main result of this
section:

\begin{theorem}
  \label{HamiltonianClassificationRankN}
  Let $X$ be a torus equipped with either the type I or the type II
  involution. Assume $n \geq 3$ and $0 < p,q < 1$. Then:
  \begin{enumerate}[(i)]
  \item The sets $[X,\mathcal{H}_{(n,0)}]_G$ and
    $[X,\mathcal{H}_{(0,n)}]_G$ are trivial.
  \item Two $G$-maps $X \to \mathcal{H}_{(p,q)}$ are $G$-homotopic iff
    their degree triples (type I) resp. their degree pairs (type II)
    agree.
  \item The realizable degree triples $\Triple{m_0}{d}{m_1}$ (type I)
    resp. degree pairs $\Pair{m}{d}$ (type II) are exactly those
    which satisfy
    \begin{align*}
      d \equiv m_0 + m_1 \mod 2 \;\text{ resp. }\;d \equiv m \mod 2.
    \end{align*}
  \end{enumerate}
\end{theorem}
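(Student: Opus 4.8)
The plan is to assemble Theorem~\ref{HamiltonianClassificationRankN} from the two reduction mechanisms already established in this chapter: the retraction of each component of $\mathcal{H}_n^*$ onto a Grassmannian orbit, and the iterative retraction of a general Grassmannian onto the one-dimensional Schubert variety $\mathcal{S}\cong\Proj_1\cong S^2$. First I would dispose of the definite components. By Remark~\ref{RemarkDefiniteComponentsRetractable} the linear homotopy $(t,A)\mapsto (1-t)A+t(\pm\I{n})$ is a $T$-equivariant strong deformation retract of $\mathcal{H}_{(n,0)}$ onto $\{\I{n}\}$ (resp. of $\mathcal{H}_{(0,n)}$ onto $\{-\I{n}\}$), so $[X,\mathcal{H}_{(n,0)}]_G$ and $[X,\mathcal{H}_{(0,n)}]_G$ are one-point sets; this is part~(i).

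For the mixed signatures, fix $(p,q)$ with $0<p,q<n$. The key first step is Proposition~\ref{GrassmannianStrongDeformationRetract}, which gives a $T$-equivariant strong deformation retract of $\mathcal{H}_{(p,q)}$ onto the orbit $U(n).\I{p}{q}$; by Remark~\ref{MinimalOrbitInComponent} and Remark~\ref{InducedActionOnGrassmannian} this orbit is equivariantly diffeomorphic to $\Gr_p(\C^n)$ with the real structure $V\mapsto\overline V$. Hence $[X,\mathcal{H}_{(p,q)}]_G\cong[X,\Gr_p(\C^n)]_G$, with the degree triple (type~I) resp. degree pair (type~II) transported along this identification, so everything reduces to classifying $G$-maps $X\to\Gr_p(\C^n)$.

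Next I would split into cases. If $n=3$, the only mixed signatures are $(2,1)$ and $(1,2)$; using the equivariant identification $\Gr_2(\C^3)\cong\Gr_1(\C^3)=\Proj_2$ (Remark~\ref{GrassmannianIdentificationEquivariant}), the assertion is precisely Theorem~\ref{HamiltonianClassificationRank3}, i.e. Theorem~\ref{TheoremClassificationMapsToP2Class1} for type~I and Theorem~\ref{TheoremClassificationMapsToP2Class2} for type~II. If $n\ge 4$ and $1<p<n$, Theorem~\ref{ClassificationMapsToGrassmannians} applies verbatim. The only leftover subcase is $p=1$ (equivalently $q=1$): for $n\ge 4$ the $G$-equivariant duality $\Gr_1(\C^n)\cong\Gr_{n-1}(\C^n)$ of Remark~\ref{GrassmannianIdentificationEquivariant}, together with $1<n-1<n$, again reduces it to Theorem~\ref{ClassificationMapsToGrassmannians}; alternatively, and more uniformly, Proposition~\ref{LemmaReductionOfGrToP1} (valid for all $1\le p\le n-1$, $n\ge 3$) provides an equivariant homotopy of any $f\colon X\to\Gr_p(\C^n)$ to a map landing in $\mathcal{S}\cong\Proj_1\cong S^2$ with unchanged degree invariants, whereupon Theorem~\ref{Classification1} (type~I) resp. Theorem~\ref{Classification2} (type~II) delivers both the completeness statement~(ii) and the realizability conditions $d\equiv m_0+m_1 \mod 2$ resp. $d\equiv m \mod 2$ of~(iii).

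I do not anticipate a serious obstacle; for this theorem itself the content is essentially bookkeeping, the substantive work having been done in the preceding sections. The only point needing care is that the degree invariants survive the whole chain of identifications — the component retraction, the duality $\Gr_p\cong\Gr_{n-p}$, and each step of the iterative retraction must send the generator $[\mathcal{S}]$ of $H_2$ to a generator and must preserve the (non-)triviality of the fixed-point loops. This is exactly what Lemma~\ref{DegreeInvariantsConstantDuringRetraction}, Remark~\ref{GrassmannianEmbeddingIsoInHomology}, Remark~\ref{RealGrassmannianEmbeddingIsoInHomotopy} and ultimately Proposition~\ref{LemmaReductionOfGrToP1} have already secured, so the remaining task is to cite the correct statement in each case and check that the numerical hypotheses ($n\ge 3$, $n\ge 4$, $1<p<n$, $1\le p\le n-1$) line up.
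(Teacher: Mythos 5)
Your proposal is correct and follows essentially the same route as the paper: definite components via Remark~\ref{RemarkDefiniteComponentsRetractable}, equivariant retraction of $\mathcal{H}_{(p,q)}$ onto $\Gr_p(\C^n)$ via Proposition~\ref{GrassmannianStrongDeformationRetract}, then Theorem~\ref{HamiltonianClassificationRank3} for $n=3$ and Theorem~\ref{ClassificationMapsToGrassmannians} for $n\ge 4$. You are in fact slightly more careful than the paper's own proof, which invokes Theorem~\ref{ClassificationMapsToGrassmannians} without noting its hypothesis $1<p<n$; your patch for the $p=1$ (resp. $q=1$) subcase, via the equivariant duality $\Gr_1(\C^n)\cong\Gr_{n-1}(\C^n)$ or the direct application of Proposition~\ref{LemmaReductionOfGrToP1}, closes that small gap.
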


\begin{proof}
  The case $n=3$ has already been dealt with in
  theorem~\ref{HamiltonianClassificationRank3}. Therefore it suffices
  to consider the case $n>3$. The topological triviality of the
  definite signature cases is handled in
  remark~\ref{RemarkDefiniteComponentsRetractable}. Let $f$ and $g$ be
  two equivariant maps $X \to \smash{\mathcal{H}_{(p,q)}^*}$. By
  proposition~\ref{GrassmannianStrongDeformationRetract} the image
  space has $\Gr_p(\C^n)$ as equivariant deformation retract. Thus,
  $f$ and $g$ can be regarded as $G$-maps $X \to \Gr_p(\C^n)$ with the
  same degree triple resp. the same degree pair.  Now
  theorem~\ref{ClassificationMapsToGrassmannians} can be applied. Part
  (i) implies that $f$ and $g$ are $G$-homotopic while part (ii)
  contains the statement about realizable degree triples resp. degree
  invariants.
\end{proof}

Remark~\ref{HamiltonianMapRealizationRank3} explains how to concretely
realize maps to $\mathcal{H}_3$ of a given degree invariant. This
works equally well in the general situation. Let $(p,q)$ be a fixed
signature ($p+q=n$) such that $0 < p,q < n$. If $n$ is smaller then
$4$ we end up in one projective situations already handled
(i.\,e. maps to $\Proj_1$ or maps to $\Proj_2$). Thus, assume $n \geq
4$. In section~\ref{SectionN=2} we have seen how to construct maps of
a (realizable) degree triple resp. degree pair to $S^2 \cong
\Proj_1$. We then identify $\Proj_1$ with the Schubert variety
$\mathcal{S}$ and compose this map with the embedding $\mathcal{C}
\hookrightarrow \Gr_p(\C^n)$ to obtain a map into the
$\Gr_p(\C^n)$. The latter Grassmannian needs to be embedded as the
$U(n)$-orbit of $\I{p}{q}$ into $\mathcal{H}_{(p,q)}$ (see
proposition~\ref{GrassmannianStrongDeformationRetract}).


\chapter{Topological Jumps}
\label{ChapterJumps}

In this chapter we construct curves
\begin{align*}
  H\colon [-1,1] \times X \to \mathcal{H}_n
\end{align*}
of equivariant maps $X \to \mathcal{H}_n$ such that the maps $H_{-1}$
and $H_{+1}$, whose images are assumed to be contained in
$\mathcal{H}_n^*$, represent distinct $G$-homotopy classes. In order
to make this precise, we make the following definitions:
\begin{definition}
  Let $H\colon X \to \mathcal{H}_n$ be a $G$-map. Then we define its 
  \emph{singular set} to be the set
  \begin{align*}
    S(H) = \{x \in X\colon \det H(x) = 0\} \subset X.
  \end{align*}
  A $G$-map $X \to \mathcal{H}_n$ is called \emph{singular}
  (resp. non-singular) if its singular set is non-empty (resp. empty).
\end{definition}
\begin{definition}
  \label{DefinitionJumpCurve}
  A \emph{jump curve} from $H_-$ to $H_+$ (both in
  $\mathcal{M}_G(X,\mathcal{H}_n)$) is a $G$-map\footnote{$G$ is
    assumed to act trivially on the interval.}
  $H\colon [-1,1] \times X \to \mathcal{H}_n$ such that
  \begin{enumerate}[(i)]
  \item $H_{\pm 1} = H_\pm$ and
  \item $H_t = H(t,\cdot)$ is non-singular for $t \neq 0$.
  \end{enumerate}
\end{definition}
Given two $G$-maps $H_\pm\colon X \to \mathcal{H}_n$ belonging to
distinct $G$-homotopy classes and a jump curve $H_t$ from $H_-$ to
$H_+$, then $H_0$ must be singular. Otherwise $H_t$ would induce a
$G$-homotopy $I \times X \to \mathcal{H}_n^*$ from $H_-$ to $H_+$,
which would imply that $H_-$ and $H_+$ are equivariantly homotopic.

Of course, given a two $G$-maps $H_\pm\colon X \to \mathcal{H}_n$,
we can always consider the affine curve
\begin{align*}
  (1-t)H_- + tH_+
\end{align*}
of $G$-maps connecting $H_-$ and $H_+$ in the vectorspace
$\mathcal{H}_n$. But in this case we have no control over the singular
set; neither is it guaranteed that the degeneration only occurs at
$t=0$, nor that the singularity set $S(H_0)$ is in some sense
``small''. In this chapter we construct jump curves obeying the
restriction that the singular set $S(H_0)$ is \emph{discrete}. The
main result of this chapter is the description of a procedure for
constructing jump curves for $G$-maps
$X \to \mathcal{H}_{(p,q)} \subset \mathcal{H}_n^*$ ($n \geq 2$) from
any $G$-homotopy class to any other $G$-homotopy class with a finite
singular set; the only requirement is that the the signature $(p,q)$
remains unchanged. Note that jumps from one signature $(p,q)$ to a
different signature $(p',q')$ are not possible with a finite singular
set as shown in the following remark:
\begin{remark}
  If a curve $H_t\colon X \to \mathcal{H}^*_n$ of $G$-maps whose only
  degeneration occurs at $t=0$ jumps from one signature $(p,q)$ to a
  different signature $(p',q')$, then the singular set
  $S(H_0)$ is the whole space $X$.
\end{remark}
\begin{proof}
  Under the assumption that $(p,q) \not= (p',q')$, let $x$ be an
  arbitray point in $X$. We have to show that $H_0(x)$ is
  singular. Assume the opposite, i.\,e. that $H_0(x)$ is
  non-singular. Then $c(t) = H_t(x)$ is a continuous curve in
  $\mathcal{H}_n^*$ with $c(-1) \in \mathcal{H}_{(p,q)}$ and $c(1) \in
  \mathcal{H}_{(p',q')}$. But for $(p,q) \neq (p',q')$,
  $\mathcal{H}_{(p,q)}$ and $\mathcal{H}_{(p',q')}$ denote two
  distinct connected components of $\mathcal{H}^*_n$, this yields a
  contradiction. Therefore, $S(H_0) = X$.
\end{proof}
Note that in order to be able to construct jump curves, it does not
suffice to consider maps of the type $X \to Y$ where $Y$ is a
Grassmannians $\Gr_p(\C^n)$, as these spaces are the deformation
retracts of the components $\mathcal{H}_{(p,q)}$, which consist
entirely of non-singular matrices. Instead we need to let the image
space $Y$ be a subspace of the closure of $\mathcal{H}_{(p,q)}$ having
non-empty intersection with its boundary:
\begin{align*}
  & Y \subset \closure(\mathcal{H}_{(p,q)})\\
  & Y \cap \partial\mathcal{H}_{(p,q)} \not= \emptyset.
\end{align*}
For the curve $H_t$ to be a jump curve it must satisfy
\begin{align*}
  \Im(H_0) \cap \partial\mathcal{H}_{(p,q)} \not= \emptyset,
\end{align*}
as otherwise it would not be singular at $t=0$ and the possibility of
jumps would be excluded. See figure~\ref{fig:jumps} for a depiction of
a jump curve.

As in the previous chapter, we handle the case $n=2$ first and then
use the methods for $n=2$ for proving a general statement. For the
$n=2$ case we will use as image space a certain subspace of
$\cl(\mathcal{H}_{(1,1)})$, namely the vector space $i\mathfrak{su}_2
\subset \cl(\mathcal{H}_{(p,q)})$ consisting of the hermitian
operators of trace zero in $\mathcal{H}_2$. The space
$i\mathfrak{su}_2\setminus\{0\}$ contains the $U(2)$-orbit of
$\I{1}{1}$, which we have already identified as an equivariant strong
deformation retract of $\mathcal{H}_{(1,1)}$ (see
proposition~\ref{Rank2MixedSignatureReduction}), and the origin in
$i\mathfrak{su}_2$, which is the unique singular matrix among the
hermitian matrices of trace zero, is contained in the boundary
$\partial\mathcal{H}_{(1,1)}$. For the general case we embed
$i\mathfrak{su}_2$ into $\mathcal{H}_{(p,q)}$ such that it contains
the -- with respect to the standard flag for $\C^n$ -- unique
one-dimensional Schubert variety $\mathcal{S} \subset \Gr_p(\C^n)$
(see the discussion on p.~\pageref{SchubertVarietyDiscussion}). This
setup allows us to use the same methods as for the $n=2$ case.

\begin{figure}[h]
  \centering
  \subfloat[Map $H_t$ for $t < 0$.]{%
    \centering%
    \scriptsize%
    \def\svgwidth{49.4mm}%
    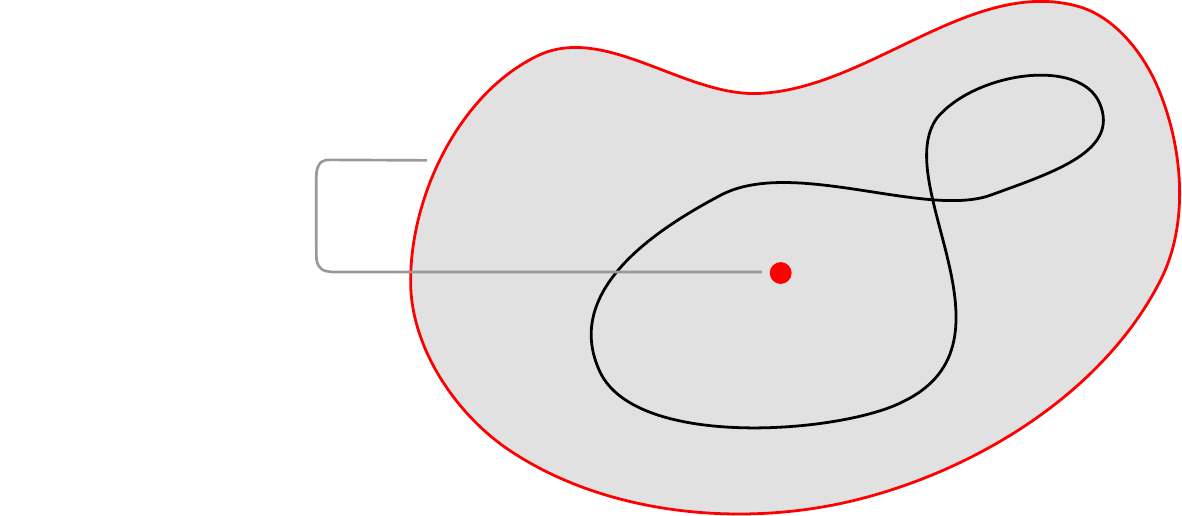}
  \subfloat[Singularity at $t = 0$.]{%
    \centering%
    \def\svgwidth{41mm}%
    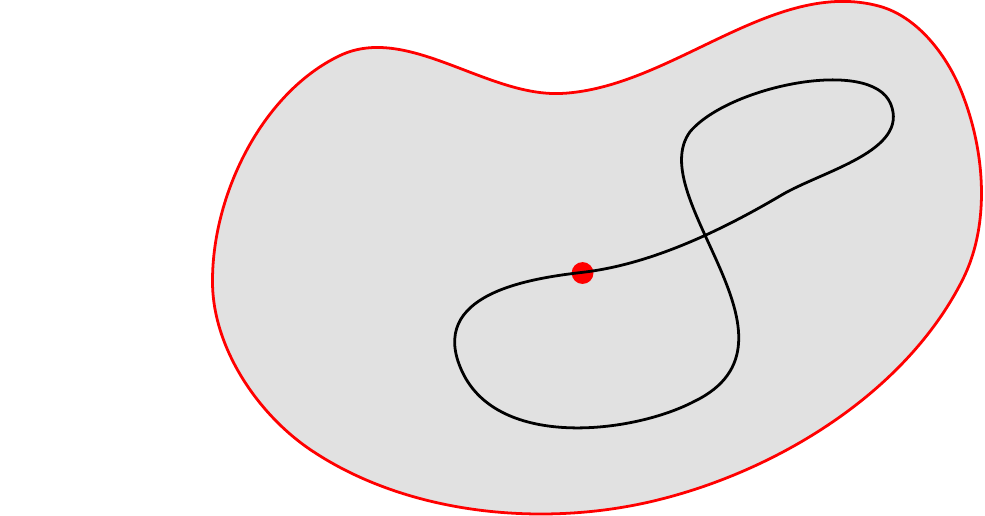}
  \subfloat[Map $H_t$ for $t > 0$.]{%
    \centering%
    \def\svgwidth{41mm}%
    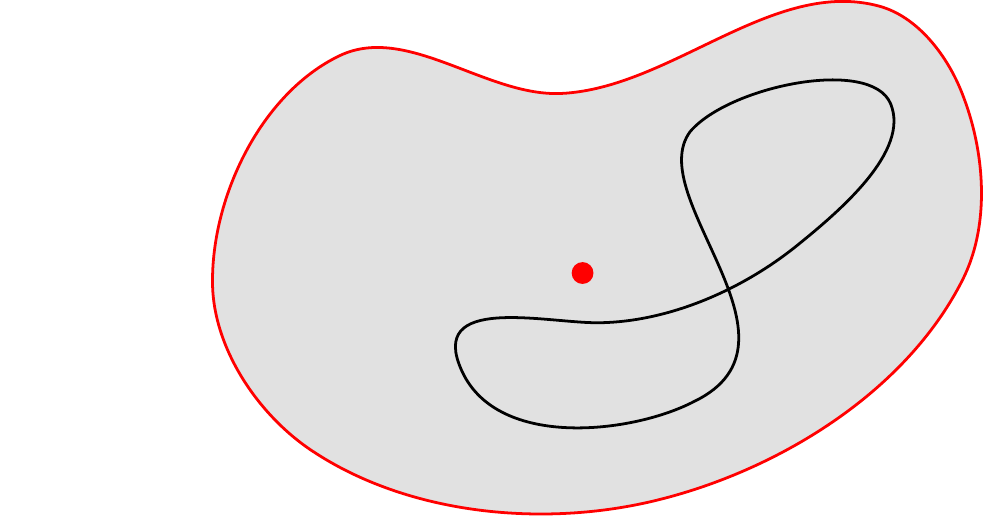}
  \caption{A jump curve.}
  \label{fig:jumps}
\end{figure}

\section{Maps into $\cl(\mathcal{H}_2)$}

Let us first understand how jumps can occur for maps into
$\mathcal{H}_2$. Since the components $\mathcal{H}_{(2,0)}$ and
$\mathcal{H}_{(0,2)}$ are topologically trivial, the only relevant
component to consider is $\mathcal{H}_{(1,1)}$. Recall the definition
of the vector space $i\mathfrak{su}_2$:
\begin{align*}
  i\su_2 = \left\{
  \begin{pmatrix}
    a & \phantom{-}b \\
    \overline{b} & -a
  \end{pmatrix}\colon a \in \R, b \in \C\right\}.
\end{align*}
The space $i\mathfrak{su}_2\setminus\{0\}$ is contained in
$\mathcal{H}_{(1,1)}$. It is important that this space is not
\emph{entirely} contained in $\mathcal{H}_{(1,1)}$, as we have
\begin{align*}
  i\su_2 \cap \partial\mathcal{H}_{(1,1)} = \left\{
    \begin{pmatrix}
      0 & 0 \\
      0 & 0
    \end{pmatrix}\right\}
\end{align*}
As discussed earlier, $i\mathfrak{su}_2$ can be linearly identified
with $\R^3$ via
\begin{align}
  \label{JumpDiscussionIsomToR3}
  \begin{pmatrix}
    a & \phantom{-}b \\
    \overline{b} & -a
  \end{pmatrix} \mapsto
  \begin{pmatrix}
    a \\
    \Re(b) \\
    \Im(b)
  \end{pmatrix}.
\end{align}
Recall that the $U(2)$-orbit of the diagonal matrix $\I{1}{1}$ is
contained in $i\su_2\smallsetminus\{0\}$ and is identified by the
isomorphism (\ref{JumpDiscussionIsomToR3}) with the unit sphere in
$\R^3$.

In the following we construct jump curves as follows: Given two
$G$-maps $H_\pm\colon X \to S^2$, we regard them as maps into
$S^2 \subset \R^3$ and then construct a map of the form
$H\colon [-1,1] \times X \to \R^3$ such that $H_{-1} = H_-$ and
$H_{+1} = H_+$. First we introduce some new definitions, which are
slightly more suitable for this concrete approach than
definition~\ref{DefinitionJumpCurve}:
\begin{definition}
  Let $H\colon X \to \R^3$ be a $G$-map. We define the \emph{singular
    set} $S(H)$ of $H$ to be the fiber $H^{-1}(\{0\})$.  The singular
  set of a map $(Z,C) \to (\R^3,\{z = 0\})$ is the singular set of its
  equivariant extension.
\end{definition}

\begin{definition}
  \label{DefinitionJumpCurveR3}
  A \emph{jump curve} from $H_-$ to $H_+$ (both in
  $\mathcal{M}_G(X,\R^3\smallsetminus\{0\})$) is a $G$-map
  $H\colon [-1,1] \times X \to \R^3$ such that
  \begin{enumerate}[(i)]
  \item $H_{\pm 1} = H_\pm$ and
  \item $0 \not\in \Im(H_t)$ for $t \neq 0$.
  \end{enumerate}
  Futhermore, a \emph{jump curve} from
  \begin{align*}
   H_- \in \mathcal{M}\left((Z,C),\left(\R^3\smallsetminus\{0\},\left(\R^3\smallsetminus\{0\}\right)\cap\{z=0\}\right)\right)
  \end{align*}
  to
  \begin{align*}
   H_+ \in \mathcal{M}\left((Z,C),\left(\R^3\smallsetminus\{0\},\left(\R^3\smallsetminus\{0\}\right)\cap\{z=0\}\right)\right)
  \end{align*}
  is a $G$-map $H\colon [-1,1] \times (Z,C) \to (\R^3,\{z = 0\})$
  such that the equivariant extension of $H$ to $[-1,1] \times X$ is a
  jump curve from the equivariant extension of $H_-$ to the
  equivariant extension of $H_+$.
\end{definition}
By means of the isomorphism (\ref{JumpDiscussionIsomToR3}), a jump
curve between maps $X \to \R^3\smallsetminus\{0\}$ induces a jump
curve between maps $X \to \mathcal{H}_n^*$.  For the concrete
construction of the curve $H_t$ we often require that the maps $H_\pm$
are in \emph{normal form} (see p.~\pageref{RemarkAboutNormalForms}) or
in \emph{modified normal form} (see
p.~\pageref{DefModifiedNormalForm},
definition~\ref{DefModifiedNormalForm}). We use the terms
\emph{curves} and \emph{homotopies} (of $G$-maps) interchangeably.

\subsection{Type I}

In this section, $X$ always denotes the torus equipped with the type I
involution. For illustrative purposes, we begin with a very naive
jumping method, which does not satisfy our requirement of the singular
set being small:
\begin{lemma}
  Let $X$ be the type I torus and let $d_0^\pm, d_1^\pm, d_0^\pm,
  d_1^\pm$ be integers. Denote by $H_\pm\colon X \to
  \mathcal{H}_{(1,1)}$ the $G$-maps in normal form for the triples
  \begin{align*}
    \Triple{d_0^\pm}{d_0^\pm-d_1^\pm}{d_1^\pm}.
  \end{align*}
  Then there exists a jump curve
  \begin{align*}
    H\colon [-1,1] \times X \to \closure(\mathcal{H}_{(1,1)})
  \end{align*}
  from $H_-$ to $H_+$ such that the singular set $S(H_0)$ consists of
  the two circles $C_0 \cup C_1$.
\end{lemma}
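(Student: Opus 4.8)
The plan is to carry out the whole construction inside the vector space $i\su_2$, which by the linear, $G$-equivariant identification $(\ref{JumpDiscussionIsomToR3})$ we regard as $\R^3$: there $T$ is the reflection $(x,y,z)\mapsto(x,y,-z)$, the orbit $U(2).\I{1}{1}$ is the unit sphere $S^2$, and the unique singular matrix $0\in\partial\mathcal{H}_{(1,1)}$ is the origin. Since $i\su_2\setminus\{0\}\subset\mathcal{H}_{(1,1)}$ and $0\in\partial\mathcal{H}_{(1,1)}$, every map constructed below will have image in $i\su_2\subset\closure(\mathcal{H}_{(1,1)})$, and because $(\ref{JumpDiscussionIsomToR3})$ sends non-singular matrices to points of $\R^3\setminus\{0\}$, it suffices (as noted after definition~\ref{DefinitionJumpCurveR3}) to produce a $G$-map $H\colon[-1,1]\times X\to\R^3$ with $H_{\pm1}=H_\pm$ whose image avoids $0$ for $t\ne0$ and with $S(H_0)=C_0\cup C_1$. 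First I would record the shape of the normal form maps: by proposition~\ref{TripleBuildingBlocks} (ii) (see p.~\pageref{RemarkAboutNormalForms}), the restriction of $H_\pm$ to the cylinder $Z=I\times S^1$ with coordinates $(r,\varphi)$, $r\in I$, is $\iota_D\circ f_\pm$ with $\iota_D$ the fixed orientation-preserving hemisphere embedding $(\ref{IotaHemisphereEmbedding})$. Hence $H_\pm(Z)$ lies in a single closed hemisphere, which after relabeling we take to be $\{z\ge0\}$; its third coordinate $w_\pm\ge0$ satisfies $w_\pm^{-1}(0)=C_0\cup C_1$; and along $C_0\cup C_1$ the map $H_\pm$ takes values in the equator $E=\{z=0\}\cap S^2$, in particular is nowhere zero there.

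Next I would fix a \emph{degenerate model} map $H_\ast\colon Z\to\R^3$, $(r,\varphi)\mapsto\sin(\pi r)\,(0,0,1)$. It vanishes exactly on $C=C_0\cup C_1$, and since it maps $C$ into $\{z=0\}$, lemma~\ref{Class1EquivariantExtension} yields a unique equivariant extension $\widehat{H_\ast}\colon X\to\R^3$; using that $C$ is $T$-fixed one checks that $S(\widehat{H_\ast})=C_0\cup C_1$.

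The jump curve itself I would build as two straight-line homotopies, one into $H_\ast$ and one out of it. For $t\in[-1,0]$ put $K^-_t=(-t)\,H_-|_Z+(1+t)\,H_\ast$, and for $t\in[0,1]$ put $K^+_t=t\,H_+|_Z+(1-t)\,H_\ast$; each is a convex combination, hence again a map $(Z,C)\to(\R^3,\{z=0\})$ (over $C$ the $H_\ast$-term is $0$ and $H_\pm$ has vanishing third coordinate), so lemma~\ref{Class1EquivariantExtension} extends it equivariantly. Set $H_t=\widehat{K^-_t}$ on $[-1,0]$ and $H_t=\widehat{K^+_t}$ on $[0,1]$. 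At $t=\mp1$ the bracketed maps are $H_\pm|_Z$, whose equivariant extensions are $H_\pm$; at $t=0$ both pieces equal $\widehat{H_\ast}$, so $H$ is a well-defined continuous $G$-map $[-1,1]\times X\to i\su_2\subset\closure(\mathcal{H}_{(1,1)})$ with $H_{\pm1}=H_\pm$. For the singular set: when $t\in(-1,0)$ the coefficients $-t$ and $1+t$ are both positive, so the third coordinate of $K^-_t$ is $(-t)w_-+(1+t)\sin(\pi r)\ge0$ and vanishes only where simultaneously $w_-=0$ and $\sin(\pi r)=0$, that is, only on $C_0\cup C_1$, where however $K^-_t=(-t)H_-|_{C_0\cup C_1}\ne0$; thus $K^-_t$, hence $H_t=\widehat{K^-_t}$, is nowhere zero. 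The same argument applies to $K^+_t$ for $t\in(0,1)$, and $H_{\pm1}=H_\pm$ have image in $S^2$. Therefore $H_t$ is non-singular for every $t\ne0$, while $S(H_0)=S(\widehat{H_\ast})=C_0\cup C_1$, which is exactly the asserted jump curve.

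I expect the only genuine point of care to be that the two halves must limit to the \emph{same} map at $t=0$: a straight-line homotopy directly from $H_-$ to $H_+$ would in general be singular only on a proper (finite) subset of $C_0\cup C_1$, so the detour through the fixed model $H_\ast$ is what forces the singular set to be all of $C_0\cup C_1$ as the statement demands; it is also what makes it matter that the sign $(0,0,1)$ of the model agrees with the hemisphere $\iota_D(D)$ shared by $H_-$ and $H_+$. Everything else — that the convex combinations land in $\{z=0\}$ over $C$, and that the equivariant extension of lemma~\ref{Class1EquivariantExtension} preserves non-singularity away from $C_0\cup C_1$ — is routine bookkeeping.
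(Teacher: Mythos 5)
Your proof is correct and follows essentially the same strategy as the paper: work inside $i\su_2\cong\R^3$ and let the curve degenerate at $t=0$ to a map supported on the $z$-axis, whose zero set is exactly $\{z\text{-coordinate of the normal form}=0\}=C_0\cup C_1$. The only technical difference is the parametrization of the degeneration: the paper keeps the third coordinate of the normal form fixed and scales the first two by $|t|$ (so continuity at $t=0$ rests on the observation that the two normal-form maps share the same third coordinate $z_-=z_+$), whereas you interpolate affinely to a fixed reference map $\sin(\pi r)\,(0,0,1)$, which makes the matching of the two halves at $t=0$ automatic at the cost of having to check the sign/hemisphere compatibility that you correctly flag. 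Both arguments are sound and yield the same singular set.
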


\begin{proof}
  Let $H_\pm\colon X \to S^2 \subset \R^3$ be the maps in normal form
  for the triples
  $\smash{\Triple{d_0^\pm}{d_0^\pm - d_1^\pm}{d_1^\pm}}$ as it has
  been constructed as part of the proof of
  proposition~\ref{TripleBuildingBlocks} (ii).  In the following we
  construct a jump curve
  \begin{align*}
    H\colon [-1,1] \times X \to \R^3
  \end{align*}
  from $H_-$ to $H_+$. The maps $H_\pm$ are of the form
  \begin{align*}
    H_{\pm} =
    \begin{pmatrix}
      x_\pm \\
      y_\pm \\
      z_\pm
    \end{pmatrix}.
  \end{align*}
  Since both maps are in normal form (see proof of
  proposition~\ref{TripleBuildingBlocks}), they have the convenient
  property that $z_- = z_+$. The functions $x_\pm$ and $y_\pm$ define
  the rotation defined by the respective degree triples.  Note that a
  matrix $H_\pm(p)$ is singular iff $x_\pm(p) = y_\pm(p) = z(p) = 0$.
  The functions $x_-$, $y_-$ and $z$ (resp. $x_+$, $y_+$ and $z$) do
  not simultaneously vanish, because the maps $H_\pm$ are non-singular
  by assumption. Now we define the jump curve as:
  \begin{align*}
    H_t = \begin{pmatrix}
      x_t \\
      y_t \\
      z
    \end{pmatrix}
  \end{align*}
  where
  \begin{align*}
    x_t =
    \begin{cases}
      |t| x_-  & \;\text{for $t < 0$}\\
      0  & \;\text{for $t = 0$}\\
      |t| x_+  & \;\text{for $t > 0$}\\
    \end{cases}
    \;\;\text{and}\;\;
    y_t =
    \begin{cases}
      |t| y_-  & \;\text{for $t < 0$}\\
      0  & \;\text{for $t = 0$}\\
      |t| y_+  & \;\text{for $t > 0$}\\
    \end{cases}
  \end{align*}
  Then $H_t$ defines a non-singular map for $t \neq 0$. The only
  points of degeneracy which were introduced by the $t$-scaling are
  those points in $X$ where together with $z$ also $t$ vanishes. Since
  the maps $H_\pm$ are in normal form, this means
  \begin{equation*}
    S(H_0) = \{p \in X\colon z(p) = 0 \} = C_0 \cup C_1,
  \end{equation*}
  which finishes the proof.
\end{proof}

For the construction of more interesting topological jumps we employ a
new normal form for maps for triples $\Triple{d_0}{d}{d_1}$. This is
described in the following definition
\begin{definition}
  \label{DefModifiedNormalForm}
  For triples $\Triple{d_0}{\pm(d_0 - d_1)}{d_0}$ we define $G$-maps
  $F\colon X \to S^2 \subset \R^3$ in \emph{modified normal form} as
  follows: Let $D$ be the closed two-disk, $D = \{z \in
  \mathbb{C}\colon |z| \leq 1\}$ and let $\iota_D\colon (D,\partial D)
  \hookrightarrow (S^2,E)$ be the embedding
  (\ref{IotaHemisphereEmbedding} from
  p.~\pageref{IotaHemisphereEmbedding} of $D$ onto one of the
  hemispheres of $S^2$. By $Z$ we denote the fundamental region
  cylinder of the torus $X$. Now define the map
  \begin{align*}
    f\colon (Z,C) &\to (D,\partial D)\\
    (t, \varphi) &\mapsto \begin{cases}
      (1-2t) e^{i d_1 \varphi} + 2t & \text{ for $0 \leq t \leq \frac{1}{2}$}\\
      (2t-1) e^{i d_2 \varphi} + 2(1-t) & \text{ for $\frac{1}{2} \leq t \leq 1$}
    \end{cases}
  \end{align*}
  and let $F\colon X \to S^2$ be the equivariant extension of the
  composition map $\iota_D \circ f$ (see
  lemma~\ref{Class1EquivariantExtension}). The resulting map has the
  triple $\Triple{d_0}{d_0-d_1}{d_1}$.
\end{definition}
The fact that the map $F$ constructed above has the triple
$\Triple{d_0}{d_0-d_1}{d_0}$ can be shown as in the proof for
proposition~\ref{TripleBuildingBlocks}~(ii).  The crucial point of
this modified normal form is that the fibers over the north-
resp. south pole of the sphere contains only a finite number of
points, precisely $|d_0| + |d_1|$ for each of the poles. In the original
normal form the fibers over the poles were copies of $S^1$ in $X$ (see
figure~\ref{FigureModifiedNormalForm}).

\begin{figure}[h]
  \centering
  \subfloat[The homotopy $(1-2t)e^{id_1} + 2t$.]{%
    \centering%
    \begin{tikzpicture}[scale=0.84]
      \draw (0,0) circle[radius=70pt];
      \draw[dashed] (35pt,0) circle[radius=35pt];
      \draw[dashed] (52.5pt,0) circle[radius=17.5pt];
      \draw[dashed] (17.5pt,0) circle[radius=52.5pt];
      \draw[font=\footnotesize] (70pt,0) coordinate (O) node[right] {};
      \draw[font=\footnotesize] (-80pt,45pt) node[left] {$t=\frac{1}{8}$};
      \draw[font=\footnotesize] (-80pt,30pt) node[left] {$t=\frac{1}{4}$};
      \draw[font=\footnotesize] (-80pt,15pt) node[left] {$t=\frac{3}{8}$};
      \draw[font=\footnotesize] (-80pt,0pt) node[left] {$t=\frac{1}{2}$};
      \draw[color=gray!60,dashed] (-75pt,45pt) -- (-20pt,45pt);
      \draw[color=gray!60,dashed] (-75pt,30pt) -- (10pt,30pt);
      \draw[color=gray!60,dashed] (-75pt,15pt) -- (34pt,15pt);
      \draw[color=gray!60,dashed] (-75pt,0pt) -- (65pt,0pt);
      \draw[font=\footnotesize] (50pt,60pt) node[above right] {};
      \fill [radius=2pt] (O) circle;
    \end{tikzpicture}}\qquad
  \subfloat[The homotopy $(1-2t)e^{id_2} + 2t$.]{%
    \centering%
    \begin{tikzpicture}[scale=0.84]
      \draw (0,0) circle[radius=70pt];
      \draw[dashed] (35pt,0) circle[radius=35pt];
      \draw[dashed] (52.5pt,0) circle[radius=17.5pt];
      \draw[dashed] (17.5pt,0) circle[radius=52.5pt];
      \draw[font=\footnotesize] (70pt,0) coordinate (O) node[right] {};
      \draw[font=\footnotesize] (-80pt,45pt) node[left] {$t=\frac{7}{8}$};
      \draw[font=\footnotesize] (-80pt,30pt) node[left] {$t=\frac{3}{4}$};
      \draw[font=\footnotesize] (-80pt,15pt) node[left] {$t=\frac{5}{8}$};
      \draw[font=\footnotesize] (-80pt,0pt) node[left] {$t=\frac{1}{2}$};
      \draw[color=gray!60,dashed] (-75pt,45pt) -- (-20pt,45pt);
      \draw[color=gray!60,dashed] (-75pt,30pt) -- (10pt,30pt);
      \draw[color=gray!60,dashed] (-75pt,15pt) -- (34pt,15pt);
      \draw[color=gray!60,dashed] (-75pt,0pt) -- (65pt,0pt);
      \draw[font=\footnotesize] (50pt,60pt) node[above right] {};
      \fill [radius=2pt] (O) circle;
    \end{tikzpicture}}
  \caption{Homotopy Visualization on the closed $2$-Disk.}
  \label{FigureModifiedNormalForm}
\end{figure}
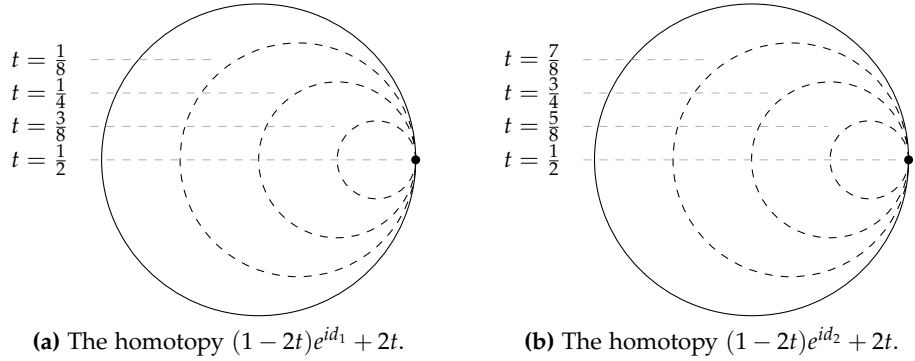

Now we construct a jump curve with only a discrete set of singular
points. This jump curve ``flips'' the total degree of maps in modified
normal form for basic triples.
\begin{lemma}
  \label{JumpsRank2Class1TotalDegree}
  Let $d_0$ and $d_1$ be two integers and assume that the maps
  \begin{align*}
    H_\pm\colon (Z,C) \to \left(S^2,E\right) \subset \left(\R^3,\{z=0\}\right)
  \end{align*}
  have the triples $\Triple{d_0}{\pm(d_1-d_0)}{d_1}$. Then there
  exists a jump curve
  \begin{align*}
    H\colon [-1,1] \times (Z,C) \to \left(\R^3,\{z=0\}\right)
  \end{align*}
  from $H_-$ to $H_+$ such that the singular set of the equivariant
  extension of $H_0$ consists of $2(|d_0| + |d_1|)$ isolated points in
  $X\setminus (C_0 \cup C_1)$. If the maps are already normalized
  along the boundary circles $C_0$ and $C_1$, then this homotopy $H_t$
  can be chosen to be relative to $C_0 \cup C_1$.
\end{lemma}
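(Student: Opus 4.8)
\section*{Proof proposal}

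\textbf{Strategy.} The plan is to reduce to the case in which $H_-$ and $H_+$ are the modified normal form maps of Definition~\ref{DefModifiedNormalForm} and then to realize the jump by a single ``flip'' of the third coordinate. Recall that the modified normal form map $F_+$ for the triple $\Triple{d_0}{d_0-d_1}{d_1}$ is $\iota_D\circ f$ with $f$ as in Definition~\ref{DefModifiedNormalForm} and $\iota_D$ the hemisphere embedding (\ref{IotaHemisphereEmbedding}); arguing exactly as in the proof of Proposition~\ref{TripleBuildingBlocks}~(ii) one sees that $F_+$ has triple $\Triple{d_0}{d_0-d_1}{d_1}$ and is type I normalized with fixed point degrees $(d_0,d_1)$, and that $F_-:=T\circ F_+$ (with $T(x,y,z)=(x,y,-z)$) is the modified normal form map for $\Triple{d_0}{d_1-d_0}{d_1}$. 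By Lemma~\ref{HomotopyOnCylinder} any type I normalized map with a given triple is homotopic rel $C$ to the corresponding modified normal form map, and any map is homotopic to a normalized one; since all these maps take values in $S^2\subset\R^3\setminus\{0\}$, such homotopies are non-singular curves. Hence it suffices to build a jump curve from $F_-$ to $F_+$ with the asserted $H_0$ and then concatenate (after reparametrizing $[-1,1]$) with a non-singular curve $H_-\rightsquigarrow F_-$ at the start and $F_+\rightsquigarrow H_+$ at the end; the resulting curve is again a jump curve from $H_-$ to $H_+$ with the same $H_0$, and it is rel $C$ as soon as $H_\pm$ are type I normalized along $C_0\cup C_1$ (using that the flip below is constant on $C$ and that Lemma~\ref{HomotopyOnCylinder} gives rel $C$ homotopies).

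\textbf{The flip.} Write $F_+=(x,y,z)\colon Z\to S^2$, so $F_-=(x,y,-z)$, and set
\[
  H_t=(x,y,tz),\qquad t\in[-1,1].
\]
Then $H_{-1}=F_-$ and $H_{+1}=F_+$. Along $C=C_0\cup C_1$ the restrictions of $F_\pm$ land in $E=\{z=0\}$, so the third coordinate $z$ already vanishes there; thus $H_t$ is a map of pairs $(Z,C)\to(\R^3,\{z=0\})$ for every $t$ and $t\mapsto H_t|_C$ is constant. By Lemma~\ref{Class1EquivariantExtension} we obtain an equivariant extension $\widehat{H}\colon[-1,1]\times X\to\R^3$. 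For $t\neq 0$ the map $H_t$ is non-singular: $H_t(p)=0$ would force $x(p)=y(p)=0$ and, since $t\neq0$, also $z(p)=0$, contradicting $F_+(p)\in S^2$; hence $\widehat{H_t}$ is non-singular for $t\neq0$, and in particular $\widehat{H}$ is a jump curve in the sense of Definition~\ref{DefinitionJumpCurveR3}.

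\textbf{The singular set at $t=0$.} At $t=0$ we have $H_0=(x,y,0)$, so $S(H_0)\cap Z=\{p\in Z:x(p)=y(p)=0\}=F_+^{-1}(\{O_\pm\})$, the preimage of the two poles of $S^2$. Since $F_+$ maps $Z$ into one closed hemisphere, only one pole is hit, and its fibre is computed from $f$: the equation $(1-2t)e^{id_0\varphi}+2t=0$ on $Z_1$ (resp. $(2t-1)e^{id_1\varphi}+2(1-t)=0$ on $Z_2$) forces, by taking absolute values, $t=\tfrac14$ (resp. $t=\tfrac34$) and then $e^{id_0\varphi}=-1$ (resp. $e^{id_1\varphi}=-1$), giving $|d_0|$ points in $Z_1$ and $|d_1|$ points in $Z_2$, all in the interior $Z\setminus C$. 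So $S(H_0)\cap Z$ consists of $|d_0|+|d_1|$ isolated interior points. Finally, by construction of the equivariant extension the restriction of $\widehat{H_0}$ to the complementary cylinder $Z'$ is $T\circ H_0\circ T$, whose singular set is $T\!\left(S(H_0)\cap Z\right)$, again $|d_0|+|d_1|$ isolated interior points, disjoint from the first batch because none lie on $C_0\cup C_1$. Hence $S(\widehat{H_0})$ consists of exactly $2(|d_0|+|d_1|)$ isolated points of $X\setminus(C_0\cup C_1)$, as claimed.

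\textbf{Main obstacle.} The only non-routine point is the fibre count for the modified normal form in the last paragraph: verifying that the preimage of the pole under $F_+$ is finite, has precisely $|d_0|+|d_1|$ elements, and avoids $C_0\cup C_1$. Everything else is immediate from the image lying on $S^2$ (non-singularity for $t\neq0$) and from $z$ vanishing along $C$ (validity of the extension and the rel $C$ clause).
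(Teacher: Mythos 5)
Your proposal is correct and follows essentially the same route as the paper: reduce (rel $C$ when the maps are already normalized) to the modified normal form maps, realize the jump by linearly scaling the third coordinate $z_t = t\,z$, observe non-singularity for $t\neq 0$ since the image lies on $S^2$, and identify $S(H_0)$ with the preimage of the poles, counted as $|d_0|+|d_1|$ interior points of $Z$ doubled by equivariant extension. Your explicit solution of $(1-2s)e^{id_j\varphi}=-2s$ at $s=\tfrac14,\tfrac34$ is in fact a slightly more careful justification of the fibre count than the paper's verbal argument, but it is not a different method.
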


\begin{proof}
  After a $G$-homotopy we can assume that the maps $H_\pm$ are in
  modified normal form for the triples
  $\Triple{d_0}{\pm(d_1-d_0)}{d_1}$. This homotopy can be chosen to be
  relative to $C_0 \cup C_1$ if the original maps $H_\pm$ are already
  type I normalized. It now suffices to prove the statement under the
  assumption that $H_\pm$ are in modified normal form. We can regard
  $H_\pm$ as maps
  \begin{align*}
    H_\pm\colon (Z,C) &\to (\R^3,\{z=0\})\\
    p &\mapsto
    \begin{pmatrix}
      x_\pm(p) \\
      y_\pm(p) \\
      z_\pm(p)
    \end{pmatrix}
  \end{align*}
  where the functions $x_\pm$, $y_\pm$ and $z_\pm$ satisfy:
  \begin{align*}
    & x_\pm \circ T = x_\pm\\
    & y_\pm \circ T = y_\pm\\
    & z_\pm \circ T = -z_\pm.
  \end{align*}
  Note that the functions $x_-$, $y_-$ and $z_-$ (resp. $x_+$, $y_+$
  and $z_+$) do not simultaneously vanish. Also, since the maps in
  modified normal form for triples of the form $\Triple{d_0}{\pm(d_1 -
    d_0)}{d_1}$ only differ by a reflection along the $x,y$-plane, we
  can conclude that $x_- = x_+$ , $y_- = y_+$ and $z_- = -z_+$. We set
  $x = x_\pm$ and $y = y_\pm$ and define the jump curve $H_t$ as
  follows:
  \begin{align*}
    H\colon [-1,1] \times (Z,C) &\to (\mathbb{R}^3,\{z=0\})\\
    (t,p) &\mapsto
    \begin{pmatrix}
      x(p)\\
      y(p)\\
      z_t(p)
    \end{pmatrix}
  \end{align*}
  where
  \begin{align*}
    z_t(p) =
    \begin{cases}
      |t| z_-(p) & \;\text{for $t < 0$} \\
      0 & \;\text{for $t = 0$} \\
      |t| z_+(p) & \;\text{for $t > 0$}
    \end{cases}.
  \end{align*}
  As long as $t \neq 0$, the map $H_t$ is non-singular. For $t=0$, the
  singular set $S(H_0)$ consists of those points $p \in X$ which are
  mapped to the south resp. the north pole (this is the set $\{x = y =
  0\}$). In the cylinder $Z \subset X$ we obtain $|d_0|$ singular
  points during the shrinking process of the degree $d_0$ loop and
  $|d_1|$ points during the enlarging process of the degree $d_1$
  loop. By equivariant extension
  (lemma~\ref{Class1EquivariantExtension}), the same applies to the
  complementary fundamental cylinder $Z'$, yielding a total number of
  $2(|d_0|+|d_1|)$ singular points in $X$.
  Equivariance of the curve is guaranteed because the equivariance
  conditions on the functions $x_\pm$, $y_\pm$ and $z$ are compatible
  with scaling by real numbers.
\end{proof}

Given integers $d_0$ and $d_1$, let 
\begin{align*}
  H^{\Triple{d_0}{\pm(d_1-d_0)}{d_1}}\colon [-1,1] \times (Z,C) \to (\R^3,\{z=0\}).
\end{align*}
denote the jump curve from $H_-$ to $H_+$
(with $\mathcal{T}(H_\pm) = \Triple{d_0}{\pm(d_1-d_0)}{d_1}$) whose
existence is guaranteed by lemma~\ref{JumpsRank2Class1TotalDegree}. In
the following we consider jump curves, which modify the fixed point
degrees. For the proof of the next lemma it is useful to introduce a
new notation for maps $(Z,C) \to (S^2,E)$ having triples of the form
$\Triple{d_0}{d_0-d_1}{d_1}$. Let us recall how the normal form maps
$(Z,C) \to (S^2,E)$ for such triples were constructed in the proof of
proposition~\ref{TripleBuildingBlocks} (ii). The cylinder is to be
regarded as $I \times S^1$ where we use $s$ as the interval
coordinate. Denote the closed unit disk in $\C$ by $D$. Let
$\gamma_d\colon S^1 \to S^1$ be the normalized loop of degree $d$.
Let $\iota_D$ be the embedding (\ref{IotaHemisphereEmbedding}) of the
disk $D$ as one of the hemispheres in $S^2$ and let $f$ be the
following map:
\begin{align*}
  f\colon [0,1] \times S^1 &\to D\\
  (s,z) &\mapsto
  \begin{cases}
    (1-2s)\gamma_{d_0}(z) & \;\text{for $0 \leq t \leq \onehalf$}\\
    (2s-1)\gamma_{d_1}(z) & \;\text{for $\onehalf \leq t \leq 1$}\\
  \end{cases}.
\end{align*}
In particular, $f_0$ and $f_1$ both map $S^1$ to the boundary of
$D$. Therefore, $\iota_D \circ f$ can be regarded as a map
\begin{align*}
  (Z,C) \to \left(S^2,E\right) \subset \left(\R^3,\{z=0\}\right).
\end{align*}
We can then define a map for the above triple as the equivariant
extension of the composition $\iota_D \circ f$ (see
lemma~\ref{Class1EquivariantExtension}):
\begin{align*}
  F = \widehat{\iota_D \circ f}. 
\end{align*}
In other words, $F$ is the equivariant extension of the map
\begin{align}
  \label{TopJumpsDefBigF}
  I \times S^1 &\to S^2\\
  (s,z) &\mapsto
  \begin{cases}
    \begin{pmatrix}
      (1-2s)\Re(\gamma_{d_0}(z))\\
      (1-2s)\Im(\gamma_{d_0}(z)\\
      -2\sqrt{s-s^2}
    \end{pmatrix} & \;\text{for $s \in \left[0,\onehalf\right]$}\\
    \begin{pmatrix}
      (1-2s)\Re(\gamma_{d_1}(z))\\
      (1-2s)\Im(\gamma_{d_1}(z))\\
      -2\sqrt{s-s^2}
    \end{pmatrix} & \;\text{for $s \in \left[\onehalf,1\right]$}
  \end{cases}.\nonumber
\end{align}
What this shows is that normal form maps for a triple
$\Triple{d_0}{\pm(d_0-d_1)}{d_1}$ can be defined in terms of two loops
$S^1 \to S^1$ which are of degree $d_0$ resp. of degree $d_1$. This
motivates the following
\begin{definition}
  By $F_{\alpha,\beta}$ we denote the map $(Z,C) \to
  \left(S^2,E\right)$ which is defined in terms of the loops $\alpha$
  and $\beta$ as described above. In particular, $F_{\alpha,\beta}$
  has the triple $\Triple{\deg \alpha}{\deg \alpha - \deg \beta}{\deg
    \beta}$.
\end{definition}

Recall that for loops $\gamma_{d_0}, \ldots,\gamma_{d_n}$ we can form
the concatenation loop
\begin{align*}
  \gamma_{d_n} \ast \ldots \ast \gamma_{d_0}\colon S^1 &\to S^1\\
  z &\mapsto \gamma_{d_j}(z_{n+1}) \text{, where } \arg z \in\left[\frac{2\pi j}{n+1},\frac{2\pi(j+1)}{n+1}\right]
\end{align*}

\begin{lemma}
  \label{JumpsRank2Class1FixpointDegree}
  Let $d^\pm$ be two integers and $H_\pm\colon (Z,C) \to
  \left(S^2,E\right) \subset (\R^3,\{z=0\})$ be maps with the degree
  triples
  \begin{align*}
    \Triple{d^\pm}{d^\pm}{0}\;\text{(resp. $\Triple{0}{d^\pm}{d^\pm}$)}.
  \end{align*}
  Then there exists a jump curve
  $H\colon [-1,1] \times (Z,C) \to \left(\R^3,\{z=0\}\right)$ from
  $H_-$ to $H_+$ such that the singular set $S(H_0)$ consists of
  $|d^+ - d^-|$ isolated points in $C_0$ (resp. in $C_1$). If the
  original maps $H_\pm$ are already normalized along the boundary
  circle $C_1$ (resp. $C_0$), then $H_t$ can be chosen relative to
  $C_1$ (resp. $C_0$).
\end{lemma}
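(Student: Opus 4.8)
The plan is to put $H_\pm$ into a normal form that is governed by a single planar loop on the circle $C_0$, to realise the required change of the fixed point degree as a one–parameter family of such loops which passes through the origin at exactly $|d^+-d^-|$ points at time $t=0$, and to glue this family onto a fixed ``filling'' over the rest of the cylinder.

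\emph{Reduction to normal form.} Since $H_\pm$ has the triple $\Triple{d^\pm}{d^\pm}{0}$, it has the same degree triple as the normal–form map $F_{\gamma_{d^\pm},\gamma_0}$ (which has triple $\Triple{\deg\gamma_{d^\pm}}{\deg\gamma_{d^\pm}-\deg\gamma_0}{\deg\gamma_0}=\Triple{d^\pm}{d^\pm}{0}$). By theorem~\ref{Classification1}, together with its rel–$C$ refinement noted after the proof, $H_\pm$ is $G$–homotopic to $F_{\gamma_{d^\pm},\gamma_0}$, and if $H_\pm$ is already type~I normalized along $C_1$ this homotopy can be taken rel $C_1$: first normalize along $C_0$ using the equivariant homotopy extension property (corollary~\ref{G-HEP}) while keeping $C_1$ fixed, then apply the rel–$C$ refinement. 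It is convenient to record this normal form intrinsically: for a loop $\alpha\colon S^1\to\R^2$ put
\begin{align*}
  G_\alpha(s,z)=
  \begin{cases}
    ((1-2s)\alpha(z),\,h(s)), & s\in[0,\tfrac12],\\
    ((2s-1)p_0,\,h(s)), & s\in[\tfrac12,1],
  \end{cases}
\end{align*}
where $p_0=(1,0)\in\R^2$ and $h\colon[0,1]\to[0,\infty)$ is a fixed smooth function with $h(0)=h(1)=0$ and $h>0$ on $(0,1)$. Then $G_\alpha$ maps $(Z,C)$ into $(\R^3,\{z=0\})$, its equivariant extension (lemma~\ref{Class1EquivariantExtension}) is singular precisely over the zeros of $\alpha$ on $C_0$, and one checks that $G_{\gamma_d}$ is $G$–homotopic rel $C_1$ to $F_{\gamma_d,\gamma_0}$ through non–singular maps (interpolate $h$ and retract radially onto $S^2$), so $G_{\gamma_d}$ again has triple $\Triple{d}{d}{0}$. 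Hence I may assume from now on that $H_\pm=G_{\gamma_{d^\pm}}$, the reductions being rel $C_1$.

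\emph{The planar movie.} Reversing orientations if necessary, assume $m:=d^+-d^-\ge 0$; if $m=0$ the two maps have equal triples and are $G$–homotopic rel $C_1$ through non–singular maps, which is already a jump curve with empty singular set, so assume $m\ge1$. I construct a path $t\mapsto\alpha_t$ of loops $S^1\to\R^2$ with $\alpha_{-1}=\gamma_{d^-}$, $\alpha_{+1}=\gamma_{d^+}$, with $\alpha_t$ nowhere zero for $t\neq0$, and with $\alpha_0$ vanishing transversally at exactly $m$ prescribed points $z_1,\dots,z_m\in S^1$. This is the standard local picture: fix $m$ small, pairwise disjoint arcs about the $z_j$; for $t$ near $-1$ grow one ``finger'' of the loop on each arc (a homotopy inside $\R^2\setminus\{0\}$); for $t$ near $0$ push each finger outward so that it crosses the origin exactly once, at $t=0$, each crossing changing the winding about $0$ by one; for $t$ near $+1$ relax the resulting loop of winding $d^+$ to the standard $\gamma_{d^+}$ inside $\R^2\setminus\{0\}$. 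Since $G_{\alpha_t}$ maps $C$ into $\{z=0\}$, lemma~\ref{Class1EquivariantExtension} lets me set $H_t$ to be the equivariant extension of $G_{\alpha_t}$ (concatenating at the two ends with the reduction homotopies from $H_\pm$ and reparametrizing so all singular behaviour occurs at $t=0$). Then $H_{\pm1}=H_\pm$; for $t\neq0$ the map $H_t$ is non–singular because $\alpha_t$ is nowhere zero and $h>0$ on $(0,1)$; and $S(H_0)$ is exactly the set $\{(0,z_1),\dots,(0,z_m)\}$, which consists of $m=|d^+-d^-|$ isolated points of $C_0$ (each is $T$–fixed, so the equivariant extension contributes no further singularities). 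Finally $H_t(1,z)=(p_0,0)$ for all $t$, so the jump curve is rel $C_1$. The parallel statement for the triples $\Triple{0}{d^\pm}{d^\pm}$ with singular points on $C_1$ follows by interchanging the two boundary circles, e.g.\ by composing with the translation $\tau$ of proposition~\ref{TripleBuildingBlocks}~(iv).

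\emph{Main obstacle.} Apart from the movie $t\mapsto\alpha_t$, everything is bookkeeping. The one delicate point is producing a single planar family that simultaneously (i) begins at $\gamma_{d^-}$ and ends at $\gamma_{d^+}$, (ii) is nowhere zero for $t\neq0$, and (iii) has $\alpha_0$ vanishing at precisely $m$ points and transversally there, so that $S(H_0)$ is genuinely $0$–dimensional with the asserted count rather than, say, a union of arcs. This is elementary but must be written carefully; the winding–number bookkeeping is controlled by the rule that each transverse crossing of the origin by the family changes the winding number about $0$ by exactly one, which is what forces the count $|d^+-d^-|$.
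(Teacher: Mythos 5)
Your proposal is correct and follows essentially the same strategy as the paper: reduce to a normal form in which everything is controlled by a single planar loop over $C_0$, then run a one-parameter family of loops that crosses the origin transversally at exactly $|d^+-d^-|$ points at time $t=0$. The one step you flag as delicate (producing the family $\alpha_t$) is handled in the paper by an explicit formula: write the degree-$d^-$ loop as $\gamma_k^{-1}\ast\gamma_k\ast\gamma_{d^-}$ with $k=d^+-d^-$ and contract the factor $\gamma_k^{-1}$ via $\iota_t(z)=\tfrac{1}{2}\left((1-t)z^{-k}+t+1\right)$, which vanishes only at $t=0$ and there at exactly $|k|$ points, so you may substitute this formula for your finger-pushing construction.
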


\begin{proof}
  We prove the statement only for the triples
  $\Triple{d^\pm}{d^\pm}{0}$; the other case works completely
  analogously. After a $G$-homotopy we can assume that $H_-$ is
  defined in terms of the loops $\gamma_k^{-1} \ast \gamma_k \ast
  \gamma_{d^-}$ and $\gamma_0$:
  \begin{align}
    H_- = F_{\gamma_k^{-1} \ast \gamma_k \ast \gamma_{d^-},\gamma_0}.
  \end{align}
  After defining $k = d^+ - d^-$ and after another $G$-homotopy we can
  assume that $H_+$ is defined in terms of the loops $\gamma_0 \ast
  \gamma_k \ast \gamma_{d^-}$ and $\gamma_0$:
  \begin{align}
    H_+ = F_{\gamma_0 \ast \gamma_k \ast \gamma_{d^-},\gamma_0}.
  \end{align}
  In order to prove the statement it suffices to construct a jump
  curve $H_t$ from $H_-$ to $H_+$. We define the curve $H_t$ as:
  \begin{align*}
    H_t = F_{\iota_t \ast \gamma_k \ast \gamma_{d^-},\gamma_0},
  \end{align*}
  where $\iota_t$ (``jump'') is the following null-homotopy of the
  loop $\gamma_k^{-1}$:
  \begin{align*}
    \iota_t\colon [-1,1] \times S^1 &\to \C\\
    (t,z) &\mapsto \frac{(1-t)z^{-k} + t + 1}{2}.
  \end{align*}
  By construction we have $H_{\pm 1} = H_\pm$. Let us compute the
  singular set of $H_t$. From the definition of the map
  $F_{\alpha,\beta}$ in (\ref{TopJumpsDefBigF}) we can conclude that
  the singular set consists of the common zeros of
  \begin{align*}
    & (1-2s)\iota_t \ast \gamma_k \ast \gamma_{d^-}(z)\\
    \text{and}\; & -2\sqrt{s-s^2}.
  \end{align*}
  Since the square root only vanishes for $s \in \{0,1\}$ and $(1-2s)$
  does not vanish for these values of $s$, the only possibility for
  both functions to vanish is
  \begin{align*}
    s = 0 \;\text{and}\;\iota_t \ast \gamma_k \ast \gamma_{d^-}(z) = 0.
  \end{align*}
  By construction, $\iota_t$ crosses the origin once and that happens
  at $t=0$. Therefore, $H_t$ is only singular for $t = 0$ and the
  singular set consists of $|k| = |d^+ - d^-|$ points in $C_0 \subset
  Z$.
\end{proof}

Note that this method also changes the total degree of the map. That
is to be expected, since we have all the freedom possible for changing
the fixed point degrees, but as we have seen earlier, not all
combinations of fixed point degrees and total degree are allowed. The
previous lemma also covers the special case $d^- = d^+$, in which
there are no singular points introduced. For two integers $d^\pm$ we
denote the jump curve from $H_-$ to $H_+$ (with
$\mathcal{T}(H_\pm) = \Triple{d^\pm}{d^\pm}{0}$
resp. $\mathcal{T}(H_\pm) = \Triple{0}{d^\pm}{d^\pm}$) whose existence
is guaranteed by lemma~\ref{JumpsRank2Class1FixpointDegree} by
\begin{align*}
  & H^{\Triple{d^\pm}{d^\pm}{0}}\colon [-1,1] \times (Z,C) \to \left(\R^3,\{z=0\}\right)\\
  \text{resp. }\; & H^{\Triple{0}{d^\pm}{d^\pm}}\colon [-1,1] \times (Z,C) \to \left(\R^3,\{z=0\}\right).
\end{align*}

Now we combine the results of lemma~\ref{JumpsRank2Class1TotalDegree}
and~\ref{JumpsRank2Class1FixpointDegree} into one theorem:
\begin{theorem}
  \label{JumpsRank2Class1}
  Let the torus $X$ be eqipped with the type I involution and let
  $H_\pm\colon X \to S^2 \subset \R^3$ be two $G$-maps with triples
  $\Triple{d_0^\pm}{d^\pm}{d_1^\pm}$. Then there exists a jump curve
  $H\colon [-1,1] \times X \to \R^3$ of $G$-maps from $H_-$ to $H_+$
  such that the singular set $S(H_0) \subset X$ consists of
  \begin{align*}
    |d_0^+ - d_0^-| + |d_1^+ - d_1^-| + d^+ - (d_0^+ + d_1^+) - \left[d^- - (d_0^- + d_1^-)\right]
  \end{align*}
  isolated points.
  
  If the original maps $H_\pm$ are type I normalized along the
  boundary circles $C_0$ and $C_1$ and $d_0^- = d_0^+$ or $d_1^- =
  d_1^+$, then the curve $H_t$ can be chosen such that it does not
  modify the maps on the respective boundary circles where they agree.
\end{theorem}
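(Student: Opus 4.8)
The plan is to assemble the desired jump curve out of the two elementary ones already constructed --- the total--degree flip of Lemma~\ref{JumpsRank2Class1TotalDegree} and the fixed--point--degree jump of Lemma~\ref{JumpsRank2Class1FixpointDegree} --- carried out \emph{simultaneously} at $t=0$ on disjoint sub--cylinders of the fundamental cylinder $Z\subset X$, so that the resulting curve degenerates only at $t=0$. First I would use singularity--free $G$--homotopies (of maps into $S^2$, where $0$ never occurs) to bring $H_-$ and $H_+$ into a common \emph{block form}: writing $Z=[0,1]\times S^1$ with interval coordinate $s$, I cut $Z$ along $s$ into sub--cylinders so that, in the concatenation calculus for triples (proposition~\ref{TripleBuildingBlocks}, proof of theorem~\ref{Classification1}), the restriction of $H_\pm$ reads
\[
  \Triple{d_0^\pm}{d_0^\pm}{0}\;\bullet\;\Triple{0}{e^\pm}{0}\;\bullet\;\Triple{0}{d_1^\pm}{d_1^\pm},
  \qquad e^\pm := d^\pm-(d_0^\pm+d_1^\pm),
\]
with both maps equal to the constant loop $\gamma_0$ on the two circles separating the three factors; here $e^\pm$ is even because the triples $\Triple{d_0^\pm}{d^\pm}{d_1^\pm}$ are realizable, by~(\ref{DiscussionParityCondition}). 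On the first factor I present $H_-$ and $H_+$ in the ``spurious word'' shape from the proof of Lemma~\ref{JumpsRank2Class1FixpointDegree}, so that they differ only by replacing a leading $\gamma_{k_0}^{-1}\ast\gamma_{k_0}$ (for $H_-$) by $\gamma_0\ast\gamma_{k_0}$ (for $H_+$), with $k_0:=d_0^+-d_0^-$; symmetrically on the third factor with $k_1:=d_1^+-d_1^-$; and on the middle factor I split off the common sub--concatenation $\Triple{0}{\min(e^-,e^+)}{0}$ and decompose the remaining $\Triple{0}{\pm|e^+-e^-|}{0}$ into $\tfrac12|e^+-e^-|$ sub--cylinders, each further split into a $\Triple{0}{1}{1}$--half (common to $H_\pm$) and a $\Triple{1}{\mp1}{0}$--half, using proposition~\ref{TripleBuildingBlocks}~(ii),(iii). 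All of this stays inside $\mathcal{M}_G(X,S^2)$, hence introduces no singular points.

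Next I would construct a jump curve on each block, all parametrised by one and the same $t\in[-1,1]$, each degenerating only at $t=0$, and each taken relative to the bounding circles of its block so that the pieces glue: on the first factor the construction of Lemma~\ref{JumpsRank2Class1FixpointDegree} (the $\Triple{d}{d}{0}$ case), deforming the leading $\gamma_{k_0}^{-1}$ through the origin, gives a jump curve from $H_-$ to $H_+$ there whose $t=0$ singular set is $|k_0|$ isolated points on $C_0$; symmetrically $|k_1|$ isolated points on $C_1$ from the third factor; on the $\Triple{0}{1}{1}$--halves of the middle sub--cylinders (and on the common sub--concatenation) the constant homotopy; and on each $\Triple{1}{\mp1}{0}$--half the restrictions of $H_-$ and $H_+$ differ by the sign flip, so Lemma~\ref{JumpsRank2Class1TotalDegree} (with $d_0=1$, $d_1=0$) applies and produces a jump curve rel the bounding circles with $2(|1|+|0|)=2$ isolated singular points in $X\setminus(C_0\cup C_1)$. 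Gluing these block jump curves along their common circles and then extending equivariantly over the complementary cylinder $Z'$ by Lemma~\ref{Class1EquivariantExtension} produces a $G$--map $H\colon[-1,1]\times X\to\R^3$ with $H_{\pm1}=H_\pm$ which is non--singular for $t\neq0$ and whose $t=0$ singular set is the disjoint union of the block singular sets; via the identification~(\ref{JumpDiscussionIsomToR3}) this is a jump curve into $\cl(\mathcal{H}_{(1,1)})$ in the sense of definition~\ref{DefinitionJumpCurveR3}.

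Counting the isolated singular points then gives $|k_0|+|k_1|+2\cdot\tfrac12|e^+-e^-| = |d_0^+-d_0^-|+|d_1^+-d_1^-|+|e^+-e^-|$, and since $e^\pm=d^\pm-(d_0^\pm+d_1^\pm)$ this is the quantity in the statement (with the convention that $H_-,H_+$ are labelled so that the last term $d^+-(d_0^++d_1^+)-[d^--(d_0^-+d_1^-)]$ is the nonnegative value $|e^+-e^-|$; reversing the curve in $t$ interchanges $H_-,H_+$ and leaves the singular set unchanged). For the last assertion: if $d_0^-=d_0^+$ then $k_0=0$, the first--factor jump is the constant homotopy, and --- every other block jump having been taken relative to its bounding circles, none of which is $C_0$ --- the whole curve is relative to $C_0$; if in addition $H_\pm$ are already type~I normalized along $C_0$, the preliminary normalization can be taken rel $C_0$ as well (the proof of theorem~\ref{Classification1} produces homotopies relative to the circles on which the maps already coincide), so $H$ leaves $\restr{H_\pm}{C_0}$ untouched; the case $d_1^-=d_1^+$ is the mirror image with $C_1$.

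The step I expect to be the main obstacle is the bookkeeping in the first paragraph: arranging $H_-$ and $H_+$ to carry literally the \emph{same} block decomposition, with all discrepancies confined to the three elementary types handled by the lemmas, and checking that each intermediate move is a genuine singularity--free equivariant homotopy and that the block jump curves glue to a continuous, $G$--equivariant map with exactly the stated singular set (in particular all points isolated and located as claimed on or off $C_0\cup C_1$). Everything else is a direct application of Lemmas~\ref{JumpsRank2Class1TotalDegree} and~\ref{JumpsRank2Class1FixpointDegree} together with the triple calculus.
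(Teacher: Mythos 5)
Your proof is correct and follows essentially the same strategy as the paper's: bring both maps, by singularity-free $G$-homotopies, into a common block decomposition of the cylinder via the triple calculus, apply the fixed-point-degree jump of lemma~\ref{JumpsRank2Class1FixpointDegree} on the outer blocks and the total-degree flip of lemma~\ref{JumpsRank2Class1TotalDegree} on the middle discrepancy, all simultaneously at $t=0$ and relative to the separating circles, then extend equivariantly. The only (immaterial) difference is that the paper absorbs the entire total-degree discrepancy into a single block $\Triple{k}{\pm k}{0}$ with $2k=e^+-e^-$ rather than into $\tfrac12|e^+-e^-|$ elementary $\Triple{1}{\mp 1}{0}$ flips, and your explicit handling of the sign of $e^+-e^-$ is in fact slightly more careful than the paper's.
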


\begin{proof}
  Assume we are given two maps $H_\pm\colon X \to S^2 \subset \R^3$
  having the degree triples $\Triple{d_0^\pm}{d^\pm}{d_1^\pm}$. Define
  \begin{align*}
    k = \frac{d^+ - (d_0^+ + d_1^+) - \left[d^- - (d_0^- + d_1^-)\right]}{2}.
  \end{align*}
  Since $d^\pm - (d_0^\pm + d_1^\pm) \equiv 0 \mod 2$, $k$ is an integer.
  Note that we then have the identity
  \begin{align}
    \label{JumpsRank2Class1ProofDiscussionD}
    d^+ = d_0^+ + 2k + d^- - d_0^- - d_1^- + d_1^+.
  \end{align}
  The maps $H_\pm$ are equivariantly homotopic (as non-singular maps
  $X \to S^2 \subset \R^3$) to maps $H'_\pm$ which are assumed to be
  the concatenations of simpler maps, corresponding to the triple
  decomposition
  \begin{align}
    \label{JumpCurveTripleDecomposition}
    \Triple{d_0^\pm}{d_0^\pm}{0} &\bullet \Triple{0}{k}{k}\\
                                 &\bullet \Triple{k}{\pm k}{0}\nonumber \\
                                 &\bullet \Triple{0}{d^--(d_0^- + d_1^-)}{0}\nonumber\\
                                 &\bullet \Triple{0}{d_1^\pm}{d_1^\pm}\nonumber.
  \end{align}
  If the original maps $H_\pm$ are already normalized along any of the
  fixed point circles and if they have the same degree there, then the
  homotopy to the maps described by the aforementioned triples can be
  chosen relatively to those boundary circles. After this reduction to
  the maps $H'_\pm$, it suffices to prove the existence of a jump
  curve from $H'_-$ to $H'_+$. The restrictions of the maps for the
  triple (\ref{JumpCurveTripleDecomposition}) to the cylinder $Z$ are
  defined as the concatenation of five maps $H^j$, each defined on one
  fifth $Z_j$ of the cylinder $Z$, $j=0,\ldots,4$. We can assume that
  each map $H^j$ is normalized on the boundary circles of its
  subcylinder $Z_j$. Note that $H'_-$ and $H'_+$ agree on
  $Z_1 \cup Z_3$ and differ only on $Z_0 \cup Z_2 \cup Z_4$. This
  allows us to define a curve
  \begin{align*}
    H'\colon [-1,1] \times (Z,C) &\to (\R^3,\{z=0\})\\
    (t,p) &\mapsto
    \begin{cases}
      H^{\Triple{d_0^\pm}{d_0^\pm}{0}}(t,p) & \;\text{if $p \in Z_0$}\\
      H'_-(t,p) & \;\text{if $p \in Z_1$}\\
      H^{\Triple{k}{\pm k}{0}}(t,p) & \;\text{if $p \in Z_2$}\\
      H'_-(t,p) & \;\text{if $p \in Z_3$}\\
      H^{\Triple{0}{d_1^\pm}{d_1^\pm}}(t,p) & \;\text{if $p \in Z_4$}
    \end{cases}
  \end{align*}
  Note that if $d_0^+ = d_0^-$ or $d_1^+ = d_1^-$, then this homotopy
  $H_t$ is relative to the respective boundary circles on which the
  fixed point degrees agree.
  Equivariant extension of $H'$ yields a $G$-map
  \begin{align*}
    \widehat{H'}\colon [-1,1] \times X \to \R^3
  \end{align*}
  satisfying the desired properties (see
  lemma~\ref{Class1EquivariantExtension}). By
  lemma~\ref{JumpsRank2Class1FixpointDegree} we obtain $|d_0^+ -
  d_0^-|$ singular points from $H^{\Triple{d_0^\pm}{d_0^\pm}{0}}$,
  $|d_1^+ - d_1^-|$ singular points from
  $H^{\Triple{0}{d_1^\pm}{d_1^\pm}}$. To this we have to add the
  singular set added by $H^{\Triple{k}{\pm k}{0}}$, which (by
  lemma~\ref{JumpsRank2Class1TotalDegree}) and by
  (\ref{JumpsRank2Class1ProofDiscussionD}) consists of
  \begin{align*}
    2k = d^+ - (d_0^+ + d_1^+) - \left[d^- - (d_0^- + d_1^-)\right]
  \end{align*}
  points. In total these are
  \begin{align*}
    |d_0^+ - d_0^-| + |d_1^+ - d_1^-| + d^+ - (d_0^+ + d_1^+) -
    \left[d^- - (d_0^- + d_1^-)\right]
  \end{align*}
  singular points on the torus.
\end{proof}

The following corollary is a reformulation of
theorem~\ref{JumpsRank2Class1Corollary} using the isomorphism $\R^3
\cong i\mathfrak{su}_2$:
\begin{corollary}
  \label{JumpsRank2Class1Corollary}
  Let the torus $X$ be equipped with the type I involution and let
  \begin{align*}
    H_\pm\colon X \to i\mathfrak{su}_2\smallsetminus\{0\} \subset \mathcal{H}_{(1,1)} 
  \end{align*}
  be two $G$-maps with the triples
  $\Triple{d_0^\pm}{d^\pm}{d_1^\pm}$. Then there exists a jump curve
  \begin{align*}
    H\colon [-1,1] \times X \to i\mathfrak{su}_2 \subset \mathcal{H}_{(1,1)}
  \end{align*}
  from $H_-$ to $H_+$ such that the singular set $S(H_0) \subset X$
  consists of
  \begin{align*}
    |d_0^+ - d_0^-| + |d_1^+ - d_1^-| + d^+ - (d_0^+ + d_1^+) - \left[d^- - (d_0^- + d_1^-)\right]
  \end{align*}
  isolated points.
\end{corollary}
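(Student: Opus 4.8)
The corollary is a transcription of Theorem~\ref{JumpsRank2Class1} across the equivariant linear isomorphism $\Psi\colon i\mathfrak{su}_2 \xrightarrow{\;\sim\;} \R^3$ of (\ref{JumpDiscussionIsomToR3}). Recall that $\Psi$ sends the zero matrix to the origin and, by Proposition~\ref{Rank2MixedSignatureReduction}, identifies the $U(2)$-orbit of $\I{1}{1}$ with the unit sphere $S^2 \subset \R^3$, and that $i\mathfrak{su}_2 \smallsetminus \{0\} \subset \mathcal{H}_{(1,1)}$ by Remark~\ref{Rank2TraceZeroReduction}. The plan is to push the data forward by $\Psi$, invoke Theorem~\ref{JumpsRank2Class1}, and pull the resulting jump curve back by $\Psi^{-1}$.

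First I would set $\tilde H_\pm = \Psi \circ H_\pm\colon X \to \R^3 \smallsetminus \{0\}$; these are $G$-maps because $\Psi$ is equivariant. Since $\Psi$ intertwines the equivariant strong deformation retract of $i\mathfrak{su}_2 \smallsetminus \{0\}$ onto the $U(2)$-orbit of $\I{1}{1}$ (Proposition~\ref{Rank2MixedSignatureReduction}) with the radial retract of $\R^3 \smallsetminus \{0\}$ onto $S^2$, the maps $\tilde H_\pm$ carry the same degree triples $\Triple{d_0^\pm}{d^\pm}{d_1^\pm}$ as $H_\pm$ — indeed this is precisely how the degree triple of an $\mathcal{H}_{(1,1)}$-valued map was defined. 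The radial retraction $v \mapsto v/\|v\|$ commutes with the reflection involution on $\R^3$, hence is equivariant, so $\tilde H_\pm$ is equivariantly homotopic through non-singular maps to a map $\tilde H'_\pm$ with image in $S^2$, the homotopy leaving the triples unchanged. Now Theorem~\ref{JumpsRank2Class1} applies to $\tilde H'_\pm$ and produces a jump curve $\tilde H'\colon [-1,1] \times X \to \R^3$ from $\tilde H'_-$ to $\tilde H'_+$ whose $t=0$ slice has precisely
\begin{align*}
  |d_0^+ - d_0^-| + |d_1^+ - d_1^-| + d^+ - (d_0^+ + d_1^+) - \bigl[d^- - (d_0^- + d_1^-)\bigr]
\end{align*}
isolated singular points. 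Concatenating the two radial homotopies onto the ends of $\tilde H'$ — they stay in $\R^3 \smallsetminus \{0\}$, so they add no singularities and do not affect the unique degenerate slice — and reparametrising the interval yields a jump curve $\tilde H$ from $\tilde H_-$ to $\tilde H_+$ with the same singular set.

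Finally I would put $H = \Psi^{-1} \circ \tilde H\colon [-1,1] \times X \to i\mathfrak{su}_2$. Equivariance of $\Psi^{-1}$ makes $H$ a $G$-map; $\Psi^{-1}$ restricts to a bijection $\R^3 \smallsetminus \{0\} \to i\mathfrak{su}_2 \smallsetminus \{0\} \subset \mathcal{H}_{(1,1)}$, so $H_t$ is non-singular for $t \neq 0$; and since $\Psi^{-1}$ is linear with $\Psi^{-1}(0) = 0$, we have $S(H_0) = H_0^{-1}(\{0\}) = \tilde H_0^{-1}(\{0\})$, the set just counted, while $H_{\pm 1} = \Psi^{-1} \circ \tilde H_{\pm} = H_\pm$. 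Thus $H$ is the asserted jump curve. The argument has no genuinely hard step: all the geometric content sits in Theorem~\ref{JumpsRank2Class1}, and the only point requiring care is the equivariant reduction to sphere-valued maps together with the observation that prepending non-singular homotopies preserves both the jump-curve property and the exact singular fibre at $t=0$ — which is exactly why the point count is verbatim that of the theorem.
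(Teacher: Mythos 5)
Your proof is correct and follows exactly the route the paper intends: the paper offers no proof at all, simply declaring the corollary ``a reformulation'' of theorem~\ref{JumpsRank2Class1} via the isomorphism $\R^3 \cong i\mathfrak{su}_2$. Your write-up supplies the details the paper omits -- in particular the equivariant radial retraction needed because the theorem assumes sphere-valued maps, and the observation that prepending non-singular homotopies neither adds singular points nor disturbs the unique degenerate slice -- and these are handled correctly.
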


\subsection{Type II}

As in the classification chapter, it is possible to deduce results for
the type II involution from the respective results of the type I
involution. Let $(X,T)$ be the torus equipped with the type II
involution and $(X',T')$ be the torus equipped with the type I
involution. As an immediate corollary to
theorem~\ref{JumpsRank2Class1} note:
\begin{corollary}
  \label{JumpsRank2Class2}
  Let $H_\pm\colon X' \to S^2 \subset \R^3$ be two $G$-maps with
  triples $\Triple{0}{d^\pm}{d_1^\pm}$. Then there exists a jump curve
  $H\colon [-1,1] \times X' \to \R^3$ from $H_-$ to $H_+$ such that
  the singular set $S(H_0) \subset X'$ consists of
  \begin{align*}
    |d_1^+ - d_1^-| + d^+ - d^- + d_1^- - d_1^+
  \end{align*}
  isolated points. In particular, if the maps $H_-$ and $H_+$ are
  normalized along the boundary circle $C_0$ (meaning that
  $H_\pm(C_0) = \{p_0\} \in E \subset S^2$), then the curve $H_t$ can
  be chosen such that $H_t(C_0) = \{p_0\}$ for all $t$.
\end{corollary}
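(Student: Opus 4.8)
The plan is to obtain this statement as a direct specialization of Theorem~\ref{JumpsRank2Class1}. First I would apply that theorem to the two $G$-maps $H_\pm\colon X' \to S^2 \subset \R^3$: by hypothesis their degree triples are $\Triple{0}{d^\pm}{d_1^\pm}$, i.e.\ this is precisely the case $d_0^- = d_0^+ = 0$ of the theorem. Theorem~\ref{JumpsRank2Class1} then furnishes a jump curve $H\colon [-1,1] \times X' \to \R^3$ from $H_-$ to $H_+$ whose singular set $S(H_0)$ consists of
\begin{align*}
  |d_0^+ - d_0^-| + |d_1^+ - d_1^-| + d^+ - (d_0^+ + d_1^+) - \bigl[d^- - (d_0^- + d_1^-)\bigr]
\end{align*}
isolated points of $X'$.

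Next I would substitute $d_0^\pm = 0$ into this count. The first summand vanishes, and the remaining terms collapse:
\begin{align*}
  |d_1^+ - d_1^-| + d^+ - d_1^+ - (d^- - d_1^-) = |d_1^+ - d_1^-| + d^+ - d^- + d_1^- - d_1^+,
\end{align*}
which is exactly the number asserted in the corollary. (Integrality and nonnegativity of this count are inherited from Theorem~\ref{JumpsRank2Class1}, where the parity condition of Theorem~\ref{Classification1} guarantees that $d^\pm - (d_0^\pm + d_1^\pm)$ is even.)

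For the final ``in particular'' clause I would invoke the last sentence of Theorem~\ref{JumpsRank2Class1}: since $H_-$ and $H_+$ agree along the boundary circle $C_0$ — indeed $H_\pm(C_0) = \{p_0\}$ — and $d_0^- = d_0^+$, the jump curve may be chosen so that it does not modify the maps on $C_0$; hence $H_t(C_0) = \{p_0\}$ for all $t \in [-1,1]$.

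The only point requiring a word of care — the closest thing to an obstacle, though it is minor — is matching the hypothesis of the corollary (``$H_\pm$ normalized along $C_0$ with image $\{p_0\}$'') against that of Theorem~\ref{JumpsRank2Class1} (``$H_\pm$ type I normalized along $C_0$ and $d_0^- = d_0^+$''). Since a constant map onto $p_0 \in E$ is type I normalized with fixed point degree $0$, the specialization $d_0^- = d_0^+ = 0$ is legitimate and the argument goes through verbatim.
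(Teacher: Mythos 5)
Your proposal is correct and is exactly the paper's argument: the paper proves this corollary by the one-line observation that it is the special case $d_0^\pm = 0$ of theorem~\ref{JumpsRank2Class1}, with the singular-point count and the relative-to-$C_0$ clause following just as you describe. Your explicit substitution check and the remark that a constant map onto $p_0$ is type I normalized with fixed point degree $0$ merely spell out what the paper leaves implicit.
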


\begin{proof}
  This is just the special case $d_0^\pm = 0$ of
  theorem~\ref{JumpsRank2Class1}.
\end{proof}

For reducing jump curves for the type II involution to jump curves for
the type I involution, recall the equivariant and orientation
preserving diffeomorphism
\begin{align*}
  \Psi\colon X'\setminus C_0 \xrightarrow{\;\sim\;} S^2\setminus \{O_\pm\}.
\end{align*}
from remark~\ref{Class1TorusWithSphereIdentification} which we have
used for identifying type I normalized $G$-maps $X' \to S^2$ with type
II normalized $G$-maps $S^2 \to S^2$. We can use the same method for
jump curves:
\begin{proposition}
  \label{JumpsRank2Class2CompleteSphere}
  Let $H_\pm$ be two type II normalized $G$-maps
  $S^2 \to S^2 \subset \R^3$ with degree pairs
  $\smash{\Pair{d_E^\pm}{d^\pm}}$. Then there exists a jump curve
  $H\colon [-1,1] \times S^2 \to \R^3$ from $H_-$ to $H_+$ such that
  the singular set $S(H_0)$ consists of
  \begin{align*}
    |d_E^+ - d_E^-| + d^+ - d^- + d_E^- - d_E^+
  \end{align*}
  isolated points in $X$.
\end{proposition}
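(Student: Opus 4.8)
The plan is to transport the type~I jump curve constructed on the type~I torus in Corollary~\ref{JumpsRank2Class2} over to the sphere $S^2$ by means of the equivariant, orientation-preserving diffeomorphism $\Psi\colon X'\setminus C_0 \xrightarrow{\;\sim\;} S^2\setminus\{O_\pm\}$ of remark~\ref{Class1TorusWithSphereIdentification} — the very same device used to pass between type~I normalized maps $X'\to S^2$ and type~II normalized maps $S^2\to S^2$ in the classification chapter. So this is the ``jump-curve analogue'' of proposition~\ref{LemmaClass2ReductionToClass1}.

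First I would move to the type~I side. Given the type~II normalized $G$-maps $H_\pm\colon S^2\to S^2$ with degree pairs $\Pair{d_E^\pm}{d^\pm}$, the associated maps $\phi(H_\pm)\colon X'\to S^2$ (defined via the correspondence (\ref{PsiCorrespondence})) are type~I normalized $G$-maps with degree triples $\Triple{0}{d^\pm}{d_E^\pm}$ by proposition~\ref{LemmaClass2ReductionToClass1}; moreover, by the very definition of $\phi$, both of them send the fixed point circle $C_0$ constantly to $p_0$. Applying Corollary~\ref{JumpsRank2Class2} (the case $d_0^\pm=0$ of theorem~\ref{JumpsRank2Class1}) to $\phi(H_-)$ and $\phi(H_+)$ produces a jump curve
\begin{align*}
  \widetilde H\colon [-1,1]\times X' \to \R^3
\end{align*}
from $\phi(H_-)$ to $\phi(H_+)$ whose singular set $S(\widetilde H_0)$ consists of
\begin{align*}
  |d_E^+ - d_E^-| + d^+ - d^- + d_E^- - d_E^+
\end{align*}
isolated points; and since both $\phi(H_\pm)$ are normalized along $C_0$ and agree there, this curve can be chosen relative to $C_0$, i.e.\ $\widetilde H_t(C_0)=\{p_0\}$ for all $t$. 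Inspecting the construction in theorem~\ref{JumpsRank2Class1}, one sees that, because $d_0^\pm=0$, none of the singular points of $\widetilde H_0$ lies on $C_0$: they lie on $C_1$ and in the interior of the fundamental cylinder.

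Next I would push $\widetilde H$ forward along $\Psi$. Define
\begin{align*}
  H\colon [-1,1]\times S^2 &\to \R^3\\
  (t,p) &\mapsto
  \begin{cases}
    \widetilde H\bigl(t,\Psi^{-1}(p)\bigr) & \text{if } p\notin\{O_\pm\}\\
    p_0 & \text{if } p\in\{O_\pm\}.
  \end{cases}
\end{align*}
Because $\widetilde H_t$ is constantly $p_0$ on $C_0$ while $\Psi^{-1}(p)$ tends to $C_0$ as $p\to O_\pm$, the map $H$ is continuous (one sees this by factoring $\widetilde H$ through the cylinder $[0,1]\times S^1$ and noting that collapsing its two boundary circles recovers $S^2$ via $\Psi$); equivariance of $H$ follows from that of $\Psi$ and $\widetilde H$ together with $T(p_0)=p_0$ and $T(O_+)=O_-$. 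By (\ref{PsiCorrespondence}) we get $H_{\pm1}=H_\pm$, and since $\Psi$ is a diffeomorphism off $C_0$ while $H_t(O_\pm)=p_0\neq 0$, we have $0\notin\Im H_t$ for $t\neq 0$ and $S(H_0)=\Psi\bigl(S(\widetilde H_0)\bigr)$; as $S(\widetilde H_0)$ avoids $C_0$, this is a set of isolated points of cardinality $|d_E^+ - d_E^-| + d^+ - d^- + d_E^- - d_E^+$, as required (these are isolated points of $S^2$, the stated ``$X$'' being a harmless slip). The only step needing genuine care is the continuity of $H$ at the two poles, which is exactly where the relativeness of $\widetilde H$ to $C_0$ — furnished by the ``relative'' clause of theorem~\ref{JumpsRank2Class1} in the degenerate case $d_0^+=d_0^-=0$ — is indispensable; everything else is routine transport of structure along $\Psi$.
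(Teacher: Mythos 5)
Your proposal is correct and follows essentially the same route as the paper: reduce to the type~I torus via proposition~\ref{LemmaClass2ReductionToClass1}, invoke corollary~\ref{JumpsRank2Class2} to get a jump curve relative to $C_0$, and push it forward along $\Psi$ with the constant value $p_0$ at the poles. Your extra observations (continuity at $O_\pm$ via the relativeness to $C_0$, and that the singular points avoid $C_0$) only make explicit what the paper leaves implicit.
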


\begin{proof}
  By lemma~\ref{LemmaClass2ReductionToClass1}, the maps $H_\pm$ induce
  type I normalized equivariant maps
  $H'_\pm\colon X' \to S^2 \subset \R^3$ with the degree triples
  $\Triple{0}{d^\pm}{d_E^\pm}$ such that
  \begin{align*}
    \Restr{H'_\pm}{X'\setminus C_0} = \Restr{H_\pm}{S^2\setminus\{O_\pm\}} \circ \Psi.
  \end{align*}
  Using corollary~\ref{JumpsRank2Class2} we obtain a jump curve
  $H'\colon [-1,1] \times X' \to \R^3$ from $H'_-$ to $H'_+$ such that
  the singular set $S(H'_0)$ consists of
  \begin{align}
    \label{SingularPointsJumpsRank2Class2CompleteSphere}
    |d_E^+ - d_E^-| + d^+ - d^- + d_E^- - d_E^+
  \end{align}
  points. In particular this curve has the property that
  $H'_t(C_0) = \{p_0\}$ for all times $t$. The last property allows us
  to define the curve
  \begin{align*}
    H\colon [-1,1] \times S^2 &\to \R^3\\
    (t,p) &\mapsto
    \begin{cases}
      H'\left(t,\Psi^{-1}(p)\right) & \;\text{if $p \not\in \{O_\pm\}$}\\
      p_0 & \;\text{if $p \in \{O_\pm\}$}\\
    \end{cases}
  \end{align*}
  Since $H'_t$ is non-singular for $t \not= 0$, the same applies to
  $H$. By construction
  $H_t$ is a jump curve from $H_-$ to $H_+$. The diffeomorphism $\Psi$
  maps the singular set $S(H'_0)$ onto the singular set $S(H_0)$, thus
  the number of singular points of $S(H_0)$ is also given by
  (\ref{SingularPointsJumpsRank2Class2CompleteSphere}). This finishes
  the proof.
\end{proof}

Now we can deduce the following result for type II jump curves:
\begin{theorem}
  \label{JumpsRank2Class2Complete}
  Let $X$ be the torus equipped with the type II involution and let
  $H_\pm$ be two $G$-maps $X \to S^2 \subset \R^3$ with degree pairs
  $\Pair{d_C^\pm}{d^\pm}$. Then there exists a jump curve
  $H\colon [-1,1] \times X \to \R^3$ from $H_-$ to $H_+$ such that the
  singular set $S(H_0)$ consists of
  \begin{align*}
    |d_C^+ - d_C^-| + d^+ - d^- + d_C^- - d_C^+
  \end{align*}
  isolated points in $X$.
\end{theorem}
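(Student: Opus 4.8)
The plan is to transport the sphere-level jump curve of proposition~\ref{JumpsRank2Class2CompleteSphere} back to the type~II torus along the quotient map $\pi_{S^2}\colon X \to X/A \cong S^2$, just as the homotopy classification for type~II was reduced to the sphere (cf.\ proposition~\ref{Class2CorrespondenceXS2} and remark~\ref{Class2HomotopiesLiftFromSphere}). First I would reduce to type~II normalized maps: by proposition~\ref{Class2Normalization} there are equivariant homotopies from $H_\pm$ to type~II normalized $G$-maps $\widetilde{H}_\pm\colon X \to S^2$, and since these homotopies stay inside $S^2 \subset \R^3$ they pass only through non-singular maps. Hence, given a jump curve from $\widetilde{H}_-$ to $\widetilde{H}_+$ with a prescribed finite singular set at $t=0$, one obtains a jump curve from $H_-$ to $H_+$ with the same singular set by concatenating it on $[-\tfrac12,\tfrac12]$ (after the order-preserving linear reparametrization of $[-1,1]$ onto $[-\tfrac12,\tfrac12]$, which fixes $0$) with the two non-singular homotopies on $[-1,-\tfrac12]$ and on $[\tfrac12,1]$. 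So we may assume $H_\pm$ are type~II normalized.

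By proposition~\ref{Class2CorrespondenceXS2} we then have $H_\pm = f_\pm \circ \pi_{S^2}$ for type~II normalized $G$-maps $f_\pm\colon S^2 \to S^2 \subset \R^3$ on the quotient sphere, with $\mathcal{P}(f_\pm) = \Pair{d_C^\pm}{d^\pm}$. Applying proposition~\ref{JumpsRank2Class2CompleteSphere} to $f_\pm$ yields a jump curve $K\colon [-1,1]\times S^2 \to \R^3$ from $f_-$ to $f_+$ whose singular set $S(K_0)$ consists of exactly
\[
  N \;:=\; |d_C^+ - d_C^-| + d^+ - d^- + d_C^- - d_C^+
\]
isolated points of $S^2$. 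Put $H := K \circ (\id_{[-1,1]} \times \pi_{S^2})\colon [-1,1]\times X \to \R^3$. This $H$ is equivariant, satisfies $H_{\pm 1} = f_\pm \circ \pi_{S^2} = H_\pm$, and has $\Im(H_t) = \Im(K_t)$ for every $t$, so $0 \notin \Im(H_t)$ whenever $t \neq 0$; thus $H$ is a jump curve from $H_-$ to $H_+$ and $S(H_0) = \pi_{S^2}^{-1}(S(K_0))$.

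The one delicate point — and the main obstacle — is to see that $S(H_0)$ is again a discrete set of exactly $N$ points. Since $\pi_{S^2}$ restricts to a homeomorphism $X \setminus A \to S^2 \setminus \{P_0\}$ and collapses the circle $A$ onto the single point $P_0 = \pi_{S^2}(A)$, the preimage $\pi_{S^2}^{-1}(S(K_0))$ is a set of $N$ isolated points precisely when $P_0 \notin S(K_0)$, i.e.\ $K_0(P_0) \neq 0$; otherwise it contains the whole circle $A$. Now $P_0$ lies on the equator $E' \subset S^2$ of $X/A$ (indeed $A \cap C$ is the single $0$-cell $e^0$, so $\pi_{S^2}(e^0) = P_0 \in \pi_{S^2}(C) = E'$), and with the compatible $S^1$-identifications fixed earlier it corresponds, under the diffeomorphism $\Psi$ of remark~\ref{Class1TorusWithSphereIdentification}, to the base point of the fixed circle $C_1 \subset X'$. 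Tracking the construction behind proposition~\ref{JumpsRank2Class2CompleteSphere}: $K$ is obtained by transporting, via $\Psi$, the type~I jump curve of corollary~\ref{JumpsRank2Class2} on $X'$, and that curve is built from normalized loops together with the null-homotopy $\iota_t(z) = \tfrac12\bigl((1-t)z^{-k} + t + 1\bigr)$, for which $\iota_t(1) = 1$ at every time; hence it keeps the base point of $C_1$ mapped to a fixed point of the target equator $E$, in particular away from $0$, so $K_0(P_0) \neq 0$. Therefore $S(H_0)$ consists of exactly $N$ isolated points of $X$.

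If one prefers not to inspect this explicit model, the same conclusion follows by first composing $K$ with a time-independent equivariant rotation of $S^2$ about its polar axis (such a rotation commutes with the reflection $T$ and does not change the number $N$) chosen so that the finitely many equatorial singular points of $K_0$ avoid $P_0$, after adjusting the normalization of the $f_\pm$ correspondingly. Either way, the transport steps are formal; only the behaviour of the singular set near the collapsed point $P_0$ requires genuine care.
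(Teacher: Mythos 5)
Your proof follows the paper's argument exactly: reduce to type~II normalized maps, push down to $S^2 \cong X/A$ via proposition~\ref{Class2CorrespondenceXS2}, apply proposition~\ref{JumpsRank2Class2CompleteSphere} on the quotient sphere, and pull the resulting jump curve back along $\pi_{S^2}$. Your extra verification that the singular points of $K_0$ avoid the collapsed point $P_0$ (so that $\pi_{S^2}^{-1}(S(K_0))$ is genuinely a set of $N$ isolated points rather than containing the whole circle $A$) addresses a point the paper's proof passes over silently, and is a worthwhile refinement rather than a deviation.
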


\begin{proof}
  After a $G$-homotopy we can assume that both maps $H_\pm$ are type
  II normalized. Therefore they push down to maps $H'_\pm$ on the
  quotient $X/A$, which can be equivariantly identified with
  $S^2$. Thus $H'_\pm$ can be regarded as type II normalized maps
  $S^2 \to S^2$ and we have the identity
  $H_\pm = H'_\pm \circ \smash{\pi_{S^2}}$. By
  remark~\ref{Class2CorrespondenceXS2} the degree pairs of the maps
  $H'_\pm$ are the same as those of the original maps $H_\pm$. Now
  proposition~\ref{JumpsRank2Class2CompleteSphere} guarantees the
  existence of a jump curve $H'\colon I \times S^2 \to \R^3$ from
  $H'_-$ to $H'_+$ with
  \begin{align*}
    |d_C^+ - d_C^-| + d^+ - d^- + d_C^- - d_C^+
  \end{align*}
  singular points on $S^2$. Then the composition
  $H'_t \circ \smash{\pi_{S^2}}$ defines a jump curve from $H_-$ to
  $H_+$ with the same number of singular points.
\end{proof}

As in the case of the type I involution we use the isomorphism $\R^3
\cong i\mathfrak{su}_2$ to note the following reformulation:
\begin{corollary}
  \label{JumpsRank2Class2CompleteCorollary}
  Let $X$ be the torus equipped with the type II involution and let
  $H_\pm$ be two $G$-maps $X \to i\mathfrak{su}_2 \subset
  \mathcal{H}_{(1,1)}$ with degree pairs $\Pair{d_C^\pm}{d^\pm}$. Then
  there exists a jump curve
  \begin{align*}
    H\colon [-1,1] \times X \to i\mathfrak{su}_2 \subset \mathcal{H}_{(1,1)}
  \end{align*}
  from $H_-$ to $H_+$ such that the singular set $S(H_0)$ consists of
  \begin{align*}
    |d_C^+ - d_C^-| + d^+ - d^- + d_C^- - d_C^+
  \end{align*}
  isolated points in $X$.
\end{corollary}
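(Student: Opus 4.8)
This corollary is a direct transcription of theorem~\ref{JumpsRank2Class2Complete} across the linear isomorphism $\Psi\colon i\mathfrak{su}_2 \xrightarrow{\;\sim\;} \R^3$ of (\ref{JumpDiscussionIsomToR3}); the only work is bookkeeping. Recall from the discussion preceding remark~\ref{Rank2TraceZeroReduction} that $\Psi$ is linear, sends $0$ to $0$, and intertwines the $T$-action on $i\mathfrak{su}_2$ (complex conjugation) with the reflection of $\R^3$ along the $x,y$-plane; moreover a trace-zero hermitian $2\times 2$ matrix is singular iff it is the zero matrix, so $i\mathfrak{su}_2\smallsetminus\{0\}\subset\mathcal{H}_{(1,1)}$ and $\Psi$ restricts to a $G$-equivariant diffeomorphism $i\mathfrak{su}_2\smallsetminus\{0\}\xrightarrow{\;\sim\;}\R^3\smallsetminus\{0\}$ carrying the $U(2)$-orbit of $\I{1}{1}$ onto the unit sphere $S^2$.

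The plan is the following. Given the two $G$-maps $H_\pm\colon X\to i\mathfrak{su}_2\smallsetminus\{0\}$, form $\Psi\circ H_\pm\colon X\to\R^3\smallsetminus\{0\}$ and radially normalize, setting $\overline H_\pm:=(\Psi\circ H_\pm)/\|\Psi\circ H_\pm\|\colon X\to S^2\subset\R^3$. Since the reflection of $\R^3$ preserves norms, the straight-line radial retraction from $\Psi\circ H_\pm$ to $\overline H_\pm$ is a $G$-homotopy inside $\R^3\smallsetminus\{0\}$; it is exactly the $\R^3$-side of the second stage of the equivariant strong deformation retract of $\mathcal{H}_{(1,1)}$ onto $S^2$ through which the degree pair on $\mathcal{M}_G(X,\mathcal{H}_{(1,1)})$ was defined, so by remark~\ref{DegreePairInvariant} the maps $\overline H_\pm$ have the same degree pairs $\Pair{d_C^\pm}{d^\pm}$ as $H_\pm$. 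Now apply theorem~\ref{JumpsRank2Class2Complete} to $\overline H_\pm$ to obtain a jump curve $\overline H\colon[-1,1]\times X\to\R^3$ from $\overline H_-$ to $\overline H_+$ whose singular set $S(\overline H_0)=\overline H_0^{-1}(\{0\})$ consists of $|d_C^+-d_C^-|+d^+-d^-+d_C^--d_C^+$ isolated points. Pulling back, $\Psi^{-1}\circ\overline H\colon[-1,1]\times X\to i\mathfrak{su}_2$ has image in $i\mathfrak{su}_2\smallsetminus\{0\}\subset\mathcal{H}_{(1,1)}$ for $t\neq 0$ (because $\Psi$ is a linear isomorphism with $\Psi(0)=0$), and its singular set at $t=0$ is $\Psi^{-1}$ of $\overline H_0^{-1}(\{0\})$, hence again the stated finite set of isolated points. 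Finally prepend and append the non-singular equivariant homotopies inside $i\mathfrak{su}_2\smallsetminus\{0\}$ joining $H_-$ to $\Psi^{-1}\circ\overline H_-$ and $\Psi^{-1}\circ\overline H_+$ to $H_+$ (the images under $\Psi^{-1}$ of the radial retractions above), and reparametrize $[-1,1]$ by an orientation-preserving homeomorphism fixing $0$ so that the degenerate time stays at $t=0$. The resulting $H\colon[-1,1]\times X\to\cl(\mathcal{H}_{(1,1)})$ is a jump curve from $H_-$ to $H_+$; since the appended homotopies are non-singular, $S(H_0)$ still consists of exactly $|d_C^+-d_C^-|+d^+-d^-+d_C^--d_C^+$ isolated points.

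There is no genuinely hard step here — the whole argument is a translation of theorem~\ref{JumpsRank2Class2Complete} under $\Psi$. The only point demanding a little care is the passage between the image space $S^2\subset\R^3$ of that theorem and the image space $i\mathfrak{su}_2\smallsetminus\{0\}$ of the corollary: one must check that the radial normalization and its inverse are $G$-equivariant and leave the degree pair unchanged, and that gluing the normalization homotopies onto the jump curve keeps the degeneration confined to the single time $t=0$, which is what the reparametrization ensures.
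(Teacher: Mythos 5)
Your proposal is correct and follows exactly the route the paper intends: the corollary is stated there as a direct reformulation of theorem~\ref{JumpsRank2Class2Complete} under the linear $G$-equivariant isomorphism $\R^3 \cong i\mathfrak{su}_2$, which is precisely your translation argument. The extra care you take with the radial normalization homotopies and the reparametrization keeping the degeneration at $t=0$ is a harmless (and slightly more complete) elaboration of the same idea.
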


\section{Maps into $\cl(\mathcal{H}_n)$}

In this section we assume $n > 2$. Recall that $\mathcal{H}_{(p,q)}$
contains the orbit $U(n).\I{p}{q}$ as an equivariant strong
deformation retract.  This orbit can be equivariantly identified with
the Grassmann manifold $\Gr_p(\C^n)$. In order to employ the method
described in the previous section in the higher dimensional setting,
we embed the space $i\mathfrak{su}_2$ into $\cl(\mathcal{H}_{(p,q)})$
such that $i\mathfrak{su}_2\setminus\{0\} \subset U(n).\I{p}{q}$ and
the origin in $i\mathfrak{su}_2$ defines a singular matrix in the
boundary of this orbit. As we have seen earlier, the topology on the
mapping space is only non-trivial in the case of a
mixed-signature. Thus we can assume that $0 < p,q < n$. The space
$i\mathfrak{su}_2$ can be equivariantly embedded into
$\cl(\mathcal{H}_{(p,q)})$ as an affine subspace via
\begin{align}
  \label{isu2Embedding}
  \iota\colon i\mathfrak{su}_2 &\hookrightarrow \cl(\mathcal{H}_{(p,q)})\\
  \xi &\mapsto \begin{pmatrix}
    \I{p-1} & 0 & 0\\
    0 & \xi & 0 \\
    0 & 0 & -\I{q-1}
  \end{pmatrix}\nonumber
\end{align}
Similarly, the group $U(2)$ can be embedded into $U(n)$ via
\begin{align*}
  U(2) &\hookrightarrow U(n)\\
  U &\mapsto \begin{pmatrix}
    \I{p-1} & 0 & 0\\
    0 & U & 0 \\
    0 & 0 & -\I{q-1}
  \end{pmatrix}.
\end{align*}
Since $U(n)$ acts on $\mathcal{H}_{(p,q)}$ via conjugation, so does
$U(2)$, by regarding each element of $U(2)$ as being embedded in
$U(n)$. The $U(2)$-orbit of $\I{p}{q}$ is the same as
$\iota(U(2).\I{1}{1})$. It is contained as a subspace in the orbit
$U(n).\I{p}{q}$ and corresponds -- under the isomorphism $\Gr_p(\C^n)
\cong U(n).\I{p}{q}$ -- to the Schubert variety $\mathcal{S} \cong
\Proj_1$ (see the remarks about the Schubert variety $\mathcal{S}$ on
p.~\pageref{SchubertVarietyDiscussion}). This follows from the fact
that $U(2)$ acts transitively on
\begin{align*}
  \mathcal{S} = \left\{E \in \Gr_p(\C^n)\colon \C^{p-1} \subset E \subset \C^{p+1}\right\}.
\end{align*}
Under the above embedding $i\mathfrak{su}_2 \hookrightarrow
\cl(\mathcal{H}_{(p,q)})$, the origin of $i\mathfrak{su}_2$
corresponds to a singular matrix, which is contained in the boundary
$\partial\mathcal{H}_{(p,q)}$.

Since $\mathcal{H}_{(p,q)}$ has $\Gr_p(\C^n)$ as equivariant
deformation retract, these two spaces and also their real parts have
the same fundamental group:
\begin{align*}
  \pi_1\left(\left(\mathcal{H}_{(p,q)}\right)_\R\right) \cong \pi_1\left(\left(\Gr_p\left(\C^n\right)\right)_\R\right) \cong C_2.
\end{align*}
We write $C_2$ additively as the group $\Z_2$ and identify elements of
$\pi_1((\mathcal{H}_{(p,q)})_\R)$ with $\Z_2$. Thus we can regard the
restrictions of maps $X \to \mathcal{H}_{(p,q)}$ to the boundary
circle(s) in $X$ as defining an element in
$\pi_1((\mathcal{H}_{(p,q)})_\R) \cong \{0,1\}$.

\begin{theorem}
  \label{JumpsRankN}
  Assume $n = p + q > 2$. Let $X$ be the torus equipped with the type
  I (resp. type II) involution and let $H_\pm$ be two $G$-maps $X \to
  \mathcal{H}_{(p,q)}$ with the degree triples
  $\smash{\Triple{m_0^\pm}{d^\pm}{m_1^\pm}}$ resp. the degree pairs
  $\smash{\Pair{m_C^\pm}{d^\pm}}$.  Then there exists a jump curve
  \begin{align*}
    H\colon [-1,1] \times X \to \cl(\mathcal{H}_{(p,q)})
  \end{align*}
  from $H_-$ to $H_+$ such that the singular set $S(H_0)$ consists of
  \begin{align*}
    & |m_0^+ - m_0^-| + |m_1^+ - m_1^-| + d^+ - (m_0^+ + m_1^+) - \left[d^- - (m_0^- + m_1^-)\right]\\
    \text{resp.}\; & |m_C^+ - m_C^-| + d^+ - d^- + m_C^- - m_C^+
  \end{align*}
  isolated points.
\end{theorem}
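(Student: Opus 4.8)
The plan is to reduce Theorem~\ref{JumpsRankN} to the rank-$2$ jump curve constructions (Corollary~\ref{JumpsRank2Class1Corollary} for the type~I involution and Corollary~\ref{JumpsRank2Class2CompleteCorollary} for the type~II involution) by means of the affine embedding $\iota\colon i\mathfrak{su}_2 \hookrightarrow \cl(\mathcal{H}_{(p,q)})$ from (\ref{isu2Embedding}), in exactly the same spirit in which the classification theorem~\ref{HamiltonianClassificationRankN} was reduced to its rank-$2$ counterpart. Recall from the discussion around (\ref{isu2Embedding}) that $\iota$ carries $U(2).\I{1}{1} \subset i\mathfrak{su}_2\setminus\{0\}$ onto the subspace of $U(n).\I{p}{q}$ which corresponds, under $U(n).\I{p}{q}\cong\Gr_p(\C^n)$, to the Schubert variety $\mathcal{S}\cong\Proj_1$, and that $\iota$ sends the origin of $i\mathfrak{su}_2$ to a singular matrix in $\partial\mathcal{H}_{(p,q)}$; more precisely, since $\iota(\xi)$ is block-diagonal with blocks $\I{p-1}$, $\xi$, $-\I{q-1}$, one has $\det\iota(\xi)=(-1)^{q-1}\det\xi$, so $\iota(\xi)$ is singular iff $\xi=0$, and $\iota(\xi)$ has signature $(p,q)$ for every nonzero $\xi\in i\mathfrak{su}_2$.

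First I would fix convenient representatives. Since $H_\pm$ exist, their degree invariants satisfy the parity condition, so by Theorem~\ref{Classification1} (type~I) resp.\ Theorem~\ref{Classification2} (type~II) there are $G$-maps $H''_\pm\colon X\to S^2$ with $\mathcal{T}(H''_\pm)=\Triple{m_0^\pm}{d^\pm}{m_1^\pm}$ resp.\ $\mathcal{P}(H''_\pm)=\Pair{m_C^\pm}{d^\pm}$. Identifying $S^2$ with the unit sphere in $\R^3\cong i\mathfrak{su}_2$ via (\ref{EmbeddingU2OrbitAsUnitSphere}), i.e.\ with $U(2).\I{1}{1}$, the composite $\iota\circ H''_\pm$ maps $X$ into $\mathcal{S}\subset U(n).\I{p}{q}\subset\mathcal{H}_{(p,q)}$. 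By the analysis of degree invariants under the embedding $\mathcal{S}\hookrightarrow\Gr_p(\C^n)$ that underlies Proposition~\ref{LemmaReductionOfGrToP1}, the map $\iota\circ H''_\pm\colon X\to\mathcal{H}_{(p,q)}$ still has degree triple $\Triple{m_0^\pm}{d^\pm}{m_1^\pm}$ resp.\ degree pair $\Pair{m_C^\pm}{d^\pm}$. Hence, by Theorem~\ref{HamiltonianClassificationRankN}, $\iota\circ H''_\pm$ is $G$-homotopic to $H_\pm$; fix such homotopies $G^\pm\colon I\times X\to\mathcal{H}_{(p,q)}$. Crucially, these run entirely inside $\mathcal{H}_{(p,q)}\subset\mathcal{H}_n^*$, so every intermediate map is \emph{non-singular}.

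Next I would apply the rank-$2$ construction. Since the fixed point signatures $m_\bullet^\pm\in\{0,1\}$ may be taken as the rank-$2$ fixed point degrees, Corollary~\ref{JumpsRank2Class1Corollary} (type~I) resp.\ Corollary~\ref{JumpsRank2Class2CompleteCorollary} (type~II) yields a jump curve $\bar H\colon[-1,1]\times X\to i\mathfrak{su}_2$ from $H''_-$ to $H''_+$ whose singular set $S(\bar H_0)=\bar H_0^{-1}(\{0\})$ consists of exactly
\begin{align*}
  |m_0^+ - m_0^-| + |m_1^+ - m_1^-| + d^+ - (m_0^+ + m_1^+) - \left[d^- - (m_0^- + m_1^-)\right]
\end{align*}
resp.
\begin{align*}
  |m_C^+ - m_C^-| + d^+ - d^- + m_C^- - m_C^+
\end{align*}
isolated points. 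Composing with $\iota$ gives a $G$-map $\iota\circ\bar H\colon[-1,1]\times X\to\cl(\mathcal{H}_{(p,q)})$; by the determinant identity above, $\iota\circ\bar H_t$ is non-singular for $t\neq0$ and $S\big((\iota\circ\bar H)_0\big)=S(\bar H_0)$, a discrete set of the stated cardinality.

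Finally I would concatenate. Reparametrising in $t$, define $H\colon[-1,1]\times X\to\cl(\mathcal{H}_{(p,q)})$ to equal $G^-$ (run from $H_-$ to $\iota\circ H''_-$) on $[-1,-\frac{2}{3}]$, an affine time-rescaling of $\iota\circ\bar H$ keeping its degeneracy at $t=0$ on $[-\frac{2}{3},\frac{2}{3}]$, and $G^+$ (run from $\iota\circ H''_+$ to $H_+$) on $[\frac{2}{3},1]$. The three pieces agree on the overlaps $t=\pm\frac{2}{3}$, the map is equivariant with $G$ acting trivially on $[-1,1]$, and because $G^\pm$ are non-singular the only degenerate time is $t=0$, where $S(H_0)$ is the discrete set produced above; this is the desired jump curve. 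The only non-routine point is the bookkeeping that the degree invariants survive the retraction $\mathcal{H}_{(p,q)}\rightsquigarrow\mathcal{S}\cong\Proj_1\cong S^2$ — but this is precisely Proposition~\ref{LemmaReductionOfGrToP1}, so no new obstacle arises, and the type~I and type~II arguments are formally identical apart from which rank-$2$ corollary is invoked.
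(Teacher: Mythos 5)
Your proposal is correct and follows essentially the same route as the paper's proof: reduce to the Schubert variety via Proposition~\ref{LemmaReductionOfGrToP1}, invoke the rank-$2$ jump corollaries inside $i\mathfrak{su}_2$, and push forward along the embedding $\iota$. The only difference is cosmetic — you explicitly concatenate the non-singular connecting $G$-homotopies $G^\pm$ in time, whereas the paper absorbs this into the phrase ``we can assume that the maps $H_\pm$ have their images contained in $\mathcal{S}$''; your version is, if anything, slightly more careful about the requirement $H_{\pm 1}=H_\pm$ in the definition of a jump curve.
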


\begin{proof}
  Let $H_\pm$ be $G$-maps $X \to \mathcal{H}_{(p,q)}$ with degree
  triples $\Triple{m_0^\pm}{d^\pm}{m_1^\pm}$ resp. degree pairs
  $\Pair{m_C^\pm}{d^\pm}$. By applying
  proposition~\ref{LemmaReductionOfGrToP1} we can assume that the maps
  $H_\pm$ have their images contained in the Schubert variety
  $\mathcal{S} \subset \Gr_p(\C^n)$. The Schubert variety is the
  $U(2)$-orbit of $\I{p}{q}$, which is $\iota(U(2).\I{1}{1})$. That
  is, it is contained in the image of the embedding
  \begin{align}
    \label{TopJumpsGrIotaEmbedding}
    \iota\colon i\mathfrak{su}_2 \hookrightarrow \cl(\mathcal{H}_{(p,q)})
  \end{align}
  from (\ref{isu2Embedding}). This means that we can assume that the
  maps $H_\pm$ are of the form
  \begin{align*}
    H_\pm = \iota \circ H'_\pm\colon X \to \cl(\mathcal{H}_{(p,q)})
  \end{align*}
  where the $H'_\pm$ are $G$-maps
  \begin{align*}
    H'_\pm\colon X \to U(2).\I{1}{1} \subset i\mathfrak{su}_2
  \end{align*}
  By construction, their degree triples resp. their degree pairs of
  $H'_\pm$ agree with those of $H_\pm$.
  Now corollary~\ref{JumpsRank2Class1Corollary} (type I)
  resp. corollary~\ref{JumpsRank2Class2CompleteCorollary} (type II)
  implies the existence of a jump curve
  \begin{align*}
    H'\colon [-1,1] \times X \to i\mathfrak{su}_2
  \end{align*}
  from $H'_-$ to $H'_+$ such that the singular set $S(H_0)$ consists
  of
  \begin{align}
    \label{TopJumpsGrSingularPoints}
    & |m_0^+ - m_0^-| + |m_1^+ - m_1^-| + d^+ - (m_0^+ + m_1^+) - \left[d^- - (m_0^- + m_1^-)\right]\\
      \text{resp.}\; & |m_C^+ - m_C^-| + d^+ - d^- + m_C^- - m_C^+\nonumber
  \end{align}
  isolated points in $X$. Composing $H'_t$ with $\iota$ yields a jump
  curve
  \begin{align*}
    H = \iota \circ H'\colon X \to \cl(\mathcal{H}_{(p,q)})
  \end{align*}
  from $H_-$ to $H_+$ whose number of isolated points is the same as
  that of $H'_t$, namely that given in
  (\ref{TopJumpsGrSingularPoints}).
\end{proof}

\section{An Example from Physics}

In this section we present an example jump curve coming from physics
(see e.\,g. \cite[p.~11]{PrimerTopIns}). For this we regard the torus
$X$ as $\R^2/\Lambda$, where
\begin{align*}
  \Lambda = \LatGen{
    \begin{pmatrix}
      2\pi \\
      0
    \end{pmatrix}}{
    \begin{pmatrix}
      0 \\
      2\pi
    \end{pmatrix}}.
\end{align*}
The orientation on $X$ is the orientation inherited from
$\R^2$. Employing physics notation for the coordinates on the torus
phase space, we define the jump curve as
\begin{align}
  H_t(q,p) = \begin{pmatrix}
    t - \cos q - \cos p & \sin q - i \sin p \\
    \sin q + i \sin p & -(t - \cos q - \cos p)
  \end{pmatrix}.
\end{align}
Each map $H_t$ is equivariant with respect to the type I
involution. This curve differs from the jump curves we have
constructed earlier in that it depends on a continuous parameter $t
\in \R$ with multiple jumps taking place as $t$ varies. These occur
precisely at $t=-2$, $t=0$ and at $t=2$. The associated curve of scaled
maps into $S^2 \subset \R^3$ is
\begin{align*}
  \widetilde{H}_t(q,p) =
  c_t(q,p)
  \begin{pmatrix}
    t - \cos q - \cos p\\
    \phantom{-}\sin q\\
    -\sin p
  \end{pmatrix},
\end{align*}
where
\begin{align*}
  c_t(q,p) = \frac{1}{\sqrt{(t - \cos q \cos p)^2 + (\sin q)^2 + (\sin p)^2}}.
\end{align*}
In the following we compute the degree triples $\mathcal{T}(H_t)$. For
this it is convenient to decompose $\smash{\widetilde{H}}_t$ as
\begin{align*}
  \widetilde{H}_t(q,p) =
  c_t(q,p)\left[
  \begin{pmatrix}
    t - \cos p\\
    0\\
    -\sin p
  \end{pmatrix} + \begin{pmatrix}
    - \cos q\\
    \sin q\\
    0
  \end{pmatrix}\right].
\end{align*}
Now we can regard $\smash{\widetilde{H}}_t$ as a family of circles in
the $x,y$-plane whose centers in $\R^3$ varies with $t$ and $p$. It
follows that $\widetilde{H}_t$ is not surjective (as a map to $S^2$)
for $|t| \in (2,\infty)$, therefore we have
$\smash{\deg \widetilde{H}_t} = 0$ for these $t$. Furthermore, when
$t$ is in a neighborhood of the origin and transitions from a negative
real number to a positive real number, then the total degree of
$\smash{\widetilde{H}}_t$ flips its sign. Hence it suffices to compute
the total degree of e.\,g. $\smash{\widetilde{H}}_1$. This can be done
by counting preimage points, for example in the fiber
$\smash{\widetilde{H}}_1^{-1}(y_0)$ where $y_0 = \ltrans{(0,1,0)}$. A
computation shows that the $y_0$-fiber consists only of the point
$(q_0,p_0) = (\text{\textonehalf}\pi,0)$.  Using the orientation of
$S^2$ induced by an outer normal vector field one can find that the
orientation sign of the map $\smash{\widetilde{H}}_1$ at $(q_0,p_0)$
is positive. This then implies that $\deg \smash{\widetilde{H}}_t = 1$
for $t \in (0,2)$ and $\deg \smash{\widetilde{H}}_t = -1$ for
$t \in (-2,0)$.

Regarding the fixed point degrees: For $p=0$ resp. $p=\pi$ the map
$\smash{\widetilde{H}}_t(p,\cdot)$ can be written as
\begin{align}
  \label{ExampleFixpointMapP=0}
  & q \mapsto
  c_t(q,0)\left[
  \begin{pmatrix}
    t - 1\\
    0 \\
    0
  \end{pmatrix} +
  \begin{pmatrix}
    -\cos q \\
    \phantom{-}\sin q \\
    0
  \end{pmatrix}\right]\\
\text{resp.}\;\;
& q \mapsto
  c_t(q,\pi)\left[
  \begin{pmatrix}
    t + 1\\
    0 \\
    0
  \end{pmatrix} +
  \begin{pmatrix}
    -\cos q \\
    \phantom{-}\sin q \\
    0
  \end{pmatrix}\right]\nonumber
\end{align}
Using the orientation on the equator $E \subset S^2$ as defined on
p.~\pageref{DegreeOneMapOnEquator} it follows that the fixed point
degree for $p=0$ is $0$ for $t \in \R\smallsetminus[0,2]$, and $-1$
for $t \in (0,2)$. The fixed point degree for $p=\pi$ is $0$ for
$t \in \R\smallsetminus[-2,0]$ and $-1$ for $t \in (-2,0)$. To
summarize the above:
\begin{remark}
  The following table lists the degree triples for $H_t$ depending on
  $t$, where $t$ is assumed to be in one of the $t$-intervals such
  that $H_t$ is non-singular:
  \begin{center}
    \footnotesize
    \begin{tabular}[h]{ll}
      \toprule
      \textbf{$t$-interval} & \textbf{degree triple $\mathcal{T}(H_t)$} \\
      \midrule
      $(-\infty,-2)$ & $\Triple{0}{0}{0}$ \\
      $(-2,0)$ & $\Triple{0}{-1}{-1}$ \\
      $(0,2)$ & $\Triple{-1}{1}{0}$ \\
      $(2,\infty)$ & $\Triple{0}{0}{0}$ \\
      \bottomrule
    \end{tabular}
  \end{center}
\end{remark}

Now we consider a generalization of the above map $H_t$. We define
\begin{align}
  H_t^m(q,p) = c_t(q,mp) \begin{pmatrix}
    t - \cos q - \cos(mp) & \sin q - i \sin(mp) \\
    \sin q + i \sin (mp) & -(t - \cos q - \cos(mp))
  \end{pmatrix},
\end{align}
with $m$ a positive integer. The maps $H_t^m$ are still type I
equivariant. They can be expressed as the composition
$H_t \circ p_m(q,p)$, where $p_m$ is the following $m:1$-cover of $X$:
\begin{align*}
  p_m\colon X &\to X\\
  (q,p) &\mapsto (q,mp).
\end{align*}
In particular, $p_m$ has degree $m$, which implies that $\deg H_t^m =
m\deg H_t$. Let us now compute the fixed point degrees of $H_t^m$. These
depend on the parity of $m$. If $m$ is even, then the
$2\pi$-periodicity of the trigonometric functions implies that the
fixed point degrees for $p=0$ and $p=\pi$ must be the same. They are
given by the map (\ref{ExampleFixpointMapP=0}). Therefore, the
fixed point degrees for even $m$ must be $(0,0)$ for $t \in
\R\smallsetminus[0,2]$ and $(-1,-1)$ otherwise. The periodicity of the
trigonometric functions implies that fixed point degrees for odd $m$ are
the same as those for $m=1$. We obtain the following result:
\begin{remark}
  The following table lists the degree triples for $H_t^m$ for even
  $m$ varying with $t$ such that $H_t^m$ is non-singular:
  \begin{center}
    \footnotesize
    \begin{tabular}[h]{ll}
      \toprule
      \textbf{$t$-interval} & \textbf{degree triple $\mathcal{T}(H^m_t)$} \\
      \midrule
      $(-\infty,-2)$ & $\Triple{0}{0}{0}$ \\
      $(-2,0)$ & $\Triple{0}{-m}{0}$ \\
      $(0,2)$ & $\Triple{-1}{m}{-1}$ \\
      $(2,\infty)$ & $\Triple{0}{0}{0}$ \\
      \bottomrule
    \end{tabular}
  \end{center}
\end{remark}
In particular, this example shows how the degree triples vary with $t$
in such a way that the condition $d \equiv d_0 + d_1 \mod 2$ is always
satisfied.


\appendix
\chapter{Appendix}
\label{ChapterAppendix}

This chapter lists some standard material, which is included for the
convenience of the reader.

\section{Topology}

Taken from \cite{Hatcher}:
\begin{proposition}
  \label{HEP} (Homotopy Extension Property) Let $X$ be a CW complex
  and $A \subset X$ a CW subcomplex. Then the CW pair $(X,A)$ has the
  homotopy extension property (HEP) for all spaces.
\end{proposition}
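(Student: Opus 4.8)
The plan is to use the standard characterization: a pair $(X,A)$ has the homotopy extension property with respect to all spaces if and only if $X \times \{0\} \cup A \times I$ is a retract of $X \times I$. Indeed, given a map $f \colon X \to Y$ and a homotopy $h \colon A \times I \to Y$ with $h(\cdot,0) = f|_A$, the data of $f$ and $h$ together define a map on $X \times \{0\} \cup A \times I$; precomposing with a retraction $r \colon X \times I \to X \times \{0\} \cup A \times I$ yields the desired extension $X \times I \to Y$. So the whole task reduces to constructing such a retraction $r$.

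First I would treat the model case of a single cell, namely the pair $(D^n, \partial D^n)$. Here one constructs a retraction $D^n \times I \to D^n \times \{0\} \cup \partial D^n \times I$ explicitly by radial projection from the point $(0,2) \in D^n \times \R$ (thinking of $I = [0,1]$): each point of $D^n \times I$ is sent along the ray from $(0,2)$ through it to where that ray first meets $D^n \times \{0\} \cup \partial D^n \times I$. This is a continuous (indeed piecewise-smooth) deformation retraction, and it restricts to the identity on the target subspace.

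Next I would build the retraction for $(X,A)$ by induction over the skeleta, working relative to $A$. Suppose $r$ has been constructed on $(X^{k-1} \cup A) \times I$ (the base case $k=0$ being $(X^0 \cup A) \times I$, where the retraction is obvious since each $0$-cell not in $A$ contributes a factor $\{pt\} \times I$ that retracts to $\{pt\} \times \{0\}$). For each $k$-cell $e^k_\alpha$ of $X$ not contained in $A$, with characteristic map $\Phi_\alpha \colon D^k \to X$, I use the model retraction on $D^k \times I$ composed with $\Phi_\alpha \times \id$ to extend $r$ over $e^k_\alpha \times I$; this is consistent with the already-defined $r$ on $\partial e^k_\alpha \times I \subset (X^{k-1}\cup A)\times I$ because the model retraction is the identity on $\partial D^k \times I \cup D^k \times\{0\}$. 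Doing this simultaneously for all $k$-cells extends $r$ over $(X^k \cup A)\times I$. Passing to the union over all $k$ defines $r$ on all of $X \times I$.

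The step I expect to be the main (though routine) obstacle is verifying continuity of the assembled retraction $r$. This rests on two facts: that $X \times I$ carries the CW topology (true since $I$ is compact, so $X \times I$ is again a CW complex with cells the products of cells of $X$ with $\{0\},\{1\},$ and the open $1$-cell of $I$), and that a map out of a CW complex is continuous iff its restriction to each skeleton — equivalently to the closure of each cell — is continuous. Since on each $e^k_\alpha \times I$ the map $r$ agrees with a continuous map built from the model case, and these agree on overlaps by construction, continuity on $X \times I$ follows. That $r$ restricts to the identity on $X \times \{0\} \cup A \times I$ is immediate from the model case being the identity there and from $r$ being built relative to $A$. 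This establishes the HEP.
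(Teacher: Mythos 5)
Your proof is correct and is essentially the standard argument that the paper simply cites (Hatcher, p.~15): the radial retraction of $D^n \times I$ onto $D^n \times \{0\} \cup \partial D^n \times I$, assembled cell-by-cell over the skeleta, with continuity handled via the weak topology on $X \times I$. No substantive difference from the referenced proof.
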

\begin{proof}
  See \cite[p.\,15]{Hatcher}.
\end{proof}

\begin{lemma}
  \label{LemmaCurvesNullHomotopic}
  Let $X$ be a topological space and $f,g\colon I \to X$ be two curves
  in $X$ with $f(0) = g(0) = p$ and $f(1) = g(1) = q$. Then $g^{-1}
  \ast f$ is null-homotopic if and only if $f$ and $g$ are homotopic
  (with fixed endpoints).
\end{lemma}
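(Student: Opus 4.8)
The plan is to reduce the statement to the standard algebra of paths up to homotopy relative to endpoints, where ``null-homotopic'' for the loop $g^{-1}\ast f$ (based at $p$) is understood to mean homotopic to the constant loop $c_p$ through loops fixing $p$. First I would record the elementary lemmas of this algebra and sketch their (routine, reparametrization-type) proofs: (a) concatenation of paths descends to a well-defined operation on homotopy classes rel $\{0,1\}$; (b) it is associative up to such homotopy; (c) for any path $\gamma$ from $a$ to $b$ one has $\gamma \ast c_a \simeq \gamma \simeq c_b \ast \gamma$, where $\simeq$ denotes homotopy rel endpoints; and (d) $\gamma^{-1}\ast\gamma \simeq c_a$ and $\gamma\ast\gamma^{-1}\simeq c_b$, the latter via the usual ``shrinking'' homotopy. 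With the paper's convention $\beta\ast\alpha =$ ``first $\alpha$, then $\beta$'', note that $g^{-1}\ast f$ is indeed a loop based at $p$.

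For the forward implication, suppose $f\simeq g$ rel endpoints. Passing to homotopy classes and using (a) and (d),
\[
[g^{-1}\ast f] = [g^{-1}]\ast[f] = [g^{-1}]\ast[g] = [g^{-1}\ast g] = [c_p],
\]
so $g^{-1}\ast f$ is null-homotopic rel endpoints.

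For the reverse implication, suppose $g^{-1}\ast f \simeq c_p$ rel endpoints. Then, using (a)--(d) in turn,
\[
f \;\simeq\; c_q\ast f \;\simeq\; (g\ast g^{-1})\ast f \;\simeq\; g\ast(g^{-1}\ast f) \;\simeq\; g\ast c_p \;\simeq\; g,
\]
which is precisely the assertion that $f$ and $g$ are homotopic with fixed endpoints.

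I do not expect a genuine obstacle here: the content is entirely the standard fundamental-groupoid bookkeeping. The only points requiring care are keeping the concatenation convention straight (so that the loops really are based at the intended point) and verifying that every intermediate homotopy in the chain above is relative to the endpoints, so that the resulting homotopy from $f$ to $g$ fixes both $p$ and $q$; if one prefers to avoid the groupoid formalism, one can instead exhibit a single explicit homotopy $F\colon I\times I\to X$ obtained by concatenating and reparametrizing the homotopies from (b)--(d), but the algebraic route is cleaner.
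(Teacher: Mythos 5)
Your proof is correct. The forward implication is essentially the same as the paper's (compose the homotopy $f\simeq g$ with $g^{-1}$ to get $g^{-1}\ast f\simeq g^{-1}\ast g$, then shrink). For the reverse implication, however, you take a genuinely different route: you run the standard fundamental-groupoid calculus
\[
f \simeq c_q\ast f \simeq (g\ast g^{-1})\ast f \simeq g\ast(g^{-1}\ast f) \simeq g\ast c_p \simeq g,
\]
relying on the unit, inverse and associativity lemmas (a)--(d), each of which is a routine reparametrization homotopy rel endpoints. The paper instead gives a single explicit geometric construction: it takes the null-homotopy $H\colon I\times I\to X$ of $g^{-1}\ast f$ (constant at $p$ on the left, top and right edges of the square) and precomposes with a one-parameter family of embedded arcs $\iota_t\colon I\to I\times I$ that sweep from the bottom-left half-edge to the bottom-right half-edge while keeping one endpoint at the midpoint of the bottom edge and pushing the other around the boundary; composing with $H$ yields a homotopy from $f$ to $g$ rel endpoints in one step. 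The trade-off is the usual one: your algebraic route is cleaner and hides the reparametrizations inside standard lemmas, while the paper's sweeping argument is self-contained and produces the homotopy explicitly without invoking associativity or the inverse law. Both are complete; the only care needed in your version, as you note, is that every intermediate homotopy is rel $\{0,1\}$, which is guaranteed since (a)--(d) are all homotopies rel endpoints.
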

\begin{proof}
  The one direction is trivial: When $f$ and $g$ are homotopic, then
  we can define a null-homotopy $H\colon I \times I \to X$ as follows:
  During $0 \leq t \leq \text{\textonehalf}$ use a homotopy from $f$
  to $g$ to build a homotopy from $g^{-1} \ast f$ to $g^{-1} \ast
  g$. Then, during $\text{\textonehalf} \leq t \leq 1$ we shrink $g^{-1} \ast
  g$ the constant curve at $p$.

  Now assume that we are given a homotopy from $\gamma_1^{-1} \ast
  \gamma_0$ to the constant curve at $p$. This is a map $H\colon I
  \times I \to X$. On the left, top and right boundary of the square
  $I$ the map $H$ takes on the value $p$, on the bottom boundary,
  i.\,e. $H_0$, this is the curve $\gamma^{-1}_1 \ast \gamma_0$. We use
  this map to construct a new map $\gamma\colon I \times I \to X$
  which is a homotopy from $\gamma_0$ to $\gamma_1$ as follows:

  We define $\gamma_t\colon I \to X$ to be the map $I
  \xhookrightarrow{\iota_t} I \times I \xrightarrow{H} X$ where the
  embedding $\iota_t$ is defined as follows:
  \begin{align*}
    \iota_t\colon I &\to I \times I\\
    s &\mapsto \frac{1}{\max \{|\cos \pi(1-t)|,|\sin \pi(1-t)|\}} \begin{pmatrix}
      \frac{\cos \pi(1-t)}{2}\\
      i \sin \pi(1-t)
    \end{pmatrix} +
    \begin{pmatrix}
      1\\
      2
    \end{pmatrix}
  \end{align*}
  With this definition, $\iota_0$ is the embedding $s \mapsto
  \text{\textonehalf}(1-s)$ and $\iota_1$ is the embeding $s \mapsto
  \text{\textonehalf}(1+s)$. Hence, $\gamma_t$ is indeed a homotopy
  from $\gamma_0$ to $\gamma_1$. Furthermore, the construction makes
  sure that for every $t$, $\iota_t(0) = \text{\textonehalf}$ and
  $\iota_t(1)$ is always contained in the left, top or right boundary
  of $I \times I$. This translates to the fact that $\gamma_t(0) = p$
  and $\gamma_t(1) = q$ for all $t$.
\end{proof}

\begin{theorem}
  \label{WhitneyApproximationTheorem}
  (Taken from John Lee, Whitney Approximation Theorem) Suppose $N$ is
  a smooth manifold with or without boundary, $M$ is a smooth manifold
  (without boundary), and $F\colon N \to M$ is a continuous map. Then
  $F$ is homotopic to a smooth map. If $F$ is already smooth on a
  closed subset $A \subseteq N$, then the homotopy can be taken to be
  relative to $A$.
\end{theorem}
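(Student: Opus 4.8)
The plan is to reduce the statement to the problem of approximating \emph{vector-valued} continuous maps by smooth ones, which can be handled locally and then patched with a partition of unity. First I would invoke the Whitney embedding theorem to realize $M$ as a smoothly, properly embedded (hence closed) submanifold of some Euclidean space $\R^k$. As a closed submanifold, $M$ possesses an open tubular neighborhood $U \subset \R^k$ together with a smooth retraction $r \colon U \to M$ (nearest-point projection along the normal bundle). The strategy is then: approximate $F$, regarded as a continuous map into $\R^k$, by a smooth map $\widetilde F \colon N \to \R^k$ so close to $F$ that for every $x$ the segment from $F(x)$ to $\widetilde F(x)$ stays in $U$; then $H(t,x) = r\big((1-t)F(x) + t\widetilde F(x)\big)$ is a continuous homotopy inside $M$ with $H(0,\cdot) = r \circ F = F$ and $H(1,\cdot) = r \circ \widetilde F$ smooth.

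The technical core is the lemma: for continuous $F \colon N \to \R^k$ and continuous $\delta \colon N \to \R_{>0}$ there is a smooth $\widetilde F \colon N \to \R^k$ with $\lVert \widetilde F(x) - F(x) \rVert < \delta(x)$ for all $x$. I would prove this using a countable, locally finite cover $\{V_i\}$ of $N$ by (half-)space coordinate balls, chosen small enough that on each $V_i$ the oscillation of $F$ and the variation of $\delta$ are controlled, points $x_i \in V_i$, and a smooth partition of unity $\{\varphi_i\}$ subordinate to $\{V_i\}$; setting $\widetilde F = \sum_i \varphi_i\, F(x_i)$ makes $\widetilde F(x)$ a convex combination of values all within $\delta(x)/2$ of $F(x)$, which gives the estimate. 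Taking $\delta(x)$ to be a positive continuous minorant of $\mathrm{dist}(F(x),\R^k \setminus U)$ (read as $+\infty$ when $U = \R^k$) then guarantees that the segments stay in $U$, completing the unconstrained case.

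For the relative statement, "$F$ smooth on the closed set $A$" means $F$ coincides with a smooth map on some open neighborhood $W$ of $A$. I would first replace $\delta$ by $\delta \cdot \chi$, where $\chi$ is continuous, zero exactly on $A$ and positive elsewhere, and then in the construction replace $\widetilde F$ by $\rho\widetilde F + (1-\rho)F$, where $\rho$ is a smooth cutoff equal to $0$ on a neighborhood of $A$ and to $1$ outside $W$: off $\overline W$ this is just $\widetilde F$ (smooth), on the neighborhood of $A$ it equals $F$ (smooth there by hypothesis), and in between it is a smooth blend of two maps both smooth on $W$; the closeness estimate survives because $\rho\widetilde F + (1-\rho)F$ is a convex combination of $F$ and $\widetilde F$. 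Since $\widetilde F = F$ near $A$, the segment homotopy, and hence $H$, is stationary on a neighborhood of $A$, so the resulting homotopy is rel $A$.

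The main obstacle I expect is precisely this relative refinement: one must juggle the three neighborhoods of $A$ — where $F$ is smooth, where $\rho = 0$, and the intermediate region — so that inserting the cutoff preserves both the smoothness of the modified $\widetilde F$ away from $A$ and the estimate keeping $H$ inside $U$. Once that bookkeeping is in place and the Euclidean approximation lemma is established, everything else is routine; the boundary case ($\partial N \neq \emptyset$) needs no new ideas, since the lemma already allows half-space charts.
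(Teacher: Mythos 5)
The paper offers no proof of this statement --- it is quoted from Lee's \emph{Introduction to Smooth Manifolds} and justified by citation alone --- and your proposal is a correct reconstruction of precisely the argument given there: Whitney embedding of $M$ into $\R^k$, tubular neighborhood with smooth retraction $r$, the partition-of-unity approximation lemma for maps into $\R^k$, and the retracted straight-line homotopy. One caveat on the relative case: you read ``$F$ is smooth on the closed set $A$'' as ``$F$ coincides with a smooth map on an open neighborhood $W$ of $A$''; under the weaker standard reading (local smooth extensions agreeing with $F$ only \emph{on} $A$), your blend $\rho\widetilde F+(1-\rho)F$ need not be smooth on $W\setminus A$ because $F$ itself need not be smooth there, and the usual fix is instead to insert the local smooth extensions of $F|_A$ into the partition-of-unity sum, which forces $\widetilde F=F$ on $A$ and already makes the segment homotopy stationary there. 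Under your stated reading --- which is the one actually relevant for the smoothing arguments in this paper --- the construction is complete as written.
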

\begin{proof}
  See \cite[p.\,141]{JohnLee}.
\end{proof}

Here is a theorem by Whitney, taken from \cite{JohnLee}:
\begin{theorem}
  \label{SmoothApproximation0}
  Let $N$ and $M$ be smooth manifolds and let $F\colon N \to M$ be a
  map. Then $F$ is homotopic to a smooth map
  $\smash{\widetilde{F}}\colon N \to M$. If $F$ is smooth on a closed
  subset $A \subset N$, then the homotopy can be taken to be relative
  to $A$.
\end{theorem}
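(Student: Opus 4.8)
The plan is to observe that this statement is, up to the cosmetic difference that here both $N$ and $M$ are taken without boundary, identical to Theorem~\ref{WhitneyApproximationTheorem}. Since a smooth manifold without boundary is in particular a smooth manifold with or without boundary, and the continuous map $F$ plays the role of ``$F\colon N \to M$'' there, the conclusion follows immediately by invoking Theorem~\ref{WhitneyApproximationTheorem} (equivalently, \cite[p.~141]{JohnLee}). So the proof is essentially a one-line reduction to the already-cited result.

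For completeness I would also indicate the underlying argument, which is the standard one. First I would use the Whitney embedding theorem to realise $M$ as a closed submanifold of some Euclidean space $\R^k$, and fix a tubular neighbourhood $U \subset \R^k$ of $M$ together with a smooth retraction $r\colon U \to M$. Next, regarding $F$ as a continuous map $N \to \R^k$, I would approximate it by a smooth map $G\colon N \to \R^k$ that is uniformly close to $F$ (using a partition of unity on $N$ and local mollification), close enough that $G(N) \subset U$; inserting a cut-off function supported away from $A$ arranges in addition that $G = F$ on $A$ while $G$ stays smooth near $A$ (where $F$ already is). Then $\widetilde{F} := r \circ G\colon N \to M$ is smooth, and the straight-line homotopy $(t,x) \mapsto r((1-t)F(x) + tG(x))$ --- well defined since the segment stays in $U$ --- is a homotopy from $F$ to $\widetilde{F}$ which is constant on $A$.

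The only point requiring genuine care is the relative clause: carrying out the mollification without disturbing $F$ on the closed set $A$ while keeping the perturbed map inside the tubular neighbourhood. This is exactly what the cut-off function handles, and it is precisely the content of the relative Whitney approximation theorem cited above; in the interest of keeping this appendix short I would not reproduce the details beyond this sketch.
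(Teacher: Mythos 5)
Your proposal is correct and matches the paper's treatment: the paper likewise disposes of this statement by citing Lee (p.~142), which is the same result as Theorem~\ref{WhitneyApproximationTheorem} restated for manifolds without boundary. Your additional sketch of the tubular-neighbourhood/mollification argument is accurate but not something the paper attempts to reproduce.
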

\begin{proof}
  See \cite[p.~142]{JohnLee}
\end{proof}

The following two statements can be found in \cite{Bredon}:
\begin{theorem}
  \label{SmoothApproximation1}
  Let $G$ be a compact Lie group acting smoothly on the manifolds $M$
  and $N$. Let $\varphi\colon M \to N$ be an equivariant map. Then
  $\varphi$ can be approximated by a smooth equivariant map
  $\psi\colon M \to N$ which is equivariantly homotopic to $\varphi$
  by a homotopy approximating the constant homotopy. Moreover, if
  $\varphi$ is already smooth on the closed invariant set
  $A \subset M$, then $\psi$ can be chosen to coincide with $\varphi$
  on $A$, and the homotopy between $\varphi$ and $\psi$ to be constant
  there.
\end{theorem}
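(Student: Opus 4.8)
The plan is to reduce the equivariant statement to the ordinary Whitney approximation theorem (Theorem~\ref{SmoothApproximation0}) by combining an equivariant embedding of the target with an averaging argument over the Haar measure of $G$. The key structural input is an equivariant version of the tubular neighbourhood theorem; once that is available, everything else is bookkeeping.

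First I would, following the equivariant embedding theorem of Mostow and Palais (as in \cite{Bredon}), fix a smooth $G$-embedding $j\colon N \hookrightarrow V$ into a finite-dimensional orthogonal $G$-representation $V$, so that $G$ acts on $V$ by isometries, together with a $G$-invariant open tubular neighbourhood $U \subset V$ of $j(N)$ and a smooth equivariant retraction $r\colon U \to j(N) \cong N$. Set $\Phi = j \circ \varphi\colon M \to V$, a continuous equivariant map which is smooth on the closed invariant set $A$. Applying the relative form of Theorem~\ref{SmoothApproximation0} to $\Phi$ (ignoring the group action) yields a smooth map $\widetilde\Phi\colon M \to V$ that is as $C^0$-close to $\Phi$ as we wish and that agrees with $\Phi$ on $A$. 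Then I average: define
\[
  \Psi_0(x) = \int_G g^{-1}\cdot\widetilde\Phi(g\cdot x)\, dg,
\]
with $dg$ the normalised Haar measure; this is smooth (differentiation under the integral, $G$ compact) and equivariant by invariance of $dg$. Since $\Phi$ is equivariant we have $\Phi(x) = \int_G g^{-1}\Phi(gx)\,dg$, and as $G$ acts by isometries on $V$ we get $\|\Psi_0(x) - \Phi(x)\| \le \sup_{y\in M}\|\widetilde\Phi(y) - \Phi(y)\|$, so $\Psi_0$ is still close to $\Phi$. For $x\in A$ the orbit $Gx$ lies in $A$, whence $\widetilde\Phi(gx) = \Phi(gx) = g\cdot\Phi(x)$ and therefore $\Psi_0(x) = \Phi(x)$.

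Because $\Phi(M)\subset j(N)$ and $\Psi_0$ is uniformly close to $\Phi$, after shrinking $U$ if necessary the image $\Psi_0(M)$ lies in $U$; I then set $\psi = j^{-1}\circ r \circ \Psi_0\colon M \to N$, which is smooth and equivariant as a composite of such maps, and which coincides with $\varphi$ on $A$. For the homotopy, take the straight-line homotopy $H_t(x) = (1-t)\Phi(x) + t\Psi_0(x)$ in $V$: it is equivariant (the action is linear), it stays inside $U$ for all $t$ provided $\Psi_0$ was chosen close enough, and it is constant on $A$. Then $j^{-1}\circ r \circ H_t$ is the desired equivariant homotopy from $\varphi$ to $\psi$ within $N$, constant on $A$, and as close to the constant homotopy as we like.

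The main obstacle is the first step, namely the equivariant infrastructure: producing the embedding into an orthogonal representation and, above all, an \emph{equivariant} tubular neighbourhood together with a smooth equivariant retraction $r$. This is where compactness of $G$ is essential (invariant metrics, slice theorem, the equivariant tubular neighbourhood theorem), and it is the only genuinely nontrivial ingredient; the averaging step and the composition with $r$ are then routine, and the relative case costs nothing extra since the orbit of a point of $A$ stays in $A$.
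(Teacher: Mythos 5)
The paper does not actually prove this statement: it is quoted verbatim from \cite{Bredon} and the ``proof'' consists solely of the citation to \cite[p.~317]{Bredon}. Your argument --- equivariant embedding of $N$ into an orthogonal $G$-representation, non-equivariant Whitney approximation, averaging over the Haar measure, then projection back to $N$ via an invariant tubular neighbourhood, with the straight-line homotopy pushed down by the retraction --- is precisely the standard proof of the cited result and is correct, up to the usual technical caveats that the Mostow--Palais equivariant embedding requires $N$ to have finitely many orbit types (automatic for compact $N$) and that for non-compact $M$ the closeness estimates should be phrased with respect to a $G$-invariant continuous gauge function rather than the global supremum $\sup_{y\in M}\|\widetilde\Phi(y)-\Phi(y)\|$, which may be infinite.
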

\begin{proof}
  See \cite[p.\,317]{Bredon}.
\end{proof}

\begin{corollary}
  \label{SmoothApproximation2}
  Let $G, M, N$ be as in [the previous theorem]. Then any equivariant
  map $M \to N$ is equivariantly homotopic to a smooth equivariant
  map. Moreover, if two smooth equivariant maps $M \to N$ are
  equivariantly homotopic, then they are so by a smooth equivariant
  homotopy.
\end{corollary}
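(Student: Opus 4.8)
The plan is to derive both assertions directly from Theorem~\ref{SmoothApproximation1}. For the first assertion I would simply invoke Theorem~\ref{SmoothApproximation1} with $A = \emptyset$: given an equivariant map $\varphi\colon M \to N$, the theorem already furnishes a smooth equivariant map $\psi\colon M \to N$ together with an equivariant homotopy from $\varphi$ to $\psi$, which is exactly the claim.

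For the second assertion, let $f_0, f_1\colon M \to N$ be smooth equivariant maps and let $H\colon I \times M \to N$ be an equivariant homotopy between them, where $G$ acts trivially on $I$. The issue to be dealt with is twofold: $I \times M$ is a manifold \emph{with} boundary, whereas Theorem~\ref{SmoothApproximation1} is stated for manifolds without boundary, and we want the resulting smooth homotopy still to equal $f_0$ at $t=0$ and $f_1$ at $t=1$ exactly, not merely up to approximation. To handle this I would first extend $H$ to a map $\widetilde{H}\colon \R \times M \to N$ by setting $\widetilde{H}(t,x) = f_0(x)$ for $t \le 0$ and $\widetilde{H}(t,x) = f_1(x)$ for $t \ge 1$. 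This is continuous, since $H(t,\cdot) \to f_0$ as $t \to 0^+$ and $H(t,\cdot) \to f_1$ as $t \to 1^-$. Because $G$ acts trivially on the $\R$-factor, $\widetilde{H}$ is again equivariant, and it is smooth on the closed invariant subset $A = \bigl((-\infty,0]\cup[1,\infty)\bigr) \times M$, since there it agrees with the smooth maps $f_0, f_1$ pulled back along the projection $\R \times M \to M$.

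Now $\R \times M$ is a smooth manifold without boundary, so Theorem~\ref{SmoothApproximation1} applies and yields a smooth equivariant map $\widetilde{\psi}\colon \R \times M \to N$ coinciding with $\widetilde{H}$ on $A$. Restricting $\widetilde{\psi}$ to $[0,1] \times M$ then gives a smooth equivariant homotopy, and since $\{0\}\times M$ and $\{1\}\times M$ are contained in $A$, this homotopy begins at $f_0$ and ends at $f_1$, as required. The only genuine subtlety in the argument is the boundary/endpoint matching just described; the extension-to-$\R$ trick removes it, and the rest is a direct application of Theorem~\ref{SmoothApproximation1}.
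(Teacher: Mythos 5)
The paper itself offers no argument here---it simply cites Bredon---so your derivation from Theorem~\ref{SmoothApproximation1} is a genuine addition. The first assertion and the overall strategy for the second are sound, but one step does not work as written: the claim that $\widetilde{H}$ is smooth on the closed set $A = \bigl((-\infty,0]\cup[1,\infty)\bigr)\times M$. In Theorem~\ref{SmoothApproximation1} (as in its non-equivariant antecedents, Theorems~\ref{WhitneyApproximationTheorem} and~\ref{SmoothApproximation0}) ``smooth on a closed subset $A$'' means that the map coincides with a smooth map on a neighborhood of $A$. Your $\widetilde{H}$ is smooth on the open set $(\R\setminus[0,1])\times M$, but at a point $(0,x)$ every neighborhood contains points $(t,y)$ with $0<t<1$, where $\widetilde{H}$ is given by the merely continuous $H$; so $\widetilde{H}$ need not be smooth on $A$ in the required sense, and the hypothesis of the theorem has not been verified. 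The fact that $\widetilde{H}$ restricted to $A$ agrees with a smooth map \emph{of $A$} is not enough.

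The gap is easily closed by the standard reparametrization trick: before extending, choose a smooth nondecreasing surjection $\beta\colon[0,1]\to[0,1]$ with $\beta\equiv 0$ near $0$ and $\beta\equiv 1$ near $1$, and replace $H$ by $H'(t,x)=H(\beta(t),x)$. Then $H'$ is still an equivariant homotopy from $f_0$ to $f_1$, and its extension to $\R\times M$ agrees with $f_0\circ\mathrm{pr}_M$ on $(-\infty,\varepsilon)\times M$ and with $f_1\circ\mathrm{pr}_M$ on $(1-\varepsilon,\infty)\times M$ for some $\varepsilon>0$, hence is genuinely smooth on a neighborhood of $A$. With this modification the application of Theorem~\ref{SmoothApproximation1} is legitimate, and the rest of your argument---restricting the resulting smooth equivariant map to $[0,1]\times M$ and using $\{0,1\}\times M\subset A$ to match the endpoints exactly---goes through.
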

\begin{proof}
  \cite[p.\,317]{Bredon}.
\end{proof}

The following theorem is taken (and translated) from
\cite{StoeckerZieschang}:
\begin{theorem}
  \label{TopologyMapInducedOnQuotient}
  Let $f\colon X \to Y$ be a continuous map, which is compatible with
  given equivalence relations $R$ resp. $S$ on $X$ resp. $Y$ (which
  means: $x \sim_R x'$ implies $f(x) \sim_S f(x')$). Then
  $f'([x]_R) = [f(x)]_S$\footnote{Here $[x]_R$ denotes the equivalence
    class of $x$ under the relation $R$ (likewise for $y$ and $S$).}
  defines a continuous map $f'\colon X/R \to Y/S$; it is called the
  map induced by $f$. If $f$ is a homeomorphism and $f^{-1}$ is also
  compatible with the relations $R$ resp. $S$, then the induced map
  $f'$ is also a homeomorphism.
\end{theorem}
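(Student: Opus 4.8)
The plan is to reduce everything to the universal property of the quotient topology. Write $\pi_X\colon X \to X/R$ and $\pi_Y\colon Y \to Y/S$ for the canonical projections, so that $X/R$ (resp. $Y/S$) carries the finest topology making $\pi_X$ (resp. $\pi_Y$) continuous. First I would check that $f'$ is well-defined as a set map: if $[x]_R = [x']_R$, then $x \sim_R x'$, so by the compatibility hypothesis $f(x) \sim_S f(x')$, hence $[f(x)]_S = [f(x')]_S$; thus the assignment $[x]_R \mapsto [f(x)]_S$ does not depend on the chosen representative. By construction $f'$ then fits into the commutative square $f' \circ \pi_X = \pi_Y \circ f$.

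Next, continuity. The composite $\pi_Y \circ f\colon X \to Y/S$ is continuous as a composition of continuous maps, and it equals $f' \circ \pi_X$. Since $\pi_X$ is a quotient map, the universal property of the quotient says that a map $X/R \to Y/S$ is continuous precisely when its precomposition with $\pi_X$ is continuous; applying this to $f'$ yields continuity of $f'$.

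For the final statement, assume $f$ is a homeomorphism and that $f^{-1}$ is also compatible with $S$ and $R$ (i.e. $y \sim_S y'$ implies $f^{-1}(y) \sim_R f^{-1}(y')$). Applying the first two paragraphs with $f$ replaced by $f^{-1}$ produces a continuous induced map $g\colon Y/S \to X/R$ with $g \circ \pi_Y = \pi_X \circ f^{-1}$. It then remains to verify that $g$ is a two-sided inverse of $f'$: for $[x]_R \in X/R$ one computes $g(f'([x]_R)) = g([f(x)]_S) = [f^{-1}(f(x))]_R = [x]_R$, and symmetrically $f'(g([y]_S)) = [y]_S$ for $[y]_S \in Y/S$. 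Hence $f'$ is a continuous bijection with continuous inverse $g$, i.e. a homeomorphism.

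I do not expect any genuine obstacle here; the only points requiring care are invoking the universal property of the quotient topology in the correct direction — to produce continuous maps \emph{out of} a quotient from the commuting square — and noticing that it is the compatibility of $f^{-1}$, not merely that of $f$, that is needed in order to build the inverse $g$ (without this hypothesis $g$ need not even be well-defined).
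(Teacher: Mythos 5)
Your proof is correct and complete; it is the standard argument via the universal property of the quotient topology, which is exactly what the cited reference (Stöcker--Zieschang) does — the paper itself only quotes this theorem and refers to that source rather than proving it. You correctly isolate the one subtle point, namely that the compatibility of $f^{-1}$ (and not just of $f$) is what makes the inverse $g$ well-defined.
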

\begin{proof}
  See \cite[p.~9]{StoeckerZieschang}.
\end{proof}

The same applies to the following
\begin{theorem}
  \label{QuotientOfCWIsCW}
  Let $(X,A)$ be a CW pair and $A \neq \emptyset$, then $X/A$ is a
  CW complex with the zero cell $[A] \in X/A$ and the cells of the
  form $p(e)$, where $p\colon X \to X/A$ denotes the identifying map
  and $e$ is a cell in $X\setminus A$.
\end{theorem}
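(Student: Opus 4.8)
The plan is to exhibit the claimed cell structure on $X/A$ explicitly and then verify the CW axioms one at a time. Write $p\colon X \to X/A$ for the identification map. Since $A$ is a subcomplex, it is closed, and for every open cell $e$ of $X$ with $e \subseteq A$ one has $\bar e \subseteq A$; hence the open cells of $X$ split cleanly into those contained in $A$ and those meeting $A$ in the empty set. The proposed cells of $X/A$ are the single $0$-cell $[A] = p(A)$ together with, for each open cell $e$ of $X$ with $e \cap A = \emptyset$, the set $p(e)$ with characteristic map $p \circ \Phi_e\colon D^n \to X/A$, where $\Phi_e$ is a characteristic map for $e$ in $X$. One first checks that $p \circ \Phi_e$ restricts to a homeomorphism of the open disk onto $p(e)$: it is injective there since $p$ is injective on $X \setminus A \supseteq e$, it is continuous, and since $D^n$ is compact and $X/A$ is Hausdorff (a standard property of CW pairs, $A$ being closed), the restriction to the interior is an embedding.

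Next I would set up the skeleta by $(X/A)^n := p\bigl(X^n \cup A\bigr)$, check that $(X/A)^n$ is obtained from $(X/A)^{n-1}$ by attaching the cells $p(e)$ over the $n$-cells $e \not\subseteq A$, and verify that for each such $e$ the attaching map $p \circ \varphi_e$, with $\varphi_e\colon S^{n-1} \to X^{n-1}$ the original attaching map, has image in $(X/A)^{n-1}$; this is immediate from $\varphi_e(S^{n-1}) \subseteq X^{n-1} \subseteq X^{n-1} \cup A$. Closure finiteness is also routine: $\overline{p(e)} = p(\bar e)$ because $p$ is closed on the compact set $\bar e$, and $\bar e$ meets only finitely many open cells of $X$, so $\overline{p(e)}$ meets only finitely many cells of $X/A$ (those $p(e')$ for the finitely many $e'$, together possibly with the point $[A]$).

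The step I expect to be the main obstacle is showing that the quotient topology on $X/A$ coincides with the weak topology determined by the proposed cell structure, i.e.\ that a subset $C \subseteq X/A$ is closed as soon as $C \cap \overline{p(e)}$ is closed for every proposed cell. Given such a $C$, one must show $p^{-1}(C)$ is closed in $X$, and for this it suffices, by the weak topology on $X$, to show $p^{-1}(C) \cap \bar e'$ is closed for every cell $e'$ of $X$. If $e' \subseteq A$ then $\bar e' \subseteq A$ and $p^{-1}(C) \cap \bar e'$ is either $\bar e'$ or $\emptyset$, according as $[A] \in C$ or not. If $e' \cap A = \emptyset$ then $p$ restricts to a continuous surjection $\bar e' \to \overline{p(e')}$, and $p^{-1}(C) \cap \bar e'$ is the preimage under it of the closed set $C \cap \overline{p(e')}$, hence closed. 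The bookkeeping around the collapsed point $[A]$ and the two types of cells of $X$ is the only genuinely fiddly part; the remaining Hausdorffness and CW-axiom checks are formal. A fully written-out version of this argument may be found in \cite{StoeckerZieschang} (see also the appendix of \cite{Hatcher}).
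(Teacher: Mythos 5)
Your proposal is correct: the paper does not prove this statement itself but simply cites St\"ocker--Zieschang (p.~93), and your argument is precisely the standard verification given there -- the same cell structure, characteristic maps $p \circ \Phi_e$, and the weak-topology check via saturated preimages. The only point worth tightening is the claim that $p\circ\Phi_e$ restricted to the open disk is an embedding: the cleanest justification is that $X\setminus A$ is a saturated open set, so $p$ restricts to a homeomorphism $X\setminus A \to (X/A)\setminus\{[A]\}$, rather than invoking compactness of $D^n$ for the interior.
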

\begin{proof}
  See \cite[p.~93]{StoeckerZieschang}.
\end{proof}

Taken from \cite{May}:
\begin{theorem}
  \label{G-HELP} (G-HELP) Let $A$ be a subcomplex of a $G$-CW
  complex $X$ of dimension $\nu$ and let $e\colon Y \to Y$ be a
  $\nu$-equivalence. Suppose given maps $g\colon A \to Y$, $h\colon A
  \times I \to Z$, and $f\colon X \to Z$ such that $e \circ g = h
  \circ i_1$ and $f \circ i = h \circ i_0$ in the following diagram:
  \[
  \label{HELP-Diagram}
  \begin{tikzcd}
    A \arrow{rr}{i_0} \arrow{dd}{i} & & A \times I \arrow{dd} \arrow{dl}{h} & & A \arrow{ll}{i_1} \arrow{dl}{g} \arrow{dd}{i} \\
    & Z & & Y \arrow[crossing over,near start]{ll}{e} \\
    X \arrow{ru}{f} \arrow{rr}{i_0} & & X \times I \arrow[dashed]{ul}{\tilde{h}} & & X \arrow{ll}{i_1} \arrow[dashed]{lu}{\tilde{g}} \\
  \end{tikzcd}
  \]
  Then there exist maps $\smash{\tilde{g}}$ and $\smash{\tilde{h}}$
  that make the diagram commute.
\end{theorem}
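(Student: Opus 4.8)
The plan is to prove the statement by induction over the skeletal filtration of the $G$-CW pair $(X,A)$. Write
\[
A = (A \cup X^{(-1)}) \subset (A \cup X^{(0)}) \subset \cdots \subset (A \cup X^{(\nu)}) = X,
\]
where $X^{(n)}$ denotes the $n$-skeleton, and set $X_n = A \cup X^{(n)}$. At stage $n$ I would assume that a partial lift $\tilde g_n \colon X_n \to Y$ and a partial homotopy $\tilde h_n \colon X_n \times I \to Z$ have already been constructed, agreeing with $g$ and $h$ on $A$ and satisfying $\tilde h_n \circ i_0 = f|_{X_n}$ and $\tilde h_n \circ i_1 = e \circ \tilde g_n$. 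The base case $n = -1$ is just $\tilde g_{-1} = g$ and $\tilde h_{-1} = h$, which is exactly the given data. Since $X = \operatorname{colim}_n X_n$ carries the weak topology, the maps $\tilde g = \operatorname{colim}_n \tilde g_n$ and $\tilde h = \operatorname{colim}_n \tilde h_n$ obtained in the limit are automatically continuous, so it suffices to carry out a single inductive step.

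For the inductive step, attaching the equivariant $n$-cells of $X$ that do not lie in $A$ (with $n \le \nu$) exhibits $X_n$ as a pushout
\[
X_n = X_{n-1} \cup_{\coprod_j G/H_j \times S^{n-1}} \Bigl(\textstyle\coprod_j G/H_j \times D^n\Bigr).
\]
By $G$-equivariance, and since a $G$-map out of such a pushout is determined by its restrictions to the pieces, it suffices to extend the already-constructed data over a single orbit cell $G/H \times D^n$ compatibly with its restriction to $G/H \times S^{n-1}$, the latter being available from $\tilde g_{n-1}$ and $\tilde h_{n-1}$ pulled back along the attaching map. Using the adjunction $\operatorname{Map}_G(G/H \times K,\, W) \cong \operatorname{Map}(K,\, W^H)$ valid for any space $K$, this equivariant extension problem over $(G/H \times D^n,\ G/H \times S^{n-1})$ translates verbatim into the \emph{non-equivariant} HELP problem for the CW pair $(D^n, S^{n-1})$ with respect to the fixed-point map $e^H \colon Y^H \to Z^H$. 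Since $e$ is a $\nu$-equivalence, each $e^H$ is a $\nu$-equivalence of spaces (this is the content of the hypothesis for $G$-maps), hence in particular an $n$-equivalence because $n \le \nu$.

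It then remains to invoke the non-equivariant HELP for a single disk pair: if $e^H$ is an $n$-equivalence, any lifting/extension datum over $(D^n, S^{n-1})$ as above can be completed. This is precisely the characterization of an $n$-equivalence in terms of disk pairs (equivalently, it follows from the elementary obstruction argument, using that the homotopy groups $\pi_q$ of the homotopy fibre of $e^H$ vanish for $q \le n-1$ together with $D^n/S^{n-1} \simeq S^n$), which I would simply cite. Gluing the extensions over all orbit cells — they agree on the common part $X_{n-1}$ and on the overlaps of distinct cells — produces $\tilde g_n$ and $\tilde h_n$ with the required properties, completing the induction and hence the proof.

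The main obstacle I anticipate is not the homotopy theory itself but the bookkeeping in the reduction step: one must check that the restriction of the inductively built data to the attaching spheres is exactly the input of a disk-pair HELP problem, that the three commutativity identities ($\tilde h \circ i_0 = f$, $\tilde h \circ i_1 = e\circ\tilde g$, and agreement with $g,h$ on $A$) are maintained at every stage and survive the gluing, and that passage to $H$-fixed points is compatible with all the structure maps involved. Once the single-orbit reduction is set up cleanly, everything else is a formal colimit argument.
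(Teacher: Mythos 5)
Your argument is correct and is essentially the standard proof of G-HELP; the paper itself does not prove this statement but only cites \cite[p.~17]{May}, and your skeletal induction -- reduction to a single orbit cell $G/H \times D^n$ via the pushout description, passage to $H$-fixed points through the adjunction $\mathcal{M}_G(G/H \times K, W) \cong \mathcal{M}(K, W^H)$, and invocation of the non-equivariant HELP for the pair $(D^n, S^{n-1})$ -- is exactly the argument given there. Two small points of hygiene: the map in the statement should read $e\colon Y \to Z$ (the ``$Y \to Y$'' is a typo, as your treatment of $e^H\colon Y^H \to Z^H$ implicitly recognizes), and in the equivariant setting $\nu$ is a function on (conjugacy classes of) subgroups, so the inequality you need at an $n$-cell of orbit type $G/H$ is $n \leq \nu(H)$; your phrasing glosses over this but the hypothesis ``$X$ has dimension $\nu$'' supplies precisely that bound.
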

\begin{proof}
  See \cite[p.~17]{May}.
\end{proof}

From this we can deduce a statement about equivariant homotopy
extensions for $G$-CW complexes:
\begin{corollary}
  \label{G-HEP}
  (Equivariant HEP) Let $G$ be a topological group.  Let $X$ and $Y$
  be $G$-CW complexes and $A$ a $G$-CW subcomplex of $X$. Then the
  $G$-CW pair $(X,A)$ has the equivariant homotopy extension
  property. That is, given a $G$-map $f\colon X \to Y$ and a
  $G$-homotopy $h\colon I \times A \to Y$. Then $h$ extends to a
  homotopy $H\colon I \times X \to Y$ such that $H_0 \equiv f$ and
  $\restr{H}{I \times A} \equiv h$.
\end{corollary}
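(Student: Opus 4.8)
The plan is to obtain the equivariant HEP as a formal consequence of the equivariant HELP theorem (Theorem~\ref{G-HELP}), using the identity map of $Y$ as the required $\nu$-equivalence. Concretely, I would set $Z = Y$ and $e = \id_Y \colon Y \to Y$; since $\id_Y$ induces bijections on all equivariant homotopy groups it is a $\nu$-equivalence for $\nu = \dim X$ (the infinite-dimensional case is handled skeleton by skeleton, see below). Given the data of the corollary — a $G$-map $f\colon X \to Y$ and a $G$-homotopy $h$ with $h_0 = \restr{f}{A}$, i.e.\ $h \circ i_0 = f \circ i$ (after the harmless interchange of the two factors of $I \times A$ versus $A \times I$) — I would define $g := h \circ i_1 \colon A \to Y$, the end map $h_1$ of the given homotopy. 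Then $e \circ g = g = h \circ i_1$, so both left-hand triangles in the G-HELP diagram commute, and the theorem produces $G$-maps $\tilde g \colon X \to Y$ and $\tilde h \colon X \times I \to Z = Y$ making the entire diagram commute.

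Unwinding what commutativity of that diagram says: $\tilde h \circ i_0 = f$ and $\tilde h \circ (i \times \id_I) = h$. Reading $H := \tilde h$ as a map $I \times X \to Y$, this is exactly the statement that $H$ is a $G$-homotopy with $H_0 \equiv f$ and $\restr{H}{I \times A} \equiv h$, which is the conclusion of the corollary. (The extra map $\tilde g$ and the relation $\tilde h \circ i_1 = e \circ \tilde g$ carry no information here, since $e = \id_Y$; they are artifacts of the more general HELP formulation.) Thus the corollary is essentially a specialization of Theorem~\ref{G-HELP}.

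The one point requiring genuine care is the dimension hypothesis in Theorem~\ref{G-HELP}, which is phrased for a $G$-CW complex $X$ of finite dimension $\nu$ together with a $\nu$-equivalence $e$. When $\dim X = \infty$ I would argue inductively along the skeletal filtration: suppose $h$ has already been extended to a $G$-homotopy $H^{(n)}$ on $I \times (X^{(n)} \cup A)$ with $H^{(n)}_0 = \restr{f}{X^{(n)} \cup A}$; applying G-HELP to the $G$-CW pair $(X^{(n+1)} \cup A,\ X^{(n)} \cup A)$ with $e = \id_Y$ (an $n$-equivalence for every $n$) pushes the extension one skeleton further, giving $H^{(n+1)}$. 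The union $H = \bigcup_{n} H^{(n)}$ is then continuous because $I \times X$ carries the colimit topology of the subspaces $I \times (X^{(n)} \cup A)$, and it satisfies $H_0 = f$, $\restr{H}{I \times A} = h$. I expect this skeletal bookkeeping to be the only real obstacle; the rest is a purely diagrammatic translation. As an alternative route one could instead prove directly that $X \times \{0\} \cup A \times I$ is an equivariant retract of $X \times I$ — building the retraction equivariant-cell by equivariant-cell from the classical retraction $D^k \times I \to D^k \times \{0\} \cup \partial D^k \times I$ crossed with the orbit $G/H$ — and then precompose the map $(f,h)$ with this retraction; this avoids invoking HELP but requires the same cell-by-cell induction.
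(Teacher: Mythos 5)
Your proposal is correct and follows essentially the same route as the paper: set $Z = Y$, take $e = \id_Y$ as the $\nu$-equivalence, and read the conclusion off the left-hand part of the G-HELP diagram. Your additional care about the infinite-dimensional case (skeletal induction) and the alternative retraction argument go beyond what the paper records, but the core argument is identical.
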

\begin{proof}
  Set $Z = Y$ and let $f\colon Y \to Z$ be the identity. In particular
  this makes $f$ be a $\nu$-equivalence where we set $\nu(H) =
  \dim(X)$ for all subgroups $H \subset G$. Then, the equivariant
  homotopy extension property follows from the LHS square of the
  diagram (\ref{HELP-Diagram}).
\end{proof}

\begin{remark}
  \label{RemarkQuotientDegreeOne}
  Let $M$ be an $n$-dimensional, connected, closed, oriented
  CW manifold and $A \subset M$ such that
  \begin{enumerate}[(i)]
  \item $A$ is contained in the $n-1$ skeleton $M^{n-1}$ and
  \item $M/A$ is topologically an $n$-dimensional, connected, closed,
    orientable manifold.
  \end{enumerate}
  Then the quotient $M/A$ can be equipped with an orientation such
  that the projection map $M \to M/A$ has degree $+1$.
\end{remark}

\begin{proof}
  The manifold $M$ is assumed to be oriented, which corresponds to a
  choice of fundamental class $[M]$ in $H_n(M,\Z) \cong \Z$. Denote
  the quotient map with
  \begin{align*}
    \pi\colon M \to M/A.
  \end{align*}
  Since $A$ is contained in $M^{n-1}$, $\pi$ maps the $n$-cells in $M$
  homeomorphically to the $n$-cells in the quotient $M/A$. It follows
  that there exists an $n$-ball in one of the $n$-cells in $M/A$ such
  that $\pi^{-1}(B)$ is a single $n$-ball in $M$ which gets mapped
  homeomorphically to $B$.  Fix a fundamental class $[M/A]$ of
  $H_n(M/A,\Z)$. Using exercise~8 from \cite[p.~258]{Hatcher} we can
  conclude that the degree of $\pi$ is $\pm 1$, depending on wether it
  is orientation preserving or orientation reversing. In case it is
  orientation reversing we can chose the opposite orientation on $M/A$
  such that $\deg \pi = +1$.
\end{proof}

A direct consequence of the previous remark is:
\begin{remark}
  \label{RemarkMapOnQuotientDegreeUnchanged}
  Let $M$ and $N$ be $n$-dimensional, connected, closed, oriented
  manifolds and $f\colon M \to N$ a map of degree $d$. Furthermore,
  assume that $f$ is constant on $A \subset M$ such that it induces a
  map $f'\colon M/A \to N$. If the pair $(M,A)$ satisfies the
  assumptions of remark~\ref{RemarkQuotientDegreeOne}, then $f'$ also
  has degree $d$.
\end{remark}

\begin{proof}
  Denote the quotient map with $\pi\colon M \to M/A$. We then have the
  following commutative diagram:
  \[
  \xymatrix{
    M \ar[d]_\pi \ar[dr]^f\\
    M/A \ar[r]_{f'} & N
  }
  \]
  In homology we obtain:
  \[
  \xymatrix{
    H_n(M,\Z) \ar[d]_{\pi_*} \ar[dr]^{f_*}\\
    H_n(M/A,\Z) \ar[r]_{f'_*} & H_n(N,\Z)
  }
  \]
  By remark~\ref{RemarkQuotientDegreeOne} the quotient $M/A$ can be
  equipped with an orientation such that, after identifying all
  homology groups with $\Z$, $\pi_*$ is multiplication by $+1$ and
  $f'_*$ is multiplication by $d'$. Thus we have:
  \[
  \xymatrix{
    \Z \ar[d]_{\cdot +1} \ar[dr]^{\cdot d}\\
    \Z \ar[r]_{\cdot d'} & \Z
  }
  \]
  By commutativity of the diagram it follows that the degree of
  $f'$ is $'d = d$.
\end{proof}

\begin{proposition}
  \label{TopologyGrassmannians}
  Assume $0 < p < n$. Then:
  \begin{enumerate}[(i)]
  \item The second homology group $H_2(\Gr_p(\C^n),\Z)$ is infinite
    cyclic for all $p$ and $n$. Fixing a full flag of $\C^n$ induces a
    decomposition of $\Gr_p(\C^n)$ into Schubert cells. With respect
    to a fixed flag there exists a unique (complex) 1-dimensional
    Schubert variety, which can be regarded as the generator of
    $H_2(\Gr_p(\C^n),\Z)$.
  \item The fundamental group $\pi_1(\Gr_p(\R^n))$ is cyclic of order
    two unless $p=1$ and $n=2$, in which case it is infinite cyclic.
  \end{enumerate} \qed
\end{proposition}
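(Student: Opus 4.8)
The plan is to prove the two parts by separate classical arguments: the Schubert (Bruhat) cell decomposition for the complex statement, and the orientation double cover for the real one.

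For the first statement I would fix a full flag $V_1 \subset \dots \subset V_n = \C^n$ and use the associated decomposition of $\Gr_p(\C^n)$ into Schubert cells $e_\lambda$, indexed by the Young diagrams $\lambda$ contained in the $p \times (n-p)$ rectangle; here $e_\lambda$ is an open cell of real dimension $2|\lambda|$ and its closure is the corresponding Schubert variety. Since every cell has even real dimension, the cellular chain complex has all boundary maps zero, so $H_{2k}(\Gr_p(\C^n),\Z)$ is free abelian of rank equal to the number of $\lambda$ with $|\lambda| = k$, and the odd homology vanishes. For $k = 1$ the only admissible diagram is the single box, which gives $H_2(\Gr_p(\C^n),\Z) \cong \Z$ (and, since there are no $1$-cells, $\Gr_p(\C^n)$ is simply connected). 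To pin down the generator, observe that the closure of the unique complex one-dimensional Schubert cell is precisely the Schubert variety $\mathcal{S}$ of \eqref{DefOfSchubertVarietyC} --- the unique one-dimensional Schubert variety, biholomorphic to $\Proj_1$, as recorded on p.~\pageref{SchubertVarietyDiscussion} --- so its fundamental class $[\mathcal{S}]$ generates $H_2$.

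For the second statement the exceptional case is immediate: $\Gr_1(\R^2) = \RProj_1 \cong S^1$, hence $\pi_1 \cong \Z$; in every other case $0 < p < n$ forces $n \geq 3$. Here I would use the orientation double cover $q \colon \widetilde{\Gr}_p(\R^n) \to \Gr_p(\R^n)$, where $\widetilde{\Gr}_p(\R^n)$ is the oriented Grassmannian and $q$ forgets the chosen orientation of a $p$-plane. Its nontrivial deck transformation $\sigma$ reverses orientations; it acts freely, has quotient $\Gr_p(\R^n)$, and the total space $\widetilde{\Gr}_p(\R^n) \cong \SOR{n}/(\SOR{p} \times \SOR{n-p})$ is connected, so $q$ is a connected nontrivial double cover. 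For $n > 2$ the oriented Grassmannian is simply connected --- the fact already used in remark~\ref{RealGrassmannianEmbeddingIsoInHomotopy} --- so $q$ is the universal cover and $\pi_1(\Gr_p(\R^n))$ is isomorphic to the deck group, which is generated by $\sigma$ and therefore equals $C_2$.

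If one wants a self-contained proof that $\widetilde{\Gr}_p(\R^n)$ is simply connected for $n \geq 3$, I would extract it from the fibration $\SOR{p} \times \SOR{n-p} \hookrightarrow \SOR{n} \to \widetilde{\Gr}_p(\R^n)$: the fibre is connected, so the long exact homotopy sequence yields a surjection $\pi_1(\SOR{n}) \twoheadrightarrow \pi_1(\widetilde{\Gr}_p(\R^n))$, and it then suffices to show that the inclusion-induced map $\pi_1(\SOR{p} \times \SOR{n-p}) \to \pi_1(\SOR{n})$ is surjective for every $n \geq 3$. This surjectivity --- and hence the whole of part (ii) --- is the step I expect to be the main obstacle: it needs the low-rank bookkeeping of $\pi_1(\SOR{k})$ (trivial for $k \leq 1$, infinite cyclic for $k = 2$, $C_2$ for $k \geq 3$) together with the fact that a once-around loop in a standardly embedded $\SOR{2}$ represents the nontrivial class of $\pi_1(\SOR{n})$, and care with the boundary cases (the already-handled $(p,n) = (1,2)$, and $p = n-1$, which one may as well route through the duality $\Gr_p(\R^n) \cong \Gr_{n-p}(\R^n)$).
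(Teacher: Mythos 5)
Your argument is correct and is exactly the standard one the paper implicitly relies on: the proposition is stated in the appendix as background with no proof, and part (ii) of your proposal coincides with the orientation-double-cover argument the paper itself deploys in remark~\ref{RealGrassmannianEmbeddingIsoInHomotopy}, while part (i) is the usual even-dimensional Schubert cell computation referenced on p.~\pageref{SchubertVarietyDiscussion}. The one step you flag as the main obstacle --- surjectivity of $\pi_1(\SOR{p}\times\SOR{n-p})\to\pi_1(\SOR{n})$ for $n\geq 3$ --- does go through with the low-rank bookkeeping you describe, so nothing is missing.
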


\section{Hausdorff Dimension}
\label{AppendixHausdorffDimension}

For completeness we quote a theorem taken from
\cite[p.~515]{SchleicherArticle}:
\begin{theorem}
  \label{HDimProperties}
  Hausdorff dimension has the following properties:
  \begin{enumerate}[(1)]
  \item if $X \subset Y$, then $\hdim X \leq \hdim Y$;
  \item if $X_i$ is a countable collection of sets with $\hdim X_i
    \leq d$, then
    \begin{align*}
      \hdim \bigcup_i X_i \leq d
    \end{align*}
  \item if $X$ is countable, then $\hdim X = 0$;
  \item if $X \subset \R^d$, then $\hdim X \leq d$;
  \item if $f\colon X \to f(X)$ is a Lipschitz map, then $\hdim(f(X))
    \leq \hdim(X)$.
  \item if $\hdim X = d$ and $\hdim Y = d'$, then $\hdim(X \times Y) \geq d + d'$;
  \item if $X$ is connected and contains more than one point, then
    $\hdim X \geq 1$; more generally, the Hausdorff dimension of any
    set is no smaller than its topological dimension;
  \item if a subset $X$ of $\R^n$ has finite positive $d$-dimensional
    Lebesgue measure, then
    \begin{align*}
      \hdim X = d. 
    \end{align*}
  \end{enumerate}
\end{theorem}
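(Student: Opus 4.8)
I would establish the eight properties in the listed order, since each rests on its predecessors, working from the definitions $\mathcal{H}^s_\delta(X)=\inf\{\sum_i(\operatorname{diam} U_i)^s\}$ (infimum over covers of $X$ by sets of diameter $\le\delta$), $\mathcal{H}^s(X)=\lim_{\delta\to 0}\mathcal{H}^s_\delta(X)$, and $\hdim X=\inf\{s:\mathcal{H}^s(X)=0\}$. Properties (1)--(5) are short covering arguments. For (1), a $\delta$-cover of $Y$ restricts to a $\delta$-cover of $X$, so $\mathcal{H}^s_\delta(X)\le\mathcal{H}^s_\delta(Y)$ and $\mathcal{H}^s(Y)=0$ forces $\mathcal{H}^s(X)=0$. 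For (2), $\mathcal{H}^s$ is an outer measure, hence countably subadditive, so if each $\mathcal{H}^s(X_i)=0$ for a fixed $s>d$ then $\mathcal{H}^s(\bigcup_i X_i)=0$; letting $s\downarrow d$ gives the bound. (3) follows from (2) and the trivial fact that a point has dimension $0$. For (4), subdivide the unit cube of $\R^d$ into $N^d$ subcubes of diameter $\sqrt d/N$ to get $\mathcal{H}^s_{\sqrt d/N}([0,1]^d)\le d^{s/2}N^{d-s}\to 0$ for $s>d$; then $\R^d$ is a countable union of cubes, so (2) and then (1) finish it. For (5), if $f$ is $L$-Lipschitz then $\{f(U_i)\}$ covers $f(X)$ with $\operatorname{diam} f(U_i)\le L\,\operatorname{diam} U_i$, giving $\mathcal{H}^s(f(X))\le L^s\mathcal{H}^s(X)$ and hence the dimension inequality.

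Property (7) splits in two. The clause "$\hdim X\ge1$ for $X$ connected with at least two points" follows from (5): fixing $a\ne b$ in $X$, the map $x\mapsto\operatorname{dist}(x,a)$ is $1$-Lipschitz, its image is a connected subset of $\R$ containing $0$ and $\operatorname{dist}(a,b)>0$, so it contains an interval of positive length, which has positive $\mathcal{H}^1$; hence $1\le\hdim f(X)\le\hdim X$. The stronger clause $\hdim X\ge\dim_{\mathrm{top}}X$ is Szpilrajn's theorem, which I would simply cite (e.g. Hurewicz--Wallman), as a self-contained proof needs an induction on topological dimension. Property (8) is read off from the identity $\mathcal{H}^d=c_d\,\lambda^d$ on any $d$-dimensional affine subspace of $\R^n$ (with $\lambda^d$ the $d$-dimensional Lebesgue measure and $c_d>0$): the inequality $\mathcal{H}^d\le c_d\lambda^d$ comes from the isodiametric inequality applied to an efficient cover, and $\mathcal{H}^d\ge c_d'\lambda^d$ from a Vitali covering argument; given $0<\mathcal{H}^d(X)<\infty$, finiteness forces $\hdim X\le d$ and positivity forces $\hdim X\ge d$.

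The genuine obstacle is (6), the product inequality $\hdim(X\times Y)\ge\hdim X+\hdim Y$, which no covering argument yields directly because it demands a \emph{lower} bound on Hausdorff measure of the product. Since every set occurring in the paper is a Borel subset of Euclidean space, I would argue via Frostman's lemma and the mass distribution principle: fix $s<\hdim X$ and $t<\hdim Y$; Frostman's lemma produces nonzero Radon measures $\mu$ on $X$, $\nu$ on $Y$ with $\mu(B(x,r))\lesssim r^s$ and $\nu(B(y,r))\lesssim r^t$; with the $\ell^\infty$ product metric, $(\mu\otimes\nu)(B((x,y),r))\le\mu(B(x,r))\,\nu(B(y,r))\lesssim r^{s+t}$, so the mass distribution principle gives $\mathcal{H}^{s+t}(X\times Y)>0$, i.e. $\hdim(X\times Y)\ge s+t$; letting $s\uparrow\hdim X$ and $t\uparrow\hdim Y$ concludes. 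The only ingredients invoked without proof are Frostman's lemma for Borel sets and the mass distribution principle; everything else above is elementary. (Since the statement is quoted verbatim from \cite{SchleicherArticle}, the cleanest option in the text is to cite it; the above records how each clause would be proved if one wished to be self-contained.)
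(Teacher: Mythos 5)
Your proposal is mathematically sound, but note that the paper itself offers no proof of this theorem at all: it is introduced with ``we quote a theorem taken from \cite[p.~515]{SchleicherArticle}'' and left as a citation, exactly the option you mention in your closing parenthesis. So there is no argument in the paper to compare against; what you have written is a correct, standard, self-contained treatment of material the author deliberately outsourced. Your covering arguments for (1)--(5), the distance-function trick plus Szpilrajn for (7), and the comparability of $\mathcal{H}^d$ with Lebesgue measure for (8) are all the textbook routes. You are also right to single out (6) as the only clause needing a lower-bound technique; the Frostman-plus-mass-distribution-principle argument (with the $\ell^\infty$ product metric so that balls in the product are products of balls) is the standard proof, and restricting to Borel subsets of Euclidean space is harmless here since that covers every set to which the paper applies the theorem (images of smooth maps, strata of Grassmannians, preimages in embedded incidence manifolds). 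The only pedantic caveat: for $s<\hdim X$ one has $\mathcal{H}^s(X)=\infty$, so Frostman's lemma should be applied to a compact subset of positive finite $\mathcal{H}^s$-measure; this is implicit in your sketch and does not affect correctness.
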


Although the Hausdorff dimension is not invariant under homeomorphisms,
it is invariant under diffeomorphisms:
\begin{corollary}
  \label{HdimDiffeomorphismInvariant}
  If $f\colon X \to Y$ is a diffeomorphism (between metric spaces),
  then $\hdim X = \hdim Y$.
\end{corollary}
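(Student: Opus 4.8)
The plan is to reduce the statement to the case of a \emph{globally} Lipschitz map, where property~(5) of theorem~\ref{HDimProperties} applies directly, and then to invoke the countable stability of Hausdorff dimension, property~(2). The point to keep in mind is that a diffeomorphism is a $C^1$ map and hence \emph{locally} Lipschitz, but it need not be globally Lipschitz when $X$ is non-compact; bridging this gap is the only real content of the proof.

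First I would decompose $X$ into countably many pieces on which $f$ is Lipschitz. Since $X$ is a smooth manifold (in our applications it is even smoothly embedded in some $\R^N$), it is second countable, so it can be covered by countably many open sets $U_i$, $i \in \N$, each with compact closure $\overline{U_i}$ contained in a single coordinate chart. On the compact set $\overline{U_i}$ the derivative $Df$ is bounded, so by the mean value inequality $f|_{\overline{U_i}}$, and in particular $f|_{U_i}$, is Lipschitz. For each $i$, property~(5) of theorem~\ref{HDimProperties} gives $\hdim f(U_i) \le \hdim U_i$, and property~(1) gives $\hdim U_i \le \hdim X$; hence $\hdim f(U_i) \le \hdim X$ for every $i$. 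Since $f$ is a bijection, $Y = f(X) = \bigcup_{i} f(U_i)$, and this is a countable union, so property~(2) yields $\hdim Y \le \hdim X$.

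Finally, the inverse map $f^{-1}\colon Y \to X$ is again a diffeomorphism, so the identical argument applied to $f^{-1}$ gives $\hdim X \le \hdim Y$. Combining the two inequalities proves $\hdim X = \hdim Y$. The main obstacle, as noted, is precisely the step from the pointwise/local Lipschitz behaviour of a diffeomorphism to an honest application of property~(5), which is a statement about a single Lipschitz map defined on all of its domain; the countable decomposition into relatively compact chart neighbourhoods together with the countable-stability property~(2) is exactly what resolves this, and everything else is routine bookkeeping.
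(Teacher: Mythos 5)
Your proposal is correct, and its core is the same as the paper's: apply theorem~\ref{HDimProperties}~(5) to $f$ to get $\hdim Y \leq \hdim X$, then run the identical argument on $f^{-1}$ for the reverse inequality. The difference is that the paper's own proof simply asserts ``$f$ and $f^{-1}$ are both Lipschitz continuous'' and applies (5) globally, whereas you correctly observe that a diffeomorphism is in general only \emph{locally} Lipschitz (e.g.\ $x \mapsto x^3 + x$ on $\R$ is a diffeomorphism that is not globally Lipschitz), and you repair this by covering $X$ with countably many relatively compact chart neighbourhoods $U_i$ on which $Df$ is bounded, applying (5) and (1) on each piece, and then invoking the countable stability property (2) to reassemble the estimate for $Y = \bigcup_i f(U_i)$. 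So your argument is a strictly more careful version of the paper's; the extra cost is the (harmless) assumption that $X$ is a second countable smooth manifold rather than an abstract metric space, which is consistent with how the corollary is actually used in the paper, namely for smooth submanifolds of Euclidean space.
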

\begin{proof}
  Let $f\colon X \to Y$ be a diffeomorphism. In particular, $f$ and
  $f^{-1}$ are both Lipschitz continuous. Thus, by
  theorem~\ref{HDimProperties} (5) we obtain
  \begin{align*}
    & \hdim(Y) = \hdim(f(X)) \leq \hdim(X)\\
    \;\text{and}\; & \hdim(X) = \hdim(f^{-1}(Y)) \leq \hdim Y
  \end{align*}
  and the statement follows.
\end{proof}

\begin{corollary}
  \label{HDimViaSetDecomposition}
  If $X$ is the finite union of sets $X_i$, then
  \begin{align*}
    \hdim X = \max_i \hdim X_i.
  \end{align*}
\end{corollary}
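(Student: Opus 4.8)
The plan is to derive this immediately from the two basic monotonicity/countable-stability properties of Hausdorff dimension recorded in theorem~\ref{HDimProperties}, namely property (1) ($X \subset Y \Rightarrow \hdim X \leq \hdim Y$) and property (2) (a countable union of sets of Hausdorff dimension at most $d$ has Hausdorff dimension at most $d$). Write $X = \bigcup_{i=1}^{m} X_i$ for the given finite decomposition and set $d = \max_i \hdim X_i$.

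First I would establish the inequality $\hdim X \leq d$. By definition of $d$ we have $\hdim X_i \leq d$ for every $i$, and a finite union is in particular a countable union, so property (2) of theorem~\ref{HDimProperties} applies verbatim and yields $\hdim X = \hdim \bigcup_i X_i \leq d$.

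Next I would establish the reverse inequality $d \leq \hdim X$. Fix an index $i_0$ with $\hdim X_{i_0} = d$ (such an index exists because the maximum is over a finite set). Since $X_{i_0} \subset X$, property (1) of theorem~\ref{HDimProperties} gives $d = \hdim X_{i_0} \leq \hdim X$. Combining the two inequalities gives $\hdim X = d = \max_i \hdim X_i$, which is the claim.

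There is no real obstacle here; the only point to be slightly careful about is that the statement is for a \emph{finite} union, so the maximum is attained and property (1) can be invoked at the attaining index — for a genuinely countable union one would only get a supremum and the reverse inequality could fail to be an equality. Since everything needed is already isolated as theorem~\ref{HDimProperties}, the proof is a two-line application and requires no new estimates.
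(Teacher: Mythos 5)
Your proof is correct and follows essentially the same route as the paper: property (2) of theorem~\ref{HDimProperties} for the upper bound $\hdim X \leq \max_i \hdim X_i$ and property (1) for the lower bound. The only cosmetic difference is that you fix an index attaining the maximum, whereas the paper applies monotonicity to every $X_i$ and then takes the maximum; your added remark about finiteness being needed so the supremum is attained is a fair observation but not essential, since even for countable unions both inequalities combine to give equality with $\sup_i$ in place of $\max_i$.
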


\begin{proof}
  Each $X_i$ is contained in $X$, hence by
  theorem~\ref{HDimProperties} (1) we obtain
  \begin{align*}
    \hdim X_i \leq \hdim X \;\text{for all $i$}.
  \end{align*}
  In other words:
  \begin{align*}
    \max_i \hdim X_i \leq \hdim X
  \end{align*}
  On the other hand, noting the trivial fact
  \begin{align*}
    \hdim X_i \leq \max_i \hdim X_i
  \end{align*}
  and using (2) of the same theorem we can conclude
  \begin{align*}
    \hdim X = \hdim\left(\bigcup_i X_i\right) \leq \max_i(\hdim X_i).
  \end{align*}
  Thus we get the desired equation:
  \begin{align*}
    \hdim X = \max_i(\hdim X_i).
  \end{align*}
\end{proof}

\begin{proposition}
  \label{PropositionHausdorffPackingDimension}
  For sets $A, B \subset \R^n$ we have
  \begin{align*}
    \hdim(A) + \hdim(B) \leq \hdim(A \times B) \leq \hdim(A) + \pdim(B)
  \end{align*}
\end{proposition}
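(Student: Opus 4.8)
The left inequality $\hdim(A)+\hdim(B)\le\hdim(A\times B)$ is nothing but Theorem~\ref{HDimProperties}~(6), so only the right inequality $\hdim(A\times B)\le\hdim(A)+\pdim(B)$ requires an argument; this is the classical product formula of Tricot, and the plan is to reproduce its proof. The starting point is the characterization of packing dimension as the \emph{modified upper box dimension}: for $E\subset\R^m$ one has $\pdim(E)=\inf\{\sup_i\overline{\dim}_B(E_i)\}$, the infimum being taken over all countable covers $E=\bigcup_i E_i$, and (after intersecting each $E_i$ with balls) one may assume every $E_i$ is bounded, hence of finite upper box dimension. This identity is standard (see e.g. Falconer or Mattila), and I would simply cite it; alternatively one could prepend its short proof, but in the spirit of this appendix a reference suffices.

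Granting this, fix reals $s>\hdim(A)$ and $t>\pdim(B)$; it is enough to show $\hdim(A\times B)\le s+t$. Note that $\mathcal H^{s}(A)=0$ since $s>\hdim(A)$. Choose a countable cover $B=\bigcup_i B_i$ with each $B_i$ bounded and $\overline{\dim}_B(B_i)<t$. Then $A\times B=\bigcup_i(A\times B_i)$, so by the countable stability of Hausdorff dimension (Theorem~\ref{HDimProperties}~(2)) it suffices to prove $\hdim(A\times B_i)\le s+t$ for each fixed $i$. We are thus reduced to the key sub-lemma: \emph{if $\mathcal H^{s}(A)=0$ and $B'\subset\R^m$ satisfies $\overline{\dim}_B(B')<t$, then $\mathcal H^{s+t}(A\times B')=0$}, which in particular gives $\hdim(A\times B')\le s+t$.

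To prove the sub-lemma I would run the standard two-scale covering estimate. Given $\eta>0$ and a small $\delta>0$, use $\mathcal H^{s}(A)=0$ to pick a cover $\{U_i\}$ of $A$ with $r_i:=\operatorname{diam}(U_i)<\delta$ and $\sum_i r_i^{\,s}<\eta$. Since $\overline{\dim}_B(B')<t$, for all sufficiently small $r$ the set $B'$ can be covered by at most $r^{-t}$ sets of diameter $\le r$; shrinking $\delta$ if necessary we may assume this applies to every $r_i$, so cover $B'$ by sets $V_{i,1},\dots,V_{i,N_i}$ with $N_i\le r_i^{-t}$ and $\operatorname{diam}(V_{i,j})\le r_i$. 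Then $\{U_i\times V_{i,j}\}$ covers $A\times B'$, each member has diameter $\le 2r_i<2\delta$, and
\begin{align*}
  \sum_{i,j}\left(\operatorname{diam}(U_i\times V_{i,j})\right)^{s+t}
  \;\le\; \sum_i r_i^{-t}\,(2r_i)^{s+t}
  \;=\; 2^{s+t}\sum_i r_i^{\,s}
  \;<\; 2^{s+t}\eta .
\end{align*}
Letting $\eta\to 0$ and then $\delta\to 0$ yields $\mathcal H^{s+t}(A\times B')=0$, hence $\hdim(A\times B')\le s+t$, which closes the argument.

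The main obstacle is essentially expository: packing dimension is not introduced in the excerpt, so the proof genuinely rests on the identity $\pdim=\overline{\dim}_{MB}$, and I would either cite it or preface the proposition with a brief account of it. A minor technical point to watch is boundedness --- upper box dimension is only well behaved for (totally) bounded sets --- but this is harmless, since the cover $\{B_i\}$ can always be refined so that each piece is bounded without raising the supremum of their box dimensions. Everything else reduces to the routine covering computation above together with Theorem~\ref{HDimProperties}~(2) and~(6).
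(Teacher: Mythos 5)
Your proof is correct, and it is genuinely more than what the paper does: the paper's entire ``proof'' of this proposition is a citation to the literature (Xiao's survey), whereas you reconstruct Tricot's product formula from scratch. Your reduction is the standard and correct one — the left inequality is literally Theorem~\ref{HDimProperties}~(6), and for the right inequality you invoke $\pdim = \overline{\dim}_{MB}$ (packing dimension as modified upper box dimension), pass to a countable cover $B=\bigcup_i B_i$ with $\overline{\dim}_B(B_i)<t$, use countable stability of $\hdim$, and finish each piece with the two-scale covering estimate. The covering computation itself is right: the count $N_i\le r_i^{-t}$ for small $r_i$ is exactly what $\overline{\dim}_B(B')<t$ gives, and the factor $2^{s+t}$ is harmless. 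What your approach buys is self-containment at the cost of one imported identity ($\pdim=\overline{\dim}_{MB}$), which is unavoidable here since the paper never actually defines packing dimension; what the paper's approach buys is brevity, which is defensible for an appendix whose other items are also quoted without proof. Two cosmetic points only: you should say a word about cover elements $U_i$ with $\operatorname{diam}(U_i)=0$ (singletons), for which the bound $N_i\le r_i^{-t}$ is vacuous — either discard them by noting $\{x\}\times B'$ has $\hdim\le t$, or thicken them slightly; and since the proposition is stated for $A,B\subset\R^n$ with the product in $\R^{2n}$, the diameter bound $\operatorname{diam}(U\times V)\le\operatorname{diam}(U)+\operatorname{diam}(V)$ (or $\sqrt2\,\max$) justifies your $\le 2r_i$. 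Neither affects the validity of the argument.
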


\begin{proof}
  See \cite[p.~1]{XiaoArticle}.
\end{proof}

For a submanifold $B \subset \R^n$ the packing dimension $\pdim(B)$,
the Hausdorff dimension $\hdim(B)$ and the usual manifold dimension
$\dim(B)$ agree\footnote{See e.\,g. \cite[p.~48]{Falconer}}. Thus we
obtain for submanifolds $A,B \subset \R^n$:
\begin{align}
  \label{HDimOfProduct}
  \hdim(A \times B) = \hdim(A) + \dim(B).
\end{align}
This allows us to prove the following:
\begin{corollary}
  \label{HDimOfTotalSpace}
  Let $E \xrightarrow{\;\pi\;} X$ be a fiber bundle over the smooth
  manifold $X$ where $E$ is a smooth submanifold of $\R^N$ such that
  the fiber $F$ is also a smooth manifold.
  Let $A$ be a subset in $X$. Then $\hdim(\pi^{-1}(A)) = \hdim(A) +
  \dim(F)$.
\end{corollary}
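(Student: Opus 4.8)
The plan is to reduce to the local product situation by a countable trivializing cover, and then to combine the product formula for Hausdorff dimension with its stability under countable unions.

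First I would pick a countable family $\{U_i\}_{i\in\N}$ of trivializing open sets covering $X$; this exists since a manifold is second countable. After shrinking I may assume each $U_i$ is a chart domain, so that $U_i$ — and hence the arbitrary subset $A\cap U_i$ — embeds in some $\R^{n}$, and I fix a smooth embedding of $F$ as a submanifold of some $\R^{m}$. Over $U_i$ the (smooth) bundle trivialization is a diffeomorphism $\pi^{-1}(U_i)\xrightarrow{\;\sim\;}U_i\times F$, which restricts to a bijection $\pi^{-1}(A\cap U_i)\to(A\cap U_i)\times F$ that is locally bi-Lipschitz; by Corollary~\ref{HdimDiffeomorphismInvariant} (applied, if one wants genuine Lipschitz constants, on a further countable exhaustion by relatively compact pieces, using Theorem~\ref{HDimProperties}(5) in both directions) it preserves Hausdorff dimension, so
\[
\hdim\bigl(\pi^{-1}(A\cap U_i)\bigr)=\hdim\bigl((A\cap U_i)\times F\bigr).
\]

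Next I would evaluate the right-hand side. Since $F$ is a smooth manifold, its packing and Hausdorff dimensions both coincide with $\dim F$, so Proposition~\ref{PropositionHausdorffPackingDimension} squeezes
\[
\hdim(A\cap U_i)+\dim F\le\hdim\bigl((A\cap U_i)\times F\bigr)\le\hdim(A\cap U_i)+\dim F,
\]
hence equality; this is exactly (\ref{HDimOfProduct}) with the second factor a manifold, and it remains valid even though $A\cap U_i$ need not be a manifold. Thus $\hdim(\pi^{-1}(A\cap U_i))=\hdim(A\cap U_i)+\dim F$ for every $i$.

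Finally I would assemble the global statement. Writing $\pi^{-1}(A)=\bigcup_i\pi^{-1}(A\cap U_i)$ and $A=\bigcup_i(A\cap U_i)$, parts (1) and (2) of Theorem~\ref{HDimProperties} give $\hdim(\pi^{-1}(A))=\sup_i\hdim(\pi^{-1}(A\cap U_i))$ and $\hdim(A)=\sup_i\hdim(A\cap U_i)$, whence
\[
\hdim(\pi^{-1}(A))=\sup_i\bigl(\hdim(A\cap U_i)+\dim F\bigr)=\dim F+\sup_i\hdim(A\cap U_i)=\hdim(A)+\dim F.
\]
The only real subtlety — the expected obstacle — is that the trivializing cover is merely countable rather than finite, so Corollary~\ref{HDimViaSetDecomposition} must be replaced by the countable-stability statement Theorem~\ref{HDimProperties}(2) together with monotonicity (1), and correspondingly the diffeomorphism invariance of Hausdorff dimension has to be applied on a countable family of pieces on which the trivialization is honestly Lipschitz; everything else is routine bookkeeping.
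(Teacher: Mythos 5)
Your proposal is correct and follows essentially the same route as the paper's (very terse) proof, which likewise invokes a trivializing cover, Corollary~\ref{HdimDiffeomorphismInvariant}, the union stability of Hausdorff dimension, and the product formula (\ref{HDimOfProduct}). In fact you are slightly more careful in two places where the paper is imprecise: you correctly replace the finite-union statement of Corollary~\ref{HDimViaSetDecomposition} by the countable stability of Theorem~\ref{HDimProperties}(1)--(2), since a trivializing cover need only be countable, and you justify the product formula for a non-manifold first factor $A\cap U_i$ via the packing-dimension squeeze of Proposition~\ref{PropositionHausdorffPackingDimension} rather than quoting (\ref{HDimOfProduct}), which is stated only for submanifolds.
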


\begin{proof}
  The statement follows by using using a trivializing covering of $X$
  together with corollary~\ref{HdimDiffeomorphismInvariant},
  corollary~\ref{HDimViaSetDecomposition} and (\ref{HDimOfProduct}).
\end{proof}


\addchap{Notation}

{\footnotesize
\begin{longtable}[ht]{lp{11.6cm}}
  $\emptyset$ & The empty set\\
  $I$ & The closed unit interval $[0,1]$\\
  $\widehat{\R}$ & Compactified real line, $\widehat{\R} = \R \cup \{\infty\}$\\
  $\LatGen{\omega_1}{\omega_2}$ & The lattice in $\mathbb{C}$ generated by $\omega_1$ and $\omega_2$\\
  $C_n$ & The cyclic group of order $n$\\
  $\mathcal{M}(X,Y)$ & The space of maps $X \to Y$\\
  $\mathcal{M}((X,A),(Y,B))$ & The space of maps $(X,A) \to (Y,B)$\\
  $\mathcal{M}_G(X,Y)$ & The space of continuous $G$-maps $X \to Y$ (both $G$-spaces)\\
  $f \simeq g$ & The maps $f$ and $g$ are homotopic\\
  $f \simeq_G g$ & The maps $f$ and $g$ are $G$-equivariantly homotopic\\
  $[X,Y]_G$ & The $G$-homotopy classes of maps $X \to Y$\\
  $\Triple{d_0}{d}{d_1}$ & Degree triple for equivariant homotopy classes (see p.~\pageref{DefinitionTriple})\\
  $\mathcal{L}X$ & The free loop space of $X$\\
  $\Omega X$ & The space of based loops in $X$\\
  $\Omega (X,x_0)$ & The space of based loops in $X$ with basepoint $x_0$\\
  $\mathbb{H}^+$ & The upper half plane in $\mathbb{C}$\\
  $\pi(X,p)$ & The fundamental group of $X$ with basepoint $p$\\
  $\pi(X;p,q)$ & The homotopy classes of curves in $X$ with fixed endpoints $p$ and $q$\\
  $\mathcal{H}_n$ & The set of complex, hermitian $n \times n$ matrices\\
  $\mathcal{H}^*_n$ & The set of complex, hermitian, non-singular $n \times n$ matrices\\
  $\mathcal{H}_{(p,q)}$ & The subset of $\mathcal{H}_{p+q}^*$ consisting of matrices with eigenvalue signature $(p,q)$\\
  $c_p\colon X \to Y$ & The constant map, which sends every $x \in X$ to $p \in Y$\\
  iff & if and only if\\
  LHS, RHS & Left-hand side, right-hand side\\
  $X^G$ & For a $G$-space $X$, $X^G = \{x \in X\colon G(x) = \{x\}\}$\\
  $\ltrans{M}$ & Matrix transpose of the matrix $M$\\
  $M^*$ & For a matrix $M$, $M^*$ denotes its conjugate-transpose, i.\,e. $M^* = \ltrans{\overline{M}}$\\
  $e^n$ & An open $n$-cell\\
  $\gamma_2 \ast \gamma_1$ & Concatenation of curves $\gamma_1$ and $\gamma_2$ where $\gamma_1(1) = \gamma_2(0)$\\
  $\gamma^{-1}$ & For a curve $\gamma$, $\gamma^{-1}$ denotes the same curve with reversed time\\
  $\mathbb{P}_n$ & The $n$-dimensional complex projective space\\
  $\mathbb{RP}_n$ & The $n$-dimensional real projective space\\
  $\I_n$ & The $n \times n$ identity matrix\\
  $\I{p}{q}$ & The block diagonal matrix $\Diag(\I{p},-\I{q})$\\
  $\hdim X$ & The Hausdorff dimension of the topological space $X$\\
  $\pdim X$ & The packing dimension of the topological space $X$\\
  $\Gr_k(\C^n)$ & Grassmannian of $k$-dimensional complex subvectorspaces in $\C^n$\\
  $\Gr_k(\R^n)$ & Grassmannian of $k$-dimensional real subvectorspaces in $\R^n$\\
  $\cl(X)$ & Closure of the topological space $X$\\
  $\partial X$ & Boundary of the topological space $X$
\end{longtable}}

All maps are assumed to be continuous, unless otherwise stated. When
discussing the dimension of complex geometric objects we refer to its
\emph{complex} dimension unless we explicitely use the term \emph{real
  dimension}.


\listoffigures

\bibliography{literature}
\bibliographystyle{plain}

\end{document}